\documentclass[10pt,reqno]{amsart}

\usepackage{amssymb}
\usepackage{graphicx}
\usepackage{amsfonts}
\usepackage{amsmath}
\usepackage{amsthm}
\usepackage{fancyhdr}
\usepackage[left=3.2cm, right=3.2cm, bottom=3.2cm]{geometry}
\usepackage{indentfirst}
\usepackage[backref=page, colorlinks = true, linkcolor = black, citecolor = blue]{hyperref}
\usepackage{comment}
\usepackage{color}
\usepackage{subcaption}
\usepackage{epsfig}
\usepackage{pst-grad} 
\usepackage{pst-plot} 
\usepackage[space]{grffile} 
\usepackage{etoolbox} 
\usepackage{mathrsfs} 
\usepackage{mathtools} 
\mathtoolsset{showonlyrefs=true}
\usepackage{musicography}
\usepackage{enumerate}
\usepackage{dsfont}

\newtheorem{theorem}{Theorem}[section]
\newtheorem{lemma}[theorem]{Lemma}

\newtheorem{proposition}[theorem]{Proposition}
\newtheorem{corollary}[theorem]{Corollary}

\newtheorem*{claim*}{Claim}

\theoremstyle{definition}
\newtheorem{definition}[theorem]{Definition}
\newtheorem{example}[theorem]{Example}

\newtheorem{conjecture}[theorem]{Conjecture}
\newtheorem{question}{Question}

\theoremstyle{remark}
\newtheorem{remark}[theorem]{Remark}
\newtheorem{notation}[theorem]{Notation}
\newtheorem{assumptions}[theorem]{Assumptions}

\numberwithin{equation}{section}

\mathtoolsset{showonlyrefs=true}

\newcommand{\ep}{\varepsilon}

\newcommand{\R}{\mathbb{R}}
\newcommand{\N}{\mathbb{N}}

\newcommand{\Hcal}{\mathcal{H}}

\newcommand{\Acal}{\mathcal{A}}

\newcommand{\Scal}{\mathcal{S}}

\newcommand{\loc}{\mathrm{loc}}

\newcommand{\Gr}{\mathbf{G}}

\newcommand{\bV}{\mathbf{V}}
\newcommand{\bRV}{\mathbf{RV}}
\newcommand{\bCV}{\mathbf{CV}}
\newcommand{\bIV}{\mathbf{IV}}

\newcommand{\bC}{\mathbf{C}}

\newcommand{\tM}{\widetilde{M}}

\newcommand{\tScal}{\widetilde{\mathcal{S}}}

\newcommand{\oTh}{\overline{\Theta}}
\newcommand{\obC}{\overline{\bC}}

\newcommand{\weakto}{\rightharpoonup}

\newcommand{\mres}{\mathbin{\vrule height 1.6ex depth 0pt width
0.13ex\vrule height 0.13ex depth 0pt width 1.3ex}}
\newcommand{\ve}{v}

\newcommand{\VarTan}{\operatorname{VarTan}}

\newcommand{\spt}{\operatorname{spt}}
\newcommand{\dist}{\operatorname{dist}}
\newcommand{\Div}{\operatorname{div}}
\newcommand{\Tr}{\operatorname{Tr}}
\newcommand{\inj}{\operatorname{inj}}
\newcommand{\Hess}{\operatorname{Hess}}

\newcommand{\reg}{\operatorname{reg}}
\newcommand{\sing}{\operatorname{sing}}

\newcommand{\Ric}{\operatorname{Ric}}

\newcommand{\Span}{\operatorname{span}}

\newcommand{\Mat}{\operatorname{Mat}}
\newcommand{\dvol}{d\mathrm{vol}}

\NewDocumentCommand{\sff}{}{\mathrm{I\!I}}

\begin{document}

\title[Boundary behaviour of stable Allen-Cahn limit-interfaces]{Boundary behaviour of stable Allen-Cahn limit-interfaces in Riemannian manifolds}

\author[Martin Li]{Martin Man-chun Li}
\address{Department of Mathematics, The Chinese University of Hong Kong, Shatin, N.T., Hong Kong}
\email{martin.li@cuhk.edu.hk}

\author[Davide Parise]{Davide Parise}
\address{Department of Mathematics, Zeeman Building, University of Warwick, Gibbet Hill Road, Coventry CV4 7AL, UK}
\email{Davide.Parise@warwick.ac.uk}

\author[Lorenzo Sarnataro]{Lorenzo Sarnataro}
\address{Department of Mathematics, University of Toronto, 40 St George St., Toronto, Ontario M5S 2E4, Canada}
\email{lorenzo.sarnataro@utoronto.ca}

\begin{abstract}
This is the second paper in our series of works to develop the min-max theory for free boundary minimal hypersurfaces in Riemannian manifolds with boundary via the Allen-Cahn equation. Building on our previous work for general critical points, we study the boundary behaviour of limit-interface integral varifolds arising from stable (or finite Morse index) solutions to the Allen-Cahn equation with Neumann boundary condition. More precisely, we prove that the limit-interface is a curvature varifold with boundary in the sense of Mantegazza satisfying a generalized stability inequality. Furthermore, we rule out classical tangent cones at the boundary and show that the first variation of the limit-interface varifold is singular with respect to its weight measure. In particular, our result confirms a folklore conjecture that the boundary concentration examples of Malchiodi, Ni and Wei have unbounded Morse index. As will be evident in our forthcoming work, the results obtained in this paper are crucial ingredients towards the optimal boundary regularity in the Allen-Cahn free boundary min-max theory.  
\end{abstract}

\maketitle

\section{Introduction and Main Results}
\label{S:intro}

Let $(M^n,g)$ be an $n$-dimensional smooth compact Riemannian manifold with boundary. We consider the following (elliptic) Allen-Cahn equation with Neumann boundary condition:
\begin{equation}
\label{E:Allen-Cahn-intro}
    \left\{ \begin{array}{cl}
            -\ep^2 \Delta_g u +W'(u)=0 & \text{in $M$} \\
            \frac{\partial u}{\partial \nu} =0 & \text{on $\partial M$}
    \end{array} \right.
\end{equation}
where $\Delta_g$ is the Laplace-Beltrami operator on $(M,g)$, $\nu$ is the unit outward normal of $\partial M$ and $W:\mathbb{R} \to \mathbb{R}$ is a smooth double well potential such as $W(t)=\frac{1}{4}(1-t^2)^2$. Here, $\ep>0$ is a small parameter. A solution $u \in C^\infty(M)$ to \eqref{E:Allen-Cahn-intro} is said to be \emph{stable} if it satisfies in addition the stability inequality, i.e. 
\begin{equation}
\label{E:stability-intro}
\int_M \ep |\nabla \psi|^2 + \frac{1}{\ep} W''(u) \psi^2 \; \dvol_g \geq 0 \quad \text{for all $\psi \in C^\infty(M)$.}
\end{equation}
The main goal of this paper is to study the (local) boundary behaviour of the limit-interface varifold arising from a sequence of stable solutions $u_{\ep_i}$ to \eqref{E:Allen-Cahn-intro} with $\ep_i \to 0$. 

\subsection{Background and motivations}
\label{SS:motivations}

We begin with a discussion on the background and some recent development in the field which motivates our study here. 

\subsubsection{Bernstein problems and De Giorgi Conjecture}
\label{SSS:De-Giorgi}

The parabolic counterpart of the Allen-Cahn equation \eqref{E:Allen-Cahn-intro} is a reaction-diffusion equation first proposed in phase transition theory to describe the process of phase separation in multi-component alloy systems. In the sharp interface limit as $\ep \to0$, heuristically the limit-interface separating stable phases would be evolving under mean curvature flow whose stationary solutions are given by minimal surfaces. Since the 1970s, the theory of $\Gamma$-convergence \cite{Modica-Mortola-1977, Modica1987a, Kohn1989} pioneered by De Giorgi showed a deep connection between the Allen-Cahn equation and minimal surface theory. Motivated by the classical Bernstein problem in minimal surface theory, De Giorgi made several famous conjectures about similar rigidity results for entire solutions to the Allen-Cahn equation. Progress made around De Giorgi's conjectures have driven some of the major developments for semi-linear partial differential equations (see \cite{Chan2018} for an excellent survey up to 2018) in the past few decades.  

In recent years, we have witnessed significant breakthroughs on the \emph{stable} Bernstein problem for minimal hypersurfaces \cite{Chodosh-Li2024, Chodosh-Li-Minter-Stryker2026, Mazet2024, Bellettini2025, Hong-Li-Wang2026}. As a result, we now know that stable two-sided complete minimal hypersurfaces in $\mathbb{R}^n$ are flat for $n \leq 7$ while non-flat examples exist for $n \geq 8$. Nevertheless, the analogous De Giorgi conjecture for \emph{stable} entire solutions to the Allen-Cahn equation remains unsolved in its full generality  (see, however, \cite{Florit-Simon-Serra2025,Florit-Simon2026,LLWWWW2026,CFFFS2026} for some remarkable recent progress). On the other hand, variants of the stable De Giorgi conjecture have been confirmed affirmatively in certain low dimensional settings (c.f. \cite{Cabre-Figalli-RosOton-Serra2020, Figalli-Serra2020, Cabre2026, Chan2026}).

\subsubsection{Min-max theory via Allen-Cahn equation}
\label{SSS:min-max-AC}

The compactness and regularity of stable (or finite Morse index) solutions to the Allen-Cahn equation is closely related to the existence theory of minimal hypersurfaces by global Morse-theoretic min-max constructions. Building on the fundamental work of \cite{Hutchinson2000,Tonegawa2012}, Guaraco \cite{Guaraco2018} pioneered a PDE approach via Allen-Cahn equation to construct closed embedded minimal hypersurfaces in closed Riemannian manifolds, as an alternative to the classical Almgren-Pitts approach based on geometric measure theory. This new PDE-based approach has proven powerful especially in low dimensions (e.g. \cite{Chodosh2020, Chodosh2023}) by exploiting novel curvature estimates first discovered by Wang-Wei \cite{Wang-Wei2019}. See also the recent work \cite{DDPS2026} of the second and third-named author with their collaborators in the free boundary setting.

\subsubsection{Free boundary minimal hypersurfaces}
\label{SSS:free-boundary}

This paper constitutes part of the broader program initiated by the same authors in \cite{LiPariseSarnataro2023} to develop a PDE-based min-max theory for free boundary minimal hypersurfaces via the Allen-Cahn equation on Riemannian manifolds with boundary. In the free boundary setting, the Almgren-Pitts min-max theory in the geometric measure theoretic framework was developed by the first-named author with X. Zhou in \cite{Li2021}. In contrast to the boundaryless situation, a technically subtle but major difficulty is to understand the boundary concentration phenomena known as \emph{touching} or \emph{improperness} as observed in \cite{Li2015}. In the context of Allen-Cahn equations, such boundary concentrating solutions have been constructed rigorously by Malchiodi, Ni and Wei \cite{Malchiodi2007} inside the unit ball and by Malchiodi-Wei \cite{Malchiodi2007a} inside any strictly mean convex domain in Euclidean space. More precisely, for any given $N \in \mathbb{N}$, \cite{Malchiodi2007} established the existence of a sequence of radially symmetric solutions $\{u_{\ep_i}\}$ to \eqref{E:Allen-Cahn-intro} for $M=\mathbb{B}^n$ and $W(t)=\frac{1}{4}(1-t^2)^2$ such that the zero level sets $\{u_{\ep_i}=0\}$ converge as $\ep_i \to 0$ to the boundary sphere $\partial \mathbb{B}^n$ with multiplicity $N$. Moreover, each solution $u_{\ep_i}$ has Morse index equal to $N$ within the class of radially symmetric functions. Nonetheless, it is a folklore conjecture that if we consider the bigger class of functions which are not necessarily radially symmetric, the Morse index should become unbounded in these examples.

\begin{conjecture}
\label{Conj:MNW-examples}
    The solutions $u_{\ep_i}$ constructed by Malchiodi, Ni and Wei in \cite{Malchiodi2007} and \cite{Malchiodi2007a} have unbounded Morse index as $\ep_i \to 0$.
\end{conjecture}

Heuristically, the boundary mean-convexity suggests that the limit-interface should be unstable in the direction along the inward normal of the domain boundary. By imposing oscillatory Fourier modes along the direction tangential to the domain boundary, this would then create infinitely many independent directions for the second variation. However, it is very difficult to make this quantitative as a-priori the level sets $\{u_{\ep_i}=0\}$ only converges to the domain boundary in a very weak topology; and it is ambiguous to make sense of the Morse index of the limit-interface (in our case here is simply the domain boundary) as it is not a genuine minimal surface in $\mathbb{R}^n$. To the best of our knowledge, the exact Morse index of the examples in \cite{Malchiodi2007} and \cite{Malchiodi2007a} has yet to be computed.

\subsection{Main results}
\label{SS:main-results}

By our previous work \cite{LiPariseSarnataro2023}, for any sequence of solutions $\{u_{\ep_i}\}$ to \eqref{E:Allen-Cahn-intro} satisfying natural uniform energy and $L^\infty$ bounds (see Assumptions \ref{As:stationary-AC}), we can associate to each $u_{\ep_i}$ a codimension-one varifold $V^{u_{\ep_i}}$ in $M$ such that, after passing to a subsequence as $\ep_i \to 0$, $V^{u_{\ep_i}}$ converges as varifolds to a limit varifold $V$, known as a \emph{limit-interface}\footnote{Note that the limit-interface $V$ may depend on the choice of the subsequence of $\ep_i$.}. Moreover, we know from \cite{LiPariseSarnataro2023} that $V$ is integer-rectifiable up to the boundary and is free boundary stationary as a varifold in $M$ (we refer the readers to Section \ref{S:varifolds} for the precise definitions). 

We now describe the main results obtained in this paper which establish further analytic and geometric properties of $V$ under the additional assumptions that each $u_{\ep_i}$ is \emph{stable} (see Assumptions \ref{As:stable-AC}). See Theorem \ref{T:main-AC} and \ref{T:exclude-classical-cones} for more precise statements.

\begin{theorem}
    \label{T:main-theorem}
    Let $\varepsilon_i \to 0$  and suppose $\{u_{\ep_i}\}$ is a sequence of stable solutions to the Allen-Cahn equation \eqref{E:Allen-Cahn-intro} with $E_{\ep_i}(u_{\ep_i})\leq E_0$ and $\|u_{\ep_i}\|_{L^\infty(M)} \leq C_0$ for some constants $E_0,C_0>0$. Then, for any limit-interface $V$ obtained from a subsequential limit of $V^{\ep_i}$ as above, the following hold:
    \begin{itemize}
        \item[(1)] $V$ is a curvature varifold with boundary in the sense of Mantegazza (c.f. Definition \ref{D:curvature-varifolds})
        \item[(2)] $V$ satisfies a stability-type inequality: for any $\psi \in C^1_c(M)$, we have
\[
    \int |\mathbf{B}_V|^2\psi^2 \, d\|V\| \leq \int |\nabla\psi|^2 \, d\|V\| + C_M \int \psi^2 \, d\|V\| + C_M \int \psi^2 \, d \|\delta V\|_{sing},
\]
where $\mathbf{B}_V$ is the generalized second fundamental form of $V$ (c.f. Definition \ref{D:curvature-varifolds})  and $C_M \geq 0$ is a geometric constant depending only on $M$.
\item[(3)] $V$ is stationary across the ``touching set'', i.e. the first variation of $V$ is given by
\[
    \delta V(X)= \int X \cdot \nu \; d \|\delta V\|_{sing} 
\]
where $X$ is any $C^1$ vector field on $M$ which is not necessarily tangential to $\partial M$.
\item[(4)] $V$ admits no classical tangent cones except possibly on an $\Hcal^{n-2}$-null set of $\spt \|V\|$.
    \end{itemize}
\end{theorem}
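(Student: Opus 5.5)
The plan is to follow the Tonegawa / Tonegawa--Wickramasekera strategy for stable Allen--Cahn limits in the interior and adapt it to the Neumann boundary setting via the framework of \cite{LiPariseSarnataro2023}.

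\emph{Step 1: a diffuse stability inequality.} Insert into \eqref{E:stability-intro} the test function $\psi = \phi\,|\nabla u_{\ep}|$, suitably regularised where $\nabla u_\ep=0$. Combine this with the Bochner formula for $\nabla u_\ep$ obtained by differentiating \eqref{E:Allen-Cahn-intro}; here $\phi\in C^1(M)$ need not vanish on $\partial M$. This yields a Sternberg--Zumbrun type inequality
\[
\int_M \ep\big(|\nabla^2 u|^2-|\nabla|\nabla u||^2 - \Ric(\nabla u,\nabla u)\big)\phi^2 \leq \int_M \ep|\nabla u|^2|\nabla\phi|^2 + \int_{\partial M}\ep\,\phi^2\,\tfrac12\partial_\nu|\nabla u|^2 .
\]
The Neumann condition gives $\tfrac12\partial_\nu|\nabla u|^2 = -\sff_{\partial M}(\nabla u,\nabla u)$ on $\partial M$. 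The boundary term is therefore bounded by $C_M\int_{\partial M}\ep|\nabla u|^2\phi^2$.

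To control this term, convert it into interior quantities by a divergence-theorem trick. Take a vector field $X$ equal to $-\nu$ near $\partial M$ and apply the first variation of the diffuse energy with $\phi^2X$. This bounds $\int_{\partial M}\ep|\nabla u|^2\phi^2$ by $C\int \phi^2\,d\mu_\ep + C\int|\nabla\phi^2|\,d\mu_\ep$ plus a term that, in the limit, converges to $\int\phi^2\,d\|\delta V\|_{sing}$. The singular part of $\delta V$ from \cite{LiPariseSarnataro2023} is precisely the weak limit of these boundary energy densities, together with the discrepancy, which is known to vanish.

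\emph{Step 2: passing to the limit.} The left-hand side dominates $\ep|\nabla u|^2|\mathbf{B}_{\{u=t\}}|^2$ on regular level sets. Following Tonegawa and Mantegazza, I would use it to obtain an $L^2$ bound on the diffuse second fundamental form and to verify the integration-by-parts identity defining curvature varifolds with boundary (Definition \ref{D:curvature-varifolds}) for the approximating varifolds $V^{u_\ep}$. Lower semicontinuity under varifold convergence, as in Hutchinson and Mantegazza, then gives (1). The same limit, using the lower bound on the left-hand side together with Step 1, gives (2).

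\emph{Step 3: items (3) and (4).} For (3), first note that $\delta V$ is a Radon measure by \cite{LiPariseSarnataro2023}. The tangential part of $\delta V$ vanishes on $M$ because $V$ is free boundary stationary, and the generalized mean curvature in the curvature varifold sense is absolutely continuous. The task is to show that the absolutely continuous normal component vanishes on $\partial M$, i.e. that $\delta V$ has no $\|V\|$-absolutely continuous part concentrated on the boundary. Mantegazza's boundary measure of a curvature varifold is carried by $\partial M$ and is $\|V\|$-singular. Comparing the two expressions of $\delta V$ shows that the normal part of $\delta V$ equals $X\cdot\nu$ integrated against this singular measure, which gives (3).

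For (4), suppose a classical cone (a union of at least three half-hyperplanes meeting along an $(n-2)$-plane, or a half-plane configuration meeting $\partial M$ non-orthogonally/transversally) appears as a tangent cone at a set of positive $\Hcal^{n-2}$ measure. The stability inequality (2) and the curvature bounds rescale: both $\int|\mathbf{B}|^2$ and the singular term scale favourably. Blow-ups at such points are then stable curvature varifolds in a half-space with $L^2$ second fundamental form, and their first variation has no absolutely continuous boundary part. For a classical cone the curvature varifold structure fails along the spine, since Mantegazza's identity forces the multiplicity-weighted unit conormals to balance. Alternatively, Tonegawa--Wickramasekera's argument shows that stable Allen--Cahn limits cannot produce junctions. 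I would adapt that argument, reflecting across $\partial M$ after Fermi coordinates to reduce boundary cones to interior ones.

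\emph{Main obstacle.} I expect the hardest step to be controlling the boundary term in Step 1 and identifying its limit with $\|\delta V\|_{sing}$. The identification uses only boundary energy concentration, and it must hold uniformly despite possible touching, as in the Malchiodi--Ni--Wei examples.
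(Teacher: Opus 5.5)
Your Steps 1--2 for (1) broadly match the paper: the $\ep$-stability inequality with $\psi|\nabla u_\ep|$, the boundary Bochner term, a trace bound on the boundary energy giving uniform $L^2$ bounds, and measure--function pair compactness. The genuine gap is in (3), and it carries over to the boundary term in (2).

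In Step 1 you take as known from \cite{LiPariseSarnataro2023} that the weak limit $\hat\mu$ of the boundary energies $\hat{\mu}^\partial_{u_{\ep_i}}=e_{\ep_i}(u_{\ep_i})\,d\Hcal^{n-1}\mres\partial\Omega$ is $\|\delta V\|_{sing}$. That is not available, and it is false for general critical points. Passing to the limit in the diffuse first variation (Pohozaev identity) only gives $\delta V=\nu\,\hat\mu$, so $\hat\mu=\|\delta V\|$. Comparing with \eqref{E:1st-variation-fbv-stationary} then shows $\hat\mu=\|\delta V\|_{sing}+|H_{\partial\Omega}|\,\|V\|\mres\partial\Omega$. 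For the Malchiodi--Ni--Wei limits the first term vanishes and the second does not. So this identification \emph{is} statement (3).

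Your Step 3 does not supply it. Comparing the free-boundary formula with the curvature-varifold first variation only identifies the generalized mean curvature with $H_{\partial\Omega}$ on the touching set, and $\pi_*(\partial V)$ with $\nu\,\|\delta V\|_{sing}$. Nothing there forces $H_{\partial\Omega}=0$. Indeed, $N|\partial\Omega|$ with $\partial\Omega$ a round sphere is an integral curvature varifold with $\partial V=0$, is free boundary stationary, and violates (3). So no argument at the level of the limit varifold can work.

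The missing idea is an $\ep$-level link. Test $\delta V^{u_\ep}(\tilde X)=\int\xi_\ep(u_\ep)\Div\tilde X+\int_{\partial\Omega}e_\ep(u_\ep)\,\tilde X\cdot\nu$ with $\tilde X=\varphi(x,S^{u_\ep})\,S^{u_\ep}(X)$. The Neumann condition gives $S^{u_\ep}(\nu)=\nu$ on $\partial\Omega$, so the diffuse Mantegazza boundary measure is $\partial V^{u_\ep}=S(\nu)\,\hat{\mu}^\partial_{u_\ep}\otimes\delta_{S^{u_\ep}}$. This is the \emph{same} boundary energy that appears in the untested first variation. Pass to the limit in both, using $\xi_{\ep_i}\to0$ in $L^1$ and the stability $L^2$ bound. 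You get $\delta V=\lim_i\nu\,\hat{\mu}^\partial_{u_{\ep_i}}=\pi_*(\partial V)$. Since $\pi_*\|\partial V\|\perp\|V\|$ for integral curvature varifolds (Lemma \ref{L:curvaure-varifolds-definition}(ii)), this yields (3) and $\hat{\mu}^\partial_{u_{\ep_i}}\weakto\|\delta V\|_{sing}$. Only that identification puts $\|\delta V\|_{sing}$ into (2). Otherwise you obtain $\|\delta V\|$ there, or a factor $1+\delta$ on $|\nabla\psi|^2$ if you absorb the boundary term with the trace inequality.

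For (4), your main argument is incorrect. Classical cones \emph{are} curvature varifolds with boundary: the triple-junction example in Section \ref{SS:curvature-varifolds} has $\mathbf{B}_V=0$ and $\partial V$ a sum of Dirac masses carrying the individual conormals. So Mantegazza's identity imposes no obstruction along the spine. Moreover, (2) is vacuous on a flat cone, since $\mathbf{B}_{\bC}=0$. The exclusion has to be done at the $\ep$-level.

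The paper takes $\mathcal{Z}$ to be the set where the weak limit of $|\Acal_{u_{\ep_i}}|^2\mu_{u_{\ep_i}}$ has positive upper $(n-3)$-density; this set is $\Hcal^{n-2}$-null. Off $\mathcal{Z}$, a diagonal blow-up gives stable solutions converging to the cone with $\int_{B_2}|\Acal_{u_{\ep_i}}|^2\,d\mu_{u_{\ep_i}}\to0$. One then shows that for most points of the spine the cross-sections are non-degenerate and meet a neighbourhood of every half-plane. A generic two-dimensional slice of a level set is then a union of planar curves with vanishing total curvature. These curves must still visit every ray, and by the Neumann condition they can end on $\partial\Omega$ only orthogonally. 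A case analysis of their ends forces a definite amount of turning, which is the contradiction.

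Your alternative of reflecting $u_\ep$ across $\partial M$ and running Tonegawa--Wickramasekera on the double is conceivable, but as written it is only a pointer. The doubled metric is merely Lipschitz across $\partial M$, and the discrepancy bound near $\partial M$ is only $\ep^{-4/5}$. The sheet-separation and up-to-the-boundary Hausdorff-convergence inputs are exactly what would have to be proved.
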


\begin{remark}
\label{R:minimal-across-touching-set}
    To understand (3), note that if $V$ is the unit-density varifold corresponding to the domain boundary $\partial M$, then it is automatically a free boundary stationary varifold with free boundary in $\partial M$ (c.f. Definition \ref{D:free-boundary-varifold}). However, it will satisfy (3) only when $\partial M$  is itself a minimal hypersurface (since in this case $\|\delta V\|_{sing}=0$ as $\partial M$ itself has no boundary). Therefore, (3) is saying in a weak sense (which can be made precise by \eqref{E:1st-variation-fbv-stationary}) the limit-interface $V$ must be ``minimal'' on the part where it leans onto $\partial M$ (c.f. Remark \ref{R:stationarity-across-touching}). 
\end{remark}

Because of Remark \ref{R:minimal-across-touching-set}, boundary concentration for stable limit-interface cannot happen on the non-minimal part of $\partial M$. Combining with a standard trick due to R. Schoen (c.f. \cite{Sharp2017}) by considering disjoint small open balls, boundary concentration can happen only at at most $N$ \emph{non-minimal} points on $\partial M$ if a limit-interface arises from solutions $\{u_{\ep_i}\}$ of the Allen-Cahn equation \eqref{E:Allen-Cahn-intro} with Morse index uniformly bounded above by $N$. Note that the examples considered in \cite{Malchiodi2007, Malchiodi2007a} have strictly mean-convex domain boundary, as an immediate consequence of Theorem \ref{T:main-theorem} (3) we have:

\begin{corollary}
    \label{C:main-corollary}
    Conjecture \ref{Conj:MNW-examples} is true.
\end{corollary}

\begin{remark}
    (4) is the free boundary analogue of a fundamental result of Tonegawa-Wickramasekera \cite[Proposition 3.2]{Tonegawa2012} for the interior case in the Euclidean setting. Here, we extend their result up to the boundary and in the Riemannian setting. In another paper \cite{Bellettini-Li-Parise-Sarnataro} of the same authors with Bellettini, we established the compactness and regularity theory for stable free boundary minimal hypersurfaces under an a-priori smallness assumption on the singular set in the spirit of Schoen-Simon (c.f. \cite{SchoenSimon81} and \cite{Bellettini2025}). In our forthcoming work, where we develop the full free boundary optimal regularity of Wickramasekera's far-reaching interior compactness and regularity theory \cite{Wick14}, (3) is the key variational hypothesis and (4) will be used to verify the main structural hypothesis on the absence of classical singularities. 
\end{remark}

\subsection{Main ideas and key challenges}
\label{SS:new-ideas}

We now give a more detailed discussion on the major difficulties and some of the novel ideas in proving our main result Theorem \ref{T:main-theorem} above (see also Theorem \ref{T:main-AC} and \ref{T:exclude-classical-cones} for more precise statements).

\subsubsection{Complications due to boundary interaction}
\label{SSS:boundary-interaction}

While the overall strategy to prove Theorem \ref{T:main-theorem} is similar to the one laid out in the foundational work of Tonegawa and Wickramasekera in \cite{Tonegawa2005a, Tonegawa2012} which established the same results in the interior Euclidean setting (see also \cite{Guaraco2018} for an extension to the Riemannian setting), the extra Neumann boundary condition in \eqref{E:Allen-Cahn-intro} interacts in a highly non-trivial way with the semi-linear Allen-Cahn equation in the interior. To better understand the new challenges due to such boundary interaction, let us first recall the main steps of \cite{Tonegawa2005a, Tonegawa2012} in establishing the optimal regularity of stable Allen-Cahn limit-interfaces in the interior setting (we shall restrict to the Euclidean setting for now and leave the discussions about the Riemannian setting until Section \ref{SSS:intrinsic-approach}).

\begin{itemize}
    \item[Step 1:] The first step is to study the limit-interface $V$ obtained as the limit of varifolds associated to general critical points $u_{\ep}$ of the Allen-Cahn energy functional with uniform energy and $L^\infty$ bounds. It was shown in \cite{Hutchinson2000} that $V$ must be an integer-rectifiable stationary varifold. The key element here is an almost monotonicity formula at the $\ep$-level which relies on a pointwise upper bound on the discrepancy function.
    \item[Step 2:] The second step is to analyse stable solutions to the Allen-Cahn equation \eqref{E:Allen-Cahn-intro} at the $\ep$-level. This was done in \cite{Padilla1998}. Here an $\ep$-stability inequality satisfied by the enhanced second fundamental form of $u_{\ep}$ is obtained.
    \item[Step 3:] The next step is to pass $\ep \to 0$ in the $\ep$-stability inequality to infer that the limit-interface $V$ is a curvature varifold in the sense of Hutchinson \cite{HutchinsonIUMJ}. This allows the generalized second fundamental form of $V$ to be defined and a stability inequality holds by passing $\ep \to 0$ the $\ep$-stability inequality from Step 3. This was done in \cite{Tonegawa2005a}.
    \item[Step 4:] As an intermediate step towards the regularity of the stable limit-interface $V$, one needs to analyse in detail the top two strata of the singular set according to the Almgren-Federer stratification. In particular, one needs to rule out the existence of classical tangent cones on a generic subset of singular points. This is done in \cite{Tonegawa2012} by a blow-up and contradiction argument, using the stability inequality from Step 3 together with some fine analysis in \cite{Hutchinson2000}.
    \item[Step 5:] The final step is to apply the far-reaching Schoen-Simon-Wickramasekera's compactness and regularity theory \cite{SchoenSimon81, Wick14} to conclude that the support of the limit-interface $V$ is smooth except possibly on a small singular set of codimension $7$. 
\end{itemize}

We now highlight some of the major new challenges in the presence of boundary interaction. Due to the boundary concentration examples in \cite{Malchiodi2007a,Malchiodi2007}, the study of general critical points in Step 1 has to be robust enough to cover these situations. This was done in our previous paper \cite{LiPariseSarnataro2023}. Unlike the interior Euclidean case treated in \cite{Hutchinson2000}, the discrepancy function only satisfies a weaker pointwise upper bound $\xi_\ep \leq \ep^{-4/5}$ (as opposed to $\xi_\ep \leq C$ in \cite{Hutchinson2000}) near the domain boundary. Using this weaker pointwise bound, the almost monotonicity formula is only effective down to the scale of order $\ep^{4/5}$. In order to improve it down to the scale of order $\ep$, we utilize in \cite{LiPariseSarnataro2023} careful covering arguments to derive the desired variational and regularity properties of the limit-interface up to the domain boundary based on integral estimates. In particular, the limit-interface is free boundary stationary and integer-rectifiable up to the boundary. Among other things, we also established that the discrepancy function $\xi_\ep$ converges to zero in the strong $L^1$-sense up to the domain boundary, which implies ``equi-partition of energy'' as in \cite{Hutchinson2000}. This finishes Step 1 in the setting with boundary.

Steps 2-4 are treated in this paper. For Step 2, we proceed as in \cite{Padilla1998} to obtain an $\ep$-stability inequality for the enhanced second fundamental form of $u_\ep$ with an additional boundary term involving the second fundamental form of the domain boundary, which arises from integrating by parts of the Bochner formula. For Step 3, the notion of curvature varifolds in \cite{HutchinsonIUMJ} is insufficient for our purpose here, as the limit-interface is supposed to contain a free boundary in our setting. Therefore, we need to adopt instead the concept of curvature varifold with boundary by Mantegazza \cite{Mantegazza1996} to describe the limit-interface. One technical difference to the interior setting of \cite{Tonegawa2005a} is that we do not know whether the restriction of the discrepancy function $\xi_\ep$ to the domain boundary still converges to zero strongly in $L^1$ with respect to the domain boundary. As a result, we do not have equi-partition of energy when restricted to the domain boundary and thus several naturally defined Radon measures on the domain boundary may be inequivalent. To overcome this difficulty, we employed a local $L^1$-trace inequality for the energy density in \cite{LiPariseSarnataro2023} to control the extra boundary term in the $\ep$-stability inequality in order to obtain uniform $L^2$-bounds on the enhanced second fundamental form of $u_\ep$ to carry out the compactness arguments. When passing $\ep \to 0$ the $\ep$-stability inequality, the singular part can be identified with appropriate quantities from the first variation of $V$. This relies on some fine boundary properties of curvature varifold with boundary established in \cite{Mantegazza1996}. From this we also infer a novel conclusion that the limit-interface $V$ must be stationary across the part where it leans on the domain boundary.

For Step 4, we introduce the definition of boundary classical tangent cones and classify them into two types, depending on whether the spine of the cone is contained in or perpendicular to the domain boundary. While the second type can be treated in a similar way to the interior setting, the first type depicts boundary concentration phenomenon which has to be studied separately. We carry out in detail here the blow-up and contradiction arguments as in \cite{Tonegawa2012}, but using the ideas to treat boundary concentration as developed in \cite{LiPariseSarnataro2023}. In the final contradiction argument, we need to argue that the planar curves obtained from generic slices of level sets of $u_\ep$ have to turn by a definite positive amount (depending solely on the boundary classical tangent cone). In contrast to the boundaryless setting in \cite{Tonegawa2012}, besides closing up or having their ends going to infinity, such a planar curve may also end on the domain boundary. This is the place where we need to make use of the Neumann boundary condition, which translates to the geometric fact that such a planar curve has to hit the domain boundary orthogonally. By carefully exhausting all possible cases on the ``ends'' of such planar curves, we can derive the same contradiction as in \cite{Tonegawa2012}.

One technical point in the above arguments is that one requires \emph{up-to-the-boundary} Hausdorff convergence of the intermediate level sets of $u_\ep$ to the support of the limit-interface $V$. This was not established in \cite{LiPariseSarnataro2023} since the almost monotonicity formula only holds for balls that are either centred at a boundary point or are completely disjoint from the boundary. This is the case because we chose to give a much streamlined approach to avoid the use of reflected balls in \cite{LiPariseSarnataro2023}. In our current paper, we also supply the arguments using reflected balls in Appendix \ref{A:reflected-density} and show the full Hausdorff convergence of the intermediate level sets up to the boundary. Moreover, in Appendix \ref{app:rectifiability-integrality} we give an alternative proof on the integer-rectifiability of \emph{stable} limit-interfaces based on the approach in \cite{DRP25}. The advantage is that it is much shorter than \cite{LiPariseSarnataro2023} but it relies heavily on stability.

\subsubsection{Intrinsic approach on varifolds in Riemannian manifolds}
\label{SSS:intrinsic-approach}

For our purpose of developing an Allen-Cahn min-max construction of free-boundary minimal hypersurfaces in Riemannian manifolds, as in \cite{LiPariseSarnataro2023} we established all the results in this paper in the setting of Riemannian manifolds. There are two existing mainstream approaches in the literature to generalize the theory of varifolds in Euclidean spaces to the Riemannian setting. One is via Nash's isometric embedding as was done in \cite{Allard1972, Naber-Valtorta2020}; another one is via an intrinsic approach as was done in \cite{SchoenSimon81, Scharrer2022}. We adopt the second approach in this paper for two reasons. The first is concerning the required regularity assumption on the Riemannian metric. The existence of isometric embedding requires at least $C^{2,\alpha}$ regularity of the Riemannian metric while one only requires $C^2$ regularity for the intrinsic approach. The second is that the boundary value problem for isometric embedding is not canonically well-posed. One can get around by first extending our Riemannian manifold $M^n$ with boundary into a closed Riemannian manifold $\widetilde{M}^n$ of the same dimension, and then apply Nash's isometric embedding theorem to the extended Riemannian manifold (c.f. \cite{Li2021}). Although the same conclusions can be drawn as far as regularity is concerned, much of the geometric information of the original Riemannian manifold $M$ is obscured due to the arbitrary choice of the extension $\widetilde{M}$ and the rather obscure isometric embedding. For example, it would be difficult to obtain effective quantitative estimates in terms of geometric quantities such as curvatures or second fundamental form of $M$. 

In the first part of this paper, we systematically develop the foundational theory of varifolds in the intrinsic setting. For example, we derive the monotonicity formula, the existence and properties of tangent cones, the stratification of the singular set, as well as the notion of free-boundary varifolds and curvature varifolds with boundary in the intrinsic Riemannian setting. Apart from identifying the natural coordinate-free definitions, a new significant difficulty arises in the intrinsic approach due to the fact that a varifold in the Riemannian manifold $M$ which has locally bounded first variation (or even stationary), when written as a varifold in the tangent space under geodesic normal coordinates, may not even have locally bounded first variation (with respect to the flat metric)! Therefore, a-priori one cannot apply any of the classical results like the compactness theorem of integral varifolds in \cite{Allard1972} to such varifolds expressed in a geodesic normal coordinate chart. This is because the Riemannian metric $g$ only agrees with the flat metric \emph{at origin} up to first order in geodesic normal coordinates. The variational information (such as stationarity) is lost once we move slightly away from the origin. This obstruction was first observed in \cite{SchoenSimon81} and some of the related results have been established in their setting. However, they made use of some additional assumptions such as stability or sheeting (c.f. \cite[Section 5, Remark 12]{SchoenSimon81}) to derive their results. Here in this paper, we establish all the results such as integrality of tangent cones, under merely first-order assumption such as locally bounded first variation in the original Riemannian manifold $(M^n,g)$. We believe that these fundamental results, that are shown to hold in such generality, will be useful in other geometric contexts (c.f. for example, \cite{Simon2023} for some recent breakthrough on the singular set of minimal hypersurfaces in the Rimemannian setting). 

\subsection{Organization of the paper}
\label{SS:organization}

The rest of the paper is organized into three sections. In Section \ref{S:prelim}, we describe our geometric setup and state various definitions, notations and concepts that will be used in this paper. In Section \ref{S:varifolds}, we systematically develop the theory of varifolds in Riemannian manifold using an intrinsic approach. We also treat in detail the boundary behaviour in the context of free-boundary varifolds and curvature varifolds with boundary. In Section \ref{S:Allen-Cahn}, we prove all the main results (Theorem \ref{T:main-theorem} in local form) concerning properties of the stable limit-interfaces arising from Allen-Cahn equation. Finally, we collect in the appendix various results that supplement the content of this paper.

\subsection*{Acknowledgements} M. Li is substantially supported by research grants from the Research Grants Council of the Hong Kong Special Administrative Region, China [Project No.: CUHK 14306624, CUHK 14308325 and RFS2526-4S02]. He would like to express his gratitude to Prof. Rick Schoen for his constant support and encouragement. D. Parise was partially supported by the Cecilia Tanner Research Impulse Fund from Imperial College London and would like to thank Luca Spolaor for several useful conversations on the topic. L. Sarnataro would like to thank Robert Haslhofer and Yevgeny Liokumovich for their support, and Paul Minter and Alessandro Pigati for several useful conversations. Part of the work here arises from some preliminary discussions while the authors were in residence or visiting at the Simons Laufer Mathematical Sciences Institute (formerly MSRI) in Berkeley, California, during the Fall 2024 Semester. All the authors would like to thank Prof. Yoshihiro Tonegawa for his interest and kind encouragement.

\section{Definitions, Notations and Preliminaries}
\label{S:prelim}

In this section, we state various basic definitions, introduce some notations and recall certain preliminary concepts that will be used throughout the rest of this paper. 

\subsection{Ambient geometry}
\label{SS:setup}

Let $(\widetilde{M}^n,g)$ be a smooth\footnote{In fact all the results in this paper are valid provided that the metric $g$ is $C^2$ and $M$ is a manifold of class $C^3$. For the sake of readability, we will assume that $g$ is $C^\infty$ throughout this paper.} complete $n$-dimensional Riemannian manifold \emph{without boundary}, and let $\nabla^g$ denote the Levi-Civita connection on $\widetilde{M}$ induced by $g$. We shall sometimes use $\langle \cdot, \cdot \rangle_g$ to denote the inner product $g$ on the tangent spaces of $\widetilde{M}$ or simply use $\cdot$ when there is no ambiguity. We use $B^g_r(x)$ to denote the geodesic ball in $(\tM,g)$ of radius $r$ centred at $x$. For any $0 \leq k \leq n$, the $k$-dimensional Hausdorff measure on $(\widetilde{M},g)$ is denoted by $\mathcal{H}^k$. Note that $\mathcal{H}^n$ agrees with the volume measure $\dvol_g$ on $(\widetilde{M},g)$. As in \cite{LiPariseSarnataro2023}, we will always assume $(\widetilde{M},g)$ has positive injectivity radius lower bound 
\begin{equation}
    \label{E:inj-bound}
\inj_g(\widetilde{M})\geq i_0  \quad \text{for some constant $i_0 > 0$.} 
\end{equation} 
and a two-sided sectional curvature bound 
\begin{equation}
\label{E:sec-bound}
|\sec_{(\widetilde{M},g)}| \leq k_0 \quad \text{for some constant $k_0 \geq 0$.}
\end{equation}
We shall omit the subscripts or superscripts $g$ if the metric is clear from the context.

Let $\Omega \subset \widetilde{M}$ be an open and connected domain with smooth\footnote{Again, $C^2$ regularity of $\partial \Omega$ would suffice for our purposes.} \emph{non-empty} boundary $\partial \Omega$, whose outward unit normal is denoted by $\nu$. As in \cite{LiPariseSarnataro2023}, we will always assume that $\Omega$ has bounded ball curvatures (c.f. \cite[Section 2.2]{LiPariseSarnataro2023}) 
\begin{equation}
\label{E:ball-curvatures}
\max\{|\overline{Z}_\Omega|, |\underline{Z}_\Omega|\} \leq \kappa_0 \quad \text{for some constant $\kappa_0 \geq 0$.}
\end{equation}
Under this assumption, the signed boundary distance function $d(\cdot):=\dist_g(\cdot, \partial \Omega)$, with $d>0$ in $\Omega$, is smooth in the tubular neighbourhood $U_{\frac{1}{2\kappa_0}}(\partial \Omega)$ of size $\frac{1}{2\kappa_0}$ around $\partial \Omega$ and satisfies the following:
\begin{equation}
\label{E:dist-estimates}
|\nabla d|_g =1, \quad (\Hess_g d) \, (\nabla d, \cdot)=0 \quad \text{ and } \quad \| \Hess_g d \, \| \leq C_1(n,k_0,\kappa_0).
\end{equation}
Moreover, for any boundary point $x \in \partial \Omega$, the distance function $r(\cdot):=\dist_g(\cdot,x)$ satisfies
\begin{equation}
\label{E:allard-estimate}
|r \nabla r \cdot \nu |(y) \leq \frac{\kappa_0}{2} r^2(y) \quad \text{for all $y \in \partial \Omega \cap B_{\frac{i_0}{2}}(x)$.}
\end{equation}


\subsection{Functions, vector fields and measures}
\label{SS:functions}

Since our arguments will be local, we let $U \subset \widetilde{M}$ be an open subset (sometimes of the Grassmannian manifold). We will use the following notations to denote various vector spaces of functions and vector fields: 
\[\begin{array}{cl}
C_c(U) & \text{real-valued continuous functions on $U$ with compact support}\\
C^k_c(U) & \text{real-valued $C^k$ functions on $U$ with compact support}\\
L^p(U;\mu) & \text{real-valued $L^p$ functions on $U$ with respect to a measure $\mu$}\\
L^p_{loc}(U;\mu) & \text{real-valued locally $L^p$ functions on $U$ with respect to a measure $\mu$}\\
\mathfrak{X}_c(U) & \text{$C^1$ vector fields on $U$ with compact support}\\
\mathfrak{X}^{tan}_c(U) & \text{$X \in \mathfrak{X}_c(U)$ such that $X \cdot \nu=0$ along $\partial \Omega \cap U$}
\end{array}\]


\subsection{Euclidean geometry}
\label{SS:Euclidean}

We will use $(\mathbb{R}^n,\delta)$ to denote the standard $n$-dimensional Euclidean space with the flat metric $\delta$ and the flat covariant derivative $D$. For any $p \in \mathbb{R}^n$ and $r>0$, we denote the homothety map $\eta_{p,r}:\mathbb{R}^n \to \mathbb{R}^n$ by
\begin{equation} 
\label{E:homothety}
\eta_{p,r}(q):=\frac{q-p}{r}.
\end{equation}
In particular, $\eta_{p,1}$ is the translation which takes $p$ to the origin, and $\eta_{0,r}$ is the dilation by a factor $r^{-1}$ with respect to the origin. Given any unit vector $v \in \mathbb{R}^n$, we use $\mathbb{R}^n_+(v) := \{u \in \mathbb{R}^n \; | \; u \cdot v \geq 0\}$ to denote the closed half-space and $\theta_v : \mathbb{R}^n \rightarrow \mathbb{R}^n$ to denote the reflection about $v$ defined by 
\begin{equation} 
\label{E:reflection}
\theta_v(p) := p - 2 (p \cdot v) v. 
\end{equation}
We shall omit the dependence on $v$ when there is no ambiguity. For the purpose of this paper, the most important case we have in mind is the flat Euclidean space $(T_x \tM,g_x)$  for $x \in \partial \Omega$ with $v=-\nu(x)$. We will denote the $k$-dimensional Lebesgue measure on $\mathbb{R}^k$ by $\mathcal{L}^k$.

\subsection{Cutoffs and constants}
\label{SS:cutoff}

Throughout this paper, we will use $\chi:\mathbb{R} \to [0,\infty)$ to denote any $C^3$ even (i.e. $\chi(t)=\chi(-t)$ for all $t \in \mathbb{R}$) cut-off function satisfying the following requirements: 
\begin{equation}
    \label{E:cutoff}
\chi(t) = \left\{ \begin{array}{cl}
1 & \text{ for $0 \leq t \leq \frac{1}{2}$}\\
0 & \text{ for $t \geq 1$}
\end{array} \right. \; \text{ and } \chi^\prime(t) \leq 0 \text{ for every $t>0$.}
\end{equation}
We shall use the convention that $C(\alpha,\beta,\gamma,\cdots)$ denotes a non-negative constant that depends only on the parameters $\alpha$, $\beta$, $\gamma$, $\cdots$.

\section{Intrinsic varifolds in Riemannian manifolds} 
\label{S:varifolds}

We recall some definitions from geometric measure theory and refer the readers to the standard references \cite{Allard1972,Simon1983} for more details. Most of the existing literature on varifolds is considered in the framework of Euclidean ambient space. One common way to extend these to the Riemannian setting is via Nash isometric embedding as in \cite{Allard1972}, which requires at least $C^{2,\alpha}$ regularity of the metric with $\alpha>0$ \cite[Section 8]{Jacobowitz72}. Another way is to develop the whole theory from the intrinsic point of view as, for example, in \cite{Scharrer2022}, which only requires the metric to be $C^2$. The latter also has the advantage that we can obtain estimates with constants depending explicitly on geometric quantities such as curvatures and injectivity radius bounds.

Let $\Gr_x(n,k)$ be the space of unoriented $k$-dimensional subspaces of $T_x \widetilde{M}$. For $T \in \Gr_x(n,k)$, we identify $T$ with the corresponding orthogonal projection (with respect to $g_x$) of $T_x \widetilde{M}$ onto $T$. The orthogonal complement of $T \in \Gr_x(n,k)$ and its corresponding orthogonal projection will be denoted by $T^\perp \in \Gr_x(n,n-k)$. In the case of $k=n-1$, we also identify $T \in \Gr_x(n,n-1)$ with the unit vector $\pm N_T \in T_x \widetilde{M}$ which is perpendicular to $T$. Note that we may express the relation by $T=I- N_T \otimes N_T$ where $I$ is the identity map on $T_x \widetilde{M}$. We shall mainly consider codimension-one varifolds in this paper but many of our discussions can be carried over to the case of general dimension and codimension.

For any open set $U \subset \widetilde{M}$, let $G(U):=\cup_{x \in U} \Gr_x(n,n-1)$ be the Grassmannian bundle over $U$ which is a locally compact separable metric space. An $(n-1)$-dimensional varifold in $U$ is simply a Radon measure on $G(U)$. We denote the set of all $(n-1)$-dimensional varifolds in $U$ by $\bV_{n-1}(U)$, equipped with the usual weak topology. For $V \in \bV_{n-1}(U)$, let $\|V\|$ be the weight of $V$, which is the Radon measure on $U$ defined by
\[ \|V\| (\phi) :=\int_{G(U)} \phi(x) \; dV(x,T), \qquad \forall \phi \in C_c(U).\]
We say $V \in \bV_{n-1}(U)$ is rectifiable if there exists an $\Hcal^{n-1}$-measurable, countably $(n-1)$-rectifiable subset $\Sigma \subset U$ and a locally $\Hcal^{n-1}$-integrable non-negative function $\Theta_V$ defined on $\Sigma$ such that 
\[ V(\phi)=\int_\Sigma \phi(x,T_x \Sigma) \; \Theta_V(x)  d\Hcal^{n-1}(x), \qquad \forall \phi \in C_c(G(U)).\]
Here $T_x \Sigma \in \Gr_x(n,n-1)$ is the unique approximate tangent space of $\Sigma$ at $x$ which exists $\Hcal^{n-1}$-a.e. on $\Sigma$.  A rectifiable varifold $V$ is uniquely determined by its weight $\|V\|=\Theta_V \Hcal^{n-1} \mres \Sigma$ through the formula above. For this reason, we naturally say a Radon measure $\mu$ on $U$ is rectifiable when one can associate a rectifiable varifold $V$ such that $\|V\|=\mu$. If $\Theta_V(x) \in \N$ for $\Hcal^{n-1}$-a.e. $x \in \Sigma$, we say $V$ is integer rectifiable or an integral varifold. The set of all $(n-1)$-dimensional rectifiable varifolds and integral varifolds in $U$ is denoted respectively by $\bRV_{n-1}(U)$ and $\bIV_{n-1}(U)$. We say that $V$ is a unit-density varifold if $\Theta_V(x)=1$ for $\|V\|$-a.e. $x$. For any $C^1$ embedded hypersurface $\Sigma \subset U$ with (possibly empty) boundary, we will denote $|\Sigma| \in \bIV_{n-1}(U)$ the integral varifold associated to $\Sigma$ with multiplicity one.

\begin{remark}
\label{R:rectifiability}
Note that the definition of a varifold does not really require the Riemannian metric $g$. In fact, the notion of rectifiability and integer-rectifiability is independent of the choice of $g$. The reason is that (integer) rectifiability is bi-Lipschitz invariant and any Riemannian metric is infinitesimally isometric to the flat Euclidean space. In particular, if $F:U \to \widetilde{U}$ is a $C^1$ proper map between two open sets of Riemannian manifolds, then the pushforward map $F_\sharp:\bV_{n-1}(U) \to \bV_{n-1}(\widetilde{U})$, as defined in \cite[Section 39]{Simon1983}, satisfies $F_\sharp(\bRV_{n-1}(U)) \subset \bRV_{n-1}(U)$ and $F_\sharp(\bIV_{n-1}(U)) \subset \bIV_{n-1}(U)$ (c.f. \cite[3.5(3)]{Allard1972} in the context of isometric embedding or \cite[I.1 (13)]{Almgren1976}).
\end{remark}

For $V \in \bV_{n-1}(U)$ let $\delta V$ be the first variation of $V$, namely,
\begin{equation}
\label{E:1st-variation}
\delta V(X):=\left. \frac{d}{dt}\right|_{t=0} \|(F_t)_\sharp V\|(U)=\int_{G(U)} \Div_T X(x) \; dV(x,T), \qquad \forall X \in \mathfrak{X}_c(U)
\end{equation}
where $\Div_T X(x):=\sum_{i=1}^{n-1} \nabla_{e_i} X(x) \cdot e_i$ for any orthonormal basis $\{e_1,\cdots,e_{n-1}\}$ of $T \in \Gr_x(n,n-1)$ with respect to the Riemannian metric $g$, and $\{F_t\}_{t \in \mathbb{R}}$ is the flow generated by $X$. We also denote the total variation measure (see \cite[(39.3)]{Simon1983}) of $\delta V$ by $\|\delta V\|$. If $\|\delta V\|$ is a Radon measure on $U$ (i.e. $V$ has locally bounded first variation in $U$), by Riesz representation and the Radon-Nikodym theorem there exists a $\|V\|$-measurable vector field $H_V$, a $\|\delta V\|$-measurable vector field $\eta_V$ with $|\eta_V|=1$ $\| \delta V\|$-a.e., and a Borel set $Z_V \subset U$ such that $\|V\|(Z_V)=0$ and
\begin{equation}
\label{E:Radon-Nikodym} 
\delta V(X)=-\int_U X(x) \cdot H_V(x) \; d\|V\|(x) +\int_{Z_V} X(x) \cdot \eta_V(x) \; d\|\delta V\|_{sing}(x)
\end{equation}
for all $X \in \mathfrak{X}_c(U)$, where by $\|\delta V\|_{sing}$ we denote the singular part of $\|\delta V\|$ with respect to $\|V\|$. We call $H_V$ the generalised mean curvature vector, $\eta_V$ the generalized outward co-normal, $\|\delta V\|_{sing}$ the generalized boundary measure, and $Z_V$ the generalized boundary of $V$. Note that in general we have $H_V \in L^1_{loc}(U;\|V\|)$ and $\|\delta V\|_{sing}$ is a Radon measure on $U$. We may omit the subscript $V$ when there is no ambiguity.

\begin{remark}
In contrast to Remark \ref{R:rectifiability}, the notion of first variation $\delta V$ is a geometric quantity that depends on the Riemannian metric $g$. This is intuitively clear as the first variation represents in a generalized sense the mean curvature vector and the boundary with its outward unit normal which are both geometric quantities. In particular, the notion of locally bounded first variation is not a diffeomorphism invariant. This is more evident when there is a singular part of the first variation. For example, consider the one-dimensional rectifiable varifold in $\mathbb{R}^2$ in \cite[Example 2, p.256]{Allard-Almgren1973} which is stationary in a ball centred at the origin, if we apply a diffeomorphism to this ball which preserves the origin but distorts the angles at all the triple junctions, we will end up with a rectifiable varifold with unbounded first variation near the origin. 
\end{remark}

We next describe the concept of tangent varifolds in the Riemannian setting from the intrinsic point of view. Given $x\in U$ and $0<\rho<\frac{1}{2}\min\{i_0, \frac{1}{\kappa_0}\}$, the exponential map $\exp_x$ is a diffeomorphism\footnote{The exponential map is a $C^2$ diffeomorphism if the metric $g$ is of class $C^2$.} from the Euclidean ball $B^\delta_\rho(0)\subset (T_x \tM,g_x)$ onto the geodesic ball\footnote{Because of \eqref{E:ball-curvatures}, we know that $B_\rho(x) \cap \partial \Omega$ is a connected, embedded hypersurface diffeomorphic to an $(n-1)$-dimensional ball which separates $B_\rho(x)$ into two connected components, one of which is contained in $\Omega$.} $B_\rho(x) \subset (\tM,g)$. Let $V\in \bV_{n-1}(U)$. We can consider the pushforward varifold $\overline{V}:=(\exp_x^{-1}|_{B_\rho(x)})_\sharp V \in \bV_{n-1}(T_x \tM)$ inside the Euclidean space $(T_x \tM,g_x)$. If (recall \eqref{E:homothety})
\begin{equation}
\label{E:tangent-varifolds}
C=\lim_{i\to \infty} (\eta_{0,r_i})_\sharp \overline{V} \in \bV_{n-1}(T_x\tM) 
\end{equation}
is a weak limit for a sequence of positive numbers $r_i \to 0$, then we say that $C$ is a varifold tangent to $V$ at $x$, and write $C\in \VarTan(V,x) \subset \bV_{n-1}(T_x\tM)$, where $\VarTan(V,x)$ denotes the set of all possible varifold tangents to $V$ at $x$. Note that $(T_x \tM,g_x)$ can be identified with the Euclidean space $(\mathbb{R}^n,\delta)$ for which the usual theory of varifolds applies. 

\begin{remark}
    Note that one can also define the set of tangent varifolds via isometric embedding as in \cite{Allard1972}. If the metric $g$ is regular enough (say $C^{2,\alpha}$ regularity), the two notions of tangent varifolds are equivalent. 
    Since we chose to avoid using Nash's isometric embeddeding in this paper, we adopt the above intrinsic definition instead. Another formulation is to consider the rescaling of the metric $r_i^{-2}g$ as done in \cite{LiPariseSarnataro2023} which is again equivalent as $d\exp_x|_0$ is the identity map of $T_x\tM$. 
\end{remark}

\subsection{Free boundary varifolds} 
\label{SS:FB-varifolds}

To study the fine properties close to the boundary, we shall also need the notion of free boundary varifolds defined in \cite[Section 3]{Edelen2020} (see also \cite[Definition 2.4]{Masi2020}). Since both \cite{Edelen2020} and \cite{Masi2020} are stated in the Euclidean setting, we will recall several useful results concerning this sub-class of varifolds and point out the differences (if any) in the Riemannian setting. 

\begin{definition}[Free boundary varifolds]
\label{D:free-boundary-varifold}
Given a smoothly\footnote{It is sufficient to assume $C^2$ regularity of $S$.} embedded hypersurface $S \subset U$ oriented by the (outward) unit normal $\nu_S$, a varifold $V \in \bV_{n-1}(U)$ is said to have free boundary in $S$ \footnote{As in \cite{Edelen2020,Masi2020}, this definition implicitly assumes that $\delta V$ has locally bounded first variation with $\|\delta V\|_{sing}=0$ away from the support hypersurface $S$.} if there exists a $\|V\|$-measurable vector field $H^{tan}_V \in L^1_{loc}(U;\|V\|)$ and $H^{tan}_V(x) \cdot \nu_S (x) =0$ for $\|V\|$-a.e. $x \in S$, satisfying
\begin{equation}
\label{E:free-boundary-varifold}
\delta V(X)= - \int_U X(x)\cdot H^{tan}_V(x) \; d\|V\|(x) \qquad \text{$\forall$ $X \in \mathfrak{X}_c(U)$ such that $X \cdot \nu_S=0$ on $S$.}
\end{equation}
If $\delta V(X)=0$ for all $X \in \mathfrak{X}_c(U)$ such that $X \cdot \nu_S=0$ on $S$ (in other words, $H^{tan}_V=0$ $\|V\|$-a.e.), we say that $V$ is free boundary stationary with free boundary in $S$.
\end{definition}

Note that we use the notation $H^{tan}_V$ to distinguish it from the generalised mean curvature vector $H_V$ defined in \eqref{E:Radon-Nikodym}. In fact, if such an $H_V$ exists as in \eqref{E:Radon-Nikodym} with $\|\delta V\|_{sing}=0$, then $H^{tan}_V$ is indeed the tangential projection of $H_V$ along $S$ (see \cite[Definition 3.0.3]{Edelen2020}). We also remark that $H^{tan}_V$ in Definition \ref{D:free-boundary-varifold} is uniquely defined $\|V\|$-a.e. with the condition $H^{tan}_V(x) \cdot \nu_S (x) =0$ for $\|V\|$-a.e. $x \in S$ (c.f. \cite[Remark 2.5]{Masi2020}).

\begin{remark} Unlike in \cite{Masi2020} (and implicitly in \cite{Edelen2020}), according to our Definition \ref{D:free-boundary-varifold}, the varifold $V$ is not required to lie on one side (determined by the unit normal $\nu_S$) of the support hypersurface $S$. Although most of the time we will only consider $S=\partial \Omega \cap U$ and $V \in \bV_{n-1}(U)$ with $\spt \|V\| \subset \overline{\Omega} \cap U$, having this extra flexibility is convenient for us in certain rescaling arguments in the Riemannian setting as will be evident in the next subsection.
\end{remark}

The following elementary but fundamental result was obtained by \cite[Corollary 4.7]{Masi2020} (and \cite[Proposition 3.2]{Edelen2020} for integral varifolds) in the Euclidean setting. Roughly speaking, it says that free-boundary varifolds are varifolds with locally bounded first variation together with certain prescribed Neumann boundary condition along the singular part $\|\delta V\|_{sing}$ of the first variation measure.

\begin{proposition}[Characterization of free boundary varifolds]
\label{P:free-boundary-characterization}
Let $V \in \bV_{n-1}(U)$. Then $V$ has free boundary in $S$ if and only if $V$ has locally bounded first variation in $U$ with $\spt \|\delta V\|_{sing} \subset S$ and $\eta_V(x)=\pm \nu_S(x)$ for $\|\delta V\|_{sing}$-a.e. $x \in S$.
\end{proposition}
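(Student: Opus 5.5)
We prove the two implications separately. Throughout, $S$ is a $C^2$ hypersurface in $U$ with unit normal $\nu_S$. We use the decomposition \eqref{E:Radon-Nikodym} of the first variation whenever it is available.

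\emph{($\Leftarrow$).} Suppose $V$ has locally bounded first variation, $\spt\|\delta V\|_{sing}\subset S$, and $\eta_V=\pm\nu_S$ holds $\|\delta V\|_{sing}$-a.e. Take $X\in\mathfrak{X}_c(U)$ with $X\cdot\nu_S=0$ on $S$. Then the singular term in \eqref{E:Radon-Nikodym} vanishes, and so $\delta V(X)=-\int X\cdot H_V\,d\|V\|$.

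Now define $H^{tan}_V:=H_V$ off $S$, and $H^{tan}_V:=H_V-(H_V\cdot\nu_S)\nu_S$ on $S$. Since $X\cdot\nu_S=0$ on $S$, we have $X\cdot H_V=X\cdot H^{tan}_V$ $\|V\|$-a.e. Moreover $H^{tan}_V\in L^1_{loc}(\|V\|)$ and $H^{tan}_V\cdot\nu_S=0$ on $S$. This gives \eqref{E:free-boundary-varifold}.

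\emph{($\Rightarrow$).} This is the substantive direction. The main obstacle is to show that $\delta V$ is a Radon measure near $S$ when we only control it on tangential fields. We follow the idea of \cite[Corollary 4.7]{Masi2020}, adapted intrinsically.

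Fix a point of $S$. On a small neighbourhood $W$, extend $\nu_S$ to a $C^1$ unit field $N$, for instance $N=\nabla d_S$ for the signed distance to $S$. For $X\in\mathfrak{X}_c(W)$ we decompose
\[
X=X^{T}+(X\cdot N)N,\qquad X^T:=X-(X\cdot N)N,
\]
where $X^T$ is tangential along $S$. The tangential part is controlled by hypothesis: $|\delta V(X^T)|\le\int|X||H^{tan}_V|\,d\|V\|$.

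It therefore suffices to show that the linear functional $\phi\mapsto\delta V(\phi N)$, for $\phi\in C^1_c(W)$, is bounded by $C\sup|\phi|$. The key step is to exhibit this functional as the sum of a positive (or signed-Radon) distribution and an $L^1$ term. Write
\[
\delta V(\phi N)=\int \phi\,\Div_T N\,dV+\int \nabla\phi\cdot T(N)\,dV.
\]
The first term is bounded by $C\sup|\phi|\,\|V\|(W)$, since $|\nabla N|\le C_1$ by \eqref{E:dist-estimates}. For the second term, the plan is to approximate $\phi$ by $\phi\,\chi(d_S/s)$ and $\phi(1-\chi(d_S/s))$.

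Away from $S$ every compactly supported field is tangential, so $\delta V=-H^{tan}_V\|V\|$ there. This bounds the piece supported off $S$ by $\int|\phi||H^{tan}_V|\,d\|V\|$, uniformly in $s$. For the piece near $S$, we argue via sign as follows. First test with nonnegative $\phi$ and with the fields $\pm\phi\chi(d_S/s)N$. The map $\phi\mapsto\lim_{s\to0}\delta V(\phi\chi(d_S/s)N)$ is then defined using the decomposition above. This reduces to showing that $\phi\mapsto\delta V(\phi N)+\int\phi N\cdot H^{tan}_V\,d\|V\|$ is controlled by $C\int_W\phi\,d(\|V\|+\ldots)$. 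We obtain this by comparing the fields $\phi\nabla d_S$ and $|d_S|$-type Lipschitz modifications.

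If this sign trick proves delicate, the fallback is Masi's argument verbatim in normal coordinates. Changing the metric only perturbs $\Div_T$ by zeroth-order terms, namely the Christoffel symbols bounded by $k_0$. Such terms contribute $C\sup|X|\,\|V\|$ and hence preserve local boundedness of the first variation.

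Once $\|\delta V\|$ is known to be Radon, the rest is quick. Off $S$, $\delta V$ is absolutely continuous with respect to $\|V\|$, so $\spt\|\delta V\|_{sing}\subset S$. Testing \eqref{E:Radon-Nikodym} with tangential $X$ shows that $\int X\cdot\eta_V\,d\|\delta V\|_{sing}=0$ for all $X$ with $X\cdot\nu_S=0$ on $S$. We may assume the absolutely continuous parts agree, because the singular measure is mutually singular with $\|V\|$. Since the fields $X|_S$ range over a dense set of sections of $TS$, we conclude $\eta_V\cdot e=0$ $\|\delta V\|_{sing}$-a.e. for every tangent vector $e$ of $S$. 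Together with $|\eta_V|=1$, this gives $\eta_V=\pm\nu_S$.
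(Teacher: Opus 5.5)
\textbf{Where the proof breaks.} Your ($\Leftarrow$) direction is correct. So is the last step of ($\Rightarrow$): once $\|\delta V\|$ is Radon, the tangential component along $S$ of $(H_V-H^{tan}_V)\|V\|-\eta_V\|\delta V\|_{sing}$ vanishes, and mutual singularity then forces $\eta_V=\pm\nu_S$.

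The gap is in the step that carries all the content: showing that $\phi\mapsto\delta V(\phi N)$ is bounded by $C\sup|\phi|$. With $d=d_S$, $N=\nabla d$ and your splitting, for every $s$ you have
\[
\delta V(\phi N)=\int\chi(d/s)\,\Div_T(\phi N)\,dV(x,T)+\frac{1}{s}\int\phi\,\chi'(d/s)\,|T(N)|^2\,dV(x,T)-\int\phi\,\bigl(1-\chi(d/s)\bigr)\,N\cdot H^{tan}_V\,d\|V\|.
\]
Nothing you state controls the first two terms.

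\begin{itemize}
\item The first term tends to $\int_{x\in S}\bigl(\phi\Div_T N+\nabla\phi\cdot T(N)\bigr)\,dV(x,T)$. This is bounded by $C\sup|\phi|$ only if $T(N)=0$ for $V$-a.e.\ $(x,T)$ with $x\in S$, i.e.\ only if $V$ is tangent to $S$ along $S$. That is the tangency part of De Masi's constancy lemma (\cite[Lemma 3.1]{Masi2020}), which the paper invokes together with \cite[Theorem 1.1]{Masi2020}. Your proposal never mentions or proves it.
\item The second term is not a priori bounded as $s\to0$. It has a sign for $\phi\ge0$ only if $\spt\|V\|$ lies on one side of $S$, because $\chi'$ changes sign with $d$. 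Definition \ref{D:free-boundary-varifold} explicitly does not assume one-sidedness.
\end{itemize}

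Testing with $\pm\phi\chi(d/s)N$ adds nothing, since the first variation is linear. ``Comparing with $|d_S|$-type Lipschitz modifications'' is not an argument. Your fallback, ``Masi's argument verbatim'', is in effect the paper's proof, but as written it is a citation rather than a proof.

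\textbf{How to close the gap.} Both missing ingredients follow from one observation: vector fields that vanish on $S$ are admissible. Take $0\le\psi\in C^1_c$ supported in the tubular neighbourhood where $d$ is smooth, and let $f(t)=\int_0^t\chi$, so $|f|\le1$ and $f'=\chi$. Test with $X_s=s\,f(d/s)\,\psi N$, which vanishes on $S$. This gives
\[
\int\chi(d/s)\,\psi\,|T(N)|^2\,dV(x,T)=-\int X_s\cdot H^{tan}_V\,d\|V\|-s\int f(d/s)\,\Div_T(\psi N)\,dV(x,T)\leq C_\psi\, s .
\]
Two consequences follow.

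\begin{itemize}
\item Letting $s\to0$ yields $\int_{x\in S}\psi|T(N)|^2\,dV=0$, which is tangency along $S$.
\item Choose $\psi\equiv1$ near $\spt\phi$ and use $\chi(d/(2s))\ge 1_{\{|d|\le s\}}$. Then the cutoff term is bounded by $2\|\chi'\|_\infty C_\psi\sup|\phi|$ uniformly in $s$. No sign is needed, so this also covers the two-sided case, consistent with the $\pm\nu_S$ in the statement.
\end{itemize}

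Letting $s\to0$ in the displayed splitting then gives $|\delta V(\phi N)|\le C\sup|\phi|$, and the rest of your argument goes through.
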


\begin{proof}
First, note that the same constancy lemma \cite[Lemma 3.1]{Masi2020} and the main theorem \cite[Theorem 1.1]{Masi2020} hold in the Riemannian setting by \eqref{E:dist-estimates}. Therefore, the proposition follows immediately as in \cite[Corollary 4.7]{Masi2020} from the following formula:
\begin{equation} 
\label{E:1st-variation-fbv-refined}
\delta V(X) = - \int_{U} X(x)\cdot (H^{tan}_V(x) + H_S (x)) \; d \|V\|(x) + \int_{S} X(x)\cdot \eta_V(x) \; d\|\delta V\|_{sing}(x), 
\end{equation} 
for all $X \in \mathfrak{X}_c(U)$, where $H_S$ is the mean curvature vector of $S$ (which by convention vanishes outside $S$) and $\eta_V(x)=\pm \nu_S(x)$ for $\|\delta V\|_{sing}$-a.e. $x$. Note that $H^{tan}_V$ and $H_S$ are mutually orthogonal for $\|V\|$-a.e. $x$. We want to remark that we have $\eta_V=\nu_S$ only when $V$ is assumed to be supported on one side of $S$ (c.f. the last paragraph in Step 4 of the proof of Theorem 1.1 in \cite{Masi2020}). In general, we only have $\eta_V=\pm \nu_S$.
\end{proof}

\begin{remark}
Given the main results in \cite{LiPariseSarnataro2023} (see Theorem \ref{T:old-main}), we shall only be concerned with free boundary varifolds that are also integral, we shall from now on implicitly make that assumption, even though it is not strictly necessary for some of the general results in this section. We shall refer to \cite{Masi2020} for a more general treatment for free boundary varifolds which are not necessarily integral (or even rectifiable). 
\end{remark}

From now on, we will mainly focus on the case that $V \in \bV_{n-1}(U)$ has free boundary in $\partial \Omega \cap U$ with $\spt \|V\| \subset \overline{\Omega} \cap U$. Although we do not have a monotonicity formula for $\|\delta V\|_{sing}$, by \cite[Theorem 1.1]{Masi2020} the following one-sided local bound holds:
\begin{equation} 
\label{E:local-volume-bound-fbv}
    \|\delta V\|_{sing}(B_{r/2}(x)) \leq \frac{C_2(n,k_0,\kappa_0)}{r} \|V\| (B_r(x)) + \int_{B_r(x)} |H^{tan}_V| \; d \|V\|, 
\end{equation}
for all $x \in \partial \Omega \cap U$ and $0< r < \min \left\{\dist_g(x,\partial U), \frac{1}{2\kappa_0} \right\}$. 

We will mostly be interested in varifolds that are \textit{free boundary stationary} with free boundary in $\partial \Omega \cap U$, i.e. varifolds $V\in\bV_{n-1}(U)$ with the additional property that $\delta V(X)=0$ for all $X\in\mathfrak{X}_c^{tan}(U)$. 
For such free boundary stationary varifolds $V\in\bV_{n-1}(U)$ with $\spt \|V\| \subset \overline{\Omega} \cap U$, \eqref{E:1st-variation-fbv-refined} becomes 
\begin{equation}
\label{E:1st-variation-fbv-stationary}
\delta V(X) = - \int_{\partial \Omega \cap U} X(x)\cdot  H_{\partial \Omega}(x) \; d \|V\| (x) + \int_{\partial \Omega \cap U} X(x)\cdot \nu(x) \; d\|\delta V\|_{sing}(x)
\end{equation}
for all $X \in \mathfrak{X}_c(U)$. Note that both terms on the right hand side are supported along $\partial \Omega \cap U$. Similarly, \eqref{E:local-volume-bound-fbv} simplifies to
\begin{equation}
\label{E:local-volume-bound-fbv-stationary}
\|\delta V\|_{sing}(B_{r/2}(x)) \leq \frac{C_2(n,k_0,\kappa_0)}{r} \|V\| (B_r(x)).    
\end{equation}

For general free boundary varifolds that are not necessarily stationary, we can still obtain the following weighted version of \eqref{E:local-volume-bound-fbv}.

\begin{lemma}[$L^1$-trace inequality for free boundary varifolds]
\label{L:trace-inequality}
    Let $V \in \bV_{n - 1}(U)$ be a varifold with free boundary in $\partial \Omega \cap U$ and $\spt \|V\| \subset \overline{\Omega} \cap U$. For any nonnegative $f \in C_{c}^{1}(U)$, we have 
    \begin{equation} 
    \label{equation: refined isop general I}
        \int f \; d\|\delta V\|_{sing} \leq C_3 \int \left( ( 1 + | H^{tan}_V| ) f 
        + |\nabla f| \right) \; d\|V \|
    \end{equation}
    where $C_3=C_3(n,i_0,k_0,\kappa_0)>0$. 
\end{lemma}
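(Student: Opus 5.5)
We test the refined first variation formula \eqref{E:1st-variation-fbv-refined} against a vector field built from the distance function to the boundary. Let $d=\dist_g(\cdot,\partial\Omega)$ be the signed distance, smooth in the tubular neighbourhood $U_{\frac{1}{2\kappa_0}}(\partial\Omega)$ by \eqref{E:dist-estimates}. Fix a cutoff $\phi(x)=\chi\bigl(4\kappa_0 d(x)\bigr)$, which equals $1$ near $\partial\Omega$ and is supported where $d<\frac{1}{4\kappa_0}$. Set
\[
X:=-f\,\phi\,\nabla d .
\]
Since $\nabla d=-\nu$ on $\partial\Omega$, we have $X=f\nu$ there. This $X$ lies in $\mathfrak{X}_c(U)$ because $f$ has compact support in $U$.

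We apply \eqref{E:1st-variation-fbv-refined} to $X$. By Proposition \ref{P:free-boundary-characterization} we know $\spt\|\delta V\|_{sing}\subset\partial\Omega$ and $\eta_V=\pm\nu$. Since $\spt\|V\|\subset\overline\Omega$, the remark in that proof gives $\eta_V=\nu$ $\|\delta V\|_{sing}$-a.e. The boundary term is therefore $\int X\cdot\nu\,d\|\delta V\|_{sing}=\int f\,d\|\delta V\|_{sing}$, which is exactly the left-hand side of \eqref{equation: refined isop general I}. Rearranging,
\[
\int f\,d\|\delta V\|_{sing}=\delta V(X)+\int X\cdot(H^{tan}_V+H_{\partial\Omega})\,d\|V\| .
\]

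It remains to bound each term by the right-hand side.
\begin{itemize}
\item Since $|X|\le f$, the mean curvature term is at most $\int f\,(|H^{tan}_V|+|H_{\partial\Omega}|)\,d\|V\|$. Here $|H_{\partial\Omega}|\le C(n,\kappa_0)$ by the ball curvature bound \eqref{E:ball-curvatures}.
\item For $\delta V(X)=\int\Div_T X\,dV(x,T)$ we expand
\[
\Div_T X=-\phi\,\nabla_T f\cdot\nabla d-f\,\nabla_T\phi\cdot\nabla d-f\,\phi\,\Div_T\nabla d .
\]
The first term is bounded by $|\nabla f|$. The third is bounded by $C_1 f$ using $\|\Hess_g d\|\le C_1$ from \eqref{E:dist-estimates}. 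The second is bounded by $4\kappa_0\sup|\chi'|\,f$.
\end{itemize}
Integrating against $V$ gives $|\delta V(X)|\le\int\bigl(|\nabla f|+C f\bigr)\,d\|V\|$. Combining the two bounds yields \eqref{equation: refined isop general I} with $C_3$ depending on $n,k_0,\kappa_0$; the dependence on $i_0$ enters only through the size of the neighbourhood on which $d$ is smooth.

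The main point requiring care is the sign identification $\eta_V=\nu$ rather than $\pm\nu$. If only $\eta_V=\pm\nu$ were available, the boundary term would not control $\int f\,d\|\delta V\|_{sing}$ from above. This is where the one-sidedness $\spt\|V\|\subset\overline\Omega$ is used, via the argument referenced in the proof of Proposition \ref{P:free-boundary-characterization} (Step 4 of \cite[Theorem 1.1]{Masi2020}), which carries over to the Riemannian setting by \eqref{E:dist-estimates}. Everything else is a routine estimate. One should also note that $X$ is $C^1$ only where $d$ is smooth; the cutoff $\phi$ guarantees this.
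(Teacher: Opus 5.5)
Your proposal is correct and follows essentially the same argument as the paper: test \eqref{E:1st-variation-fbv-refined} with $-f\cdot(\text{cutoff})\,\nabla d$, use $\eta_V=\nu$ from the one-sidedness $\spt\|V\|\subset\overline{\Omega}$, and bound $\Div_T X$ via \eqref{E:dist-estimates} and $|H_{\partial\Omega}|\leq (n-1)\kappa_0$. The only difference is the cutoff. The paper localizes with $\chi(\dist_g(\cdot,x_0)/R)$, $R=\frac12\min\{i_0,\frac{1}{2\kappa_0}\}$, around boundary points $x_0$ and then appeals to a covering argument, which is where $i_0$ enters. Your cutoff $\chi(4\kappa_0 d)$ in the boundary distance makes the test field global, so no covering is needed and the constant depends only on $n,k_0,\kappa_0$.
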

\begin{proof}
We will basically follow the arguments in \cite[Subsection 3.2]{Masi2020} adapted to the local Riemannian setting. Fix $f \in C^1_c(U)$. Since $f$ has compact support in $U$, a straightforward covering argument shows that it suffices to establish \eqref{equation: refined isop general I} locally around any $x_0 \in \partial \Omega \cap U$. Let $r=\dist_g(\cdot,x_0)$ and $d=\dist_g(\cdot,\partial \Omega)$. We test \eqref{E:1st-variation-fbv-refined} with the compactly-supported vector field 
\[ X(x) = - f(x) \chi\left(\frac{r(x)}{R} \right) \nabla d(x)\]
where $\chi$ is the cut-off function defined in Section \ref{SS:cutoff}, and $R = \frac{1}{2} \min\{ i_0,\frac{1}{2\kappa_0}\}$. Thus, we have the following
    \begin{align*}
        \begin{split}
            \int_{B_{R/2}(x_0)} f(x) \; & d\|\delta V\|_{sing}(x)  \leq \int f(x) \chi\left(\frac{r(x)}{R} \right)  \; d\|\delta V\|_{sing}(x) = \int X(x) \cdot \nu(x)  \; d\|\delta V\|_{sing}(x) \\
            & = \int \mathrm{div}_S X(x) \; dV(x, S) + \int X(x)\cdot \left(H^{tan}_V(x) + H_{\partial \Omega} (x) \right) \; d \|V\| (x).
        \end{split}
    \end{align*}
On the other hand, a direct computation shows $\mathrm{div}_S X(x)$ is equal to
    \begin{equation*}
        - f(x) \chi^\prime\left(\frac{r(x)}{R} \right) \frac{1}{R} \left( S(\nabla r) \cdot \nabla d\right)(x) - f(x) \chi\left( \frac{r(x)}{R} \right) \mathrm{div}_S \nabla d(x) - \chi\left( \frac{r(x)}{R} \right) \left( \nabla d \cdot S(\nabla f) \right) (x). 
    \end{equation*}
Note that $| \mathrm{div}_S \nabla d | \leq (n-1)C_1(n,k_0,\kappa_0)$ in $B_R(x_0)$ by \eqref{E:dist-estimates}. Furthermore, $| H_{\partial \Omega} | \leq (n-1)\kappa_0$ by \eqref{E:ball-curvatures}. Therefore, we can bound
    \begin{align*}
        - \int f(x) \chi\left( \frac{r(x)}{R} \right) & \mathrm{div}_S \nabla d(x) \; dV(x, S)  - \int f(x) \chi\left(\frac{r(x)}{R} \right) \nabla d(x) \cdot \left( H^{tan}_V(x) + H_{\partial \Omega}(x) \right)   d \| V \|(x) \\
        & \leq C(n,k_0,\kappa_0) \int_{B_R(x_0)} \left(1 +  |H^{tan}_V(x)| \right)f(x) \; d\| V \|(x). 
    \end{align*}
The other terms appearing in $\mathrm{div}_S X(x)$ can be controlled by fixing a cutoff function such that $\| \chi^\prime \|_{L^\infty} \leq 3$ and the observation that $|\nabla r|=|\nabla d|=1$. This concludes the proof. 
\end{proof}

Taking $f = \psi^2$ in \eqref{equation: refined isop general I} and applying Cauchy's inequality, we have the following simple but important consequence of Lemma \ref{L:trace-inequality}.

\begin{corollary}
\label{C:free-bdy-trace-ineq}
For every $\eta >0$, there exists $C_4(\eta,n,i_0,k_0,\kappa_0)>0$ such that the following holds: let $V \in \bV_{n - 1}(U)$ be a varifold with free boundary in $\partial \Omega \cap U$ and $\spt \|V\| \subset \overline{\Omega} \cap U$, for any $\psi \in C^1_c(U)$, we have
  \begin{equation}
        \int \psi^2\, d\|\delta V\|_{sing} \leq (1+\eta) \int |\nabla\psi|^2\,d\|V\| + C_4 \int \left( 1+ |H^{tan}_V| \right) \psi^2 \, d\|V\|. 
    \end{equation}
\end{corollary}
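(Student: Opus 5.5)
The plan is to apply Lemma \ref{L:trace-inequality} with the nonnegative test function $f=\psi^2 \in C^1_c(U)$, which is admissible since $\psi \in C^1_c(U)$. Since $\nabla f = 2\psi \nabla \psi$, the lemma gives
\[
\int \psi^2 \, d\|\delta V\|_{sing} \leq C_3 \int (1+|H^{tan}_V|)\psi^2 \, d\|V\| + 2C_3 \int |\psi| \, |\nabla \psi| \, d\|V\|.
\]
The first term on the right already has the required form, provided we take $C_4 \geq C_3$.

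The only issue is the second term, whose coefficient must come out as $(1+\eta)$ in front of $\int |\nabla\psi|^2\,d\|V\|$ rather than something depending on $C_3$. For this I would use the weighted Cauchy inequality $2ab \leq \lambda a^2 + \lambda^{-1} b^2$ with $a=|\nabla\psi|$, $b=C_3|\psi|$ and $\lambda = 1+\eta$. This yields
\[
2C_3 \int |\psi|\,|\nabla\psi| \, d\|V\| \leq (1+\eta)\int |\nabla\psi|^2 \, d\|V\| + \frac{C_3^2}{1+\eta}\int \psi^2 \, d\|V\|.
\]
The last integral is bounded by $\frac{C_3^2}{1+\eta}\int (1+|H^{tan}_V|)\psi^2\,d\|V\|$, since $1+|H^{tan}_V|\geq 1$.

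Combining the two bounds, the corollary holds with $C_4 := C_3 + C_3^2/(1+\eta)$. This constant depends only on $\eta, n, i_0, k_0, \kappa_0$, because $C_3 = C_3(n,i_0,k_0,\kappa_0)$. In fact the argument works with coefficient exactly $\lambda$ for any $\lambda>0$; choosing $\lambda = 1+\eta$ is just a matter of normalization. There is no real obstacle here. The only point to check is that every integral involved is finite: $\psi$ has compact support, $\|V\|$ is Radon, and $H^{tan}_V \in L^1_{loc}(U;\|V\|)$.
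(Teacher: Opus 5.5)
Your proposal is correct and follows the paper's argument exactly: the paper also takes $f=\psi^2$ in Lemma \ref{L:trace-inequality} and absorbs $2C_3\int|\psi|\,|\nabla\psi|\,d\|V\|$ by Cauchy's inequality. Your explicit choice $C_4 = C_3 + C_3^2/(1+\eta)$ is valid, and you are right that any coefficient $\lambda>0$ in front of the gradient term would work equally well.
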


We also recall a useful reflection principle and constancy lemma for free boundary stationary varifolds from \cite{Li2021} and \cite{Masi2020}, adapted to our setup and terminology. We will mostly apply the lemma below at the level of tangent cones, hence we can take $\mathbb{R}^n=T_x \tM$ and $v=-\nu$ (recall the notations from \eqref{E:reflection} for the lemma below).

\begin{lemma}[{\cite[Lemma 2.2]{Li2021}}]
\label{L:reflection stationary}
    For any $V \in \bV_{n-1}(\mathbb{R}^n)$ with $\spt V \subset \mathbb{R}^n_+$, define the doubled-varifold 
    \begin{equation*}
        \overline{V} := V + (\theta_v)_\sharp V \in \bV_{n-1}(\mathbb{R}^n). 
    \end{equation*}
    If $V$ is free boundary stationary with free boundary in $\partial \mathbb{R}^n_+$, then $\overline{V}$ is stationary in $\mathbb{R}^n$. 
\end{lemma}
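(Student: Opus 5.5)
We verify $\delta\overline{V}(X)=0$ for every $X\in\mathfrak{X}_c(\mathbb{R}^n)$ by splitting $X$ into its even and odd parts under the reflection. Write $\theta=\theta_v$. Since $\theta$ is a linear isometry, the first variation transforms naturally under pushforward: $\delta(\theta_\sharp V)(X)=\delta V(\theta_\sharp X)$, where $\theta_\sharp X := \theta\circ X\circ\theta$ (using $d\theta=\theta$ and $\theta^{-1}=\theta$). This follows directly from $\Div_{\theta(T)}X(\theta(x))=\Div_T(\theta_\sharp X)(x)$. Hence
\[
\delta\overline{V}(X)=\delta V(X)+\delta V(\theta_\sharp X)=\delta V(X+\theta_\sharp X).
\]
The vector field $Y:=X+\theta_\sharp X$ is $C^1$ with compact support. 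On $\partial\mathbb{R}^n_+=\{p\cdot v=0\}$ we have $\theta(x)=x$, so
\[
Y(x)\cdot v=X(x)\cdot v+\theta(X(x))\cdot v=X(x)\cdot v-X(x)\cdot v=0.
\]
Thus $Y$ is tangential along $\partial\mathbb{R}^n_+$.

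It remains to use free boundary stationarity of $V$, i.e. $\delta V(Y)=0$ for all $Y\in\mathfrak{X}_c(\mathbb{R}^n)$ with $Y\cdot v=0$ on $\partial\mathbb{R}^n_+$, per Definition \ref{D:free-boundary-varifold}. This gives $\delta\overline{V}(X)=\delta V(Y)=0$.

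The only point needing care is the change-of-variables identity for $\delta(\theta_\sharp V)$. I would check it directly from \eqref{E:1st-variation}: an orthonormal basis $e_i$ of $T$ maps to the orthonormal basis $\theta e_i$ of $\theta(T)$, and
\[
D_{\theta e_i}X(\theta x)\cdot\theta e_i=D_{e_i}(\theta X\theta)(x)\cdot e_i .
\]
Alternatively, one can note that the flow of $\theta_\sharp X$ is $\theta\circ F_t\circ\theta$ and differentiate the mass. I do not expect any genuine obstacle here: since $\spt\|V\|\subset\mathbb{R}^n_+$, the definition with test fields $Y$ that are tangential only on the boundary hyperplane applies without further modification.
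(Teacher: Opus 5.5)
Your proof is correct. The identity $\delta(\theta_\sharp V)(X)=\delta V(\theta\circ X\circ\theta)$ holds because $\theta$ is a linear isometry with $J_S\theta\equiv 1$, so $\delta\overline{V}(X)=\delta V(X+\theta\circ X\circ\theta)$; this symmetrized field is tangential along $\partial\mathbb{R}^n_+$, and free boundary stationarity then gives zero. The paper gives no proof of its own and simply cites \cite[Lemma 2.2]{Li2021}, and your symmetrization argument is the standard proof of this reflection principle (note that under the paper's Definition~\ref{D:free-boundary-varifold}, the hypothesis $\spt\|V\|\subset\mathbb{R}^n_+$ is not even needed for the stationarity conclusion).
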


\begin{lemma}[{\cite[Lemma 3.1 and Corollary 4.3]{Masi2020}}]
\label{L:constancy lemma De Masi}
 Let $V \in \bV_{n - 1}(U)$ be a varifold with free boundary in $\partial \Omega \cap U$. Then, for any $\phi \in C_c(G(U))$, we have
 \[ \int_{G(U)} \phi(x,S) \; dV(x,S) = \int_{\partial \Omega \cap U} \phi(x,T_x\partial \Omega) \Theta^{n-1}(\|V\|,x) \; d\mathcal{H}^{n-1}(x) + \int_{G(U \setminus \partial \Omega)} \phi(x,S) \; dV(x,S).\]
 In particular, $\|V\| \, \mres_{\partial \Omega}= \Theta^{n-1}(\|V\|,\cdot) \; d \mathcal{H}^{n-1} \, \mres_{\partial \Omega}$ and $V$ is rectifiable along $\partial \Omega \cap U$ with 
    \begin{equation}
    \label{E:constancy-lemma}
        V \left( \{(x,S)\in G(U) \, | \, x \in \partial \Omega \cap U, S \neq T_x \partial \Omega \} \right)=0.
    \end{equation}
\end{lemma}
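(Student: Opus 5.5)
The plan is to work locally near a point of $\partial\Omega\cap U$ in the tubular neighbourhood $U_{1/(2\kappa_0)}(\partial\Omega)$, where $d$ is smooth and satisfies \eqref{E:dist-estimates}. We test the free boundary condition \eqref{E:free-boundary-varifold} with tangential vector fields built from $d$ and a one-dimensional cutoff $\psi(d/s)$, and then let $s\to0$. The statement splits into two parts, proved in order: first the orientation statement \eqref{E:constancy-lemma}, then the identification $\|V\|\mres\partial\Omega=\Theta^{n-1}(\|V\|,\cdot)\,\mathcal{H}^{n-1}\mres\partial\Omega$. The displayed decomposition formula then follows at once, by splitting $G(U)=G(U\setminus\partial\Omega)\cup G(\partial\Omega\cap U)$ and inserting both facts into the second piece.

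\emph{Step 1 (tangent planes along $\partial\Omega$).} Take $\phi\in C^1_c(U)$ and a cutoff $\psi$ with $\psi=1$ near $0$ and $\psi=0$ on $[1,\infty)$. Set $X=\phi\,\psi(d/s)\,d\,\nabla d$. Since $d=0$ on $\partial\Omega$, we have $X\cdot\nu=0$ there, so $\delta V(X)=-\int X\cdot H^{tan}_V\,d\|V\|$. Using $|\nabla d|=1$, we compute
\[
\Div_S X=\psi(d/s)\,d\,\Div_S(\phi\nabla d)+\phi\,\psi(d/s)\,|S\nabla d|^2+\phi\,\tfrac{d}{s}\psi'(d/s)\,|S\nabla d|^2 .
\]
Now let $s\to0$ and use dominated convergence, recalling that $\|V\|$ is Radon and $H^{tan}_V\in L^1_{loc}$. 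The first term and the right-hand side tend to $0$ because they carry a factor $d\,\chi_{\{d<s\}}$. The third term has a bounded integrand supported in $\{0<d<s\}$, so it also tends to $0$. The only surviving term is $\int_{G(\partial\Omega)}\phi\,|S\nu|^2\,dV=0$. Since $S\nu=0$ exactly when $S=T_x\partial\Omega$, this gives \eqref{E:constancy-lemma}, that is, $V\mres G(\partial\Omega\cap U)=(\|V\|\mres\partial\Omega)\otimes\delta_{T_x\partial\Omega}$.

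\emph{Step 2 (the weight on $\partial\Omega$).} Let $\mu:=\|V\|\mres\partial\Omega$. By Proposition \ref{P:free-boundary-characterization}, $V$ has locally bounded first variation. Test with $X=\phi\,\psi(d/s)\,Y$, where $Y$ is a vector field tangent to the level sets of $d$, obtained by extending a field tangent to $\partial\Omega$ so that $Y\cdot\nabla d=0$. Passing to the limit $s\to0$, and using Step 1 on $\partial\Omega$, should show that $Y\mapsto\int_{\partial\Omega}\Div_{\partial\Omega}(\phi Y)\,d\mu$ is bounded by $C\sup|\phi Y|$. In other words, $\mu$, viewed as a top-dimensional varifold in the $(n-1)$-manifold $\partial\Omega$, has locally bounded first variation. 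The constancy theorem in this top-dimensional setting, i.e.\ a distribution with bounded derivative, then gives $\mu=\theta\,\mathcal{H}^{n-1}\mres\partial\Omega$ with $\theta\in BV_{loc}(\partial\Omega)$. Finally, $\theta=\Theta^{n-1}(\|V\|,\cdot)$ for $\mathcal{H}^{n-1}$-a.e.\ point of $\partial\Omega$: at Lebesgue points of $\theta$, the interior part $\|V\|\mres\Omega$ has upper density zero relative to $\mathcal{H}^{n-1}\mres\partial\Omega$, by the standard differentiation of the Radon measure $\|V\|\mres\Omega$ with respect to $\mathcal{H}^{n-1}\mres\partial\Omega$.

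\emph{Main obstacle.} The delicate point is the cutoff term $\phi\,s^{-1}\psi'(d/s)\,(S\nabla d)\cdot Y$ in Step 2. It lives on the thin layer $\{0<d<s\}$ with a factor $s^{-1}$, so dominated convergence does not apply directly. My first attempt is to avoid evaluating it: $\|\delta V\|$ is Radon, so $|\delta V(X)|\le\sup|X|\,\|\delta V\|(\spt\phi)$ uniformly in $s$. The remaining terms have limits, so the cutoff term stays bounded by $C\sup|\phi Y|$, and this bound is exactly what the constancy argument in Step 2 needs. If this turns out to be insufficient, I would follow De Masi's argument \cite{Masi2020}, using \eqref{E:1st-variation-fbv-refined} together with a second choice of cutoff, $\psi(d/s)$ against $\psi(d/2s)$, to show that the layer contribution is a signed measure converging as $s\to0$.
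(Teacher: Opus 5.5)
Your Step 1 is correct and is essentially the constancy lemma the paper imports from De Masi. With $X=\phi\,\psi(d/s)\,d\,\nabla d$, every term except $\int\phi\,\psi(d/s)|S\nabla d|^2\,dV$ vanishes as $s\to0$, and this gives \eqref{E:constancy-lemma}. Your final identification of $\theta$ with $\Theta^{n-1}(\|V\|,\cdot)$ is also fine once $\|V\|\mres\partial\Omega\ll\mathcal{H}^{n-1}\mres\partial\Omega$ is known.

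The gap is in how you get that absolute continuity.

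\emph{The first attempt is circular.} The bound $|\delta V(X_s)|\le C\sup|\phi Y|$ only says that the layer term equals $\delta V(X_s)-\int\psi(d/s)\Div_S(\phi Y)\,dV$. The second piece converges to exactly the quantity $\int_{\partial\Omega}\Div_{\partial\Omega}(\phi Y)\,d\mu$ you want to bound. So the layer term is only bounded by $C(\sup|\phi Y|+\sup|D(\phi Y)|)$, which is useless here.

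\emph{The intermediate claim is false.} $\mu$ need not have locally bounded first variation in $\partial\Omega$, and $\theta$ need not be $BV_{loc}$. Take $\mathbb{R}^2$ with $\Omega=\{x_2>0\}$ and disjoint intervals $[a_k,b_k]=[2^{-k},2^{-k}+4^{-k}]$. Let $\Gamma_k\subset\overline\Omega$ be the circular arc over $[a_k,b_k]$ meeting $\partial\Omega$ at angle $\alpha_k=2^{-k}$ at both ends. Set $V=\theta\,\mathcal{H}^1\mres\partial\Omega\otimes\delta_{T\partial\Omega}+\sum_k|\Gamma_k|$ with $\theta=1-\sum_k\cos\alpha_k\,\chi_{[a_k,b_k]}$.
\begin{itemize}
\item For tangential $X$, the endpoint terms $\cos\alpha_k\,(X^1(b_k)-X^1(a_k))$ of $\Gamma_k$ cancel exactly against $\int\theta\,\partial_1X^1\,dx_1$.
\item Hence $V$ satisfies \eqref{E:free-boundary-varifold} with $\int|H^{tan}_V|\,d\|V\|=\sum_k 2\alpha_k<\infty$, and $\|\delta V\|_{sing}=\sum_k\sin\alpha_k(\delta_{a_k}+\delta_{b_k})$ is finite.
\item Yet $|D\theta|$ has mass $\sum_k 2\cos\alpha_k=\infty$ near $0$.
\end{itemize}
So the layer term converges only as a distribution, not as a measure, and no cutoff trick can rescue Step 2. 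A sign argument, using $|S\nabla d|^2\ge0$ and a monotone cutoff vanishing on $\partial\Omega$, controls the normal flux $s^{-1}\int\psi'(d/s)|S\nabla d|^2\,dV$. Nothing controls the tangential flux $S\nabla d\cdot Y$.

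\emph{What you need instead} is absolute continuity via finite upper density, using the ingredient you already invoked. Since $\|\delta V\|$ is Radon (Proposition \ref{P:free-boundary-characterization}), you may test $\delta V$ with $\chi(r/\rho)\,r\nabla r$ at any centre $x$; no tangency is needed now. After integrating in $\rho$ this gives $\frac{d}{d\rho}\big(e^{C\rho}\rho^{1-n}\|V\|(B_\rho(x))\big)\ge-e^{C\rho}\rho^{1-n}\|\delta V\|(B_\rho(x))$. For $\|V\|$-a.e. $x$, differentiation of measures gives $\|\delta V\|(B_\rho(x))\le C_x\|V\|(B_\rho(x))$ for small $\rho$. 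Hence $\Theta^{*n-1}(\|V\|,x)<\infty$ for $\|V\|$-a.e. $x$.

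The part of $\|V\|\mres\partial\Omega$ that is singular with respect to $\mathcal{H}^{n-1}\mres\partial\Omega$ is concentrated where this upper density is infinite, so it vanishes. Your Step 1 and your density identification then finish the proof. This matches the order in the source the paper cites, as far as the citation and the paper's remark on Proposition \ref{P:free-boundary-characterization} indicate: the constancy lemma (your Step 1) comes first, and the weight statement is a corollary obtained only after the first variation is shown to be a Radon measure, not from any $BV$ structure of $\theta$.
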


We shall use Lemma \ref{L:reflection stationary} and Lemma \ref{L:constancy lemma De Masi} to prove the following technical result.

\begin{lemma}
\label{L:stationarity planar tangents}
    Using the same notation as in Lemma \ref{L:reflection stationary}, and suppose $V\in \bV_{n-1}(\mathbb{R}^n)$ is free boundary stationary with free boundary in $\partial \mathbb{R}^n_+$.
    Assume, in addition, that $\spt \|V\| \subset \partial \mathbb{R}^n_+$. Then $V$ is stationary in $\mathbb{R}^n$ and thus $V=m |\partial \mathbb{R}^n_+|$ for some real constant $m >0$.
\end{lemma}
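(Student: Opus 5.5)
The plan is to combine the reflection principle with the constancy lemma and then exploit the symmetry of the doubled varifold. Throughout, write $P:=\partial \mathbb{R}^n_+$ and $\theta:=\theta_v$.

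\emph{Step 1: $V$ is rectifiable and carried by $P$.} Since $\spt\|V\|\subset P$, Lemma \ref{L:constancy lemma De Masi} (applied with $U=\mathbb{R}^n$ and $\partial\Omega=P$) gives
\[
V=\Theta^{n-1}(\|V\|,\cdot)\,\mathcal{H}^{n-1}\mres P\otimes\delta_{T_xP}.
\]
So $V$ is the rectifiable varifold with approximate tangent plane $P$ and density $\Theta:=\Theta^{n-1}(\|V\|,\cdot)$. Moreover $\theta$ fixes $P$ pointwise and maps $P$ to itself, so $\theta_\sharp V=V$, and the doubled varifold of Lemma \ref{L:reflection stationary} is $\overline V=2V$.

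\emph{Step 2: stationarity of $V$ in $\mathbb{R}^n$.} Lemma \ref{L:reflection stationary} says $\overline V=2V$ is stationary in $\mathbb{R}^n$, hence so is $V$. As a cross-check, one can argue directly. For an arbitrary $X\in\mathfrak{X}_c(\mathbb{R}^n)$, split $X=X^{\top}+(X\cdot v)v$. The field $(X\cdot v)v$ can be modified away from $P$ to $\big((X\cdot v)(x-(x\cdot v)v)\big)v$ without changing $\Div_P$ along $P$. Then compute
\[
\Div_P\big(f v\big)=\nabla^P f\cdot v=0 \quad\text{on } P,
\]
since $v\perp P$. It follows that $\delta V(X)=\delta V(X^\top)$ up to fields tangent to $P$, and $\delta V(X^\top)=0$ by free boundary stationarity. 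One subtlety is that $X^\top$ is tangent to $P$ only along $P$, but that is exactly the admissible class.

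\emph{Step 3: constancy.} We now have a stationary rectifiable varifold supported in the hyperplane $P$, with tangent plane $P$ a.e. The classical constancy theorem \cite[Theorem 41.1]{Simon1983}, applied in $P\cong\mathbb{R}^{n-1}$ (which is connected), gives $\Theta\equiv m$ $\mathcal{H}^{n-1}$-a.e. on $P$ for a constant $m\ge 0$. Hence $V=m|P|$, with $m>0$ provided $V\neq 0$, which is implicit in the statement.

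The only delicate point is Step 1, namely making sure the constancy lemma applies with $\spt\|V\|$ entirely inside the support hypersurface. Lemma \ref{L:constancy lemma De Masi} handles this directly, so no real obstacle is expected.
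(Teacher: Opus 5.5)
Your proposal is correct and follows the paper's proof: Lemma \ref{L:constancy lemma De Masi} forces $V$ to be carried by $\{(x,\partial\mathbb{R}^n_+): x\in\partial\mathbb{R}^n_+\}$, so $(\theta_v)_\sharp V=V$; Lemma \ref{L:reflection stationary} then makes $\overline V=2V$ stationary, and the classical constancy theorem concludes. Your Step 2 cross-check is also valid, and simpler than you present it: $X^\top=X-(X\cdot v)v$ is tangent to $\partial\mathbb{R}^n_+$ everywhere (not only along it), and $\Div_P\big((X\cdot v)v\big)=0$ identically for $P=\partial\mathbb{R}^n_+$, so no modification of the normal component is needed and the reflection lemma can be bypassed entirely.
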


\begin{proof}
By the classical constancy theorem \cite[41.1]{Simon1983}, it suffices to show that $V$ is stationary in $\mathbb{R}^n$. We claim that $(\theta_v)_\sharp V=V$. Once the claim is established, we see from Lemma \ref{L:reflection stationary} that the doubled varifold $\overline{V}=V+(\theta_v)_\sharp V=2V$ is stationary in $\mathbb{R}^n$. This of course implies stationarity of $V=\frac{1}{2} \overline{V}$ in $\mathbb{R}^n$.

It remains to establish the claim that $(\theta_v)_\sharp V=V$. Since $v \in \mathbb{R}^n$ and $\partial \mathbb{R}^n_+=\{u\in \mathbb{R}^n \, | \, u \cdot v=0\}$, for all $x\in \partial \mathbb{R}^n_+$ we have $\theta_v(x)=x$ and $D_x\theta_v$ acts as the identity map on $\partial \mathbb{R}^n_+$. Moreover, notice that for any $(y,S) \in G(\mathbb{R}^n)$, the Jacobian (see \cite[\S 39]{Simon1983}) is given by $J_S \theta_v (y)=1$. Therefore, by the definition of pushforward of a varifold (see again \cite[\S 39]{Simon1983}), we have for all $f \in C^0_c (G(\mathbb{R}^n))$
    \begin{equation}
    \begin{split}
        \int_{G(\mathbb{R}^n)}f(z, T) \; d((\theta_v)_\sharp V)(z,T) &= \int_{G(\mathbb{R}^n)} f(\theta_v(y), D_y\theta_v(S))J_S\theta_v(S) \; dV(y, S)\\
        &=\int_{\partial \mathbb{R}^n_+ \times \Gr(n,n-1)}f(\theta_v(y), D_y\theta_v(S)) \; dV(y,S)\\
        &\qquad + \int_{G(\mathbb{R}^n \setminus \partial \mathbb{R}^n_+)} f(\theta_v(y), D_y\theta_v(S)) \; dV(y,S).
    \end{split}
    \end{equation}
    Using \eqref{E:constancy-lemma}, $\theta_v(y)=y$ for $y \in \partial \mathbb{R}^n_+$ and the fact that $D_y\theta_v(S) \neq \partial \mathbb{R}^n_+$ for all $y \in \partial \mathbb{R}^n_+$, $S \neq \partial \mathbb{R}^n_+$, we see that 
    \begin{equation}
        \int_{\partial \mathbb{R}^n_+ \times \Gr(n,n-1)}f(\theta_v(y), D_y\theta_v(S)) \; dV(y,S) =\int_{\partial \mathbb{R}^n_+ \times \{\partial \mathbb{R}^n_+\}} f(y, \partial \mathbb{R}^n_+) \; dV(y,S).
    \end{equation}
On the other hand, since $\spt \| V \| \subset \partial \mathbb{R}^n_+$ by assumption, we have 
    \begin{equation}
        \int_{G(\mathbb{R}^n \setminus \partial \mathbb{R}^n_+)} f(\theta_v(y), D_y\theta_v(S)) \; dV(y,S) =0.
    \end{equation}
Combining all these and Lemma \ref{L:constancy lemma De Masi}, we have established the claim that $(\theta_v)_\sharp V=V$.
\end{proof}

\begin{remark}
\label{R:stationarity planar tangents}
In fact, the above result is local and the same statement holds when $\mathbb{R}^n$ (and respectively $\partial \mathbb{R}^n_+$) is replaced by any open set $U \subset \mathbb{R}^n$ (and respectively $U \cap \partial \mathbb{R}^n_+$) which is invariant under the reflection $\theta_v$.
\end{remark}

\subsection{Monotonicity formula and tangent cones} 
\label{SS:Monotonicity}

We will assume mostly in this section that $V \in \bIV_{n-1}(U)$ is a free boundary stationary (integral) varifold with free boundary in $\partial \Omega \cap U$ and $\spt \|V\| \subset \overline{\Omega} \cap U$. The goal is to recall various monotonicity formulae and look at their immediate consequences, especially regarding the basic properties of the varifold tangents $C \in \VarTan(V,x)$.

We first recall the following (almost) monotonicity formula for geodesic balls for free boundary varifolds with an $L^\infty$ bound on $H^{tan}_V$. Note that we do not need to assume $V$ is integral.

\begin{lemma}[Monotonicity formula]
\label{L:monotonicity limit varifold}
    For every $\Lambda \geq 0$, there exists $C_5(n, k_0, \kappa_0, \Lambda)>0$ such that for any varifold $V \in \bV_{n-1}(U)$ with free boundary in $\partial \Omega \cap U$, $\spt \|V\| \subset \overline{\Omega} \cap U$ and $\|H^{tan}_V\|_{L^\infty(U)} \leq \Lambda$, the function
    \begin{equation}
    \label{E:monotonicity limit varifold}
        \rho \mapsto e^{C_5 \rho}\frac{\|V\|(B_\rho(x))}{\rho^{n-1}}
    \end{equation}
    is monotone increasing for $0<\rho< \frac{1}{2} \min \{ i_0, \frac{1}{\kappa_0}, \frac{1}{\sqrt{k_0}},2\dist_g(x,\partial U) \}$ when $x \in \partial \Omega \cap U$; and for $0< \rho <\frac{1}{2} \min \{ i_0, \frac{1}{\sqrt{k_0}}, 2\dist_g(x,\partial \Omega \cup \partial U) \} $ when $x \in U \setminus \partial \Omega$. In particular, the $(n-1)$-dimensional density
    \begin{equation}
        \Theta_V(x):=\lim_{\rho\to 0}\frac{\|V\|(B_\rho(x))}{\omega_{n-1}\rho^{n-1}}
    \end{equation}
    exists for all $x\in U$ (and vanishes outside $\spt \|V\|$).
\end{lemma}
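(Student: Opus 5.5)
The plan is to derive the almost monotonicity from the first variation formula \eqref{E:1st-variation-fbv-refined}, tested with a radial vector field $X = \phi(r)\, r\nabla r$, where $r=\dist_g(\cdot,x)$ and $\phi$ is a cutoff approximating $\mathds{1}_{[0,\rho)}$; e.g.\ $\phi(r)=\chi(r/\rho)$, letting $\chi$ tend to the indicator at the end. Two facts enter. First, for $\rho<\frac{1}{2}\min\{i_0,1/\sqrt{k_0}\}$, Hessian comparison under \eqref{E:sec-bound} gives
\[
\bigl|\Hess_g(\tfrac12 r^2)-g\bigr|\le C(n,k_0)\,r^2 .
\]
Consequently,
\[
\Div_S(r\nabla r)\ge (n-1)-C r^2 \quad\text{for every } S\in\Gr_y(n,n-1).
\]
Second, the tangential mean curvature satisfies $|X\cdot H^{tan}_V|\le \Lambda\,\phi\, r$.

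The interior case $x\in U\setminus\partial\Omega$ is the easy one. For $\rho<\dist_g(x,\partial\Omega)$, the ball $B_\rho(x)$ does not meet $\partial\Omega$. So $X$ is supported away from $\partial\Omega$, the terms involving $H_{\partial\Omega}$ and $\|\delta V\|_{sing}$ in \eqref{E:1st-variation-fbv-refined} vanish, and $\delta V(X)=-\int X\cdot H^{tan}_V\,d\|V\|$. The standard computation (as in \cite[\S17]{Simon1983}) gives, for $I(\rho)=\int\phi(r/\rho)\,d\|V\|$,
\[
\frac{d}{d\rho}\bigl(\rho^{-(n-1)}I(\rho)\bigr)\ge -C(\Lambda+\rho)\,\rho^{-(n-1)}I(\rho)+(\text{nonnegative normal-part term}).
\]
Integrating yields monotonicity of $e^{C_5\rho}\rho^{1-n}I(\rho)$.

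The boundary case $x\in\partial\Omega$ is where the main obstacle lies. Here $X$ is not tangential to $\partial\Omega$, so two extra terms must be controlled: the boundary integral $\int_{\partial\Omega}X\cdot\eta_V\,d\|\delta V\|_{sing}$ and $-\int X\cdot H_{\partial\Omega}\,d\|V\|$. By Proposition \ref{P:free-boundary-characterization}, $\eta_V=\pm\nu$ $\|\delta V\|_{sing}$-a.e. By \eqref{E:allard-estimate}, on $\partial\Omega\cap B_\rho(x)$ we have $|X\cdot\eta_V|\le\phi\,\frac{\kappa_0}{2}r^2$. A crude bound would therefore need $\|\delta V\|_{sing}(B_\rho)\lesssim\rho^{-1}\|V\|(B_{2\rho})$, which only bounds the boundary term by $C\rho\,\|V\|(B_{2\rho})$ at a doubled radius and ruins a clean ODE. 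To avoid this, I would instead use that $\spt\|V\|\subset\overline\Omega$ forces $\eta_V=\nu$ (the one-sided case noted in the proof of Proposition \ref{P:free-boundary-characterization}). I would then correct $X$ so that $X\cdot\nu\le 0$ on $\partial\Omega$, making the singular term favourable (nonpositive). Concretely, take
\[
\widetilde X=\phi(r/\rho)\bigl(r\nabla r - a(y)\,\nabla d\bigr),
\]
with $a=\frac{\kappa_0}{2}r^2$-type correction (or simply drop the exact tangency and use $-r\nabla r\cdot\nu\ge -\frac{\kappa_0}{2}r^2$ with the right sign), so that $\widetilde X\cdot\nu\le0$ on $\partial\Omega$ while $\Div_S\widetilde X\ge (n-1)\phi-C\rho\,\phi-C\rho\,|\phi'|r/\rho$ by \eqref{E:dist-estimates}. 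The terms $\widetilde X\cdot H_{\partial\Omega}$ are bounded by $(n-1)\kappa_0\,r\,\phi$ using \eqref{E:ball-curvatures}.

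The errors from the correction multiply $\phi'$ only by $O(\rho)$. They can therefore be absorbed into the $\frac{d}{d\rho}I$ term as a factor $(1+O(\rho))$, giving the inequality
\[
\frac{d}{d\rho}\log\bigl(\rho^{1-n}I(\rho)\bigr)\ge -C_5
\]
with $C_5=C(n,k_0,\kappa_0,\Lambda)$, valid for $\rho<\frac12\min\{i_0,1/\kappa_0,1/\sqrt{k_0}\}$. Letting $\phi\to\mathds{1}_{[0,1)}$ gives the monotonicity in both cases. Existence of the density limit $\Theta_V(x)$ then follows from monotonicity, and $\Theta_V(x)=0$ outside $\spt\|V\|$ trivially.
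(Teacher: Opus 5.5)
This is essentially the paper's proof. Your corrected field $\phi(r/\rho)\bigl(r\nabla r+\frac{\kappa_0}{2}r^2\nabla d\bigr)$ is exactly $X-\frac{\kappa_0}{2}\tilde X$ for the paper's two test fields $X=\chi(r/\rho)\,r\nabla r$ and $\tilde X=-r^2\chi(r/\rho)\nabla d$. The paper uses the second field to bound $\frac{\kappa_0}{2}\int r^2\chi\,d\|\delta V\|_{sing}$ at the same radius, and your single field yields the same inequality $\rho\bigl(1+\frac{\kappa_0\rho}{2}\bigr)I'(\rho)\ge\bigl(n-1-O(\rho)\bigr)I(\rho)$.

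Two small fixes are needed. First, the sign: since $\nabla d=-\nu$ on $\partial\Omega$, you need $a=-\frac{\kappa_0}{2}r^2$ in your notation; with $a=+\frac{\kappa_0}{2}r^2$ you get $\widetilde X\cdot\nu\ge0$, the wrong direction. Second, your parenthetical, if it means invoking \eqref{E:allard-estimate} without modifying the field, does not work on its own. It leaves $\frac{\kappa_0}{2}\int r^2\phi\,d\|\delta V\|_{sing}$ uncontrolled, and that is precisely the term the correction (equivalently, the paper's second test field) absorbs.
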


\begin{proof}
The proof (in the Euclidean setting) is essentially contained in \cite[Section 4.5]{Masi2020}. We include the main steps here for our readers' convenience and to adapt the calculations in the Riemannian setting. We will only focus on the case $x \in \partial \Omega$ as the interior case can be found for example in \cite{Scharrer2022}. The key idea is to test \eqref{E:1st-variation-fbv-refined} with suitable test vector fields. In what follows, we denote as in Section \ref{SS:setup} that $r(\cdot)=\dist_g(\cdot,x)$ and $d(\cdot)=\dist_g(\cdot,\partial \Omega)$ be the intrinsic distance function from $x$ and $\partial \Omega$ respectively. Recall the cutoff function $\chi$ defined in section \ref{SS:cutoff}.

Fix $x\in\partial\Omega\cap U$ and let $\rho$ be as in the lemma, so that $\overline{B_\rho(x)}\subset U$. Set
\[
I(\rho):=\int\chi\left(\frac r\rho\right)\,d\|V\|,
\qquad
I'(\rho)=\int-\frac r{\rho^2}\chi'\left(\frac r\rho\right)\,d\|V\|\geq0,
\]
and let
\[
A:=\Lambda+(n-1)(\kappa_0+\sqrt{k_0}),\qquad
B:=\Lambda+(n-1)(\kappa_0+C_1),
\]
where $C_1=C_1(n,k_0,\kappa_0)$ is as in \eqref{E:dist-estimates}. First consider
\[
X=\chi\left(\frac r\rho\right)r\nabla r\in\mathfrak{X}_c(U).
\]
By Hessian comparison (c.f. \cite[(2.1)]{LiPariseSarnataro2023}),
\[
\left|\Div_S X-(n-1)\chi\left(\frac r\rho\right)
-\frac r\rho\chi'\left(\frac r\rho\right)|S(\nabla r)|^2\right|
\leq(n-1)\sqrt{k_0}\,r\chi\left(\frac r\rho\right).
\]
Thus \eqref{E:1st-variation-fbv-refined}, \eqref{E:allard-estimate}, and $|H_{\partial\Omega}|\leq(n-1)\kappa_0$ give
\begin{equation}\label{E:monotonicity-proof-1}
\begin{split}
&\int\left((n-1)\chi\left(\frac r\rho\right)
+\frac r\rho\chi'\left(\frac r\rho\right)|S(\nabla r)|^2\right)\,dV(y,S)\\
&\hspace{2cm}\leq A\rho I(\rho)
+\frac{\kappa_0}{2}\int r^2\chi\left(\frac r\rho\right)\,d\|\delta V\|_{sing}.
\end{split}
\end{equation}
To estimate the boundary term while retaining its quadratic factor, use
\[
\tilde X=-r^2\chi\left(\frac r\rho\right)\nabla d\in\mathfrak{X}_c(U).
\]
Since $-\nabla d=\nu$ on $\partial\Omega$, and the support of $V$ lies in $\overline\Omega$, the boundary term in \eqref{E:1st-variation-fbv-refined} is now exactly $\int r^2\chi(r/\rho)\,d\|\delta V\|_{sing}$. Moreover, \eqref{E:dist-estimates} gives
\[
|\Div_S\tilde X|\leq(n-1)C_1r^2\chi\left(\frac r\rho\right)
+2r\chi\left(\frac r\rho\right)
-\frac{r^2}{\rho}\chi'\left(\frac r\rho\right).
\]
Consequently,
\begin{align*}
\int r^2\chi\left(\frac r\rho\right)\,d\|\delta V\|_{sing}
&\leq\int(Br^2+2r)\chi\left(\frac r\rho\right)\,d\|V\|
+\int-\frac{r^2}{\rho}\chi'\left(\frac r\rho\right)\,d\|V\|\\
&\leq(B\rho^2+2\rho)I(\rho)+\rho^2 I'(\rho).
\end{align*}
Here the term $\Lambda$ in $B$ controls $\tilde X\cdot H^{tan}_V$.

Since $\chi'\leq0$ and $|S(\nabla r)|\leq1$, the left-hand side of \eqref{E:monotonicity-proof-1} is at least $(n-1)I(\rho)-\rho I'(\rho)$. Combining the two estimates gives
\[
\rho\left(1+\frac{\kappa_0\rho}{2}\right)I'(\rho)
\geq\left(n-1-(A+\kappa_0)\rho-\frac{\kappa_0B}{2}\rho^2\right)I(\rho).
\]
In particular, \begin{equation}\label{E:monotonicity-diff-inequality}
\frac{d}{d\rho}\left(\frac{I(\rho)}{\rho^{n-1}}\right)
\geq-\frac{A+\frac{n+1}{2}\kappa_0+\frac{\kappa_0B}{2}\rho}
{1+\frac{\kappa_0\rho}{2}}\frac{I(\rho)}{\rho^{n-1}}
\geq-C_5\frac{I(\rho)}{\rho^{n-1}},
\end{equation}
where we may choose any $C_5>0$ with
\[
C_5\geq\max\left\{A+\frac{n+1}{2}\kappa_0,\,B\right\}.
\]
Hence $e^{C_5\rho}I(\rho)/\rho^{n-1}$ is nondecreasing. For centres away from $\partial\Omega$, the radial test field alone gives the interior assertion with the same $C_5$.
\end{proof}

\begin{remark} 
\label{R:monotonicity}
We would like to point out that the monotonicity formula above holds only for ambient geodesic balls that are either centred at a boundary point on $\partial \Omega$ or is completely disjoint from the boundary $\partial \Omega$. A standard argument (c.f. \cite[Corollary 17.8]{Simon1983}) then shows that the density function $\Theta_V$ is ``partially'' upper semi-continuous at the boundary in the sense that $\limsup_{x_i \to x} \Theta_V(x_i) \leq \Theta_V(x)$ in general holds for $x \in \partial \Omega$ only when $x_i \in \partial \Omega$ for all $i$. A free boundary half-plane contained in the half-space of $\mathbb{R}^n$ illustrates this point as the boundary points have density $\frac{1}{2}$ and the interior points have density $1$. However, as already observed in the proof of \cite[Theorem 5.7]{LiPariseSarnataro2023}, one can also use the arguments of reflected balls as in \cite[Corollary 3.2]{Grueter1986} (c.f. Appendix \ref{A:reflected-density}) to conclude that the \textit{reflected density} defined by 
\begin{equation}\label{E:reflected-density}
    \oTh_V(x):= \left\{\begin{matrix}\begin{aligned}
        &\Theta_V(x) &\text{if }x\notin\partial\Omega \cap U \\ 
        &2\Theta_V(x) &\text{if }x\in\partial\Omega\cap U
        \end{aligned}\end{matrix}\right.
\end{equation}
is actually an upper semi-continuous function in $x$.  In particular, if $V$ is further assumed to be integral, i.e. $V \in \bIV_{n-1}(U)$, then we have the following uniform density lower bound\footnote{In fact, $\Theta_V(x) \geq 1$ for all $x \in \spt \|V\| \cap \Omega$. For boundary points $x \in \spt \|V\| \cap \partial \Omega$ which are in the closure of $\spt \|V\| \cap \Omega$, it follows from upper semi-continuity of the reflected density. For boundary points $x \in \spt \|V\| \cap \partial \Omega$ which are not in the closure of $\spt \|V\| \cap \Omega$, by the free boundary assumption and Lemma \ref{L:constancy lemma De Masi}, $V=\Theta_V |\partial \Omega|$ in a small ball $B_r(x)$ and the conclusion follows once again from integrality of $V$ and upper semi-continuity of the reflected density.}
\begin{equation}
    \label{E:density-lower-bound}
\Theta_V(x) \geq \frac{1}{2} \quad \text{for every $x \in \spt \|V\|$.} 
\end{equation} 
By the monotonicity formula, this implies the following uniform density ratio bounds
\begin{equation}
\label{E:density-ratio-bound}
\frac{1}{C_6} \leq \frac{\|V\|(B_\rho(x))}{\rho^{n-1}} \leq C_6 \frac{\|V\|(B_{R_x}(x))}{R_x^{n-1}} 
\end{equation}
for any $0 < \rho < R_x=\min\{R_6,\dist_g(x,\partial U)\}$ when $x \in \spt \|V\| \cap \partial \Omega$; and $0<\rho < R_x=\min\{R_6,\dist_g(x,\partial \Omega \cup \partial U)\}$ when $x \in \spt \|V\| \setminus \partial \Omega$. Here, $C_6=C_6(n,i_0,k_0,\kappa_0,\Lambda)>0$ and $R_6=R_6(n,i_0,k_0,\kappa_0)>0$ are positive constants.
\end{remark}

\begin{remark}
    \label{R:weighted-monotonicity}
The same test fields give a weighted version of \eqref{E:monotonicity-diff-inequality}. Let $h\in C^1_c(U)$ be nonnegative and set
\[
I_h(\rho):=\int h\chi\left(\frac r\rho\right)\,d\|V\|,
\qquad
G_h(\rho):=\int r\chi\left(\frac r\rho\right)|S(\nabla h)|\,dV.
\]
At a boundary centre, test \eqref{E:1st-variation-fbv-refined} with $hX$ and $h\tilde X$, where $X$ and $\tilde X$ are the fields in the proof above. The additional derivative terms are bounded by $G_h(\rho)$ and $\rho G_h(\rho)$, respectively. With the same constants $A,B$, this yields
\[
\begin{split}
\rho\left(1+\frac{\kappa_0\rho}{2}\right)I_h'(\rho)
\geq{}&\left(n-1-(A+\kappa_0)\rho-\frac{\kappa_0B}{2}\rho^2\right)I_h(\rho)\\
&-\left(1+\frac{\kappa_0\rho}{2}\right)G_h(\rho).
\end{split}
\]
Thus, for the same choice of $C_5$,
\begin{equation}\label{E:weighted-monotonicity-diff-inequality}
\frac{d}{d\rho}\left(\frac{e^{C_5\rho}}{\rho^{n-1}}
\int h\chi\left(\frac r\rho\right)\,d\|V\|\right)
\geq-\frac{e^{C_5\rho}}{\rho^n}\int r\chi\left(\frac r\rho\right)|S(\nabla h)|\,dV.
\end{equation}
At an interior centre the same inequality follows using $hX$ alone. Integrating in $\rho$ and then letting the cutoffs increase to the characteristic function of $[0,1)$ gives
\begin{equation}\label{E:weighted-montonicity}
\begin{split}
&\frac{e^{C_5R_2}}{R_2^{n-1}}\int_{B_{R_2}(x)}h\,d\|V\|
-\frac{e^{C_5R_1}}{R_1^{n-1}}\int_{B_{R_1}(x)}h\,d\|V\|\\
&\qquad\geq-\int_{R_1}^{R_2}\frac{e^{C_5t}}{t^n}
\left(\int_{G(B_t(x))}r\,|S(\nabla h)|\,dV\right)dt
\end{split}
\end{equation}
for $0<R_1<R_2$ in the corresponding range of Lemma \ref{L:monotonicity limit varifold}. 
\end{remark}

The monotonicity formula in Lemma \ref{L:monotonicity limit varifold} is fundamental in the singularity analysis of free boundary varifolds as this allow us to perform blow-up arguments to reduce some of the problems to the study of tangent varifolds. The following proposition in the Euclidean setting is contained in \cite[Lemma 5.4]{Masi2020}. The proof turns out to be technically rather subtle in the Riemannian setting so we have decided to give complete details. We would like to point out that the analysis here is somewhat similar to that in \cite[Section 5]{SchoenSimon81}. Besides the fact that we are not assuming (almost) stability nor a-priori sheeting, we have to deal with the new difficulty coming from the singular part of the first variation which is not present in \cite{SchoenSimon81}.

\begin{proposition}[Existence and properties of tangent cones]
\label{P:tangent cones FB stationary varifolds}
    Let $V \in \bV_{n-1}(U)$ be a varifold with free boundary in $\partial \Omega \cap U$, $\spt \|V\| \subset \overline{\Omega} \cap U$ and $\|H^{tan}_V\|_{L^\infty(U)} \leq \Lambda$. Then for any $x\in \spt\|V\|\cap \partial\Omega$, the space $\VarTan(V,x)$ of varifold tangents at $x$ is non-empty and compact (in the weak topology). Moreover, for any $\bC \in\VarTan(V,x)$ the following holds:
    \begin{enumerate}[(i)]
        \item $\bC$ is free boundary stationary with free boundary on $T_x \partial \Omega$ and $\spt \|\bC\| \subset T_x^{+} \tM$, i.e.
        \[ \delta \bC (X)=0 \quad \text{for all $X \in \mathfrak{X}_c(T_x \tM)$ such that $X \cdot \nu(x)=0$ along $T_x \partial \Omega$};\]
        \item $\Theta_\bC(0)=\Theta_V(x)$ and $\|\bC\|$ is a cone with vertex at the origin of $T_x \tM$, i.e. 
        \[ (\eta_{0,r})_* \|\bC\|= r^{n-1}\|\bC\| \quad \text{for all $r>0$}; \] 
        \item The generalized boundary measure $\|\delta \bC\|_{sing}$ in $T_x \tM$ is scale-invariant (with dimension $n-2$), i.e. 
        \[ (\eta_{0,r})_* \|\delta \bC\|_{sing}=r^{n-2} \|\delta \bC\|_{sing} \quad \text{for all $r>0$.}\]
        Here, we identify $T_x \tM$ with $\mathbb{R}^n$ and denote $T_x^+ \tM:=\{v \in T_x \tM: v \cdot \nu(x) \leq 0\}$ the tangent half-space that points into $\Omega$, and the homothety maps $\eta_{p,r}:T_x \tM \to T_x \tM$ defined by $\eta_{p,r}(q):=r^{-1}(q-p)$ (c.f. Section \ref{SS:Euclidean}).
    \end{enumerate}
    
    \noindent If we assume, furthermore, that $V$ is rectifiable, i.e. $V \in \bRV_{n-1}(U)$, and the density of $V$ satisfies a uniform positive lower bound, i.e.
    \begin{equation}
    \label{E:uniform-lower-bound}
    \Theta_V(x) \geq 1 \text{ for $\|V\|$-a.e. $x\in U$.}
    \end{equation}
    then for any $\bC \in\VarTan(V,x)$, we have $\bC \in \bRV_{n-1}(T_x \tM)$ is a varifold cone with vertex at the origin of $T_x \tM$, i.e. 
        \[ (\eta_{0,r})_\sharp \bC= \bC \quad \text{for all $r>0$}.\]
        If $V \in \bIV_{n-1}(U)$ (in which case \eqref{E:uniform-lower-bound} is trivially satisfied), then $C \in \bIV_{n-1}(T_x \tM)$.
\end{proposition}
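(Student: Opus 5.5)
We work throughout in geodesic normal coordinates at $x$. Set $\overline{V}:=(\exp_x^{-1}|_{B_\rho(x)})_\sharp V$ and $V_r:=(\eta_{0,r})_\sharp\overline{V}$. These are varifolds in $T_x\tM$ and are stationary-type with respect to the rescaled pulled-back metrics $g_r:=r^{-2}\eta_{0,r}^{-1\,*}(\exp_x^*g)$. These metrics converge to $g_x$ in $C^2_{loc}$ as $r\to0$. The rescaled boundaries $\partial\Omega_r:=\eta_{0,r}(\exp_x^{-1}(\partial\Omega))$ converge in $C^2_{loc}$ to $T_x\partial\Omega$.

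\emph{Existence and compactness.} The monotonicity formula of Lemma \ref{L:monotonicity limit varifold} gives $\|V\|(B_s(x))\leq Cs^{n-1}$ for small $s$. Hence $\|V_r\|(B^\delta_R(0))\leq CR^{n-1}$ uniformly in $r$, and weak compactness of Radon measures gives non-emptiness. Compactness of $\VarTan(V,x)$ follows by a diagonal argument, since this set is the set of limit points of the precompact family $\{V_r\}$.

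\emph{Property (i).} This is the main obstacle. We must pass the free boundary condition to the limit, even though $\overline{V}$ has no controlled first variation for the flat metric. The plan is to test with the Riemannian metric. Take $X\in\mathfrak{X}_c(T_x\tM)$ with $X\cdot\nu(x)=0$ on $T_x\partial\Omega$. Correct it to a field $X_r$ that is $g_r$-tangent to $\partial\Omega_r$, for instance by subtracting $(X\cdot\nu_r)\nabla^{g_r}d_r$ near the boundary; then $X_r\to X$ in $C^1$. The free boundary condition for $V$, transported by the isometry $\exp_x\circ\eta_{0,r}^{-1}:(T_x\tM,g_r)\to(\tM,r^{-2}g)$, gives
\[
\delta^{g_r}V_r(X_r)=-\int X_r\cdot_{g_r}H^{tan}_{r}\,d\|V_r\|^{g_r},
\]
where $|H^{tan}_r|\leq r\Lambda$. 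The right-hand side is therefore $O(r)$ by the mass bound. The $g_r$-divergence $\Div^{g_r}_S X_r$ converges to $\Div^{g_x}_S X$ uniformly on $G(\spt X)$, because Christoffel symbols and metric differences are $O(r)$. The $g_r$-weights likewise differ from the flat weights by $1+O(r)$. Weak convergence then gives $\delta\bC(X)=0$. The support statement follows because $\spt\|V_r\|$ lies on one side of $\partial\Omega_r$, and these boundaries converge to $T_x\partial\Omega$.

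\emph{Properties (ii) and (iii).} By (i) and Lemma \ref{L:monotonicity limit varifold} applied to $\bC$ with $\Lambda=0$ and $\kappa_0=k_0=0$, the flat density ratio of $\bC$ at $0$ is monotone. For the cone property we use the exact monotonicity identity for free boundary stationary varifolds in a half-space. The boundary term vanishes there, since $r\nabla r\cdot\nu=0$ on $T_x\partial\Omega$. This identity expresses $\frac{d}{ds}\big(s^{1-n}\|\bC\|(B_s)\big)$ as $\frac{d}{ds}\int_{B_s}|S^\perp(y)\cdot y|^2|y|^{-n-1}\,d\bC$. The density ratios of $\bC$ satisfy
\[
s^{1-n}\|\bC\|(B_s)=\lim_{r_i\to0} (sr_i)^{1-n}\|V\|(B_{sr_i}(x))=\omega_{n-1}\Theta_V(x).
\]
This uses continuity of $\|\bC\|(\partial B_s)=0$ for a.e.\ $s$, and that Riemannian balls differ from Euclidean ones by factors $1+O(r)$. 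The ratios are therefore constant in $s$, which gives $\Theta_\bC(0)=\Theta_V(x)$. Only the weight $\|\bC\|$ is a cone here, since $S^\perp\cdot y=0$ $\bC$-a.e.\ only says radial vectors lie in $S$. To get $(\eta_{0,r})_*\|\bC\|=r^{n-1}\|\bC\|$ we use the weighted monotonicity \eqref{E:weighted-montonicity} with $h$ homogeneous of degree $0$. For (iii), the representation \eqref{E:1st-variation-fbv-refined} for $\bC$ has $H=0$, so $\delta\bC=\nu\,\|\delta\bC\|_{sing}$ on $T_x\partial\Omega$. Since $\delta((\eta_{0,r})_\sharp\bC)(X)=r^{n-2}\delta\bC(X\circ\eta_{0,r}^{-1})$ and the weight is invariant up to $r^{n-1}$, uniqueness in \eqref{E:1st-variation-fbv-refined} yields the claimed scaling.

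\emph{Rectifiable and integral case.} Assume $V$ is rectifiable with \eqref{E:uniform-lower-bound}. The $V_r$ are rectifiable with densities $\geq1$ and have first variations (w.r.t.\ $g_r$) locally bounded by \eqref{E:local-volume-bound-fbv}. We cannot invoke Allard's compactness theorem for the flat metric directly. Instead we note that the flat first variation is $\delta^{g_r}V_r$ plus an error bounded by $Cr\|V_r\|$, obtained by comparing $\Div^{g_r}$ and $\Div^{g_x}$ on $C^1$ fields. So $\|\delta^{\delta}V_r\|$ is locally uniformly bounded, using \eqref{E:local-volume-bound-fbv-stationary} for the singular parts. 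Allard's rectifiability and integral compactness theorems then give $\bC$ rectifiable (respectively integral). A rectifiable varifold is determined by its weight. Since the weight is a cone by (ii), and $\bC$ is therefore the rectifiable varifold of a conical measure, we get $(\eta_{0,r})_\sharp\bC=\bC$.
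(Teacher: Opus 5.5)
Existence, compactness, and (i)--(ii) are essentially fine. For (i) you take a different route from the paper. You correct $X$ to an exactly tangential field $X_r$, so the boundary term drops out. The paper instead pushes $X$ forward uncorrected and kills the boundary term using $|X_i\cdot\nu|=O(r_i)$ together with the bound \eqref{E:local-volume-bound-fbv} on $\|\delta V\|_{sing}$. Build your correction from an extension of the \emph{boundary values} of $X\cdot\nu_r$, e.g.\ constant along normal geodesics. Subtracting the pointwise quantity $(X\cdot\nu_r)\nabla^{g_r}d_r$ does not converge to $X$ away from $T_x\partial\Omega$.

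\textbf{The gap.} It is in the rectifiable/integral part: the claim that $\|\delta^{\delta}V_r\|$ is locally uniformly bounded is false in general. Comparing divergences gives
\[
\Div^{g_r}_S X-\Div^{\delta}_S X=\operatorname{tr}\bigl((P^{g_r}_S-P^{\delta}_S)\,DX\bigr)+O(|\partial g_r|)\,|X|,
\]
where $P^g_S$ is the $g$-orthogonal projection onto $S$. So on $B^\delta_R(0)$ the discrepancy is bounded only by $C_R\,r^2(|X|+|DX|)$, not by $C_R\,r^2|X|$; the Jacobian factor multiplying $\Div_S X$ behaves the same way. This gives $\delta^\delta V_r(X)\to\delta\bC(X)$ for each fixed $C^1$ field, which is all (i) needs. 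It gives no control of $\sup\{\delta^\delta V_r(X):|X|\le 1\}$, and in general $\delta^\delta V_r$ is a distribution of order one, not a Radon measure. Geometrically, a change of metric destroys the balancing of conormals along junction-type singular sets. What remains is proportional to the unsigned conormal mass, which the hypotheses do not control (cf.\ the distorted triple junction remark after \eqref{E:Radon-Nikodym}). This is exactly the obstruction the paper flags (``$V_i$ may not have locally bounded first variation due to the term involving $DX$''). So neither Allard's compactness theorem for rectifiable varifolds nor the one for integral varifolds can be applied to the sequence $V_r$.

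\textbf{What is needed instead.} For rectifiability only the limit must have bounded first variation. From $\Theta_V\ge1$ a.e., monotonicity and upper semicontinuity of the reflected density, $V$ has the uniform density-ratio bounds \eqref{E:density-ratio-bound}. These are bi-Lipschitz invariant, so they pass to $V_r$ and then to $\bC$ (points of $T_x\partial\Omega$ need separate care). This gives a positive lower density bound on $\spt\|\bC\|$, and Allard's rectifiability theorem then applies to $\bC$ itself, which has bounded flat first variation by (i). For integrality you must redo by hand the core of Allard's integral compactness argument:
\begin{enumerate}
\item Blow up again at an approximate-tangent point $p$, setting $W_i=(\eta_{p,s_i})_\sharp V_{r_i}\to\Theta_\bC(p)|T_p\bC|$.
\item Prove a sheeting estimate as in \cite[Lemma 42.9]{Simon1983}: off a set of small measure, the projected multiplicity is at most $(1+\gamma)$ times a density ratio plus $\gamma$. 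This uses tilt-excess, slab versions of the (weighted) monotonicity of the \emph{original} Riemannian $V$, and induction on the number of sheets.
\item This upgrades weak $L^1$ convergence of the integer-valued multiplicities to a.e.\ convergence, giving $\Theta_\bC(p)\in\mathbb{N}$.
\end{enumerate}

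\textbf{Two smaller points.} First, \eqref{E:weighted-montonicity} is an inequality involving $|S(\nabla h)|$, which need not vanish for degree-zero homogeneous $h$. For conicality of $\|\bC\|$ use the exact identity instead: its error term $\int S(\nabla h)\cdot y\,d\bC$ vanishes because $S(y)=y$ for $\bC$-a.e.\ $(y,S)$. Second, in (iii) the first variation, and hence $\|\delta\bC\|_{sing}$, depends on the Grassmannian part of $\bC$, not only on $\|\bC\|$. Your scaling argument therefore needs $(\eta_{0,r})_\sharp\bC=\bC$, which is available only once $\bC$ is rectifiable. The paper's one-line justification of (iii) rests on the same point.
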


\begin{proof}
We provide again the key steps of the proof of \cite{Masi2020} adapted to the Riemannian setting. Consider a sequence of positive numbers $r_i \to 0$, and the sequence of varifolds\footnote{More precisely, for any fixed $R>0$, $V_i$ is an integral varifold in $B^\delta_R(0)$ for all sufficiently large $i$.} 
\[ V_i:=(\eta_{0,r_i} \circ \exp_x^{-1})_\sharp V \in \bV_{n-1}(T_x \tM).\]
Since we are working with geodesic normal coordinates of $(\tM,g)$ centred at $x$, we have the standard estimates from \eqref{E:inj-bound} and \eqref{E:sec-bound}
\begin{equation}
\label{E:metric-estimate}
|g_{ij}(q)-\delta_{ij}| \leq C(n,k_0) |q|^2
\end{equation}
for sufficiently small $|q|$ depending only on $n,k_0$ and $i_0$. This implies the following estimate on the exponential map and its Jacobian factor (recall \cite[(39.1)]{Simon1983}):
\begin{equation}
    \label{E:dexp-estimate}
 \max\{\|d(\exp^{-1}_x)|_y\|,\|d(\exp_x)|_{\exp_x^{-1}(y)}\| \} \leq 1+ C(n,k_0) \dist_g(y,x),
\end{equation}
\begin{equation}
    \label{E:Jacobian-estimate}
 |J_S (\exp^{-1}_x)(y)-1| \leq C(n,k_0) \dist_g(y,x),
\end{equation}
From \eqref{E:dexp-estimate} and \eqref{E:Jacobian-estimate}, by the monotonicity formula Lemma \ref{L:monotonicity limit varifold} and the fact that the rescaled pushforward metrics $r_i^{-2}(\exp_x^{-1})_* g$ converges in $C^2_{loc}$ to the flat Euclidean metric $\delta$ on $T_x \tM$, $V_i$ has locally uniformly bounded mass in $T_x \tM$ as varifolds in $(T_x \tM,\delta)$. Therefore, standard compactness theorem for Radon measures implies that, after passing to a subsequence, we have $V_i$ converging weakly to some $\bC \in \VarTan(V,x)$. This implies $\VarTan(V,x) \neq \emptyset$ and it follows from the same argument and the definition that $\VarTan(V,x)$ is compact in the weak topology. 

Suppose now $\bC=\lim_{i\to \infty} V_i \in \VarTan(V,x)$. We now compute its first variation as varifolds in Euclidean space $(T_x \tM,\delta)$ using \eqref{E:1st-variation} as follows: suppose $X \in \mathfrak{X}_c(T_x \tM)$ with $\spt X \subset B^\delta_R(0)$, then for all sufficiently large $i$ we have
\[ \delta V_i(X)=\int_{G(B^{\delta}_R(0))} (\Div_T^{\delta} X)(p) \; dV_i(p,T)= \frac{1}{r_i^{n-1}} \int_{G(B^{\delta}_{Rr_i}(0))} (\Div^{\delta}_T X)(r_i^{-1} q) \; d \overline{V} (q,T) \]
\[ = \frac{1}{r_i^{n-2}} \int_{G(B^{\delta}_{Rr_i}(0))} \Div^{g}_T \left( X(r_i^{-1} q) \right) \; d \overline{V} (q,T) - \frac{1}{r_i^{n-2}} \int_{G(B^{\delta}_{Rr_i}(0))} (\Div^{g}_T -\Div^{\delta}_T) \left( X(r_i^{-1} q) \right)  \; d \overline{V} (q,T)  \]
where $\overline{V}:=(\exp^{-1}_x)_\sharp V \in \bV_{n-1}(T_x \tM)$\footnote{Again, $\overline{V}$ is more precisely a varifold in some open neighbourhood of $0$.}. By the definition of pushforward of varifolds, we have $d \overline{V} (q,T)=J_S (\exp^{-1}_x)(y)\; d V (y,S)$ where $q=\exp^{-1}_x(y)$, $T=(d \exp^{-1}_x)|_y(S)$, and $J_S (\exp^{-1}_x)(y):=\sqrt{\det\left((d\exp^{-1}_x|_S)^* \circ (d\exp^{-1}_x|_S) \right)}(y)$ is the Jacobian factor of $\exp_x^{-1}$ relative to $S$. It then follows from \eqref{E:metric-estimate} that for all $i$ sufficiently large
\[ \left|\Div^{g}_T \left( X(r_i^{-1} q) \right) \right| \leq 2 \left( |q| |X(r_i^{-1}q)|+ r_i^{-1} |DX(r_i^{-1}q)| \right), \]
\[ \left|(\Div^{g}_T -\Div^{\delta}_T) \left( X(r_i^{-1} q) \right) \right| \leq C(n,k_0) \left( |q| |X(r_i^{-1}q)|+|q|^2 r_i^{-1} |DX(r_i^{-1}q)| \right). \]
Combining these estimate with \eqref{E:Jacobian-estimate}, we see that
\begin{equation}
\label{E:proof of tangent cones FB stationary varifolds}
|\delta V_i(X) - r_i^{-(n-2)} \delta V(X_i)| \leq C(n,k_0) r_i \int \left( r_i |p| |X(p)|+|p| |DX(p)| \right) \; d\|V_i\|(p) 
\end{equation} 
where $X_i:=d(\exp_x)(X \circ \eta_{0,r_i}) \in \mathfrak{X}_c(U)$ is supported inside $B_{R r_i}(x)=\exp_x(B^\delta_{Rr_i}(0))$ for all large $i$. 
Note that $V_i$ may not have locally bounded first variation in $T_x \tM$ due to the term involving $DX$. On the other hand, using \eqref{E:1st-variation-fbv-refined}, we have
\[ r_i^{-(n-2)} \delta V(X_i)= - \frac{1}{r_i^{n-1}} \int_{\partial \Omega} X_i \cdot (r_i H^{tan}_V+r_i H_{\partial \Omega}) \; d\|V\| + \frac{1}{r_i^{n-2}} \int_{\partial \Omega} X_i \cdot \nu \; d\|\delta V\|_{sing}. \]
By \eqref{E:ball-curvatures} and \eqref{E:Jacobian-estimate}, we have for all sufficiently large $i$ the following estimate for the first term on the right hand side above:
\[\left| \frac{1}{r_i^{n-1}} \int_{\partial \Omega} X_i \cdot (r_i H^{tan}_V+r_i H_{\partial \Omega}) \; d\|V\| \right| \leq C(n,\kappa_0,\Lambda) r_i \int |X(p)| \; d\|V_i\|(p). \]
For the second term, if we further assume that $X$ is tangential to $T_x \partial \Omega$, by \eqref{E:metric-estimate} and \eqref{E:ball-curvatures}, we have the estimate
\[\|X_i \cdot \nu\|_{L^\infty(B_{Rr_i}(x))} \leq C(\kappa_0) R r_i\left( R \|DX\|_{L^\infty(U_{C(\kappa_0) R^2 r_i}(T_x\partial \Omega))} + \|X\|_{L^\infty(U_{C(\kappa_0) R^2 r_i}(T_x\partial \Omega))} \right)  \]
where $U_h(T_x \partial \Omega):=\{p \in T_x \tM: \dist^\delta(p,T_x \partial \Omega) < h\}$ denotes the parallel slab of width $h$ around the hyperplane $T_x \partial \Omega$. Combining this with \eqref{E:local-volume-bound-fbv-stationary}, we can estimate the second term as follow:
\[ \left| \frac{1}{r_i^{n-2}} \int_{\partial \Omega} X_i \cdot \nu \; d\|\delta V\|_{sing} \right| \leq C(n,k_0,\kappa_0,\Theta_V(x)) R^{n-1} r_i  \left( R \|DX\|_{L^\infty} + \|X\|_{L^\infty} \right).  \]
Putting all these estimates together, \eqref{E:proof of tangent cones FB stationary varifolds} can be written as: for all sufficiently large $i$, and $X \in \mathfrak{X}_c(T_x \tM)$ with $X$ is tangential to $T_x \partial \Omega$, $\spt X \subset B^\delta_R(0)$, we have
\[ |\delta V_i(X)| \leq C(n,k_0,\kappa_0,\Lambda) r_i \int \left( (1+r_i |p|) |X(p)|+|p| |DX(p)| \right) \; d\|V_i\|(p) \]
\begin{equation}
\label{E:proof of tangent cones FB stationary varifolds-2}
\hspace{2cm} +  C(n,k_0,\kappa_0,\Theta_V(x)) R^{n-1} r_i \left( R \|DX\|_{L^\infty} + \|X\|_{L^\infty} \right)
\end{equation} 
where the $L^\infty$ norms on the right hand side are taken in the slab $U_{C(\kappa_0) R^2 r_i}(T_x\partial \Omega)$ as before.

The estimate \eqref{E:proof of tangent cones FB stationary varifolds-2} on the first variation of $V_i$ implies that 
\begin{equation}
\label{E:limit-stationary-Vi}
\lim_{i\to \infty} \delta V_i(X)=0 \quad \text{$\forall X \in \mathfrak{X}_c(T_x \tM)$ tangential to $T_x \partial \Omega$.} 
\end{equation}
By the continuity of first variation with respect to varifold convergence, we conclude that $\bC$ is free boundary stationary with free boundary on $T_x \partial \Omega$. Moreover, as $\spt \|\overline{V}\| \subset \exp_x^{-1}(\overline{\Omega})$ and $\eta_{0,r_i}(\exp^{-1}_x(\Omega))$ converges locally in Hausdorff topology to $T^+_x \tM$, it is clear that $\spt \|C\| \subset T^+_x \tM$. 

Next, we move on to the assertions in (ii) and (iii). Because of \eqref{E:Jacobian-estimate}, $\Theta_{\overline{V}}(0)$ exists and is equal to $\Theta_{V}(x)$. As $\bC \in \VarTan(\overline{V},0)$, standard arguments (c.f. \cite[Section 42]{Simon1983}) shows that 
\[ \frac{\|\bC\|(B_R(0))}{\omega_{n-1} R^{n-1}}  \equiv \Theta_V(x) \quad \text{for all $R>0$}.\]
Recall that $\bC$ is free boundary stationary in $T_x \tM$ with free boundary on the hyperplane $T_x \partial \Omega$, we therefore conclude from the monotonicity formula that $\|\bC\|$ is a cone centred at $0$. This proves (ii). Finally, (iii) is a direct consequence of (i) and (ii) because of the scaling of first variation \cite[Remark 4.12(1)]{Allard1972} and $\|\delta \bC\|=\|\delta \bC\|_{sing}$.

To establish the other assertions in the proposition, it remains to show that $\bC$ is rectifiable (resp. integral) under the assumption that $V$ is rectifiable (resp. integral) satisfying \eqref{E:uniform-lower-bound}. First of all, notice that we cannot apply the standard compactness theorem for rectifiable or integral varifolds as $V_i$ does not have locally bounded variation due to the terms involving $DX$ on the right hand side of \eqref{E:proof of tangent cones FB stationary varifolds-2}. To get around this difficulty, we first note that since $V \in \bRV_{n-1}(U)$ is free boundary stationary with uniform density lower bound \eqref{E:uniform-lower-bound}, by Remark \ref{R:monotonicity} and \eqref{E:density-ratio-bound} therein, we have the following uniform density ratio bounds at the geometric level:
\[ \frac{1}{C_6} \leq \frac{\|V\|(B_\rho(y))}{\rho^{n-1}} \leq C_6 \frac{\|V\|(B_{R_y}(y))}{R_y^{n-1}} \]
for all $y \in \spt \|V\| \cap B_{d_x/2}(x)$ provided that $0 < \rho < R_y:=\min\{R_6,d_x/2\}$ when $y \in \partial \Omega$; and $0<\rho< R_y:=\min\{R_6,d_x/2,\dist_g(y,\partial \Omega)\}$ when $y \in \Omega$. Here, $d_x:=\dist_g(x,\partial U)>0$. It is important to keep in mind that $C_6, R_6$ are positive constants depending only on the ambient geometric quantities $n,i_0,k_0,\kappa_0$ and $\Lambda$ but not on $V$ and $y$. These estimates pass to the level of geodesic normal coordinates (due to \eqref{E:dexp-estimate}, \eqref{E:Jacobian-estimate} and the fact that uniform density ratio bounds are bi-Lipschitz invariant) to yield
for any fixed $R>0$ and all sufficiently large $i$,
\begin{equation}
    \label{E:density-ratio-bound-Vi}
\frac{1}{2C_6} \leq \frac{\|V_i\|(B^\delta_\rho(p_i))}{\rho^{n-1}} \leq 2C_6 \frac{\|V_i\|(B^\delta_{R_{p_i}}(p_i))}{R_{p_i}^{n-1}}
\end{equation} 
for all $p_i \in \spt \|V_i\| \cap B^\delta_R(0)$ provided that $0 < \rho < R_{p_i}:=R$ when $p_i \in \eta_{0,r_i}(\exp_x^{-1}(\partial \Omega))$; and $0<\rho< R_{p_i}:=\min\{R, \dist_\delta (p_i,\eta_{0,r_i}(\exp_x^{-1}(\partial \Omega)))\}$ when $p_i \in \eta_{0,r_i}(\exp_x^{-1}\Omega)$. Recall the elementary observation that density is preserved under $C^1$-diffeomorphism, we have $\Theta_{V_i}$ exists everywhere in $B^\delta_R(0)$ and $\Theta_{V_i}(p_i) \geq \frac{1}{2}$ for every $p_i \in \spt\|V_i\| \cap B^\delta_R(0)$ because of \eqref{E:density-lower-bound}. 

We now argue that the uniform density ratio bounds (with slightly different constants) pass on to the tangent varifold $\bC=\lim_{i \to \infty} \|V_i\|$. Let $p \in \spt \|C\|$, it is clear from definition that there exists $p_i \in \spt \|V_i\|$ such that $p_i \to p$. Since $\spt \|C\| \subset T^+_x \tM$, either $p$ is an interior point, i.e. $p \notin T_x \partial \Omega$, or $p$ is a boundary point, i.e. $p \in T_x \partial \Omega$. For an interior point $p \notin T_x \partial \Omega$, the approximating sequence $p_i \in \spt\|V_i\|$ will also be interior points for all sufficiently large $i$. Applying the interior case of \eqref{E:density-ratio-bound-Vi}, we have for such $p$ the following density ratio lower bound 
\[ \frac{\|\bC\|(B_\rho^\delta(p))}{\rho^{n-1}} \geq \frac{1}{4^n C_6}>0\]
provided that $0< \rho < \min\{R, \frac{1}{2}\dist_\delta(p,T_x \partial \Omega)\}$. Since $\bC$ is free boundary stationary, the density $\Theta_{\bC}$ exists everywhere and letting $\rho \to 0$ in the above inequality implies that $\Theta_{\bC}(p) \geq \frac{1}{4^n \omega_{n-1} C_6}>0$ for all interior points $p \in \spt \|\bC\| \setminus T_x \partial \Omega$. Suppose now $p \in \spt \|\bC\| \cap T_x \partial \Omega$ is a boundary point. If there are infinitely many $p_i$ in the approximating sequence that lies on the boundary $\eta_{0,r_i}(\exp_x^{-1}\Omega)$, then the same argument as above gives the same density lower bound for such $p$. It remains to treat the remaining case for those $p \in \spt \|\bC\| \cap T_x \partial \Omega$ but there exists some $r>0$ with $\spt\|V_i\| \cap B^\delta_r(p)$ contained in the interior $\exp_x^{-1}(\Omega)$ for all sufficiently large $i$. Going back to the geometric setting, it means that $V$ indeed has empty free boundary and hence we are once again reduced to the interior situation which has been treated before\footnote{More precisely, in this case although $\dist_\delta(p_i,\eta_{0,r_i}(\exp_x^{-1}\Omega)) \to 0$, we can still obtain \eqref{E:density-ratio-bound-Vi} with $R_{p_i}$ replaced by a fixed $R_0>0$ independent of $i$.}. Finally, since $\bC \in \bV_{n-1}(T_x \tM)$ is free boundary stationary with free boundary in $T_x \partial \Omega$ (hence $\bC$ has locally bounded first variation in $T_x \tM$) and its density exists and has a fixed positive lower bound everywhere in $\spt \|\bC\|$, we can apply Allard's Rectifiability Theorem \cite[5.5(1)]{Allard1972} to conclude that $\bC \in \bRV_{n-1}(T_x \tM)$. Once we know that $\bC$ is rectifiable and is free boundary stationary in $T_x \tM$ with free boundary on the hyperplane $T_x \partial \Omega$, standard arguments (c.f. \cite[Section 42]{Simon1983}) show that $\bC$ is a varifold cone centred at $0$. 

We now show that $\bC$ is integral, i.e. $\bC \in \bIV_{n-1}(T_x \tM)$, if $V$ is integral. In other words, we need to show that $\Theta_{\bC}(p) \in \mathbb{N}$ for $\|\bC\|$-a.e. $p \in \spt \|\bC\|$. First we address the interior points, i.e. $p \in \spt \|\bC\| \setminus T_x \partial \Omega$. Since we already know that $\bC$ is rectifiable, we can assume without loss of generality that $\|\bC\|$ has an approximate tangent space $T_p\bC \in \Gr_x(n,n-1)$ at $p$ in the sense that
\[ \lim_{s \to 0} s^{-(n-1)}(\eta_{p,s})_\sharp \|\bC\| = \Theta_{\bC}(p) \Hcal^{n-1} \mres T_p \bC   \]
as Radon measures in $T_x \tM$. Our goal is to show that $\Theta_{\bC}(p) \in \mathbb{N}$. 

Since $\lim_{i \to \infty} V_i = \bC$ as varifolds, we also have $\lim_{i \to \infty} \|V_i\|= \|\bC\|$ and therefore we can fix some sequence $s_i \to 0$ and then, after passing to a subsequence, assume that 
\begin{equation}
    \label{E:proof-tangent-cone-1}
\lim_{i \to \infty} r_i s_i^{-1} =0 \qquad \text{and} \qquad \lim_{i \to \infty} (\eta_{p,s_i})_\sharp V_i = \Theta_{\bC}(p) |T_p \bC|
\end{equation} 
as varifolds in $T_x \tM$. Again, although each $W_i:=(\eta_{p,s_i})_\sharp V_i \in \bIV_{n-1}(T_x \tM)$, we cannot apply the compactness theorem for integral varifolds in \cite[Theorem 6.4]{Allard1972} to conclude that the limit is integral since $(\eta_{p,s_i})_\sharp V_i$ a-priori do not have locally bounded first variation. Instead, we will proceed as in the proof of \cite[Theorem 6.4]{Allard1972}. Since all the $W_i$ satisfy uniform density ratio bounds as in \eqref{E:density-ratio-bound-Vi}, we have $\spt \|W_i\|$ converges to $T_p \bC$ locally in Hausdorff topology (c.f. \cite[Remark 10]{SchoenSimon81}). Let $\pi:T_x \tM \to T_x \tM$ denote the orthogonal projection onto $T_p \bC$. Then, we have from \eqref{E:proof-tangent-cone-1} and the rectifiability of $W_i$ that after projecting $W_i$ inside the slab $U_\epsilon(T_p \bC):=\{q \in T_x \tM : |q-\pi(q)|<\epsilon\}$ of width $\epsilon >0$ (to be fixed later) we have 
\[ \lim_{i \to \infty} \pi_\sharp (W_i \mres G(U_\epsilon(T_p \bC))) = \Theta_{\bC}(p) |T_p \bC|.\] 
Since $W_i \in \bIV_{n-1}(T_x \tM)$, after possibly re-defining on a set of $\Hcal^{n-1}$-measure zero, we consider the $L^1_{loc}$ function $\psi^\epsilon_i:T_p \bC \to \mathbb{N} \cup \{\infty\}$ defined by
\[ \psi^\epsilon_i(q)=\sum_{\substack{\tilde{q} \in \spt \|W_i\| \cap U_\epsilon(T_p \bC) \\
\pi(\tilde{q})=q}} \Theta_{W_i}(\tilde{q})  \]
converges weakly in $L^1_{loc}(T_p \bC;\Hcal^{n-1})$ to the constant function $\Theta_\bC(p)$. However, weak $L^1$-convergence is not sufficient to conclude convergence almost everywhere. We need to upgrade it (after suitable truncation) to strong $L^1$-convergence. This will be achieved by the following (c.f. \cite[Lemma 42.9]{Simon1983})

\vspace{1ex}

\textit{Claim: For any $\gamma \in (0,1)$, there exists $\epsilon=\epsilon(\gamma)>0$ such that for any sufficiently large $i$, there exists $A_i \subset B^\delta_1(0) \cap T_p \bC$ with $\Hcal^{n-1}(A_i)<\gamma$ such that for every $q \in (B^\delta_1(0) \cap T_p \bC) \setminus A_i$, we have}
\begin{equation}
\label{E:sheeting-estimates}
\psi^\epsilon_i(q) \leq  (1+\gamma) \frac{\|W_i\|(B^\delta_2(q))}{\omega_{n-1} 2^n} + \gamma.
\end{equation}

\vspace{1ex}

We first see how the claim implies $\Theta_\bC(p) \in \mathbb{N}$. Choose any sequence $\gamma_i \to 0$, there exists $\epsilon_i \to 0$ and, after passing to a subsequence, there exists $A_i \subset B^\delta_1(0) \cap T_p \bC$ with $\Hcal^{n-1}(A_i) \to 0$ such that by \eqref{E:proof-tangent-cone-1}
\[ \psi^{\epsilon_i}_i(q) \leq \Theta_\bC(p) + \frac{1}{i} \quad \text{for all $q \in (B^\delta_1(0) \cap T_p \bC) \setminus A_i$.}\]
Hence, for any positive integer $N>\Theta_\bC(p)$, the integer-valued function $\max\{\psi^{\epsilon_i}_i,N\}$ converges strongly in $L^1(B^\delta_1(0) \cap T_p \bC;\Hcal^{n-1})$ to $\Theta_\bC(p)$. By taking a subsequence that converges $\Hcal^{n-1}$-a.e. in $B^\delta_1(0) \cap T_p \bC$, we conclude that $\Theta_\bC(p) \in \mathbb{N}$. 

It remains to establish the claim above. Fix $\gamma \in (0,1)$. We will follow the arguments in \cite[Lemma 42.9]{Simon1983} with the caveat that $W_i$ does not have locally bounded first variation. First of all, we observe that by \eqref{E:proof-tangent-cone-1}, for all sufficiently large $i$ we have 
\begin{equation}
\label{E:Wi-uniform-mass-bound}
\|W_i\|(B^\delta_4(0)) \leq 5^{n-1} \omega_{n-1} \Theta_{\bC}(p).
\end{equation}
Therefore, $W_i$ have uniformly bounded mass in $B^\delta_4(0)$. 
Moreover, \eqref{E:proof-tangent-cone-1} also implies the $L^1$-tilt excess for $W_i$ in $B^\delta_4(0)$ converges to zero, i.e.
\[ \lim_{i \to \infty} \int_{G(B^\delta_4(0))} \|S-T_p \bC\| \; dW_i(q,S) = 0. \]
For any $\tau>0$, we now define the subset $A^\tau_i:=\pi(\tilde{A}^\tau_i) \subset B^\delta_1(0) \cap T_p \bC$ where
\[ \tilde{A}^\tau_i:=\left\{ \tilde{q} \in \spt\|W_i\| \cap B^\delta_2(0) \; : \; \exists \rho \in (0,2) \text{ such that } \int_{G(B^\delta_{\rho}(q))} \|S-T_p \bC\| \; dW_i(\tilde{q},S) >  \tau \rho^{n-1}  \right\}. \]
Since $W_i$ satisfies uniform density ratio bounds, by Besicovitch covering lemma \cite[Lemma 4.6]{Simon1983}, we have the following estimate on the measure of $\tilde{A}^\tau_i$:
\[ \|W_i\|(\tilde{A}^\tau_i) \leq C(n,i_0,k_0,\kappa_0,\Lambda) \tau^{-1}  \int_{G(B^\delta_4(0))} \|S-T_p \bC\| \; dW_i(q,S).\]
Since $W_i \in \bIV_{n-1}(T_x \tM)$, this implies 
\[ \lim_{i \to \infty} \Hcal^{n-1}(A^\tau_i) =0 \quad \text{for any fixed $\tau>0$.} \] 

Next, by the (interior) weighted monotonicity formula \eqref{E:weighted-montonicity} applied to the original free boundary varifold $V \in \bIV_{n-1}(U)$, together with \eqref{E:dexp-estimate}, \eqref{E:Jacobian-estimate} and some scaling arguments as before, we see that for all sufficiently large $i$, and any $q \in B^\delta_1(0) \setminus \tilde{A}^\tau_i$ (note that $q$ need not be in $\spt \|W_i\|$) and any $h>0$,
\begin{equation}
\label{E:slabbed-ball-estimate-1}    
\frac{\|W_i\|(U^h_{R_1}(q))}{\omega_{n-1} R_1^{n-1}} \leq (1+C(n,i_0,k_0,\kappa_0,\Lambda) r_i s_iR_2) \frac{\|W_i\|(U^{2h}_{R_2}(q))}{\omega_{n-1} R_2^{n-1}} + \frac{4}{3} \tau \left(\frac{R_2}{h}\right)^{n-1}
\end{equation} 
for any $0<R_1 \leq R_2\leq 2$. Here, we denote $U^h_R(q):= B^\delta_R(q) \cap \{p \in T_x \tM: |N_{T_p \bC} \cdot (p-q)| < h\}$ where $N_{T_p \bC}$ is a unit vector orthogonal to $T_p \bC$. If furthermore $q \in \spt \|W_i\|$, by letting $R_1 \to 0$ and $R_2=h=2$ in \eqref{E:slabbed-ball-estimate-1}, we obtain (note that $U^{2h}_2(q) \subset B^\delta_2(q)$ for any $h>0$)
\[ \Theta_{W_i}(q) \leq (1+\gamma) \frac{\|W_i\|(B_2^\delta(q))}{\omega_{n-1} 2^{n-1}} + \gamma\]
for all sufficiently large $i$ and sufficiently small $\tau$ (both depending on $\gamma$ besides the ambient constants $n,i_0,k_0,\kappa_0$ and $\Lambda$ but independent of $q$). This proves the claim in case there is only one $\tilde{q}\in \spt \|W_i\| \cap U_\epsilon(T_p \bC)$ such that $\pi(\tilde{q})=q \in (B^\delta_1(0) \cap T_p \bC) \setminus A_i$ (note that all such $\tilde{q} \in B^\delta_2(0)$) with $A_i=A_i^\tau$. 

We will finish the proof by inducting on the number of $\tilde{q}$'s in $\spt \|W_i\| \cap U_\epsilon(T_p \bC)$ such that $\pi(\tilde{q})=q \in (B^\delta_1(0) \cap T_p \bC) \setminus A^\tau_i$. Suppose there are $N$ distinct $\tilde{q}_1,\cdots,\tilde{q}_N \in \spt\|W_i\| \cap U_\epsilon(T_p \bC)$ such that $\pi(\tilde{q}_j)=q \in (B^\delta_1(0) \cap T_p \bC) \setminus A^\tau_i$ for $j=1,\cdots,N$. We assert that for all sufficiently large $i$ (depending on $\tau$ and $n,i_0,k_0,\kappa_0,\Lambda$), with $\epsilon=\tau^{\frac{1}{2(n-1)}}/2$ for any $\tau \in (0,1)$,
\begin{equation}
\label{E:multi-layer-monotonicity}
\sum_{j=1}^N \frac{\|W_i\|(U^h_{R}(\tilde{q}_j))}{\omega_{n-1} R^{n-1}} \leq (1+ C(n,N) \tau^{\frac{1}{2(n-1)}} ) \frac{\|W_i\|(B_2^\delta(0))}{\omega_{n-1} 2^{n-1}} + C(n,N) \tau^{1/2}
\end{equation}
provided that $h \leq \frac{1}{4}\min_{j \neq k} |\tilde{q}_j - \tilde{q}_k|$ and $R< \min\{2,\tau^{-\frac{1}{2(n-1)}} C(N) h\}$.  

To establish \eqref{E:multi-layer-monotonicity}, we first note that by the interior monotonicity formula (Lemma \ref{L:monotonicity limit varifold}), the same argument as before shows for any $q \in B^\delta_1(0)$ (again not necessarily inside $\spt \|W_i\|$), $0 <R_1 \leq R_2 \leq 2$, and all sufficiently large $i$,
\begin{equation}
\label{E:multi-layer-monotonicity-1}
   \frac{\|W_i\|(B^\delta_{R_1}(q))}{\omega_{n-1} R_1^{n-1}} \leq (1+ C(n,i_0,k_0,\kappa_0,\Lambda) r_i s_iR_2) \frac{\|W_i\|(B^\delta_{R_2}(q))}{\omega_{n-1} R_2^{n-1}}  
\end{equation}
For the purpose of induction on $N$ in \eqref{E:multi-layer-monotonicity}, note that when $N=1$, it follows directly from \eqref{E:slabbed-ball-estimate-1} by setting $R_1=R_2=R$, $C(1)=2$, $\epsilon=1$ and using \eqref{E:multi-layer-monotonicity-1}. Suppose now \eqref{E:multi-layer-monotonicity} holds up to $N-1$. Consider now $N$ distinct $\tilde{q}_1,\cdots,\tilde{q}_N \in \spt\|W_i\| \cap U_\epsilon(T_p \bC)$ such that $\pi(\tilde{q}_j)=q \in (B^\delta_1(0) \cap T_p \bC) \setminus A^\tau_i$ for $j=1,\cdots,N$. Set 
\[ \hat{h}:= \min_{1 \leq j \neq k \leq N} |\tilde{q}_j - \tilde{q}_k| \leq \frac{2\epsilon}{N-1}.\]
On the other hand, it is elementary to see that, after possibly re-labelling again, we can assume that\footnote{This is based on the elementary fact: there exists constants $C(N)>0$ such that given any collection $\{a_1,\cdots,a_N\}$ of $N$ points in $\mathbb{R}$ with $d:=\min_{1 \leq j \neq k \neq N} |a_j - a_k|$ (by convention, we take $d=+\infty$ if $N=1)$, there is always a proper sub-collection $\{\hat{a}_1,\cdots,\hat{a}_K\}$ with $1 \leq K \leq N-1$ and $\hat{d} \geq \frac{3d}{2}$ such that $\hat{d} \leq C(N) d$, $4\hat{d} \leq \min_{1 \leq j \neq k \neq K} |\hat{a}_j - \hat{a}_k|$
\[ \bigcup_{j=1}^N \left(a_j-\frac{d}{2},a_j+\frac{d}{2} \right) \subset \bigcup_{i=1}^K \left(\hat{a}_i-\hat{d},\hat{a}_i+\hat{d} \right).  \]
This can be proved by a simple (but tedious) induction argument on $N$. The basic idea is to analyse different cases corresponding to the relative distance between adjacent points and throwing away (in finitely many steps bounded by $N$) those within distance of order $d$ to increase the mutual distance of the remaining $K$ elements by a factor depending on $C(N)$.} there exists some $1\leq K \leq N-1$ and $\tilde{h} \geq \frac{3\hat{h}}{2}$ such that $\tilde{h} \leq C(N) \hat{h}$, $\tilde{h} \leq \frac{1}{4} \min_{1 \leq j \neq k \leq K} |\tilde{q}_j - \tilde{q}_k|$, and provided that $\frac{\hat{h}}{R_2}$ is sufficiently small, 
\begin{equation}
    \label{E:elementary-disjoint}
\bigcup_{j=1}^N U^{\hat{h}/2}_{R_2}(\tilde{q}_j) \subset \bigcup_{j=1}^{K} U^{\tilde{h}}_{(1+  \frac{\hat{h}}{2R_2})R_2}(\tilde{q}_j)
\end{equation}
which is the key to the induction argument. Observe that by \eqref{E:slabbed-ball-estimate-1}, we have for each $j=1,\cdots,N$ and all sufficiently large $i$ that for any $0<h \leq \frac{\hat{h}}{4}$,
\[ \frac{\|W_i\|(U^{h}_R(\tilde{q}_j))}{\omega_{n-1} R^{n-1}} \leq (1+C(n,i_0,k_0,\kappa_0,\Lambda) r_i s_iR_2) \frac{\|W_i\|(U^{\hat{h}/2}_{R_2}(\tilde{q}_j))}{\omega_{n-1} R_2^{n-1}} + \frac{4}{3} \tau \left(\frac{2R_2}{\hat{h}}\right)^{n-1} \]
provided that $0 < R < R_2 \leq 2$. Summing over $j=1,\cdots,N$, using \eqref{E:elementary-disjoint} and noting that the family $\{ U^{\hat{h}/2}_{R_2}(\tilde{q}_j) \}_{j=1}^N$ are disjoint, we obtain by setting $R_2= \min\{1, \tau^{-\frac{1}{2(n-1)}} \hat{h}\}$ and $\epsilon=\tau^{\frac{1}{2(n-1)}}/2$ (recall $\tilde{q}_j \in U_\epsilon(T_p \bC)$), for all sufficiently large $i$ (depending on $\tau, n, i_0,k_0,\kappa_0,\Lambda$)
\begin{eqnarray*}
   && \sum_{j=1}^N  \frac{\|W_i\|(U^{h}_R(\tilde{q}_j))}{\omega_{n-1} R^{n-1}} \leq (1+C(n,i_0,k_0,\kappa_0,\Lambda) r_i s_i R_2) \sum_{j=1}^{K} \frac{\|W_i\|(U^{\tilde{h}}_{(1+  \frac{\hat{h}}{2R_2}) R_2} (\tilde{q}_j)) }{\omega_{n-1}  R_2^{n-1}} +  \frac{4}{3} N \tau \left(\frac{2R_2}{\hat{h}}\right)^{n-1} \\
& &\leq (1+C(n,i_0,k_0,\kappa_0,\Lambda) r_i s_i) \left(1+ \frac{\hat{h}}{2R_2} \right)^{n-1} \sum_{j=1}^{K} \frac{\|W_i\|(U^{\tilde{h}}_{(1+  \frac{\hat{h}}{2R_2}) R_2} (\tilde{q}_j)) }{\omega_{n-1} \left( (1+ \frac{\hat{h}}{2R_2}) R_2 \right)^{n-1}} +  \frac{4}{3} N \tau \left(\frac{2R_2}{\hat{h}}\right)^{n-1} \\
&&\leq (1+C(n,N) \tau^{\frac{1}{2(n-1)}})  \frac{\|W_i\|(B_2^\delta(0))}{\omega_{n-1} 2^{n-1}} +  C(n,N) \tau^{1/2}
\end{eqnarray*} 
for any $0<R<\min\{2,\tau^{-\frac{1}{2(n-1)}} C(N) h\}$. This finishes the proof of \eqref{E:multi-layer-monotonicity}. Finally, \eqref{E:multi-layer-monotonicity} clearly implies the claim by first choosing $\tau$ small (depending on $\gamma$, $n$ and $N$) and then letting $R \to 0$.
\end{proof}
    
\begin{remark}\label{R:interior-tangent-cones}
    By \eqref{E:1st-variation-fbv-stationary}, if $x \in \spt\|V\|\cap \Omega \cap U$ (i.e. $x$ is an \textit{interior point}), then the varifold $V$ is stationary in a neighbourhood of $x$ away from $\partial \Omega$, and we can proceed with the same arguments above (cf. e.g. \cite[\S 42]{Simon1983}), so in particular $\VarTan(V,x)\neq\emptyset$ and any $\bC \in\VarTan(V,x)$ is an integer-rectifiable cone in $T_x\tM$, which is stationary in $T_x\tM$ provided that $V \in \bIV_{n-1}(U)$.
\end{remark}

Let us record here a simple consequence of Proposition \ref{P:tangent cones FB stationary varifolds} and Lemma \ref{L:stationarity planar tangents}.

\begin{corollary}
Let $V \in \bV_{n-1}(U)$ be a varifold with free boundary in $\partial \Omega \cap U$, $\spt \|V\| \subset \overline{\Omega} \cap U$ and $\|H^{tan}_V\|_{L^\infty(U)} \leq \Lambda$. Suppose $\mathbf C\in\VarTan(V,x)$, where $x \in \partial \Omega$, satisfies $\spt \|C\|\subset T_x \partial \Omega$. Then, $\bC=m |T_x \partial \Omega|$ for some positive $m\in \mathbb{R}$ in $T_x \tM$. If $V \in \bIV_{n-1}(U)$, then $m \in \mathbb{N}$.
\end{corollary}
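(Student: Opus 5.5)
The plan is to combine Proposition \ref{P:tangent cones FB stationary varifolds} with Lemma \ref{L:stationarity planar tangents}, with one extra step to get rectifiability in the non-integral case.

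\emph{Step 1: hypotheses of the lemma.} By Proposition \ref{P:tangent cones FB stationary varifolds}(i), any $\bC \in \VarTan(V,x)$ is free boundary stationary with free boundary on the hyperplane $T_x\partial\Omega$, and $\spt\|\bC\| \subset T_x^+\tM$. This part of the proposition needs only the free boundary assumption and the bound $\|H^{tan}_V\|_{L^\infty(U)}\le\Lambda$, not rectifiability. Identify $T_x\tM$ with $\mathbb{R}^n$, $T_x^+\tM$ with $\mathbb{R}^n_+(v)$ for $v=-\nu(x)$, and $T_x\partial\Omega$ with $\partial\mathbb{R}^n_+$. With the extra assumption $\spt\|\bC\|\subset T_x\partial\Omega$, these are exactly the hypotheses of Lemma \ref{L:stationarity planar tangents}.

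\emph{Step 2: apply the lemma.} Lemma \ref{L:stationarity planar tangents} gives that $\bC$ is stationary in $T_x\tM$. I would not use the lemma's conclusion $\bC=m|\partial\mathbb{R}^n_+|$ directly, since its final step calls the classical constancy theorem, which is stated for rectifiable varifolds.

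\emph{Step 3: identify $\bC$.} Instead I would apply Lemma \ref{L:constancy lemma De Masi} to $\bC$ with $U=T_x\tM$ and $\partial\Omega$ replaced by the flat hyperplane; it only requires free boundary in that hyperplane. Since $\spt\|\bC\|\subset T_x\partial\Omega$, the interior term vanishes. Hence $\bC=\Theta_{\bC}(\cdot)\,\Hcal^{n-1}\mres T_x\partial\Omega$, with tangent plane $T_x\partial\Omega$ $\|\bC\|$-a.e., so $\bC$ is rectifiable. The classical constancy theorem \cite[41.1]{Simon1983}, applied to the stationary rectifiable varifold $\bC$ supported in the connected hyperplane, then gives $\bC=m|T_x\partial\Omega|$ for a constant $m\ge 0$.

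\emph{Step 4: $m>0$ and integrality.} By Proposition \ref{P:tangent cones FB stationary varifolds}(ii), $m=\Theta_{\bC}(0)=\Theta_V(x)$. Since $x\in\spt\|V\|$ implicitly, the lower bound \eqref{E:density-lower-bound} (or the integrality discussion there) gives $m>0$. I expect this positivity to be the only delicate point in the non-integral case: it needs a density lower bound at $x$, so I would either read $m$ as $\Theta_V(x)$ and note that it is positive precisely when $\|\bC\|\ne0$, or invoke \eqref{E:density-lower-bound} when available. If $V\in\bIV_{n-1}(U)$, the last part of Proposition \ref{P:tangent cones FB stationary varifolds} gives $\bC\in\bIV_{n-1}(T_x\tM)$. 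Its density is then $m$ at $\Hcal^{n-1}$-a.e. point of $T_x\partial\Omega$, so $m\in\mathbb{N}$.
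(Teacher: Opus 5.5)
Your proposal is correct and follows the paper's own route: the corollary is recorded as a direct consequence of Proposition \ref{P:tangent cones FB stationary varifolds} together with Lemma \ref{L:stationarity planar tangents}, and integrality comes from the last part of that proposition. Two points you add are left implicit in the paper. First, your explicit rectifiability step via Lemma \ref{L:constancy lemma De Masi} is the same constancy lemma the proof of Lemma \ref{L:stationarity planar tangents} already uses. Second, your caveat that $m=\Theta_V(x)$ is positive only when $\bC\neq 0$ is correct; positivity is automatic in the integral case by \eqref{E:density-lower-bound}.
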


\begin{remark}
  Note that some of the statements in this subsection hold in the more general setting of free boundary varifold with $H^{tan}_V\in L^p(U;\|V\|)$ for $p > n-1$ (see, for example, \cite{Masi2020}).   
\end{remark}

\subsection{Singular set and stratification}
\label{SS:stratification}
In order to analyse the fine properties of the singularities of an integral free boundary stationary (or more generally, $\|H^{tan}_V\|_{L^\infty(U)} \leq \Lambda$) varifold $V\in\bIV_{n-1}(U)$, we need some further properties about the general structure of the singular set. In particular, we need to know that the singular set can be divided into different strata according to the maximal dimension of translation invariance for their tangent cones. We begin with a precise definition of the singular set.

\begin{definition}[Regular and singular set]\label{D:reg(V)-sing(V)}
Let $V\in\bIV_{n-1}(U)$ be an integral free boundary stationary varifold in $\Omega \cap U$ with free boundary in $\partial\Omega\cap U$ and $\spt \|V\| \subset \overline{\Omega} \cap U$. We define the \textit{regular set} $\reg(V)$ of $V$ to be
\begin{equation}
    \reg(V):=\bigg\{x \in \spt \|V\|:
        \begin{array}{l}
            \textrm{there exists \(r>0\) such that \(\spt\|V\|\cap B_{r}(x)\) is an embedded}
            \\
            \textrm{$C^\infty$ hypersurface with (possibly empty) boundary in \(\partial \Omega \cap U\)}
        \end{array}\bigg\}.
    \end{equation}
Moreover, we define its complement to be the \textit{singular set} $\sing(V)$ of $V$ by
\begin{equation}
    \sing(V):= \spt \|V\|\setminus\reg(V).
\end{equation}
The singular set can be further subdivided into its boundary part
\begin{equation}
    \sing_b(V)\coloneqq\sing(V)\cap \partial\Omega,
\end{equation}
and its interior part 
\begin{equation}
    \sing_i(V)\coloneqq \sing(V)\cap \Omega.
\end{equation}
\end{definition}

\begin{remark}
Note that by definition, $\reg(V)$ is a relatively open subset of $\spt \|V\|$. Hence, $\sing(V)$ is a relatively closed subset of $\spt \|V\|$. However, while $\sing_b(V)$ is also a relatively closed subset, $\sing_i(V)$ may not be. This simple but subtle observation is important in our analysis to follow.
\end{remark}

The most fundamental result in the general varifold setting is Allard's Regularity Theorem \cite[Theorem 8.19]{Allard1972}. Together with the free boundary version in \cite[Theorem 4.13]{Grueter1986} (see also \cite{Scharrer2022} and \cite{Nardulli}) in the intrinsic Riemannian setting), we can state it for our purpose as the following $\epsilon$-regularity result in terms of the reflected density defined in \eqref{E:reflected-density}.

\begin{proposition}[Consequence of Allard regularity - Density gap]
\label{P:Allard}
    There exists a small constant $\epsilon=\epsilon(n)\in(0,1)$ such that the following holds: given any free boundary stationary integral varifold $V\in\bIV_{n-1}(U)$ with free boundary in $\partial\Omega\cap U$ and $\spt \|V\| \subset \overline{\Omega} \cap U$, we have
    \begin{equation}
        \{x \in \spt \|V\|: 1 \leq \oTh_V(x)<1+\epsilon\}\subset \reg(V).
    \end{equation}
    In other words, any singular point must have reflected density $\geq 1+\epsilon$. As a consequence, $\reg(V)$ is an open and dense subset of $\spt \|V\|$.
\end{proposition}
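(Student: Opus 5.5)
The plan is to split into interior and boundary points and, in each case, reduce to an Allard-type $\epsilon$-regularity theorem via the monotonicity formula (Lemma \ref{L:monotonicity limit varifold}) and upper semi-continuity of $\oTh_V$ (Remark \ref{R:monotonicity}). Let $x\in\spt\|V\|$ with $1\le \oTh_V(x)<1+\epsilon$.

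\emph{Interior points} $x\in\Omega$. Here $\oTh_V(x)=\Theta_V(x)$, and $V$ is stationary in $B_r(x)$ for $r<\dist_g(x,\partial\Omega)$ by \eqref{E:1st-variation-fbv-stationary}. Since $\Theta_V(x)<1+\epsilon$, for small $\rho$ the almost-monotone quantity $e^{C_5\rho}\|V\|(B_\rho(x))/(\omega_{n-1}\rho^{n-1})$ is below $1+2\epsilon$. Allard's theorem in its intrinsic Riemannian form \cite{Allard1972,Scharrer2022}, with $\epsilon$ chosen below its threshold, then shows that $\spt\|V\|\cap B_{\rho/2}(x)$ is a $C^{1,\alpha}$ graph. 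Integrality together with the density bound forces multiplicity one, so $V$ is a stationary unit-density $C^{1,\alpha}$ hypersurface there. Elliptic regularity for the minimal surface equation upgrades this to $C^\infty$.

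\emph{Boundary points} $x\in\partial\Omega$. Now $\oTh_V(x)=2\Theta_V(x)<1+\epsilon$, so $\Theta_V(x)<\tfrac12+\tfrac\epsilon2$. Using the reflected-ball monotonicity of Appendix \ref{A:reflected-density}, the reflected density ratio at all nearby points and small scales is below $1+2\epsilon$. This is exactly the hypothesis of Gr\"uter--Jost's free boundary Allard theorem \cite[Theorem 4.13]{Grueter1986}, applied in Riemannian form \cite{Scharrer2022,Nardulli}. The conclusion is that $\spt\|V\|\cap B_r(x)$ is a $C^{1,\alpha}$ hypersurface meeting $\partial\Omega$ orthogonally along its boundary, with multiplicity one by integrality. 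Higher regularity then follows from the minimal surface equation with Neumann-type boundary condition, by standard Schauder theory.

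One case needs separate treatment: $V$ may lean on $\partial\Omega$, that is, $\|V\|(\partial\Omega\cap B_r(x))>0$. In that case the constancy result Lemma \ref{L:constancy lemma De Masi} gives density $\Theta_V\ge1$ on a positive-measure part of $\partial\Omega$, hence $\oTh_V\ge 2$ nearby. Upper semi-continuity of $\oTh_V$ then contradicts $\oTh_V(x)<1+\epsilon$, so this case cannot occur. I expect verifying the Gr\"uter--Jost hypotheses in the intrinsic setting to be the main technical point; this is handled through the reflected-ball monotonicity.

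\emph{Density of $\reg(V)$.} Take any $x\in\spt\|V\|$ and $r>0$. By the constancy Lemma \ref{L:constancy lemma De Masi} and rectifiability, there are interior points of $\spt\|V\|\cap B_r(x)$ at which an approximate tangent plane exists and $\Theta_V$ is a positive integer. Choose one that is a Lebesgue point where the density equals its minimal value $m\in\N$ on a neighbourhood. Allard's theorem applied to the varifold $V/m$, which is stationary with density $1$ $\|V\|$-a.e. near that point, gives a regular point, so $\reg(V)$ is dense. Alternatively, one can use the standard argument \cite[Theorem 8.19]{Allard1972}: density points of the set $\{\Theta_V=m\}$ with $m$ locally minimal are regular. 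Finally, $\reg(V)$ is relatively open by definition.
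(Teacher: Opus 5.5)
Your proposal is correct and takes essentially the paper's route: you handle interior points by Allard's theorem in the intrinsic setting and boundary points by Gr\"uter--Jost's free-boundary version, phrased through the reflected density, which is all the paper does (by citation). Only minor tidying is needed: in the leaning case, run the argument with $\|V\|(\partial\Omega\cap B_r(x))>0$ for \emph{every} $r>0$ (otherwise shrink the ball); and in the density argument, also allow $\spt\|V\|\cap B_r(x)\subset\partial\Omega$, where tangential stationarity forces $V=m|\partial\Omega|$ locally with $m$ constant, so $x$ is itself regular and your claim that interior points exist is not needed there.
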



\begin{remark}
In particular, if an interior point $x \in \spt \|V\| \cap \Omega$ has a tangent cone which is a multiplicity one plane, then $\oTh_V(x)=\Theta_V(x)=1$ and thus $B_r(x) \cap \spt \|V\|$ is a smoothly embedded minimal hypersurface without boundary for some $r>0$. On the other hand, if a boundary point $x \in \spt \|V\| \cap \partial \Omega$ has a tangent cone which is a multiplicity one half-plane orthogonal to $T_x \partial \Omega$, then $\oTh_V(x)=2\Theta_V(x)=1$ and thus $B_r(x) \cap \spt \|V\|$ is a smoothly embedded minimal hypersurface with non-empty free boundary lying on $\partial \Omega \cap U$ for some $r>0$.
\end{remark}

Not much is known beyond the above $\epsilon$-regularity without further assumptions (such as stability or multiplicity-one for certain tangent cones). However, there is also a very general result known as Almgren-Federer stratification, due to \cite{Federer1970} and \cite{Almgren2000}, that plays a key role in the singularity analysis of general stationary integral varifolds. The basic idea is to sub-divide the singular set into different stratum according to the order of symmetry (with respect to translations) for all possible tangent cones that can occur. We now describe this stratification procedure, paying special attention to the boundary behaviour. First, we recall some definitions and notation about varifold cones (not necessarily arising from tangent cones) inside Euclidean space.

\begin{definition}[Spine of a cone]\label{D:spine}
Let $\bC\in\bIV_{n-1}(\mathbb{R}^n)$ be an integer-rectifiable cone (in the sense that $(\eta_{0,r})_\sharp \bC=\bC$ for all $r>0$). We define the \textit{spine} $S(\bC)$ of $\bC$ to be the translation invariance set of $\bC$, namely
\begin{equation}
    S(\bC)\coloneqq \{p\in \mathbb{R}^n:(\eta_{p,1})_\sharp\bC=\bC\}.
\end{equation}
\end{definition}

It is clear from the definition that the spine $S(\bC)$ is always a linear subspace of $\mathbb{R}^n$ with dimension less than or equal to $n-1$. In the case of free boundary cones contained inside the Euclidean half-space $\mathbb{R}^n_+$ (recall the paragraph containing \eqref{E:reflection}), we have the following useful characterization of the spine.

\begin{lemma}\label{L:spine-boundary}
Let $\bC\in \bIV_{n-1}(\mathbb{R}^n)$ be an integer-rectifiable cone with free boundary in $\partial \mathbb{R}^n_+$ and $\spt \|\bC\| \subset \mathbb{R}^n_+$. Then $S(\bC)$ is a linear subspace contained in $\partial \mathbb{R}^n_+$. In particular,
\begin{equation}
    S(\bC)=\{p \in \partial \mathbb{R}^n_+ \; : \; (\eta_{p,1})_\sharp\bC=\bC\}.
\end{equation}
Consequently, we have $(\eta_{p,1})_\sharp \|\delta \bC\|_{sing}=\|\delta \bC\|_{sing}$ for all $p \in S(\bC)$.
\end{lemma}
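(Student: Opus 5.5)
We need to show that if $p\in S(\bC)$ then $p\cdot v=0$, where $v=-\nu$ is the inward normal and $\mathbb{R}^n_+=\{u\cdot v\ge 0\}$. The plan is to argue by contradiction using the support condition and the cone structure. Suppose $p\in S(\bC)$ with $p\cdot v\neq 0$. Since $S(\bC)$ is a linear subspace, both $p$ and $-p$ lie in $S(\bC)$, and so does $tp$ for every $t\in\mathbb{R}$. Translation invariance gives $\spt\|\bC\| + tp=\spt\|\bC\|$ for all $t$. We may assume $\bC\neq 0$; otherwise every $p$ is trivially in the spine, but then all statements are vacuous. Actually, we would need to take the convention that the zero cone is excluded, or simply note that the lemma is meant for nonzero $\bC$.

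Pick any $q\in\spt\|\bC\|$. Then $q+tp\in\spt\|\bC\|\subset\mathbb{R}^n_+$ for all $t\in\mathbb{R}$, so $(q+tp)\cdot v=q\cdot v+t\,(p\cdot v)\ge 0$ for all $t$. Choosing $t$ with sign opposite to $p\cdot v$ and $|t|$ large gives a contradiction. Hence $p\cdot v=0$, i.e. $p\in\partial\mathbb{R}^n_+$, and $S(\bC)\subset\partial\mathbb{R}^n_+$. The displayed characterization of $S(\bC)$ is then immediate, since it is just the definition intersected with $\partial\mathbb{R}^n_+$.

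For the final assertion, fix $p\in S(\bC)\subset\partial\mathbb{R}^n_+$ and let $\tau=\eta_{p,1}$. This translation preserves $\mathbb{R}^n_+$ and $\partial\mathbb{R}^n_+$, and its differential is the identity. So for any $X\in\mathfrak{X}_c(\mathbb{R}^n)$ we have $\delta(\tau_\sharp\bC)(X)=\delta\bC(X\circ\tau)$. Since $\tau_\sharp\bC=\bC$, this gives $\delta\bC(X)=\delta\bC(X\circ\tau)$. Taking total variations, $\tau_\ast\|\delta\bC\|=\|\delta\bC\|$, and likewise $\tau_\ast\|\bC\|=\|\bC\|$.

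Since $\bC$ is free boundary stationary in the flat half-space, $H^{tan}=0$ and $H_{\partial\mathbb{R}^n_+}=0$. By \eqref{E:1st-variation-fbv-refined}, $\|\delta\bC\|=\|\delta\bC\|_{sing}$. Alternatively, one can argue that the decomposition into parts absolutely continuous and singular with respect to $\|\bC\|$ is preserved by a map preserving both measures. Either way, $(\eta_{p,1})_\sharp\|\delta\bC\|_{sing}=\|\delta\bC\|_{sing}$.

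The only mildly delicate point is justifying that $\|\delta\bC\|$ has no absolutely continuous part. This uses the flatness of $\partial\mathbb{R}^n_+$. The Radon–Nikodym uniqueness argument provides a backup that avoids it.
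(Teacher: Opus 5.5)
Your argument is correct and is essentially the paper's. The paper also gets $S(\bC)\subset\partial\mathbb{R}^n_+$ from the fact that the linear subspace $S(\bC)$ sits inside $\spt\|\bC\|\subset\mathbb{R}^n_+$; it uses $0\in\spt\|\bC\|$, i.e.\ $\bC\neq 0$, where you use an arbitrary $q\in\spt\|\bC\|$. It then dispatches the invariance of $\|\delta\bC\|_{sing}$ in one line by appeal to Proposition~\ref{P:free-boundary-characterization}, and your translation-invariance computation makes that step explicit.

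One caution on the final claim: the lemma only assumes that $\bC$ has free boundary, not that it is free boundary stationary. So your first justification of $\|\delta\bC\|=\|\delta\bC\|_{sing}$ (via $H^{tan}=0$) is not available in general. What actually proves the claim is your backup argument via uniqueness of the Lebesgue decomposition, since the translation preserves both $\|\bC\|$ and $\|\delta\bC\|$.
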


\begin{proof}
It suffices to show that $S(\bC)$ is actually contained inside the hyperplane $\partial \mathbb{R}^n_+$. It follows from the simple observation $S(\bC) \subset \spt \|\bC\|$ (as $0 \in \spt \|\bC\|$ since $\bC$ is a cone) and the assumption that $\spt \|\bC\| \subset \mathbb{R}^n_+$. Hence, being a subspace lying inside the half-space forces $S(\bC) \subset \partial \mathbb{R}^n_+$. The last assertion follows readily from Proposition \ref{P:free-boundary-characterization}.
\end{proof}

Let us now focus on the special situation of cones $\bC \in \bIV_{n-1}(T_x\tM)$ that arise as tangent cones to a free boundary stationary integral varifold $V \in \bIV_{n-1}(U)$ with free boundary in $\partial \Omega \cap U$ and $\spt\|V\| \subset \Omega \cap U$. Fix $x \in \spt\|V\|$. By Proposition \ref{P:tangent cones FB stationary varifolds} and Remark \ref{R:interior-tangent-cones}, $\VarTan(V,x)\neq \emptyset$, and any element $\bC \in \VarTan(V,x)$ is an integral cone $\bC \in \bIV_{n-1}(T_p\tM)$ which is either stationary in $T_x \tM$ when $x \in \Omega \cap U$ is an interior point; or free boundary stationary in $T_x \tM$ with free boundary in $T_x \partial \Omega$ and $\spt \|\bC\| \subset T^+_x \tM$ when $x \in \partial\Omega\cap U$ is a boundary point. By Lemma \ref{L:reflection stationary}, in the latter case, the doubled-cone $\obC$ is a stationary cone in $T_x \tM$ and its density function $\Theta_{\obC}$ coincides exactly with the reflected density function of $\bC$ (recall \eqref{E:reflected-density}):
\begin{equation}
    \Theta_{\obC}=\oTh_\bC.
\end{equation}
Moreover, the spine of the original tangent cone $\bC$ and its doubled-cone $\obC$ are related as follows:
\begin{equation}
\label{E:spine-inclusion}
    S(\bC)=S(\obC)\cap T_x \partial\Omega.
\end{equation}
To see this, just note that the translation $\eta_{p,1}$ and the reflection $\theta_{-\nu}$ (recall \eqref{E:reflection}) commutes with each other when $p \in T_x \partial \Omega$. This implies $S(\bC) \subset S(\obC)\cap T_x \partial\Omega$. To see the other inclusion $S(\obC)\cap T_x \partial\Omega \subset S(\bC)$, take any $p \in S(\obC)\cap T_x \partial\Omega$. By the definition of spine and commutativity of $\eta_{p,1}$ and $\theta_{-\nu}$, we have
\begin{equation}
    \bC+(\theta_{-\nu})_\sharp\bC=\obC=(\eta_{p,1})_\sharp\obC=(\eta_{p,1})_\sharp\bC+(\theta_{-\nu})_\sharp((\eta_{p,1})_\sharp\bC).
\end{equation}
Note that $\spt\|\bC\|\subset T_x^+\tM$ and $\spt \|(\theta_{-\nu})_\sharp \bC\|\subset \theta_{-\nu} (T_x^+ \tM)$. If $\spt\|\bC\|$ is not contained in $T_x \partial \Omega$, then we must have $\bC=(\eta_{p,1})_\sharp \bC$, i.e. $p \in S(\bC)$. If $\spt\|\bC\| \subset T_x \partial \Omega$, then $\bC= m |T_x \partial \Omega|$ for some $m \in \mathbb{N}$ by Lemma \ref{L:stationarity planar tangents} and thus $\bC=(\eta_{p,1})_\sharp \bC$ as well. This verifies \eqref{E:spine-inclusion}.

Next, we observe another useful dichotomy property of the spine with respect to $T_x \partial \Omega$:
\begin{equation}
\label{E:dichotomy-cones}
\text{Either} \qquad  S(\overline{\bC}) \subset T_x \partial \Omega \qquad \text{ or } \qquad S(\overline{\bC}) \perp T_x \partial \Omega.
\end{equation}
If $S(\overline{\bC}) \subset T_x \partial \Omega$, then \eqref{E:spine-inclusion} implies $S(\bC)=S(\overline{\bC})$. If $S(\overline{\bC}) \perp T_x \partial \Omega$, then $\nu \in S(\overline{\bC})$ and we have an orthogonal splitting $S(\overline{\bC})=S(\bC) \times  \mathbb{R}\nu$. To see why \eqref{E:dichotomy-cones} is true, since by definition $\obC$ is invariant under reflection, i.e. $(\theta_{-\nu})_\sharp \obC=\obC$, we clearly have the spine $S(\obC)$ is also invariant under reflection.

It is well-known (see e.g. \cite[Section 4]{White1997}) that for a stationary integral cone $\bC$ in $\mathbb{R}^n$, the spine can also be characterized as the set of points of maximal density which is equal to $\Theta_\bC(0)$ (by the upper semi-continuity of density and that $\bC$ is a cone), i.e.
\begin{equation} 
\label{E:spine-max-density-interior}
S(\bC)=\{p \in \mathbb{R}^n \; : \; \Theta_\bC(p)=\Theta_\bC(0) \}.
\end{equation}
For free boundary stationary integral cones $\bC$ in $\mathbb{R}^n$ with $\spt\|\bC\| \subset \mathbb{R}^n_+$, we also have the same characterization (c.f. \cite[Lemma 5.6]{Masi2020}):
\begin{equation} 
\label{E:spine-max-density-boundary}
S(\bC)=\{p \in \partial \mathbb{R}^n_+ \; : \; \Theta_\bC(p)=\Theta_\bC(0) \}.
\end{equation}

In fact, the translation invariance along the spine gives a useful ``dimension-reduction'' argument known as \emph{cone splitting}. In the case of a stationary integral cone $\bC$ in $\mathbb{R}^n$, we know that up to a rotation the cone splits as follow: 
	\begin{equation}
    \label{E:cone-splitting}
		\bC=\bC_0\times \R^{\dim(S(\bC))}
	\end{equation}
where $\bC_0$ is a stationary integral cone of dimension $n-1-\dim(S(\bC))$ in $\R^{n-\dim(S(\bC))}$. In the case of a free boundary stationary integral cone $\bC$ contained inside a half-space $\mathbb{R}^n_+$, the same cone splitting applies to the doubled cone $\obC$. However, the dimension of the $\mathbb{R}$-factor may be different as we have $\dim S(\obC)=\dim S(\bC)+1$ in the second case of \eqref{E:dichotomy-cones}.  

We now describe the Almgren-Federer stratification adapted to the free boundary setting. The key idea is again to sub-divide the singular set according to the maximal spine dimension of all possible tangent cones at a singular point. One has to be careful at a boundary singular point as the spine dimension could be different for the cone and its doubled cone. In order to give a unified statement that works for both interior and boundary singular points, we introduce the notion of \emph{modified cone} in the definition below.

\begin{definition}
Let $V \in \bIV_{n-1}(U)$ be a free boundary stationary varifold with free boundary in $\partial \Omega \cap U$ and $\spt \|V\| \subset \overline{\Omega} \cap U$. For each $j\in\{0, 1,\dots, n-1\}$, we define the \emph{$j$-stratum} $\Scal_j$ of the singular set as follows:
\begin{equation}
    \Scal_j := \{x \in \sing(V) \; : \; \dim S(\widehat{\bC})\leq j \text{ for all } \bC \in \VarTan(V,x)\}
\end{equation}
where for each tangent cone $\bC \in \VarTan(V,x)$, the \emph{modified cone} $\widehat{\bC} \in \bIV_{n-1}(T_x \tM)$ is defined as
    \begin{equation}
        \widehat{\bC}:= \left\{\begin{array}{cl}
        \bC & \text{if $x \in \Omega$,} \\ 
        \bC+(\theta_{-\nu(x)})_\sharp \bC & \text{if $x \in \partial \Omega$.}
        \end{array}\right.
    \end{equation}
In other words, $\widehat{\bC}$ is either the cone $\bC$ itself or the doubled cone $\obC$ depending on whether the cone is based at an interior or a boundary point $x \in \spt \|V\|$. Note that in either case $\widehat{\bC}$ is stationary (without free boundary!) in $T_p\tM$ and $\Theta_{\widehat{\bC}}(0)=\oTh_V(x)$. From the definition we clearly have
\begin{equation}
    \Scal_0\subset\Scal_1\subset\dots\subset\Scal_{n-1}=\sing(V).
\end{equation}
For each stratum $\Scal_j$, it is  useful to define the following subset of $\Scal_j$:
\begin{equation}
    \tScal_j := \Scal_j\setminus\Scal_{j-1}=\left\{x \in \sing(V) \; : \; \max_{\bC \in \VarTan(V,x)} \dim(S(\widehat{\bC})) = j \right\}.
\end{equation}
\end{definition}

\begin{example}[Classical boundary tangent cones]
\label{Ex:boundary-classical-cones}
Consider the following union of two half-planes 
\[ V_1=|\{(x,y) \in \mathbb{R}^2 \; : \; |x|=|y| \text{ and } y \geq 0\}| \times \mathbb{R}^{n-2} \in \bIV_{n-1}(\mathbb{R}^n)\]
which is a free boundary stationary varifold contained in the half-space $\mathbb{R}^n_+$, in this case we have 
\[ \sing (V_1)=\tScal_{n-2}=\{0\} \times \mathbb{R}^{n-2}.\] 
Perhaps a more interesting example is 
\[ V_2= \left( 2 | \{(x,y):y=\sqrt{3}x, \; y \geq 0  \} | +  |\{(x,y):x \leq 0, \; y=0\}| \right) \times \mathbb{R}^{n-2} \in \bIV_{n-1}(\mathbb{R}^n)\]
which is also a free boundary stationary varifold in the half-space $\mathbb{R}^n_+$, in this case we still have 
\[ \sing (V_2)=\tScal_{n-2}=\{0\} \times \mathbb{R}^{n-2}.\] 
\end{example}

\begin{remark}\label{R:cones-large-spine}
Before we state the main result (Proposition \ref{P:stratification}) concerning estimates on the Hausdorff dimension of each stratum of the singular set, we first take a closer look at the $j$-stratum for $j \geq n-3$, which will be most relevant for use in our paper. In our discussions below, let $x \in \sing (V)$ and $\bC \in \VarTan(V,x)$. Recall that the modified cone $\widehat{\bC}$ is always a stationary integral cone in $T_p\tM$ by Proposition \ref{P:tangent cones FB stationary varifolds} and Lemma \ref{L:reflection stationary}.
\begin{enumerate}[(i)]
    \item If $\dim(S(\widehat{\bC}))=n-1$ (i.e. $x \in \tScal_{n-1}$), then the classical constancy theorem says that $\widehat{\bC}= m |P|$ for some hyperplane $P \in \Gr_x(n,n-1)$ and $m  \in \mathbb{N}$. In fact, $m \geq 2$ by Allard's regularity (Proposition \ref{P:Allard}). If $x \in \sing_i(V)$ is an interior singular point, then clearly $\bC=m|P|$. If $x \in \sing_b(V)$ is a boundary singular point, then we have the following dichotomy:
    \begin{itemize}
        \item either $P=T_x \partial\Omega$ and $\bC=m|T_x \partial \Omega|$ for some $m \in  \mathbb{N}$ (possibly $m=1$);
        \item or $P \perp T_x \partial \Omega$ and $\bC=m|P_+|$ for some $m \in  \mathbb{N}$ with $m \geq 2$ and $P_+:=P \cap T^+_x \tM$.
    \end{itemize}
    
    \item If $\dim(S(\widehat{\bC}))=n-2$, then by \eqref{E:cone-splitting} we have up to rotations the splitting 
    \begin{equation}
        \label{E:interior-classical-cone}
    \widehat{\bC}=\widehat{\bC}_0 \times S(\widehat{\bC})\cong \widehat{\bC}_0 \times \R^{n-2},
    \end{equation} 
    where $\widehat{\bC}_0$ is a $1$-dimensional stationary integral cone in $\R^2$. By \cite{Allard-Almgren1976}, $\widehat{\bC}_0$ consists of a finite union of rays emanating from the origin, each ray is counted with integer multiplicities satisfying the stationarity condition at the origin. Cones of the form \eqref{E:interior-classical-cone} are often referred to in the literature (c.f. \cite{Wick14}) as \textit{(interior) classical tangent cones}. If $x \in \sing_i(V)$ is an interior singular point, then $\widehat{\bC}_0$ must consist of at least three rays. If $x \in \sing_b(V)$ is a boundary singular point, then we have the following dichotomy by \eqref{E:dichotomy-cones}:
    \begin{itemize}
        \item either $S(\widehat{\bC})=S(\bC)\subset T_x \partial \Omega$, in which case 
        \begin{equation}
        \label{E:boundary-classical-cone}
        \bC=\bC_0 \times S(\bC)
        \end{equation}
        where $\bC_0$ is a $1$-dimensional free boundary stationary integral cone contained in the plane $\mathbb{R}^2 \cong \Span \{\nu(x),e\}$ with $\spt \|\bC_0\| \subset \mathbb{R}^2_+$. Here, $e \in T_x \partial \Omega$ is a unit vector perpendicular to $S(\bC)$ and $\mathbb{R}^2_+:=\{p \in \mathbb{R}^2 : p \cdot \nu(x) \leq 0\}$. We shall refer to cones of the form \eqref{E:boundary-classical-cone} as \emph{boundary classical tangent cone}.
        \item or $S(\widehat{\bC}) \perp T_x \partial \Omega$, in which case $S(\widehat{\bC})=S(\bC)\times \R \nu(x)$ so $\dim(S(\bC))=n-3$. In fact, 
        \begin{equation}
            \label{E:half-classical-cone}
        \bC=\bC_0 \times S(\bC) \times \mathbb{R}_+ \nu(x)
        \end{equation} 
        where $\mathbb{R}_+ \nu(x)$ denotes the ray $\{-r \nu(x):r \geq 0\}$ along $-\nu(x)$ that points into $\Omega$, and $\bC_0$ is a $1$-dimensional stationary (without free boundary!) integral cone contained in a $2$-dimensional subspace of $T_x \partial \Omega$. We shall refer to cones of the form \eqref{E:half-classical-cone} \emph{half classical tangent cone}.
    \end{itemize}
    
\end{enumerate}
\end{remark}

The usefulness of such stratification is due to the following fundamental result on the size of the $j$-stratum of the singular set, which was first proved by Almgren \cite{Almgren2000} in the setting of area-minimizing currents.

\begin{proposition}\label{P:stratification}
    Let $V\in \bIV_{n-1}(U)$ be a free boundary stationary integral varifold with free boundary in $\partial \Omega \cap U$ and $\spt \|V\| \subset \overline{\Omega} \cap U$. Then, for all $j\in\{0,1,\dots, n-1\}$
    \begin{equation}
        \dim_\Hcal\Scal_j\leq j,
    \end{equation}
    where $\dim_\Hcal$ denotes the Hausdorff dimension.
\end{proposition}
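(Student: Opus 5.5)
The plan is to run the classical Almgren--Federer dimension-reduction argument (cf. \cite[Appendix A]{Simon1983}, \cite{White1997}), applied separately to the interior part $\sing_i(V)\cap\Scal_j$ and the boundary part $\sing_b(V)\cap\Scal_j$. Since Hausdorff dimension is countably stable, it suffices to show $\Hcal^{s}(\Scal_j\cap\Omega)=0$ and $\Hcal^{s}(\Scal_j\cap\partial\Omega)=0$ for every $s>j$.

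\emph{Interior points.} Here $V$ is stationary near $x$ by \eqref{E:1st-variation-fbv-stationary}. By Remark~\ref{R:interior-tangent-cones}, the tangent cones are stationary integral cones in $T_x\tM$. The standard argument then applies verbatim, provided we work intrinsically in geodesic normal coordinates. Suppose $\Hcal^s(\Scal_j\cap\Omega)>0$ for some $s>j$. Then there is a point $x$ of positive upper $\Hcal^s_\infty$-density. Blowing up at $x$ along a sequence $r_i\to0$ produces a tangent cone $\bC$ whose ``approximate spine set'' $\{p:\Theta_\bC(p)\ge\Theta_\bC(0)\}$ has positive $\Hcal^s_\infty$-measure. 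The key input here is upper semicontinuity of density under the convergence $V_i\to\bC$. This follows from the almost monotonicity \eqref{E:monotonicity limit varifold} together with \eqref{E:dexp-estimate}--\eqref{E:Jacobian-estimate}, exactly as in the proof of Proposition~\ref{P:tangent cones FB stationary varifolds}. By \eqref{E:spine-max-density-interior}, that set equals $S(\bC)$, which is a subspace of dimension $\le j<s$, a contradiction.

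\emph{Boundary points.} We replace $\Theta_V$ by the reflected density $\oTh_V$ of \eqref{E:reflected-density}, which is upper semicontinuous by Remark~\ref{R:monotonicity} and Appendix~\ref{A:reflected-density}. We replace each tangent cone by the modified cone $\widehat\bC=\obC$, which is stationary by Lemma~\ref{L:reflection stationary} and satisfies $\Theta_{\widehat\bC}=\oTh_\bC$. Assume $\Hcal^s(\Scal_j\cap\partial\Omega)>0$. Pick a boundary density point $x$ and blow up. The points of $\Scal_j\cap\partial\Omega$ near $x$ rescale into points of $T_x\partial\Omega$, and in the limit they accumulate on a set $K\subset T_x\partial\Omega$ with $\Hcal^s_\infty(K)>0$. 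On $K$, upper semicontinuity of the reflected density along boundary sequences (which holds even without reflected balls, by monotonicity at boundary centres) gives $\oTh_\bC\ge\oTh_\bC(0)$, i.e.\ $\Theta_\bC\ge\Theta_\bC(0)$ on $K$. Then \eqref{E:spine-max-density-boundary} gives $K\subset S(\bC)\subset S(\widehat\bC)$, and so $\dim S(\widehat\bC)\ge s>j$. This contradicts $x\in\Scal_j$, after iterating the blow-up once more in the usual way so that the cone obtained genuinely lies in $\VarTan(V,x)$.

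\emph{Main obstacle.} The delicate point is the uniform upper semicontinuity of density along the rescalings $V_i=(\eta_{0,r_i}\circ\exp_x^{-1})_\sharp V$, which are not stationary with respect to $\delta$. I would handle it by applying the Riemannian monotonicity to $V$ itself at the original points $\exp_x(r_ip_i)$ and transferring the resulting estimate via \eqref{E:dexp-estimate}, so that the error factor is $1+O(r_i)$. At the boundary, the same transfer must respect the curvature of $\partial\Omega$, whose rescalings converge to $T_x\partial\Omega$. Handling the boundary case only along boundary sequences avoids needing reflected balls, which is why \eqref{E:spine-max-density-boundary} (stated only on $\partial\mathbb{R}^n_+$) suffices.
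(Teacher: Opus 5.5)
\textbf{Gap: the nearby singular points are not forced to have density close to $\Theta_V(x)$.}

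The argument breaks where you place the limit set $K$ inside $\{\Theta_\bC\ge\Theta_\bC(0)\}$. The same issue affects your interior ``approximate spine set''.

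Suppose the rescaled images $q_i$ of points $y_i\in\Scal_j\cap\partial\Omega$ converge to $q$. Upper semicontinuity only gives $\Theta_\bC(q)\ge\limsup_i\Theta_V(y_i)$. To get $\Theta_\bC(q)\ge\Theta_\bC(0)=\Theta_V(x)$, you need the lower bound $\liminf_i\Theta_V(y_i)\ge\Theta_V(x)$. Your choice of $x$, an arbitrary point of positive upper $\Hcal^s_\infty$-density of $\Scal_j\cap\partial\Omega$, does not supply this. The points of $\Scal_j$ accumulating at $x$ may all have density strictly below $\Theta_V(x)$, and then semicontinuity says nothing about whether their limits lie in $S(\bC)$. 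So the inequality you call the key input points the wrong way, and the contradiction does not follow as written.

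\textbf{How the paper handles it.} The missing ingredient is a selection by density level. In Step~1 the paper only requires the points $y$ with $\Theta_V(y)\ge\Theta_V(x)-\overline\rho$ to lie near a $j$-plane. In the contradiction argument this gives $\Theta_V(y_i)\ge\Theta_V(x)-\overline\rho_i\to\Theta_V(x)$. Step~2 then splits $\Scal_j\cap\partial\Omega$ into pieces $\Scal_j^{k,\ell}$ on which $\Theta_V\in[\frac{\ell-1}{k},\frac{\ell}{k})$, so the threshold holds automatically within each piece. It concludes with Simon's $\epsilon$-approximation lemma rather than a density-point argument.

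\textbf{How to repair your route.} Take $\delta_m\downarrow0$ and set $A:=\Scal_j\cap\partial\Omega$.
\begin{enumerate}
\item Partition $A$ into slabs $A\cap\{\Theta_V\in[\ell\delta_m,(\ell+1)\delta_m)\}$.
\item Apply the density theorem ($\Theta^{*s}(\Hcal^s_\infty,B,x)\ge 2^{-s}$ for $\Hcal^s$-a.e.\ $x\in B$, valid for arbitrary $B$) to every slab, working in charts.
\item Conclude that $\Hcal^s$-a.e.\ $x\in A$ is, for every $m$, a point of upper $\Hcal^s_\infty$-density at least $2^{-s}$ of $A\cap\{\Theta_V\ge\Theta_V(x)-\delta_m\}$.
\item Blow up at such an $x$ along a diagonal sequence of radii adapted to $\delta_m$. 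The limit points then satisfy $\Theta_\bC(q)\ge\Theta_\bC(0)$.
\item Add the cone bound $\Theta_\bC(q)\le\Theta_\bC(0)$, which follows from boundary monotonicity of $\bC$ at $q$ plus conicality. Now \eqref{E:spine-max-density-boundary} applies.
\end{enumerate}
With this fix your argument closes and gives a valid alternative to the paper's covering-lemma route. Also, no second blow-up is needed: the cone obtained at $x$ already lies in $\VarTan(V,x)$.
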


\begin{proof}
The proof is essentially the same as in the interior case (see e.g. \cite[Theorem 4]{White1997}). We shall follow the same structure as in \cite[\S 3.4]{Simon1996}, where an analogous result is proved in the context of energy-minimizing harmonic maps. 

The interior stratification theorem gives $\dim_\Hcal(\Scal_j\cap\Omega\cap U)\leq j$: indeed, $V$ is stationary in each ball compactly contained in $\Omega\cap U$, and we can cover $\Omega\cap U$ by countably many such balls. It therefore remains to prove $\dim_\Hcal(\Scal_j\cap\partial\Omega\cap U)\leq j$.

Let us fix $j\in\{0, 1, \dots, n-1\}$ and $\epsilon>0$. We shall say that a subset $A\subset \R^n$ has the $\epsilon$\textit{-approximation property} if there exists some $\rho_0>0$ such that for all $y\in A$ and $\rho \in (0, \rho_0]$, there is a $j$-dimensional subspace $W_{y, \rho}$ of $\R^n$ such that 
\begin{equation}
    \eta_{y, \rho}(A \cap B_\rho(y))\subset U_\epsilon(W_{y,\rho})
\end{equation}
where $\eta_{y,\rho}$ denotes the homothety map in \eqref{E:homothety} and $U_\epsilon(W_{y,\rho})$ denotes the $\epsilon$-open neighbourhood of $W_{y,\rho}$ in $\R^n$. By \cite[\S 3.4, Lemma 3]{Simon1996}, there exists a positive function $\beta:(0,+\infty) \to (0,+\infty)$ such that $\lim_{\epsilon \to 0}\beta(\epsilon)=0$ and $\Hcal^{j+\beta(\epsilon)}(A)=0$ whenever $A \subset \R^n$ satisfies the $\epsilon$-approximation property.

Thus, it suffices to cover $\Scal_j\cap\partial\Omega\cap U$, in local coordinates, by countably many sets with the $C\epsilon$-approximation property, where $C$ is independent of $\epsilon$. We use ambient geodesic normal coordinates throughout; the transfer to fixed coordinate charts is justified in Step 2. The main arguments can be divided into two steps.

\vspace{1ex}

\noindent\textit{Step 1: Show that the ``good density'' points accumulate around a $j$-dimensional subspace.}

\vspace{1ex}

Take any $x \in \Scal_j\cap\partial\Omega\cap U$. We claim that there exists $\overline{\rho}=\overline{\rho}(x,\epsilon,V)\in(0,i_0/2)$, with $B_{\overline{\rho}}(x)\subset U$, such that for all $\rho \in(0, \overline{\rho}]$ there exists a $j$-dimensional subspace $W_{x,\rho} \subset T_x\partial\Omega$ such that
\begin{equation}\label{E:stratification-proof-key-claim}
    \eta_{0,\rho}\left(\exp_x^{-1}(\{y \in B_\rho(x)\cap\partial\Omega:\Theta_V(y) \geq \Theta_V(x)-\overline{\rho}\}) \right) \subset U_\epsilon(W_{x,\rho}).
\end{equation}
Here $U_\epsilon(W_{x,\rho})$ is the neighbourhood in $T_x\tM$, as above; the image of $\partial\Omega$ under $\exp_x^{-1}$ need not lie in $T_x\partial\Omega$.

We will establish the claim by contradiction. Suppose the claim is false. Then there exist $x \in \Scal_j\cap\partial\Omega\cap U$ and sequences $\overline{\rho}_i \to 0$ and $\rho_i \in (0,\overline{\rho}_i]$ such that for each $i$
\begin{equation}\label{E:stratification-proof-failed-inclusion}
    \eta_{0,\rho_i}\left(\exp_x^{-1}(\{y \in B_{\rho_i}(x)\cap\partial\Omega:\Theta_V(y) \geq \Theta_V(x)-\overline{\rho}_i\}) \right) \not \subset U_\epsilon(W)
\end{equation}
for \emph{any} $j$-dimensional subspace $W \subset T_x\partial\Omega$. Up to passing to a subsequence, we can assume that $V_i := (\eta_{0,\rho_i})_\sharp (\exp_x^{-1})_\sharp V \to \bC \in \VarTan(V,x)$ as varifolds. Since $x \in \Scal_j$, we know that $\dim S(\widehat{\bC}) \leq j$, and hence $\dim S(\bC)\leq j$ by \eqref{E:spine-inclusion}. Choose a $j$-dimensional subspace $W \subset T_x\partial\Omega$ containing $S(\bC)$. By \eqref{E:stratification-proof-failed-inclusion}, there exist $y_i \in B_{\rho_i}(x)\cap\partial\Omega$ such that, with $q_i:=\eta_{0,\rho_i}(\exp_x^{-1}(y_i)) \in B^\delta_1(0)$, we have
\begin{equation}\label{E:stratification-proof-failed-inclusion-2}
\Theta_V(y_i) \geq \Theta_V(x) - \overline{\rho}_i= \Theta_{\bC}(0) - \overline{\rho}_i,
\end{equation}
but $q_i \notin U_\epsilon(W)$. After passing to a subsequence, we can also assume
\[
q_i\to q\in\bigl(\overline{B^\delta_1(0)}\cap T_x\partial\Omega\bigr)\setminus U_\epsilon(W),
\]
since the rescaled boundaries $\eta_{0,\rho_i}(\exp_x^{-1}(\partial\Omega))$ converge locally smoothly to $T_x\partial\Omega$. By \eqref{E:stratification-proof-failed-inclusion-2}, $\liminf_{i\to\infty}\Theta_V(y_i)\geq\Theta_\bC(0)$. We claim that
\begin{equation}
    \label{E:upper-semi-continuity-density-1}
\Theta_{\bC}(q) \geq \limsup_{i\to\infty}\Theta_V(y_i).
\end{equation}
Once the claim is established, $\Theta_\bC(q)\geq\Theta_\bC(0)$. Conversely, exact boundary monotonicity for $\bC$ at $q\in T_x\partial\Omega$, together with conicality, gives $\Theta_\bC(q)\leq\Theta_\bC(0)$ by letting the radius tend to infinity. Thus \eqref{E:spine-max-density-boundary} implies $q\in S(\bC)\subset W$, a contradiction.

\vspace{1ex}

It remains to verify \eqref{E:upper-semi-continuity-density-1}. Fix $s>0$ and $a>0$. Since $q_i\to q$, the differential estimate \eqref{E:dexp-estimate} gives, for all sufficiently large $i$,
\[
\eta_{0,\rho_i}\bigl(\exp_x^{-1}(B_{\rho_i s}(y_i))\bigr)
\subset\overline{B^\delta_{(1+a)s}(q)}.
\]
The balls $B_{\rho_i s}(y_i)$ are compactly contained in $U$ for large $i$. Applying Lemma \ref{L:monotonicity limit varifold} at the boundary point $y_i$ and using \eqref{E:Jacobian-estimate}, we obtain
\[
\Theta_V(y_i)
\leq e^{C_5\rho_i s}\frac{\|V\|(B_{\rho_i s}(y_i))}{\omega_{n-1}(\rho_i s)^{n-1}}
\leq e^{C_5\rho_i s}(1+o_i(1))
\frac{\|V_i\|(\overline{B^\delta_{(1+a)s}(q)})}{\omega_{n-1}s^{n-1}},
\]
where $o_i(1)\to0$ for fixed $s,a$. Weak convergence on the compact closed ball therefore yields
\[
\limsup_{i\to\infty}\Theta_V(y_i)
\leq\frac{\|\bC\|(\overline{B^\delta_{(1+a)s}(q)})}{\omega_{n-1}s^{n-1}}.
\]
Letting first $s\downarrow0$ and then $a\downarrow0$ proves \eqref{E:upper-semi-continuity-density-1}, since open and closed balls give the same density limit.

\vspace{1ex}

\noindent \textit{Step 2: Decompose $\Scal_j\cap\partial\Omega\cap U$ into a countable union of sets satisfying the approximation property.}

\vspace{1ex}

This step is exactly the same as in the interior case. We provide the proof here for readers' convenience. First, for each $k \in \mathbb{N}$ we let
\begin{equation}
    \Scal_{j}^{k} := \left\{ x \in \Scal_j\cap\partial\Omega\cap U : \text{ \eqref{E:stratification-proof-key-claim} holds with $\overline{\rho}=\frac{1}{k}$} \right\},
\end{equation}
and for each $k,\ell \in \mathbb{N}$ we define
\begin{equation}
    \Scal_{j}^{k,\ell} := \left\{ x \in \Scal_{j}^{k}: \Theta_V(x) \in \left[ \frac{\ell-1}{k}, \frac{\ell}{k} \right) \right\}.
\end{equation}
Clearly, $\Scal_j\cap\partial\Omega\cap U=\bigcup_{k,\ell =1}^\infty \Scal_{j}^{k,\ell}$. For any $x \in \Scal_j^{k,\ell} \subset \Scal_j^k$, the definitions give
\[ \Scal_j^{k,\ell} \subset \left\{ y \in \Scal_j\cap\partial\Omega\cap U : \Theta_V(y) \geq \Theta_V(x)-\frac{1}{k} \right\},\]
and for every $\rho \in (0,\frac{1}{k}]$, there exists a $j$-dimensional subspace $W_{x,\rho} \subset T_x\partial\Omega$ such that
\[ \eta_{0,\rho} \left(\exp_x^{-1}\left( \left\{y \in B_\rho(x)\cap\partial\Omega:\Theta_V(y) \geq \Theta_V(x)-\frac{1}{k} \right\} \right) \right) \subset U_\epsilon(W_{x,\rho}). \]
Therefore,
\[ \eta_{0,\rho} ( \exp_x^{-1} (\Scal_j^{k,\ell} \cap B_\rho(x)) ) \subset U_\epsilon (W_{x,\rho}).\]

To pass to a fixed normal coordinate chart, let $f=\exp_p^{-1}$ and restrict to a relatively compact smaller chart on which $f$ and $f^{-1}$ have Lipschitz constants at most $L\geq1$. Smoothness gives
\[
f(y)-f(x)=df_x\bigl(\exp_x^{-1}(y)\bigr)+o(r)
\]
uniformly for $x,y$ in this smaller chart with $|f(y)-f(x)|<r$. Apply the preceding inclusion with $\rho=Lr\leq1/k$, and use $\|df_x\|\leq L$. For all sufficiently small $r$, depending also on $\epsilon$, the distance from $f(y)-f(x)$ to the $j$-dimensional subspace $df_x(W_{x,Lr})$ is strictly less than $2L^2\epsilon r$ whenever $x,y\in\Scal_j^{k,\ell}$ and $|f(y)-f(x)|<r$. Thus the coordinate image of each such local piece has the $2L^2\epsilon$-approximation property. The charts can be chosen with a common bound for $L$ by \eqref{E:metric-estimate} and \eqref{E:dexp-estimate}. A countable chart cover and the approximation lemma give
\[
\Hcal^{j+\beta(2L^2\epsilon)}(\Scal_j\cap\partial\Omega\cap U)=0.
\]
Letting $\epsilon\downarrow0$ completes the proof.

\end{proof}

\subsection{Curvature varifolds with boundary} 
\label{SS:curvature-varifolds}

In this subsection, we recall the notion of curvature varifolds with boundary first introduced by Hutchinson \cite{HutchinsonIUMJ} in the boundaryless setting and later extended by Mantegazza \cite{Mantegazza1996} to encompass the notion of boundary. Both \cite{HutchinsonIUMJ} and \cite{Mantegazza1996} work in the Euclidean setting (or submanifolds inside Euclidean space). We shall adapt it to the intrinsic Riemannian setting avoiding the use of isometric embedding. The main idea is to define a generalized second fundamental form and impose the validity of integration by parts of such curvature varifolds at the Grassmannian level. It turns out that this sub-class of integral varifolds enjoys some nice compactness and a-priori regularity properties and is a natural object that arises in stable limit-interface of Allen-Cahn equation (c.f. \cite{Tonegawa2005a}).

\begin{remark}
    We point out that one needs to work within the class of integral varifolds to yield some useful geometric properties of curvature varifolds (with or without boundary). This is the setting considered in \cite{HutchinsonIUMJ} and \cite{Mantegazza1996}. One fundamental reason is that any integral varifolds with locally bounded first variation must have its generalized mean curvature vector perpendicular almost everywhere to the approximate tangent space (which exists a.e. by rectifiability). This result, known as Brakke's Perpendicularity Theorem, was first proved in the Euclidean setting by Brakke in his monumental monograph \cite[Chapter 5]{Brakke}. We shall extend this fundamental result to the intrinsic Riemannian setting. Note that this result is just about the properties of the generalized mean curvature vector but provides no information about the singular part $\|\delta V\|_{sing}$ of the first variation.
\end{remark}

\begin{theorem}[Brakke's Perpendicularity Theorem]
\label{T:Brakke-perpendicularity}
Let $V \in \bIV_{n-1}(U)$ be an integral varifold with locally bounded first variation. Then, we have
\[ V\Big( \{(x,S) \in G(U):S(H_V(x)) \neq 0 \} \Big) =0\]
where $H_V$ is the generalized mean curvature vector as defined in \eqref{E:Radon-Nikodym}. Moreover, the following ``flattening properties'' hold: for $\|V\|$-a.e. $x \in U$, 
\[ \lim_{r \to 0} \frac{1}{r^n} \int_{B_r(x)} \|P_{y,x}(T_y V)- T_x V\|^2 \; d\|V\|(y)=0  \]
where $P_{y,x}:\Gr_y(n,n-1) \to \Gr_x(n,n-1)$ is the map obtained by parallel transport from $y$ to $x$ along the unique minimizing geodesic between $y$ and $x$.
\end{theorem}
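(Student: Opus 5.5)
The plan is to reduce to Brakke's Euclidean theorem \cite[Chapter 5]{Brakke} in geodesic normal coordinates, at the level of blow-ups. Both conclusions are local and hold up to $\|V\|$-null sets, so fix $x_0\in U$ and work in $B_\rho(x_0)$ with $\rho<\frac12 i_0$.

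\emph{Step 1: coordinate varifold and its first variation.} Let $\overline V:=(\exp_{x_0}^{-1})_\sharp V$. This is an integral varifold in a Euclidean ball, by Remark \ref{R:rectifiability}. The obstruction noted in Section \ref{SSS:intrinsic-approach} is that $\overline V$ need not have locally bounded first variation with respect to $\delta$. I will avoid computing $\delta^{\delta}\overline V$ directly. Instead, I use a flat metric that is adapted to each point.

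For $\|V\|$-a.e.\ $x$, the following hold:
\begin{itemize}
\item $x$ is a Lebesgue point of $H_V$ with respect to $\|V\|$;
\item $\Theta_V(x)\in\mathbb N$ and the approximate tangent plane $T_xV$ exists;
\item $\lim_{r\to0} r^{-(n-1)}\|\delta V\|_{sing}(B_r(x))=0$ (Besicovitch differentiation);
\item $\lim_{r\to0} r^{-(n-1)}\int_{B_r(x)}|H_V|\,d\|V\|<\infty$.
\end{itemize}
At such an $x$, I pass to normal coordinates centred at $x$ itself and rescale. The computation leading to \eqref{E:proof of tangent cones FB stationary varifolds} then shows that the blow-ups $V_{x,r}:=(\eta_{0,r}\circ\exp_x^{-1})_\sharp V$ satisfy
\[
\delta V_{x,r}(X)=-r\int X\cdot H_V\,d\|V_{x,r}\|+o(1)\bigl(\|X\|_{C^1}\bigr)
\]
on fixed balls.

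\emph{Step 2: Brakke's argument at scale $r$.} Brakke's Euclidean proof is quantitative. It establishes a quadratic tilt-excess decay, $r^{-(n+1)}\int_{B_r}\|T_yV-T_xV\|^2\,d\|V\|\to0$, and the perpendicularity $T_x V(H_V(x))=0$ at good points. The inputs are the monotonicity identity and the first variation tested against vector fields of the form $\phi(|y|)\,(T_xV)^\perp(y)$ and $\phi\, T_xV(H_V(x))$.

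I will redo these computations intrinsically. I test $\delta V$ with $X=\phi(r(y)/r)\,P_{x,y}(T_xV)\cdot(\text{position } r\nabla r)$, where $P_{x,y}$ denotes parallel transport from $x$ to $y$. The error terms come from curvature and from the difference $\Div^g-\Div^\delta$. By \eqref{E:metric-estimate} these are $O(r^2)$ relative to the main terms, which is one order better than Brakke's errors involving $|H|$. They can therefore be absorbed, with constants depending only on $n,k_0$.

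This yields the flattening statement with $\|P_{y,x}(T_yV)-T_xV\|$. Parallel transport arises naturally here because $d\exp_x$ agrees with $P$ up to $O(\dist^2)$, again by \eqref{E:metric-estimate}. In fact the stated $r^{-n}$ normalization is weaker than the $r^{-(n+1)}$ decay that comes out of Brakke's argument, so it follows once the latter is established.

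\emph{Step 3: perpendicularity.} Testing with $X=\phi(r(y)/r)\,P_{x,y}\bigl(T_xV(H_V(x))\bigr)$ gives two expressions for the same quantity. The left side equals $-\int X\cdot H_V\,d\|V\|\approx -\omega_{n-1}\Theta r^{n-1}|T_xV H_V(x)|^2$, using the Lebesgue point property and the vanishing singular density. The right side, $\int \Div_S X\,dV$, is controlled by the tilt excess from Step 2 together with the $O(r)$ curvature error. It is $o(r^{n-1})$ once one uses Brakke's key observation: integrality (integer multiplicity) forces the tangential part of $\Div_S X$ to cancel at leading order. Comparing the two sides forces $T_xV(H_V(x))=0$ for $\|V\|$-a.e.\ $x$, which is the first claim.

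\textbf{Main obstacle.} The main obstacle is Step 2. Brakke's excess-decay argument relies on integrality through a delicate height/tilt comparison. I will carry it out in the blow-up $V_{x,r}$ using the Euclidean metric $g_x$, treating the metric discrepancy as a perturbation of order $r^2$. The points I must check are that the singular part $\|\delta V\|_{sing}$ and the possible lack of bounded flat variation do not spoil this. I expect they do not, because at good points both are $o(r^{n-1})$.
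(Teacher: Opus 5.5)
The paper states this theorem without proof, so I am judging your argument on its own. It has a genuine gap exactly where integrality must be used.

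In Step 3, the two quantities you compare are equal by the definition of $\delta V$, up to the singular part and curvature errors, which are $o(r^{n-1})$ at your good points. So the argument carries information only through the claim $\int \Div_S X\,dV=o(r^{n-1})$. You justify this solely by ``integrality forces the tangential part of $\Div_S X$ to cancel at leading order'', which is a restatement of the theorem. None of your other inputs can produce it. Consider the non-integral weighted plane $V=e^{f}\Hcal^{n-1}\mres T$ with $f$ affine (cf.\ Example \ref{Ex:generalised-sff-smooth}). Every point is a Lebesgue point of $H_V=\nabla^T f$, $\|\delta V\|_{sing}=0$, the tilt excess vanishes identically, and every blow-up is a constant-multiplicity plane. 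Yet for $X=\phi(|y-x|/r)\,e$ with $e=T(H_V(x))=\nabla^T f$ one gets $\int\Div_T X\,d\|V\|=-|e|^2\int\phi\,e^{f}\,d\Hcal^{n-1}$, which is of exact order $r^{n-1}|e|^2$. So the cancellation you need lives one order beyond the tangent plane. It has to come from a genuine mechanism exploiting integer multiplicity, such as Brakke's multi-sheeted approximation or the fact that an integer-valued density has no absolutely continuous tangential derivative; the proposal never supplies one.

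Moreover, Brakke's theorem cannot simply be invoked on the blow-ups $V_{x,r}$. It is an a.e.\ statement about a single varifold with Radon Euclidean first variation, which $\overline V$ need not be. So ``reducing to Brakke'' really means re-proving his whole argument with Riemannian error terms. Those errors must be $o(r)$ in the blow-up normalization, not $o(1)$ as in your Step 1. That in turn needs $|J_S(\exp_x^{-1})-1|=O(\dist_g^2)$, not the linear bound \eqref{E:Jacobian-estimate} behind \eqref{E:proof of tangent cones FB stationary varifolds}.

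The quantitative input in Step 2 is also incorrect. The decay $r^{-(n+1)}\int_{B_r(x)}\|T_yV-T_xV\|^2\,d\|V\|\to0$ fails for every smooth hypersurface at points where its second fundamental form $A$ is nonzero: there the quantity converges to a positive multiple of $|A(x)|^2$. So the flattening property is not a weaker by-product of a stronger decay. The $r^{-n}$ rate (normalized quadratic tilt excess $o(r)$) is itself the estimate to be proved. With only that rate, Cauchy--Schwarz bounds the tilt part of $\int\Div_S X\,dV$ by $r^{-1}\|V\|(B_r)^{1/2}\bigl(\int_{B_r}\|S-T_xV\|^2\,dV\bigr)^{1/2}=o(r^{n-3/2})$, not $o(r^{n-1})$. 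So even that part of Step 3 does not close as written.

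Since $g$ is smooth, the economical repair is a local isometric embedding of $B_\rho(x_0)$ into some $\R^N$. For vector fields $Y$ on $\R^N$ one has $\delta^{\R^N}V(Y)=\delta V(Y^{\top})-\int Y\cdot\Tr_S\sff_{\tM}\,dV(x,S)$. Hence $V$ has Radon Euclidean first variation with the same singular part and mean curvature $H_V(x)+\Tr_{T_xV}\sff_{\tM}$, and the added term is normal to $T_x\tM\supset T_xV$. The Euclidean theorem, valid in any codimension, then gives perpendicularity. For flattening, note $\|P_{y,x}(T_yV)-T_xV\|\le\|T_yV-T_xV\|_{\R^N}+C|y-x|$. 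The Euclidean $o(r)$ tilt decay then gives the stated property, since the extra term contributes only $O(r)$ after the $r^{-n}$ normalization. If you prefer to stay intrinsic, you must actually carry out a Brakke-type integrality argument with curvature errors; that is the missing core of the proposal.
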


Since we are only interested in codimension one varifolds, we can give a simpler description of curvature varifolds compared to the ones in \cite{HutchinsonIUMJ} and \cite{Mantegazza1996}. First, recall that $\pi:G(U):=\cup_{x \in U} \Gr_x(n,n-1) \to U$ is a smooth $\mathbb{RP}^{n-1}$-bundle over $U \subset \tM$ by the identification $S \cong \pm N_S$. At each $(x,S) \in G(U)$, there is a natural splitting (which is orthogonal with respect to a Sasaki-type Riemannian metric $\tilde{g}$ on $G(U)$ naturally induced from $(\tM,g)$ c.f. \cite[Section 2]{Koike-Yamamoto2018}) of the tangent space 
\begin{equation}
\label{E:grassmann-splitting}
T_{(x,S)} G(U) = \mathcal{H}_{(x,S)} \oplus \ker d\pi_{(x,S)}
\end{equation}
for some ``horizontal'' subspace $\mathcal{H}_{(x,S)}$ such that $d\pi_{(x,S)}:\mathcal{H}_{(x,S)} \to T_x\tM$ is a linear isomorphism. On the other hand, the ``vertical'' subspace $\ker d\pi_{(x,S)}$ is tangential to the fiber $G_x(n,n-1)$ which can be identified with $S$ itself (since $T_{\pm N_S} \mathbb{S}^{n-1} \cong S$). We shall use $\widetilde{\nabla}$ to denote the Levi-Civita connection on $G(U)$ with respect to the Sasaki-type metric above. In what follows, we call any map $X:G(U) \to T \tM$ such that $X(x,S)\in T_x \tM$ for all $(x,S) \in G(U)$ a \emph{vector field on $G(U)$ with values in $T\tM$}.

\begin{definition}[Curvature varifolds with boundary]
\label{D:curvature-varifolds}
A varifold $V \in \bV_{n-1}(U)$ is said to be a \emph{curvature varifold with boundary} if there exist on the Grassmannian $G(U)$
\begin{itemize}
\item a tensor-valued $\mathbf{B}_V \in L^1_{loc}(G(U);V)$ with values in $T^*\tM \otimes T^* \tM \otimes T \tM$, i.e. for any continuous vector fields $X,Y$ on $G(U)$ with values in $T\tM$, 
\[ \mathbf{B}_V(X,Y) \text{ is a locally $V$-integrable vector field with values in $T\tM$}\]
satisfying  
\begin{equation}
\label{E:formal-properties-B}
V\left( \left\{ (x,S) \in G(U):S(\mathbf{B}_V (S^\perp(X), S^\perp (Y))  \neq 0 \text{ for some $X,Y$}  \right\}\right) =0;
\end{equation}
\item and a vector-valued Radon measure $\partial V$ on $G(U)$ with values in $T\tM$, i.e. 
\[ \partial V= \textbf{n}_V \|\partial V\|\]
where $\|\partial V\|$ is the total variation Radon measure of $\partial V$ and $\textbf{n}_V$ is a $\|\partial V\|$-measurable vector field on $G(U)$ with values in $T\tM$ and $|\textbf{n}_V|=1$ $\|\partial V\|$-a.e., satisfying
\begin{equation}
\label{E:formal-properties-partial-V}
\|\partial V\| \left( \left\{ (x,S) \in G(U):S(\textbf{n}_V)  \neq \textbf{n}_V   \right\}\right) =0;
\end{equation}
\end{itemize}
such that for any $\varphi \in C^1_c(G(U))$ and any $C^1$ vector field $X:U \to T\tM$, we have
\[ \int_{G(U)} \varphi(x,S) \, X(x) \cdot \textbf{n}_V(x,S) d \|\partial V\|= \int_{G(U)} S(\nabla^{hor} \varphi(x,S)) \cdot X(x) +\varphi(x,S) \Div_S X(x) \; dV(x,S)  \]
\begin{equation}
    \label{E:curvature-varifolds}
   +\int_{G(U)} \mathbf{B}_V(\nabla^{ver}\varphi(x,S),X(x)) \cdot N_S - \varphi(x,S) \, \mathbf{H}_V(x,S) \cdot X(x) \; dV(x,S)
\end{equation}
where $\mathbf{H}_V:=\Tr \mathbf{B}_V = \sum_{i=1}^{n} \mathbf{B}_V(e_i,e_i) \in L^1_{loc}(G(U);V)$ with values in $T\tM$. Here, $\{e_i\}_{i=1}^n$ is any orthonormal basis for $T_x \tM$ and $\nabla \varphi=\nabla^{hor} \varphi + \nabla^{ver} \varphi$ is the $\tilde{g}$-gradient of $\varphi$ written with respect to the orthogonal splitting \eqref{E:grassmann-splitting}. We call $\mathbf{B}_V$ the \emph{generalized second fundamental form}, $\textbf{n}_V$ the \emph{varifold outward co-normal} and $\|\partial V\|$ the \emph{varifold boundary measure} of $V$. Here, we adopt the convention to identify $T\tM$ and $T^*\tM$ using the metric $g$ whenever necessary. We shall use $\bCV^\partial_{n-1}(U)$ to denote the space of all $(n-1)$-dimensional curvature varifolds with boundary in $U$.
\end{definition}

\begin{remark}
Note that $\nabla^{ver} \varphi (x,S) \in \ker d\pi_{(x,S)}$, when regarded as a vector in $S$ is only well-defined up to the anti-podal isomorphism $p \mapsto -p$ on $S$. This reflects the choice of a unit normal $\pm N_S$ to the \emph{un-oriented} subspace $S$. Nonetheless, the term $\mathbf{B}_V(\nabla^{ver}\varphi(x,S),X(x)) \cdot N_S$ in \eqref{E:curvature-varifolds} is well-defined independent of the choice of $N_S$.
\end{remark}

\begin{remark}
    Condition \eqref{E:formal-properties-B} says that $\mathbf{B}_V$ at $V$-a.e. $(x,S) \in G(U)$ is essentially a bi-linear map $S \times S \to S^\perp$ (however, we do not assume that $\mathbf{B}_V(X,Y)= \mathbf{B}_V(Y,X)$ in general). Similarly, condition \eqref{E:formal-properties-partial-V} says that $\textbf{n}_V$ at $\|\partial V\|$-a.e. $(x,S)$ is essentially tangential to $S$. These conditions are necessary in order for $\mathbf{B}_V$ and $\textbf{n}_V$ to be uniquely defined by \eqref{E:curvature-varifolds}. Note that no such conditions are required in \cite{HutchinsonIUMJ} and \cite{Mantegazza1996} as they regard $\Gr_x(n,n-1)$ as a subset of $\mathbb{R}^{n^2}$, the space of all $n \times n$ real matrices, and they allow the test functions $\varphi \in C^1_c(U \times \mathbb{R}^{n^2})$ instead of the smaller subspace $C^1_c(G(U))$ considered here in \eqref{E:curvature-varifolds}. As a result, \eqref{E:formal-properties-B} and \eqref{E:formal-properties-partial-V} together imply all the formal and tangential properties in \cite[Proposition 3.6 and 3.7]{Mantegazza1996}.
\end{remark}

\begin{remark}
\label{R:relation-Mantegazza}
    When $V \in \bIV_{n-1}(U)$, our definition generalizes the notion of curvature varifolds with boundary in \cite{Mantegazza1996} to the case of general varifolds in the Riemannian setting. If $U \subset \mathbb{R}^n$, then the Grassmannian splits globally as a product $G(U) \subset U \times \mathfrak{gl}(n,\R)$ if we regard $S \in \mathfrak{gl}(n,\R) \cong \mathbb{R}^{n^2}$ as the orthogonal projection matrix. Taking $X=\partial_i$ to be parallel coordinate vector fields in $\mathbb{R}^n$, it is straightforward to check that \eqref{E:curvature-varifolds} reduces to \cite[Definition 3.1]{Mantegazza1996}. In general, we can always locally trivialize the Grassmannian bundle $G(B_r(x))=B_r(x) \times \Gr_x(n,n-1)$, say within a small geodesic ball $B_r(x) \subset U$ and use parallel transport along minimizing geodesics from $x$ to identify $\Gr_y(n,n-1)$ with $\Gr_x(n,n-1)$ for all $y \in B_r(x)$, to reduce the situation to the Euclidean setting (modulo some extra terms coming from the ambient geometry). Notice that under such local trivialization, we can express any $\varphi \in C^1_c(G(B_r(x)))$ as $\varphi(y,S)$ where $y \in B_r(x)$ and $S \in \Gr_x(n,n-1)$ and the orthogonal splitting \ref{E:grassmann-splitting} at the base point $x \in \tM$ becomes
    \[ \nabla \varphi (x,S)= \nabla^{\tM} \varphi (x,S) + \nabla^{\Gr_x(n,n-1)} \varphi (x,S).\]
    Note that the extra term $\Div_S X$ appearing in \eqref{E:curvature-varifolds} (that was not present in \cite{Mantegazza1996}), when $X=\partial_i$ is a (non-parallel!) coordinate vector field in geodesic normal coordinates, involves up to first order derivatives of $g_{ij}$ and can be estimated using \eqref{E:metric-estimate}. 
\end{remark}

\begin{remark}
\label{R:curvature-varifold-local}
    Note that the notion of curvature varifold with boundary is essentially a local property in the sense that one only needs to check \eqref{E:curvature-varifolds} for $\varphi \in C^1_c(G(U_\alpha))$ where $\{U_\alpha\}_{\alpha}$ is any locally finite open cover of $U$. This follows from the linearity of $\varphi$ in \eqref{E:curvature-varifolds} and a standard partition of unity argument. We will mainly apply this observation to any locally finite open cover of $U$ by small enough geodesic balls.
\end{remark}

We first note several elementary consequences by choosing suitable test functions $\varphi$ in \eqref{E:curvature-varifolds}, in case $V$ is also a rectifiable (resp. integral) varifold.

\begin{lemma}
\label{L:curvaure-varifolds-definition}
Let $V \in \bCV^\partial_{n-1}(U) \cap \bRV_{n-1}(U)$.
\begin{enumerate}
\item[(i)] $\mathbf{B}_V$ and $\partial V$ are uniquely determined by \eqref{E:curvature-varifolds} and $\spt \|\partial V\| \subset \spt V$.
\item[(ii)] If $V \in \bIV_{n-1}(U)$, then $\pi_* \|\partial V\|$ is singular with respect to $\|V\|$, i.e. 
\[ \pi_* \|\partial V\| \perp \|V\|.\]
Moreover, $V$ has locally bounded first variation in $U$ and the generalized mean curvature $H_V$ and the generalized vector-valued boundary measure $\eta_V \, \|\delta V\|_{sing}$ defined in \eqref{E:Radon-Nikodym} are given by
\begin{equation}
\label{E:relation-to-first-variation}    
H_V(x)=\mathbf{H}_V(x,T_xV) \text{ and } \eta_V \|\delta V\|_{sing}=\pi_*(\partial V)
\end{equation} 
where $T_xV$ is the approximate tangent space of $V$ which exists for $\|V\|$-a.e. $x \in U$.

\end{enumerate}
\end{lemma}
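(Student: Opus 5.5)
For (i), uniqueness comes from linearity. Suppose $(\mathbf{B}_V,\partial V)$ and $(\mathbf{B}'_V,\partial' V)$ both satisfy \eqref{E:curvature-varifolds}. Subtracting the two identities gives, for all $\varphi\in C^1_c(G(U))$ and all $C^1$ fields $X$,
\[
\int \varphi\, X\cdot d(\partial V-\partial' V)=\int (\mathbf{B}_V-\mathbf{B}'_V)(\nabla^{ver}\varphi,X)\cdot N_S-\varphi\,(\mathbf{H}_V-\mathbf{H}'_V)\cdot X\;dV .
\]
First I would take $X$ with values in $S^\perp$ along the relevant fibres; after localizing, such $X$ can be approximated in $L^1(V)$ by $C^1$ fields. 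By \eqref{E:formal-properties-partial-V} the left side then vanishes. Since \eqref{E:formal-properties-B} forces $\mathbf{B}(\cdot,\cdot)$ to take values in $S^\perp$, choosing vertical gradients $\nabla^{ver}\varphi$ that are arbitrary in $S$ determines the difference of $\mathbf{B}_V$. Here I use that the vertical directions of $\Gr_x(n,n-1)$ span $S$, so $V$-a.e. all components are recovered. Once the $\mathbf{B}$ terms cancel, the identity says $\partial V-\partial' V$ is absolutely continuous with respect to $V$. It must also equal $(\mathbf{H}_V-\mathbf{H}'_V)\,V$, whose values lie in $S^\perp$, whereas \eqref{E:formal-properties-partial-V} forces values in $S$. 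Hence both differences vanish.

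The inclusion $\spt\|\partial V\|\subset\spt V$ is immediate. If $\varphi$ is supported away from $\spt V$, the right side of \eqref{E:curvature-varifolds} is zero, and since $X$ is arbitrary this kills $\partial V$ there.

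For (ii), the first step is to take $\varphi(x,S)=\phi(x)$, a function independent of $S$. Then $\nabla^{ver}\varphi=0$, and setting $X'=\phi X$ absorbs the horizontal term, using $\Div_S(\phi X)=\phi\Div_S X+S(\nabla\phi)\cdot X$. This gives
\[
\delta V(X')=\int X'\cdot \mathbf{H}_V\,dV+\int X'\cdot\mathbf{n}_V\,d\|\partial V\|,
\]
up to the sign convention in \eqref{E:Radon-Nikodym}. It follows that $\|\delta V\|\le |\mathbf{H}_V|V+\|\partial V\|$ after pushforward by $\pi$, so $V$ has locally bounded first variation. Comparing with \eqref{E:Radon-Nikodym}, the absolutely continuous part of $\pi_*(\mathbf{n}_V\|\partial V\|)$ plus $\pi_*(\mathbf{H}_V V)$ equals $-H_V\|V\|$. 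Since $V$ is rectifiable, $\pi_*(\mathbf{H}_V V)=\mathbf{H}_V(x,T_xV)\|V\|$.

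The main obstacle is the singularity $\pi_*\|\partial V\|\perp\|V\|$. My plan is the following. Write $\pi_*(\partial V)=f\|V\|+\mu_s$. By Brakke's perpendicularity (Theorem \ref{T:Brakke-perpendicularity}), which applies now that the first variation is locally bounded, the absolutely continuous part of $\delta V$ is normal to $T_xV$ $\|V\|$-a.e. Since $\mathbf{H}_V(x,T_xV)$ is normal, $f$ must be normal as well. On the other hand, I need $\mathbf{n}_V$ to be tangent to $T_xV$ on the part of $\|\partial V\|$ absolutely continuous with respect to $\|V\|$; this uses that $V$ concentrates on $(x,T_xV)$. Then $f$ is both normal and tangent, hence zero.

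The subtle point is that cancellation under $\pi_*$ could hide an absolutely continuous part of $\|\partial V\|$ itself. For this I would use test functions with genuine $S$-dependence, namely $\varphi(x,S)=\phi(x)\beta(S)$ with $\beta$ concentrating near $T_xV$. These, together with the flattening property in Theorem \ref{T:Brakke-perpendicularity}, should show that $\|\partial V\|$ restricted to the absolutely continuous part is carried by the graph $\{(x,T_xV)\}$ with $\mathbf{n}_V$ tangent there. The integrality of $V$ enters through Brakke's theorem, following Mantegazza's argument for \cite[Proposition 3.7 / Theorem 6.1]{Mantegazza1996}. With the absolutely continuous part of $\pi_*(\partial V)$ shown to vanish, \eqref{E:relation-to-first-variation} follows from uniqueness of the Radon–Nikodym decomposition.
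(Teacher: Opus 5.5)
Your route for (ii) is the paper's, which itself defers to Mantegazza. You lift $\phi(x)$ to $G(U)$ and read off $\delta V(Y)=\int Y\cdot\mathbf H_V\,dV+\int Y\cdot\mathbf n_V\,d\|\partial V\|$, hence locally bounded first variation. You then kill the absolutely continuous part with Theorem~\ref{T:Brakke-perpendicularity} and \eqref{E:formal-properties-partial-V}.

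In (i), however, the step ``approximate a normal $X$ in $L^1(V)$, so the left side vanishes'' fails. The left side integrates against $\|\partial V\|$ and $\|\partial' V\|$. These are not absolutely continuous with respect to $V$ (by (ii) they are even singular), and they need not charge the fibres $(x,T_xV)$ at all, so $L^1(V)$-closeness controls nothing there. You also never explain how to separate $(\mathbf B_V-\mathbf B'_V)(\nabla^{ver}\varphi,X)\cdot N_S$ from $\varphi(\mathbf H_V-\mathbf H'_V)\cdot X$.

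The missing idea, from Mantegazza's Proposition 3.4 which the paper cites, is this. In the \emph{difference} of the two identities the terms $S(\nabla^{hor}\varphi)\cdot X$ and $\varphi\Div_S X$ cancel. So you may use test functions with $\|\varphi\|_\infty\le\lambda$ but prescribed vertical gradient along the graph $\{(x,T_xV)\}$. Concretely: by Lusin, take $x\mapsto T_xV$ continuous on a compact $K$ of almost full measure. Approximate it uniformly within $c\lambda$ by a smooth $\tilde T$, and set $\varphi=\phi(x)\,\lambda\,\eta\bigl((S-\tilde T(x))/\lambda\bigr)$ in a local trivialization, with $\eta$ linear near $0$. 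The large $x$-derivatives of $\tilde T$ are harmless because the horizontal term has cancelled. Both the boundary term and the $\mathbf H$-term are $O(\lambda)$. Letting $\lambda\to0$ and then $\|V\|(U\setminus K)\to0$ recovers $(\mathbf B_V-\mathbf B'_V)(v,X)\cdot N_S$ for all $v\in S$. By \eqref{E:formal-properties-B} (read as in the subsequent remark) this determines $\mathbf B_V-\mathbf B'_V$, after which $\partial V=\partial' V$ follows at once. There is no need to make the left side vanish first.

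In (ii), the claim ``$\mathbf n_V$ is tangent on the absolutely continuous part because $V$ concentrates on $(x,T_xV)$'' is not yet an argument. Where $V$ lives says nothing a priori about where $\|\partial V\|$ lives. Concentration of that part of $\|\partial V\|$ on the graph is exactly what rules out cancellation under $\pi_*$, i.e.\ it is the content of $\pi_*\|\partial V\|\perp\|V\|$. You leave this at ``should show'', and the paper also only points to Mantegazza.

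Your suggested tool does close the gap, provided you use the \emph{quadratic} flattening in Theorem~\ref{T:Brakke-perpendicularity}. Work at $\|V\|$-a.e.\ $x_0$: a point with approximate tangent plane $T_0$, a Lebesgue point of $|\mathbf B_V|$, where flattening holds, and where the singular part of $\pi_*\|\partial V\|$ has zero density. Test with $\varphi_r(y,S)=\phi(\exp_{x_0}^{-1}(y)/r)\,\beta(S)$, where $\beta$ vanishes on a $\delta$-neighbourhood of $T_0$.

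All integrands on the right of \eqref{E:curvature-varifolds} then vanish off $E_r=\{y\in B_r(x_0):\|T_yV-T_0\|\ge\delta\}$, and $\|V\|(E_r)\le\delta^{-2}o(r^{n})$. So even the term with $|\nabla^{hor}\varphi_r|\lesssim r^{-1}$ is $o(r^{n-1})$; the normalized $o(r^{n-1})$ tilt bound would not beat that factor $r^{-1}$. The $\mathbf B_V$-terms are $o(r^{n-1})$ by the Lebesgue point property. Hence $\int\varphi_r X\cdot\mathbf n_V\,d\|\partial V\|=o(r^{n-1})$.

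Now disintegrate $\|\partial V\|$ along $\pi$, using countably many $\beta$, constant $X$, and $|\mathbf n_V|=1$. This shows the fibre measure at $x_0$ is $\delta_{T_0}$ wherever $\pi_*\|\partial V\|$ has positive density with respect to $\|V\|$. On the graph $\pi$ is injective, so there is no cancellation. Your normal-versus-tangent argument then forces this density to vanish, giving both $\pi_*\|\partial V\|\perp\|V\|$ and \eqref{E:relation-to-first-variation}.
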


\begin{proof}
For (i), the proof in \cite[Proposition 3.4]{Mantegazza1996} carries over once we fix a local trivialization as in Remark \ref{R:relation-Mantegazza} and choose $\varphi(y,S)=\phi(y)\psi(S)$ and $X=\partial_i$. It is clear from \eqref{E:curvature-varifolds} that $\spt \|\partial V\| \subset \spt V$. Note that the proof in \cite{Mantegazza1996} only requires rectifiability of $V$. For (ii), if we take $\varphi=\varphi(x) \in C^1_c(U)$ (or more precisely, the lift of such a function to $G(U)$), using Theorem \ref{T:Brakke-perpendicularity} and \eqref{E:formal-properties-partial-V}, we see that $V$ has locally bounded first variation and \eqref{E:relation-to-first-variation} holds (c.f. \cite[p.827--829]{Mantegazza1996}).
\end{proof}

\begin{remark}
    If $V \in \bRV_{n-1}(U)$ is not integral, then $\pi_* \|\partial V\|$ is not necessarily singular with respect to $\|V\|$ since $\partial V$ will also contain information about the tangential derivative of the density $\Theta_V$. See Example \ref{Ex:generalised-sff-smooth}. 
\end{remark}

\begin{example}[Triple junction as curvature varifold with boundary]
Consider the unit-density varifold $V \in \bIV_{1}(\mathbb{R}^2)$ given by three rays from the origin $\ell^+_1, \ell^+_2, \ell^+_3$ meeting at equal angle of $2\pi/3$. Denote by $\vec{n}_i$ the unit vector centred at the origin pointing towards $\ell_i$. Due to the balancing of $\vec{n}_i$ at the origin, $V$ is a stationary varifold in $\mathbb{R}^2$, hence $H_V \equiv 0$ and $\|\delta V\|_{sing}=0$. One can check from the definition that $V$ is a curvature varifold with boundary with $\mathbf{B}_V \equiv 0$ and
\[ \partial V = \sum_{i=1}^3 \vec{n}_i \delta_{(0,\ell_i)} \neq 0 \]
where $\ell_i \in \Gr_0(2,1)$ is the unique line containing the ray $\ell^+_i$. Note that $\pi_* (\partial V)= \left(\sum_{i=1}^3 \vec{n}_i\right) \delta_0=0$. On the other hand, $\|\partial V\|=\sum_{i=1}^3 \delta_{(0,\ell_i)}$ is a sum of three delta measures on the Grassmannian $G(\mathbb{R}^2)$ and the pushforward measure $\pi_* \|\partial V\|=3 \delta_0$ is singular with respect to $\|V\|$. This example shows that in general we only have $\|\delta V\|_{sing} \leq \pi_* \|\partial V\|$ as Radon measures on $U$.
\end{example}

\begin{example}[Weighted smoothly embedded hypersurfaces]
\label{Ex:generalised-sff-smooth}
Let $\Sigma \subset \mathbb{R}^n$ be a properly embedded smooth hypersurface with boundary and $f:\Sigma \to \mathbb{R}$ be a smooth function. Suppose $\Sigma$ is two-sided and oriented by the global unit normal vector field $N_\Sigma$. Consider the rectifiable varifold $V \in \bRV_{n-1}(\mathbb{R}^n)$ with weight measure given by
\[ \|V\|=e^f d\Hcal^{n-1} \mres \Sigma,\]
then a direct computation shows that $V$ is a curvature varifold with boundary with $\mathbf{B}_V:\mathbb{R}^n \times \mathbb{R}^n \to \mathbb{R}^n$ supported on $T\Sigma$ given by the classical second fundamental form\footnote{We denote $w=w^T+w^N$ for the orthogonal decomposition $T_x \mathbb{R}^n=T_x \Sigma \oplus N_x \Sigma$}: 
\[ \mathbf{B}_V|_{(x,T_x \Sigma)}(u,v):= (D_{u^T} v^T)^N,\]
and the vector-valued Radon measure $\partial V$ is given by
\[ \partial V(x,T_x \Sigma)= -e^f\nabla^\Sigma f (x) \, d \Hcal^{n-1} \mres \Sigma (x) \otimes \delta_{T_x\Sigma} + e^f\eta(x) \, d\Hcal^{n-2} \mres \partial \Sigma (x) \otimes \delta_{T_x\Sigma}\]
where $\eta(x) \in T_x\Sigma$ is the outward unit co-normal of $\partial \Sigma$. Note that $\partial V$ contains a non-trivial part which is absolutely continuous with respect to $V$ unless $f$ is constant on $\Sigma$. On the other hand, $V$ has locally bounded first variation with $H_V(x)=\mathbf{H}_V(x,T_x\Sigma)$ and
\[ \|\delta V\|_{sing}= e^fd\Hcal^{n-2} \mres \partial \Sigma \leq e^f|\nabla^\Sigma f| \, d \Hcal^{n-1} \mres \Sigma + e^fd\Hcal^{n-2} \mres \partial \Sigma = \pi_* \|\partial V\|. \]
More generally, one can also consider $f \in BV(\Sigma)$ and replace $\nabla^\Sigma f  \, d \Hcal^{n-1} \mres \Sigma$ with the vector-valued Radon measure $Df$ on $\Sigma$.
\end{example}

Finally, we recall the following boundary rectifiability theorem \cite[Theorem 7.1]{Mantegazza1996} in the context of integral curvature varifold with boundary.

\begin{theorem}[Boundary Rectifiability Theorem]
\label{T:boundary-rectifiability}
     Let $V \in \bCV^\partial_{n-1}(U) \cap \bIV_{n-1}(U)$. Then, $\pi_* \|\partial V\|$ is an $(n-2)$-rectifiable measure on $U$, i.e. there exists a countably $(n-2)$-rectifiable subset $\Gamma \subset U$ and a positive Borel function $\tau :\Gamma \to (0,\infty)$ such that 
     \[ \pi_* \|\partial V\|=\tau \; d\Hcal^{n-2}\mres \Gamma.\]
\end{theorem}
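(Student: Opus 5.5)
The plan is to reduce to the Euclidean statement \cite[Theorem 7.1]{Mantegazza1996} by localizing and working in geodesic normal coordinates. The conclusion is local: rectifiability of a Radon measure can be checked on a locally finite cover. So, as in Remark \ref{R:curvature-varifold-local}, I would fix a point $x \in U$ and a small geodesic ball $B_r(x)$, and trivialize $G(B_r(x)) \cong B_r(x)\times \Gr_x(n,n-1)$ via parallel transport as in Remark \ref{R:relation-Mantegazza}. I would then push $V$ forward by $\exp_x^{-1}$ to a varifold $\overline{V}$ on $B^\delta_r(0)\subset T_x\tM \cong \mathbb{R}^n$. By Remark \ref{R:rectifiability}, $\overline V$ is still integral. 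The first task is to show that $\overline{V}$ is a curvature varifold with boundary in Mantegazza's Euclidean sense, with $\|\partial \overline V\|$ mutually absolutely continuous with $(\exp_x^{-1})_\sharp\|\partial V\|$ up to bounded positive densities.

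For this transfer, take test functions $\varphi(y,S)$ on the trivialized bundle and $X=\partial_i$ in normal coordinates, and rewrite \eqref{E:curvature-varifolds}. The left side becomes an integral against $\partial V$, which after pushforward is a vector Radon measure on $G(B^\delta_r(0))$; the Jacobian factor $J_S(\exp_x^{-1})$ and the change of Grassmannian identification are $C^1$ and close to the identity by \eqref{E:metric-estimate}, \eqref{E:dexp-estimate} and \eqref{E:Jacobian-estimate}. On the right side, the differences between $\Div^g_S$ and $\Div^\delta_S$, and between the horizontal and vertical $\tilde g$-gradients and the Euclidean gradients on $B\times\Gr(n,n-1)\subset \mathbb{R}^n\times\mathbb{R}^{n^2}$, produce only zeroth-order terms in $\varphi$ with bounded coefficients (involving $\Gamma^k_{ij}$ and derivatives of the transport map). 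These terms are absorbed into a modified $L^1_{loc}$ second fundamental form $\overline{\mathbf B}$. The Jacobian weight also contributes, via its derivative, a term absolutely continuous with respect to $V$, which goes into either $\overline{\mathbf B}$ or into the absolutely continuous part of $\partial\overline V$. Extending $\varphi$ from the Grassmannian to $U\times\mathbb{R}^{n^2}$ is harmless, by the remark after Definition \ref{D:curvature-varifolds}. The outcome is that $\partial\overline V = F_\sharp(\partial V) + \mathbf{a}\,\overline V$ for an $L^1_{loc}$ density $\mathbf a$, where $F$ is the induced bundle diffeomorphism.

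Next I would apply \cite[Theorem 7.1]{Mantegazza1996} to $\overline V$, which gives that $\pi_*\|\partial\overline V\|$ is $(n-2)$-rectifiable. To return to $V$, two facts are needed. First, by Lemma \ref{L:curvaure-varifolds-definition}(ii) and integrality, $\pi_*\|\partial V\|\perp\|V\|$, and likewise for $\overline V$. Hence the absolutely continuous term $\mathbf a\,\overline V$ must vanish, or at any rate can be discarded on a $\|V\|$-null set. Second, on that null set $\pi_*\|\partial V\|$ equals the pullback of $\pi_*\|\partial\overline V\|$ up to a bounded positive density. Countable $(n-2)$-rectifiability and the structure $\tau\,\Hcal^{n-2}\mres\Gamma$ are invariant under $C^1$ diffeomorphisms, with $\tau$ changing by a Jacobian factor and $\Hcal^{n-2}$ compared to its Euclidean version by the area formula. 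Gluing over a countable cover of $U$ by a partition of unity then gives $\Gamma$ and $\tau$ globally.

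The main obstacle I anticipate is the transfer step, specifically handling total variation. Since $\partial V$ is vector-valued, the total variation $\|\partial\overline V\|$ could in principle be smaller than the pushforward of $\|\partial V\|$ through cancellations at fibers, as in the triple junction example. I plan to avoid this by working fiberwise on $G(U)$ rather than after projecting. Because $F$ is a bundle diffeomorphism, it maps distinct fiber points to distinct points, so no cancellation occurs, and $\|F_\sharp\partial V\|$ is comparable to $F_\sharp\|\partial V\|$ via the bounded invertible linear maps $d\exp_x^{-1}$. If carrying out this verification proves too cumbersome, the fallback is to redo Mantegazza's proof intrinsically: a blow-up at $\pi_*\|\partial V\|$-typical points, using the tangent-space analysis in geodesic normal coordinates as in the proof of Proposition \ref{P:tangent cones FB stationary varifolds}.
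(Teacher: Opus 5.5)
Your proposal is correct and follows the same route as the paper. The paper gives no separate argument: it simply invokes \cite[Theorem 7.1]{Mantegazza1996} through the local reduction to the Euclidean setting sketched in Remark \ref{R:relation-Mantegazza}. Your normal-coordinate transfer, the use of Lemma \ref{L:curvaure-varifolds-definition}(ii) to discard the absolutely continuous discrepancy $\mathbf{a}\,\overline{V}$, and the fiberwise comparison of total variations are exactly the details that reduction leaves implicit.

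One correction is needed in the transfer. With the literal choice $X=\partial_i$, the term $S(\nabla^{hor}\varphi)\cdot X$ pairs the base derivative of $\varphi$ with the $g$-orthogonal projection of $\partial_i$ onto $S$. Away from the origin this differs from the $\delta$-orthogonal projection $Pe_i$ that appears in Mantegazza's $P_{ij}D_j\varphi$. The difference is a first-order base-direction term, and it fits neither into $\overline{\mathbf{B}}$ nor into $\mathbf{a}\,\overline{V}$. So test instead with the $S$-dependent field $X=Pe_i$. This field lies in $S$, so $S(X)=X$, and it is admissible by writing $X=\sum_k f_k\partial_k$ and applying the definition to $\varphi f_k$ and $\partial_k$. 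After this change, the remaining discrepancies are fiber-derivative terms, which are absorbed into $\overline{\mathbf{B}}$, and zeroth-order terms, which are absorbed into $\mathbf{a}\,\overline{V}$.
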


\section{The Allen-Cahn equation on Riemannian manifolds}
\label{S:Allen-Cahn}

In this section, we will study the geometric properties of limit-interfaces arising in stationary or stable solutions to the Allen-Cahn equation on Riemannian manifolds with boundary. Let $\Omega$ be a domain in $(\tM,g)$ as in Section \ref{SS:setup} and fix an open set $U \subset \tM$. For any $\epsilon>0$, we consider the Allen-Cahn energy for any $u \in W^{1,2}(\Omega \cap U)$,
\begin{equation}
    \label{E:AC-energy}
E_\epsilon (u):= \int_{\Omega \cap U} \left( \frac{\ep}{2}|\nabla u|^2+\frac{W(u)}{\ep} \right) \; \dvol_g 
\end{equation} 
where $W:\mathbb{R} \to \mathbb{R}$ is a ``double-well potential'' which is a non-negative smooth function satisfying the following (a common choice is $W(s)=\frac{1}{4}(1-s^2)^2$ with the unique local maximum at $\gamma=0$):
\begin{itemize}
\item $W(\pm1)=W'(\pm1)=0$,
\item $W$ has a unique local non-degenerate maxima at $\gamma \in (-1,1)$,
\item there exist some $\alpha \in (0,1)$ and $\kappa \in (0,1)$ such that $W''(t) \geq \kappa$ for all $|t| \geq \alpha$.
\end{itemize}
We shall denote the energy constant\footnote{Note that our definition of the constant $h_0$ is the same as in our previous paper \cite{LiPariseSarnataro2023}, but differs by a factor of 2 from \cite{Hutchinson2000,Guaraco2018}.} with respect to $W$ by
\begin{equation}
\label{E:energy-constant}
h_0:=\int_{-1}^1 \sqrt{2 W(s)} \; ds.
\end{equation}
For easy reference, we will also denote the (normalized) \emph{energy density} $e_\epsilon(u)$ and the \emph{discrepancy function} $\xi_\epsilon(u)$ for any $u \in C^1(\Omega \cap U)$ respectively by
\begin{equation}
    \label{E:energy-density-discrepancy}
e_\epsilon(u):=\frac{1}{h_0} \left(\frac{\ep}{2}|\nabla u|^2+\frac{W(u)}{\ep}\right) \quad \text{ and } \quad \xi_\epsilon(u):= \frac{1}{h_0} \left(\frac{\ep}{2}|\nabla u|^2-\frac{W(u)}{\ep}\right).
\end{equation} 
Note that $e_\epsilon(u) \geq 0$ and $\xi_\epsilon(u) \leq e_\epsilon(u)$ everywhere.

For any $u_\ep \in W^{1,2}(\Omega \cap U)$ with $\ep>0$, we can associate a Radon measure $\mu_{u_{\ep}}$ on $U$ by 
\begin{equation}
\label{E:mu}
	\mu_{u_{\ep}}\coloneqq\frac{1}{h_0}\ep|\nabla u_{\ep} |^2\ \dvol_g \mres {\Omega \cap U},
\end{equation}
and a varifold $V^{u_{\ep}} \in \mathbf{V}_{n-1}(U)$ by
\begin{equation}
\label{E:varifold-def}
	V^{u_{\ep}}(\phi) := \int_{\{x \in \Omega \cap U: |\nabla u_\ep(x)|\neq 0\}}\phi\left(x, S^{u_\ep}(x)\right)\ d\mu_{u_{\ep}}(x) \qquad \forall \phi\in C_c(G(U))
\end{equation}
where $S^{u_\ep}(x) \in \Gr_x(n,n-1)$ is the hyperplane orthogonal to $\nabla u_\ep(x) \neq 0$. Note that $\mu_{u_{\ep}}$ is absolutely continuous with respect to the ambient volume measure $\dvol_g$ and is supported on the closure of $\{x \in \Omega \cap U: |\nabla u_\ep(x)|\neq 0\}$ in $U$ which is in general not an $(n-1)$-rectifiable subset. Therefore, although the varifold $V^{u_{\ep}}$ is concentrated on $S^{u_\ep}$ along the Grassmannian factor, the spatial support of $\|V^{u_{\ep}}\|=\mu_{u_{\ep}}$ is diffused.  

\begin{remark}
\label{R:varifold-definition}
	Our definition of the varifold $V^{u_\ep}$ in \eqref{E:varifold-def} is consistent with \cite{Hiesmayr2017, Chodosh2020, Mantoulidis2021}, but slightly different from the one in \cite{LiPariseSarnataro2023}. Nonetheless, whenever \textit{equi-partition of energy} holds (i.e. $\xi_{\ep} \to 0$ in $L^1(U)$), it will give rise to the same varifold limit $V^{u_\ep} \to V$ if it exists as $\ep\to 0$.
\end{remark}

\subsection{General critical points to the Allen--Cahn energy}
\label{SS:our-old-theorem}

We first recall the key properties of general critical points to the Allen-Cahn energy functional, which study was pioneered by Hutchinson and Tonegawa in \cite{Hutchinson2000} for the Euclidean setting; the Riemannian setting (c.f. \cite[Appendix B]{Guaraco2018}) together with the boundary behaviour was studied in our previous paper \cite{LiPariseSarnataro2023}. The Euler-Lagrange equation for \eqref{E:AC-energy} is given by the following elliptic semi-linear Allen-Cahn equation with homogeneous Neumann boundary condition:
\begin{equation}
	\label{E:Allen-Cahn}
	\left\{      
	\begin{array}{rl}
		-\ep^2 \Delta u + W'(u) &=0 \quad \text{in $\Omega \cap U$}\\
		\partial u / \partial \nu &=0 \quad \text{on $\partial \Omega \cap U$.}
	\end{array}
	\right. 
\end{equation}
By standard elliptic regularity, any weak solution $u \in W^{1,2}(\Omega \cap U)$ to \eqref{E:Allen-Cahn} is in fact a classical solution $u \in C^3(\overline{\Omega} \cap U)$. 

\begin{assumptions}
\label{As:stationary-AC}
We will consider solutions $u_\ep$ to \eqref{E:Allen-Cahn} under the following additional assumptions:
\begin{itemize}
    \item[(A1)] (Uniform energy bound) There exists some $E_0>0$ such that
    \[ E_{\ep}(u_\ep) \leq E_0.\]
    \item[(A2)] (Uniform $L^\infty$ bound on $u_\ep$) There exists some $C_0>0$ such that
    \[ \|u_\ep\|_{L^\infty (\Omega \cap U)} \leq C_0.\]
\end{itemize}
\end{assumptions}



The theorem below summarizes the key properties of the limit-interface arising from general critical points to the Allen-Cahn energy. Note that the main theorem \cite[Theorem 3.3]{LiPariseSarnataro2023} there actually holds with trivial modifications of the proof to the situation of varying metrics and domains as well.

\begin{theorem}{\cite[Theorem 3.3]{LiPariseSarnataro2023}}
\label{T:old-main}
Let $\{\ep_i\}_{i=1}^\infty$ be a sequence of positive real numbers such that $\ep_i\to 0$, and let $\{u_{\ep_i}\}_{i=1}^\infty$ be a sequence of solutions to \eqref{E:Allen-Cahn} satisfying (A1)-(A2) in Assumptions \ref{As:stationary-AC} with respect to a sequence of Riemannian metrics $g_i$ on $U$ and a sequence of domains $\Omega_i \subset \tM$ such that
\[ g_i \to g \quad \text{ and } \quad \Omega_i \to \Omega\]
in the $C^2$-sense to some Riemannian metric $g$ and some $C^2$ domain $\Omega \subset \tM$ satisfying all the assumptions in \ref{SS:setup}.
Then, after passing to a subsequence, there exist some $u \in BV(\Omega \cap U)$ and $V \in \bV_{n-1}(U)$ such that
\begin{itemize}
\item $u_{\ep_i} \to u$ strongly in $L^1(\Omega \cap U;\dvol_g)$; 
\item $V^{u_{\epsilon_i}} \to V$ as varifolds in $U$ (recall \eqref{E:varifold-def}).
\end{itemize}
Moreover, the following statements are true for any such $u$ and $V$ obtained above:
\begin{itemize}
\item[(1)] (Discrepancy estimates and equi-partition of energy) The discrepancy function $\xi_{\ep_i}(u_{\ep_i})$ as defined in \eqref{E:energy-density-discrepancy} satisfies the pointwise upper bound locally away from $\partial U$:
\begin{equation}
    \label{E:discrepancy-pointwise-bound}
    \xi_{\ep_i}(u_{\ep_i}) \leq \ep_i^{-4/5} \qquad \text{ for all large $i$};
\end{equation}
moreover the discrepancy function converges to zero strongly in $L^1$, i.e.
\begin{equation}
    \label{E:vanishing-discrepancy-limit}
    \xi_{\ep_i}(u_{\ep_i}) \to 0 \quad \text{ in } L^1(\Omega \cap U;\dvol_g).
\end{equation}
As a result, we have for each $\phi \in C_c(U)$,
\begin{equation}
\label{E:equipartition energy}
\|V\|(\phi)= \lim_{i \to \infty} \mu_{u_{\ep_i}}(\phi)= \lim_{i \to \infty} \frac{2}{h_0} \int_{\overline{\Omega} \cap U} \frac{W(u_{\ep_i})}{\ep_i} \phi \; \dvol_g.
\end{equation}
\item[(2)] (Hausdorff convergence of level sets) The phase boundary is contained in the support of the limit-interface, i.e. 
\[ \partial^* \{u=1\} \subset \spt \|V\| \]
where $\partial^* \{u=1\} \subset \Omega \cap U$ is the reduced boundary\footnote{See, for example, \cite{Koltai-Volkner} for a precise definition in the local Riemannian setting.} of $u \in BV(\Omega \cap U)$. Moreover, as $i \to \infty$,
\[ u_{\ep_i} \to \pm 1 \quad \text{ locally uniformly on $(\Omega \cap U) \setminus \spt \|V\|$}\]
and for each $b \in (0,1)$, 
\[ \{x \in \overline{\Omega} \cap U : |u_{\ep_i}(x)| \leq 1-b\} \to \spt \|V\| \quad  \text{ locally in Hausdorff topology.}\]
\item[(3)] (Integer-rectifiability of limit-interface) The limit-interface is integer-rectifiable, i.e. $V \in \bIV_{n-1}(U)$, with density 
\[ \Theta_V(x)= \left\{ \begin{array}{cl}
\text{odd} & \text{for $\mathcal{H}^{n-1}$-a.e. $x \in \partial^* \{u=1\}$},\\
\text{even} & \text{for $\mathcal{H}^{n-1}$-a.e. $x \in (\spt \|V\| \cap \Omega) \setminus \partial^* \{u=1\}$}.
\end{array} \right.
\]
\item[(4)] (Variational property of limit-interface) The limit-interface $V \in \bIV_{n-1}(U)$ is free boundary stationary, with respect to the metric $g$, with free boundary in $\partial \Omega \cap U$ and $\spt \|V\| \subset \overline{\Omega} \cap U$.
\end{itemize}
\end{theorem}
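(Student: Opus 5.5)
The statement is imported from \cite[Theorem 3.3]{LiPariseSarnataro2023}, so the plan is not to reprove it from scratch. Instead I would re-run that proof and check that every constant depends on the metric and the domain only through the quantities $i_0,k_0,\kappa_0$ of Section \ref{SS:setup} and their $C^2$ moduli. Since $g_i\to g$ and $\Omega_i\to\Omega$ in $C^2$, these bounds hold uniformly in $i$ for all large $i$. One can also pull everything back by diffeomorphisms $F_i\to\mathrm{id}$ in $C^2$ with $F_i(\Omega)=\Omega_i$, which reduces to a fixed domain with metrics $F_i^*g_i\to g$. I would then carry out the steps below in order, with all estimates uniform in $i$.

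\emph{Compactness.} (A1) together with the Modica--Mortola trick $w_i=\Phi(u_{\ep_i})$, where $\Phi'=\sqrt{2W}$, gives a uniform $BV$ bound. This yields $u_{\ep_i}\to u$ in $L^1$, and (A2) forces $u\in\{\pm1\}$ a.e. The masses $\|V^{u_{\ep_i}}\|(U)\le 2E_0/h_0$ are uniformly bounded, so weak compactness of Radon measures on $G(U)$ gives $V$.

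\emph{First variation and discrepancy.} Testing \eqref{E:Allen-Cahn} against $\langle X,\nabla u\rangle$ and integrating by parts gives an identity for $\delta V^{u_{\ep_i}}(X)$. This identity consists of a term $\int \langle\nabla X\nabla u,\nabla u\rangle$-type error weighted by $\xi_{\ep_i}$, plus a boundary integral. For $X\in\mathfrak X^{tan}_c(U)$ the boundary integral vanishes because of the Neumann condition. Hence (4) reduces to (1), that is, to $\xi_{\ep_i}\to0$ in $L^1$ up to $\partial\Omega$. For the pointwise bound \eqref{E:discrepancy-pointwise-bound}, I would apply the maximum principle to a modified discrepancy $\xi-G(u)$ with a boundary barrier. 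The Neumann condition makes $\partial_\nu\xi$ controlled by the second fundamental form of $\partial\Omega$, which only gives the weaker $\ep^{-4/5}$ bound. Feeding this into the radial test fields of Lemma \ref{L:monotonicity limit varifold} (boundary centred, or disjoint from $\partial\Omega$) gives almost monotonicity of $r^{1-n}\int e_\ep$ down to scale $\ep^{4/5}$.

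\emph{Main obstacle.} The hard step is bridging the gap between the scales $\ep^{4/5}$ and $\ep$ near the boundary. I would follow the covering argument of \cite{LiPariseSarnataro2023}. Interior balls of radius comparable to their distance to $\partial\Omega$ are handled by Hutchinson--Tonegawa, where the bound $\xi\le C$ is available away from the boundary layer. The boundary layer of width $\ep^{4/5}$ is controlled by the local $L^1$ trace inequality for $e_\ep$ together with the density upper bound. Summing over a Besicovitch cover then shows that $\int|\xi_{\ep_i}|\phi\to0$. Equipartition \eqref{E:equipartition energy} follows immediately.

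\emph{Remaining items.} With monotonicity and equipartition available, I would obtain a uniform density lower bound at points where $|u|\le1-b$ via the standard energy-lower-bound lemma at scale $\ep$. This gives (2): the level sets converge in Hausdorff distance, and $u\to\pm1$ uniformly away from $\spt\|V\|$. Up to the boundary this convergence uses the reflected-ball argument of Appendix \ref{A:reflected-density}. Finally, (3) follows from the Hutchinson--Tonegawa blow-up at $\|V\|$-a.e. point. Since $\|V\|\mres\partial\Omega$ is handled by Lemma \ref{L:constancy lemma De Masi}, it suffices to treat interior points. There the one-dimensional profile analysis counts transitions, giving integer density, and the parity of that count relative to $\partial^*\{u=1\}$ gives the odd/even dichotomy.
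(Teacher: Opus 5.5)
Your overall route is the same as the paper's. The paper does not reprove this theorem: it imports it from \cite{LiPariseSarnataro2023}, asserts that the argument there carries over with trivial modifications to $C^2$-converging metrics and domains (this matches your uniformity/pull-back remark), and separately supplies the reflected-ball monotonicity of Appendix \ref{A:reflected-density} for the up-to-the-boundary Hausdorff convergence in (2), as you propose. One small correction: $u\in\{\pm1\}$ a.e.\ comes from (A1), via $\int W(u_{\ep_i})\le \ep_i E_0$, not from (A2).

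There is a genuine gap in your treatment of (3). Lemma \ref{L:constancy lemma De Masi} only tells you that along $\partial\Omega$ the varifold equals $\Theta^{n-1}(\|V\|,\cdot)\,\mathcal{H}^{n-1}\mres\partial\Omega$ with tangent plane $T_x\partial\Omega$. It holds for arbitrary free boundary varifolds and says nothing about $\Theta$ being an integer. Indeed, $c\,|\partial\Omega|$ is free boundary stationary for every $c>0$, so integrality on $\partial\Omega$ cannot come from varifold theory alone. Moreover, $\|V\|(\partial\Omega)>0$ really occurs, as the Malchiodi--Ni--Wei boundary-concentration examples show. So ``it suffices to treat interior points'' is false.

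You must also run the blow-up at $\|V\|$-a.e.\ $x\in\partial\Omega$. There the tangent is $\theta\,|T_x\partial\Omega|$, and you work in Fermi coordinates on a half-space with $\partial_t u_{\ep_i}=0$ at $t=0$. The missing ingredient is that no transition layer can sit within distance $O(\ep_i)$ of the wall. A one-dimensional limit profile there would satisfy $q''=W'(q)$ and $\tfrac12(q')^2=W(q)$ on a half-line, with $q'=0$ at the endpoint. This forces $q=\pm1$ at the endpoint, so $q$ is constant by ODE uniqueness, contradicting $|q(0)|\le 1-b$. Hence every transition in a good slice is a full heteroclinic contributing $h_0$, and $\theta\in\mathbb{N}$.

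This is the content of \cite[Proposition 6.1]{LiPariseSarnataro2023}, which the paper re-derives under stability in Proposition \ref{prop:app-integrality}. Without it you obtain only rectifiability, not integrality, of $V$ along $\partial\Omega$, and hence neither $V\in\bIV_{n-1}(U)$ in (3) nor the integral-varifold claim in (4).
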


\begin{remark}\label{R:old-theorem-optimal}
	The assumptions (A1) and (A2) are really the minimal assumptions one has to make to expect the conclusions in Theorem \ref{T:old-main}. Condition (A2) typically follows from a maximum principle argument on the global Allen-Cahn equation \eqref{E:Allen-Cahn-intro}. Condition (A1) is necessary in order to guarantee a uniform bound on the BV-norm of $u_{\ep_i}$ and the mass of $V^{u_{\ep_i}}$ so that standard compactness theorems can be applied to extract a convergent subsequence. In particular, there are no assumptions on the stability (or Morse index) of $u_{\ep_i}$ with respect to the Allen-Cahn energy functional $E_{\ep_i}$. We emphasize that Theorem \ref{T:old-main} allows higher multiplicity convergence and boundary concentration phenomena as constructed by Malchiodi-Ni-Wei \cite{Malchiodi2007a} and Malchiodi-Wei \cite{Malchiodi2007}. As these examples show, in general the limit-interface $V$ may not be stationary in $U$ if there is a part of $\spt \|V\|$ leaning onto the boundary $\partial \Omega$. Nonetheless, the first variation of $V$ is given by \eqref{E:1st-variation-fbv-stationary} but it is a difficult question to infer further regularity (e.g. whether $\Hcal^{n-1}(\sing V)=0$) as a-priori the touching part can interact with $\|\delta V\|_{sing}$ in highly complicated manners.
\end{remark}

\begin{remark}\label{R:up-to-boundary-Hausdorff-convergence}
    Note that we have only established in \cite{LiPariseSarnataro2023} Hausdorff convergence of the transition level sets to the support of the limit-interface \emph{locally away from $\partial \Omega$}. This is due to the technical arguments that in \cite{LiPariseSarnataro2023} we only make use of monotonicity formula on balls that are either centred at a boundary point or disjoint from $\partial \Omega$. The last assertion of (2) in Theorem \ref{T:old-main} essentially relies on a uniform density-ratio bounds which have to be also valid for balls that intersect $\partial \Omega$ but are not centred at a boundary point. This can be achieved by a reflected ball argument as in \cite{Grueter1986} and \cite{Tonegawa2003}. For the sake of completeness, we provide the proof in Appendix \ref{A:reflected-density}.
\end{remark}

As highlighted in Remark \ref{R:old-theorem-optimal}, Theorem \ref{T:old-main} is likely to be sharp in such generality. However, as the examples in \cite{Malchiodi2007a,Malchiodi2007} show, a free boundary stationary integral varifold can be quite far from a genuine free boundary minimal hypersurface. In order to obtain better regularity properties for the limit interface, it is necessary to impose stronger assumptions on $u_{\ep_i}$. Since $u_{\ep_i}$ are critical points to a variational problem, a natural assumption is that of stability (or more generally, finite Morse index), which is going to be studied in the following sections.

\subsection{Stable critical points to Allen-Cahn equation}
\label{SS:Stability}

We now study further properties of the limit-interface $V$ arising from Theorem \ref{T:old-main} under the additional assumption that each $u_{\ep_i}$ is stable. 

\begin{assumptions}
\label{As:stable-AC}
From this section onwards, we will consider solutions $u_\ep$ to \eqref{E:Allen-Cahn} under the following additional assumption:
\begin{itemize}
    \item[(A3)] (Stability) Each $u_\ep$ is a stable critical point in the sense that the second variation of $E_\ep$ is non-negative-definite at $u_\ep$, i.e.
\begin{equation}
\label{E:def-stability}
	\left. \frac{d^2}{dt^2} \right\vert_{t = 0} E_\ep(u_\ep + t\psi)\geq 0 \qquad \forall \psi \in C_c^1(U).
\end{equation}
\end{itemize}
\end{assumptions}

Note that the test function $\psi$ in \eqref{E:def-stability} is allowed to be non-zero along $\partial \Omega \cap U$, and therefore $u_{\ep}$ is stable as a ``free boundary problem'' (in the sense that the boundary values of $u_{\ep}$ are allowed to change freely on $\partial \Omega \cap U$ in the variational problem). Recall that $u \in C^3(\overline{\Omega} \cap U)$ for any solution to \eqref{E:Allen-Cahn} and thus we can define two Radon measures $\mu^\partial_{u_\ep}$ and $\hat{\mu}^\partial_{u_\ep}$ supported on $\partial \Omega \cap U$ by
\begin{equation}
\label{E:mu-partial}
	\mu^\partial_{u_{\ep}}\coloneqq\frac{1}{h_0}\ep|\nabla u_{\ep} |^2 d\Hcal^{n-1} \mres {\partial \Omega \cap U}
\end{equation}
and 
\begin{equation}
\label{E:mu-hat-partial}
	\hat{\mu}^\partial_{u_{\ep}}\coloneqq e_{\ep}(u_\ep) d\Hcal^{n-1} \mres {\partial \Omega \cap U}.
\end{equation}
Clearly, we have $\mu^\partial_{u_\ep} \leq 2\hat{\mu}^\partial_{u_\ep}$ as measures but in general we do not have $\hat{\mu}^\partial_{u_\ep} \leq C \mu^\partial_{u_\ep}$ unless we have pointwise lower bound of the discrepancy function $\xi_{\ep}(u_\ep)$ up to the boundary $\partial \Omega$.

We shall also denote the unit normal $N_{u_\ep}$ to the (regular part of) level sets of $u_\ep$, which is a $C^2$ vector field on $\overline{\Omega} \cap U \cap \{\nabla u_\ep \neq 0\}$, defined by
\begin{equation}
    \label{E:u-normal}
 N_{u_\ep}:=\frac{\nabla u_\ep}{|\nabla u_\ep|}
 \end{equation}
We also recall the definition of the enhanced second fundamental form $\mathcal{A}_{u_\ep}$, which is a $(1,1)$-tensor of class $C^1$ in $\overline{\Omega} \cap U \cap \{\nabla u_\ep \neq 0\}$ defined by
\begin{equation}
\label{E:enhanced-sff}
   \mathcal{A}_{u_\ep}  := \nabla N_{u_\ep}. 
\end{equation}
By convention, we shall just let $N_{u_\ep}$ and $\mathcal{A}_{u_\ep}$ vanish on $\{|\nabla u_\ep| = 0\}$\footnote{Using this convention, $N_{u_\ep}$ and $\mathcal{A}_{u_\ep}$ will be discontinuous across the exceptional set $\{|\nabla u_\ep| = 0\}$. However, as we will see, we will always pair $N_{u_\ep}$ and $\mathcal{A}_{u_\ep}$ with the Radon measures such as $\mu_{u_{\ep}}$ and $\mu^\partial_{u_{\ep}}$ that vanish on this exceptional set so that we will only be integrating over the set $\{|\nabla u_\ep| \neq 0\}$.}. The pointwise Hilbert-Schmidt norm of $\mathcal{A}_{u_\ep}$ on $\{|\nabla u_\ep| \neq 0\}$ is given by
\[ |\mathcal{A}_{u_\ep}|^2 = \frac{|\Hess u|^2 - |\nabla |\nabla u||^2}{|\nabla u_\ep|^2}.  \]




We first derive an $\ep$-stability inequality for stable solutions to the Allen-Cahn equation \eqref{E:Allen-Cahn}. The same inequality was first established in the interior Euclidean setting by Padilla-Tonegawa \cite{Padilla1998} (c.f. \cite{Tonegawa2005a}) and then the Riemannian setting by Hiesmayr \cite{Hiesmayr2017}. We extend this to the Riemannian setting including the boundary terms.

\begin{proposition}[$\ep$-stability inequality]
    \label{P:ep-stability}
    Suppose $u_\ep$ is a stable solution to \eqref{E:Allen-Cahn} satisfying (A3) in Assumption \ref{As:stable-AC}. Then the following inequality holds: for any $\psi \in C^1_c(U)$,
    \begin{equation}
        \label{E:ep-stability}
        \int |\mathcal{A}_{u_\ep}|^2 \psi^2 \; d\mu_{u_\ep} \leq \int |\nabla \psi|^2 + \Ric ( N_{u_\ep},N_{u_\ep}) \psi^2  \; d\mu_{u_\ep}  + \int \sff_{\partial \Omega} (N_{u_\ep},N_{u_\ep}) \psi^2 \; d\mu^\partial_{u_\ep}
    \end{equation}
    where $\Ric$ is the Ricci curvature of $(\tM,g)$ and $\sff_{\partial \Omega}$ is the second fundamental form\footnote{Our sign convention here is that $\sff_{\partial \Omega}$ is positive definite for a convex domain $\Omega \subset (\tM,g)$.} of $\partial \Omega$ with respect to the outward unit normal $\nu$.
\end{proposition}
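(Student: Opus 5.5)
The plan follows Padilla--Tonegawa: test the stability inequality \eqref{E:def-stability} with $\varphi = |\nabla u_\ep|\psi$ and rewrite the resulting quadratic form using the Bochner formula. The boundary term of $\sff_{\partial\Omega}$ will come from the integration by parts. To avoid the non-smoothness of $|\nabla u|$ on $\{\nabla u=0\}$, I first take $\varphi_\delta = \sqrt{|\nabla u|^2+\delta^2}\,\psi$, which is $C^1$ up to $\partial\Omega$ because $u\in C^3(\overline\Omega\cap U)$, and let $\delta\to 0$ at the end (dominated convergence). Below I write $u=u_\ep$.

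\emph{Step 1 (second variation).} Computing the second variation gives
\[ 0 \le \int_{\Omega\cap U} \ep|\nabla\varphi|^2 + \ep^{-1}W''(u)\varphi^2 \; \dvol_g . \]
With $\varphi=w\psi$ and $w=|\nabla u|$, expand $|\nabla(w\psi)|^2 = w^2|\nabla\psi|^2 + \tfrac12\nabla(w^2)\cdot\nabla(\psi^2) + \psi^2|\nabla w|^2$. Then integrate the middle term by parts:
\[ \int \tfrac12\nabla(w^2)\cdot\nabla(\psi^2) = -\int \psi^2\,\tfrac12\Delta(w^2) + \int_{\partial\Omega} \psi^2\,\tfrac12\partial_\nu(w^2)\, d\Hcal^{n-1}. \]

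\emph{Step 2 (Bochner and the equation).} The Bochner formula reads
\[ \tfrac12\Delta|\nabla u|^2 = |\Hess u|^2 + \nabla u\cdot\nabla\Delta u + \Ric(\nabla u,\nabla u). \]
Differentiating the equation gives $\ep^2\nabla\Delta u = W''(u)\nabla u$, so $\ep\,\nabla u\cdot\nabla\Delta u = \ep^{-1}W''(u)|\nabla u|^2$. This cancels exactly the potential term $\ep^{-1}W''(u)w^2\psi^2$. What remains is
\[ \int \ep\big(|\Hess u|^2 - |\nabla|\nabla u||^2\big)\psi^2 \le \int \ep|\nabla u|^2|\nabla\psi|^2 + \ep\Ric(\nabla u,\nabla u)\psi^2 + \int_{\partial\Omega}\ep\,\tfrac12\partial_\nu|\nabla u|^2\,\psi^2 . \]
Using the formula for $|\mathcal A_{u}|^2$ and $\mu_u = h_0^{-1}\ep|\nabla u|^2\dvol_g$, the left side is $h_0\int|\mathcal A_u|^2\psi^2\,d\mu_u$. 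The interior terms on the right match \eqref{E:ep-stability} after dividing by $h_0$.

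\emph{Step 3 (boundary term; the key point).} I need $\tfrac12\partial_\nu|\nabla u|^2 = \Hess u(\nabla u,\nu) = -\sff_{\partial\Omega}(\nabla u,\nabla u)$ on $\partial\Omega$. The Neumann condition gives $\nabla u\cdot\nu=0$ on $\partial\Omega$, so $\nabla u$ is tangent to $\partial\Omega$. Differentiating $\langle\nabla u,\nu\rangle=0$ in the tangential direction $X=\nabla u$ gives
\[ \Hess u(X,\nu) + \langle\nabla u,\nabla_X\nu\rangle = 0, \qquad\text{i.e.}\qquad \Hess u(\nabla u,\nu) = -\sff_{\partial\Omega}(\nabla u,\nabla u) \]
in some sign convention. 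The sign must be matched against the footnote's convention, in which $\sff_{\partial\Omega}$ is positive on convex domains. The resulting boundary term is $\pm\int_{\partial\Omega}\ep\,\sff_{\partial\Omega}(\nabla u,\nabla u)\psi^2$. Writing $\sff_{\partial\Omega}(\nabla u,\nabla u)=|\nabla u|^2\,\sff_{\partial\Omega}(N_u,N_u)$, this is $h_0\int \sff_{\partial\Omega}(N_u,N_u)\psi^2\,d\mu^\partial_u$. Convex domains should make stability easier to violate, which fixes the sign appearing in \eqref{E:ep-stability}.

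\emph{Main obstacle.} I expect the main difficulty to be the careful regularization near $\{\nabla u=0\}$, including at boundary critical points, together with the sign bookkeeping in Step 3. For the regularization, the regularized term $|\nabla\sqrt{w^2+\delta^2}|^2 \le |\nabla w|^2$ only helps. The boundary identity above holds for $w^2$ regardless of $\delta$. The integrands on $\{\nabla u=0\}$ vanish in the limit, consistent with the convention that $N_u$ and $\mathcal A_u$ vanish there.
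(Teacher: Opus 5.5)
Your route is the paper's: test stability with $\psi|\nabla u_\ep|$, cancel the potential term using Bochner and the differentiated equation, and evaluate the boundary term by differentiating the Neumann condition. Your $\sqrt{|\nabla u|^2+\delta^2}\,\psi$ regularization is a good self-contained replacement for the paper's appeal to Padilla--Tonegawa, with two small corrections.
\begin{enumerate}
\item The piece $\delta^2\ep^{-1}W''(u)\psi^2$ of the potential term is not cancelled by Bochner; it tends to $0$ by (A2).
\item On $\{\nabla u=0\}$ the integrand $\ep|\Hess u|^2\psi^2$ does not vanish. It sits on the left-hand side with a good sign, so it can simply be dropped.
\end{enumerate}
The real problem is the sign bookkeeping in Steps 2--3: you adjusted the signs to match the display instead of reading them off your own computation.

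\emph{Ricci term.} With your Bochner formula and your integration by parts, the term $-\tfrac{\ep}{2}\psi^2\Delta|\nabla u|^2$ contributes $-\ep\Ric(\nabla u,\nabla u)\psi^2$ to the right-hand side, not $+$.

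\emph{Boundary term.} In Step 3 there is no convention ambiguity left. The footnote fixes $\sff_{\partial\Omega}(X,Y)=\langle\nabla_X\nu,Y\rangle$, which is positive on convex domains. Your differentiation of $\langle\nabla u,\nu\rangle=0$ then gives exactly $\tfrac12\partial_\nu|\nabla u|^2=-\sff_{\partial\Omega}(\nabla u,\nabla u)$; this is the paper's \eqref{E:Bochner-u-ep-boundary}. So the boundary contribution is $-\int\sff_{\partial\Omega}(N_{u_\ep},N_{u_\ep})\psi^2\,d\mu^\partial_{u_\ep}$. Your heuristic that convexity destabilizes (think of the equatorial disc in the ball) confirms this minus sign. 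It does not produce the plus sign in the display.

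\emph{What is actually proved.} Your argument, and the paper's, yields
\[
\int|\Acal_{u_\ep}|^2\psi^2\,d\mu_{u_\ep}\le\int\bigl(|\nabla\psi|^2-\Ric(N_{u_\ep},N_{u_\ep})\psi^2\bigr)\,d\mu_{u_\ep}-\int\sff_{\partial\Omega}(N_{u_\ep},N_{u_\ep})\psi^2\,d\mu^\partial_{u_\ep}.
\]
The printed \eqref{E:ep-stability} has both curvature signs reversed. This is evidently a typo in the paper, for three reasons:
\begin{enumerate}
\item the remark on test functions supported in $\reg(V)$ puts $+\Ric(N_\Sigma,N_\Sigma)$ on the left;
\item the classical inequality recalled after Theorem \ref{T:main-AC} carries the same minus signs;
\item every later use needs only $|\Ric|\le(n-1)k_0$ and $|\sff_{\partial\Omega}|\le\kappa_0$.
\end{enumerate}
Still, as printed the inequality is strictly stronger than what you prove wherever $\Ric(N_{u_\ep},N_{u_\ep})<0$ or $\sff_{\partial\Omega}(N_{u_\ep},N_{u_\ep})<0$. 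You should conclude with the minus-sign version above, and its absolute-value consequence, rather than force the signs to match.
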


\begin{proof}
A direct calculation as in \cite{Padilla1998} and integrating by parts gives for any $\phi \in C^1_c(U)$,
    \begin{equation}
    \label{E:2nd-variation-boundary} 
	\begin{split}
    	0 \leq \left. \frac{d^2}{dt^2} \right\vert_{t = 0} E_\ep(u_\ep + t\phi) & = \int_{\Omega\cap U} \left( \ep \vert \nabla \phi \vert^2 + \frac{1}{\ep} W^{\prime \prime}(u_\ep) \phi^2 \right) \, \dvol_g, \\
    	& = \int_{\Omega \cap U} \phi\left( - \ep \Delta \phi + \frac{1}{\ep} W^{\prime \prime}(u_\ep) \phi \right) \dvol_g + \frac{\ep}{2} \int_{\partial \Omega \cap U} \frac{\partial \phi^2}{\partial \nu} \, d\Hcal^{n-1} \\
	\end{split}
\end{equation}
To rewrite the above in the form of \eqref{E:ep-stability}, first recall by the Bochner formula that 
\begin{equation}
\label{E:Bochner-u-ep}
    \frac{\ep}{2}\Delta \vert \nabla u_\ep \vert^2 = \ep\vert \Hess u_\ep \vert^2 + \frac{1}{\ep} W^{\prime \prime}(u_\ep) \vert \nabla u_\ep \vert^2 + \ep \Ric (\nabla u_\ep, \nabla u_\ep)
\end{equation}
and also the boundary analogue of Bochner formula (c.f. \cite[Lemma 3.1]{Tonegawa2003})
\begin{equation}
\label{E:Bochner-u-ep-boundary}
    \frac{\partial |\nabla u_\ep|^2}{\partial \nu} = - 2 \sff_{\partial \Omega}(\nabla u_\ep, \nabla u_\ep).  
\end{equation}
We can then\footnote{Although \eqref{E:2nd-variation-boundary} was derived for $C^1$ test functions $\phi$, it is standard to see by an approximation argument that our choice $\psi |\nabla u_\ep|$ is a valid test function even though $|\nabla u_\ep|$ is in general not $C^1$ across the exceptional set $\{|\nabla u_\ep|=0\}$. The interested readers can refer to \cite[Proposition 2.6]{Padilla1998} for more details. Also, technically $\psi |\nabla u_\ep|$ is only defined on $\overline{\Omega} \cap U$ but we can extend it suitably across $\partial \Omega \cap U$ which does not affect the integrals in \eqref{E:2nd-variation-boundary}.} plug $\phi=\psi |\nabla u_\ep|$ into \eqref{E:2nd-variation-boundary} to obtain \eqref{E:ep-stability}. 
\end{proof}

We now move on to the main result of this section concerning the properties of limit-interfaces arising from stable solutions to the Allen-Cahn equation. 

\begin{theorem}
\label{T:main-AC}
Let $\{\ep_i\}_{i=1}^\infty$ be a sequence of positive real numbers such that $\ep_i\to 0$, and let $\{u_{\ep_i}\}_{i=1}^\infty$ be a sequence of solutions to \eqref{E:Allen-Cahn} satisfying (A1)-(A2) in Assumptions \ref{As:stationary-AC}. 
Suppose, furthermore, that each $u_{\ep_i}$ is stable, i.e. (A3) in Assumption \ref{As:stable-AC} is satisfied, with respect to the metric $g_i$. Then, for any limit-interface $V \in \bIV_{n-1}(U)$ obtained from Theorem \ref{T:old-main}, the following holds:
\begin{enumerate}
\item $V$ is a curvature varifold with boundary (recall Definition \ref{D:curvature-varifolds}), i.e. 
\[ V \in \bIV_{n-1}(U) \cap \bCV^\partial_{n-1}(U).\]
\item (Ambient stability inequality) For any $\psi \in C^1_c(U)$, we have
\begin{equation}
\label{E:stability-inequality-boundary}
    \int |\mathbf{B}_V|^2\psi^2 \, d\|V\| \leq \int |\nabla\psi|^2 \, d\|V\| + (n-1) k_0 \int \psi^2 \, d\|V\| + 2 \kappa_0 \int \psi^2 \, d \|\delta V\|_{sing},
\end{equation}
where $|\mathbf{B}_V|^2(x)=|\mathbf{B}_V(x,T_x V)|^2$ with $T_x V$ being the approximate tangent plane of $\|V\|$ that exists at $\|V\|$-a.e. $x \in U$.
\item (Stationarity across touching set) The first variation of $V$ is given by
\begin{equation}
\label{E:minimal-touching}
    \delta V(X)= \int X \cdot \nu \; d \|\delta V\|_{sing} \qquad \forall X \in \mathfrak{X}_c(U).
\end{equation}
\end{enumerate}

\begin{remark}
    By (1) and Theorem \ref{T:boundary-rectifiability}, $\pi_* \|\partial V\|$ is an $(n-2)$-rectifiable measure on $U$.
\end{remark}
\begin{remark}
Note that if $V=|\Sigma|$ is a unit-density varifold for some smoothly embedded, two-sided, stable free boundary minimal hypersurface $\Sigma \subset \Omega$ with $\partial \Sigma \subset \partial \Omega$, the classical \emph{intrinsic} stability inequality for $\Sigma$ reads (recall the notation as in Example \ref{Ex:generalised-sff-smooth}):
\[ \int_\Sigma |B_\Sigma|^2 \psi^2 \; d\Hcal^{n-1} \leq \int_\Sigma |\nabla^\Sigma \psi|^2 \; d\Hcal^{n-1} - \int_\Sigma \Ric (N_\Sigma,N_\Sigma)  \psi^2 \; d\Hcal^{n-1} - \int_{\partial \Sigma} \sff_{\partial \Omega}(N_\Sigma,N_\Sigma) \psi^2 \; d \Hcal^{n-2} \]
where $B_\Sigma$ is the classical second fundamental form of $\Sigma$. Comparing this with \eqref{E:stability-inequality-boundary}, note that the term involving the extrinsic gradient $\nabla \psi$ is replaced by the intrinsic gradient $\nabla^\Sigma \psi$ and the boundary term differs by a factor of two.   
\end{remark}

\begin{remark}
\label{R:stationarity-across-touching}
Since $V$ is a free boundary stationary varifold with free boundary in $\partial \Omega \cap U$ and $\spt \|V\| \subset \overline{\Omega} \cap U$, combining \eqref{E:minimal-touching} with \eqref{E:1st-variation-fbv-stationary}, we conclude that
\begin{equation}
\label{E:minimal-touching-2}
    H_{\partial \Omega}(x)=0 \qquad \text{for $\|V\|$-a.e. $x \in \partial \Omega \cap U$.}
\end{equation}
In other words, the part that $V$ leans on to the boundary must be a minimal portion of $\partial \Omega$.
\end{remark}

%
%
\end{theorem}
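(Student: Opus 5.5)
The plan follows Tonegawa's interior scheme, with extra care at the boundary. The first step is a uniform local $L^2$ bound on $\mathcal{A}_{u_{\ep_i}}$ with respect to $\mu_{u_{\ep_i}}$. In the $\ep$-stability inequality \eqref{E:ep-stability}, the Ricci term is bounded by $(n-1)k_0\int\psi^2\,d\mu_{u_\ep}$ and the boundary term by $\kappa_0\int\psi^2\,d\mu^\partial_{u_\ep}$. The boundary term is controlled by an $\ep$-level $L^1$-trace inequality, the analogue of Lemma \ref{L:trace-inequality} proved in \cite{LiPariseSarnataro2023}. I would test the identity $\Div(\ep\,\psi^2\nabla u\otimes\nabla u)$, i.e. the first-variation identity for the stress tensor of $u_\ep$, with the field $-\psi^2\nabla d$. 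This gives
\[
\int\psi^2\,d\hat\mu^\partial_{u_\ep}\le C\int\big(|\nabla\psi|+\psi\big)\psi\,d\mu_{u_\ep}+C\int\psi^2|\xi_\ep|\,\dvol+\text{(terms vanishing as }\ep\to0).
\]
Since $\mu^\partial\le 2\hat\mu^\partial$ and $\xi_\ep\to0$ in $L^1$, this yields $\sup_i\int_K|\mathcal{A}_{u_{\ep_i}}|^2\,d\mu_{u_{\ep_i}}<\infty$ for every compact $K$.

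For (1), I construct the $\ep$-level integration by parts on $G(U)$. For $\varphi\in C^1_c(G(U))$ and a $C^1$ field $X$, compute $\int\Div\big(\varphi(x,S^{u_\ep})X\,\ep|\nabla u_\ep|^2\big)$ over the regular part of the level sets using the coarea formula, or equivalently differentiate $\varphi(x,S^{u_\ep}(x))$ along $S^{u_\ep}$. The vertical derivative produces $\mathcal{A}_{u_\ep}$ paired with $\nabla^{ver}\varphi$, as in \cite{Tonegawa2005a}. The error terms involve the discrepancy together with $\ep\Delta u\,\nabla u$; they are bounded by $\int|\xi_\ep|\,|\mathcal A|$ and tend to zero by Cauchy--Schwarz, the $L^2$ bound, and \eqref{E:vanishing-discrepancy-limit}. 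The divergence theorem produces a boundary integral over $\partial\Omega$, namely $\int\varphi\,X\cdot\nu\,d\mu^\partial_{u_\ep}$ lifted to $G(U)$, which is bounded uniformly by the trace estimate. Pass to the limit: the $L^2$-bounded measures $\mathcal{A}_{u_\ep}\,dV^{u_\ep}$ converge weakly to $\mathbf{B}_V\,dV$ with $\mathbf{B}_V\in L^2$, as in Hutchinson's lemma on measure-function pairs, and the boundary measures converge to a vector Radon measure. Together these define $\partial V$, and the formal properties \eqref{E:formal-properties-B}--\eqref{E:formal-properties-partial-V} pass to the limit. Lower semicontinuity then gives $\int|\mathbf{B}_V|^2\psi^2\,d\|V\|\le\liminf\int|\mathcal{A}|^2\psi^2\,d\mu$.

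For (2), let $\hat\mu^\partial_{u_{\ep_i}}\rightharpoonup\sigma$ on $\partial\Omega$. Then \eqref{E:ep-stability} gives \eqref{E:stability-inequality-boundary} with $2\kappa_0\int\psi^2\,d\sigma$ in place of the singular term. It remains to identify $\sigma\le\|\delta V\|_{sing}$, or at least $\sigma$ restricted to the part of $\partial\Omega$ off $\spt\|V\|$ within $\Omega$. Test the $\ep$-level first variation against $X=-f\nabla d$. The left side converges to $\delta V(X)$, which by \eqref{E:1st-variation-fbv-stationary} equals $\int fH_{\partial\Omega}\cdot\nabla d\,d\|V\|+\int f\,d\|\delta V\|_{sing}$. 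The $\ep$-level boundary term is $\int f\,(e_\ep-\ep|\nabla u\cdot\nu|^2)\,d\Hcal^{n-1}=\int f\,d\hat\mu^\partial$ by the Neumann condition. Hence $\sigma=\|\delta V\|_{sing}+(H_{\partial\Omega}\text{-term})\,\|V\|\mres\partial\Omega$.

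For (3), the required identity is exactly that the $\|V\|\mres\partial\Omega$ part of $\delta V$ vanishes. This comes from comparing two computations of the boundary contribution. The curvature-varifold identity from (1), taken with $\varphi=\varphi(x)$, gives by Lemma \ref{L:curvaure-varifolds-definition}(ii) that $\eta_V\|\delta V\|_{sing}=\pi_*\partial V$, which is singular with respect to $\|V\|$. Meanwhile $H_V=\mathbf{H}_V(x,T_xV)$ is the absolutely continuous part. On $\partial\Omega$ we have $T_xV=T_x\partial\Omega$ by Lemma \ref{L:constancy lemma De Masi}, and the sheets of $V$ there are flat against $\partial\Omega$. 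I would then use the locality of $\mathbf{B}_V$ for integral curvature varifolds (Hutchinson/Mantegazza: $\mathbf{B}_V$ coincides a.e. with the second fundamental form of any $C^2$ hypersurface on which $V$ concentrates) to get $\mathbf{H}_V=H_{\partial\Omega}$ a.e. on $\partial\Omega$.

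\textbf{Main obstacle.} Deducing $H_{\partial\Omega}=0$ requires a second, independent identity, and I do not yet see what supplies it. The route I would try first is a tangential-derivative argument on the $\ep$-level normals: because $\nabla u\cdot\nu=0$, the boundary measure in the $\ep$-identity is purely tangential ($X\cdot\nu$ tested against tangential conormals). This should force the limiting $\partial V$ to have no $\nu$-component absolutely continuous with respect to $\|V\|$. That in turn would cancel the $H_{\partial\Omega}\|V\|$ term and yield \eqref{E:minimal-touching}, and with it $\sigma=\|\delta V\|_{sing}$ in (2). Justifying this limit of the boundary measures carefully, without equipartition on $\partial\Omega$, is the step I expect to be hardest.
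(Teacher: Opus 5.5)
Your $L^2$ bound and your construction in (1) essentially follow the paper. However, (2) and (3) are not closed. You end with $\sigma=\|\delta V\|_{sing}-(\nu\cdot H_{\partial\Omega})\,\|V\|\mres\partial\Omega$ and, as you say, no way to remove the last term. Without that, (2) holds only with $\sigma$ in place of $\|\delta V\|_{sing}$, and $\sigma$ need not be bounded by $\|\delta V\|_{sing}$.

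No second independent identity is needed. The missing observation is that the boundary measure in your curvature-varifold identity and your $\sigma$ are the same object; a bookkeeping slip in (1) hides this. Derive the $\ep$-level identity as the paper does: test the first-variation formula \eqref{E:diffused-V-first-variation} (the Pohozaev identity) with $\tilde X=\varphi(x,S^{u_\ep})\,S^{u_\ep}(X)$. The resulting boundary term carries the density $e_\ep(u_\ep)$, not $\ep|\nabla u_\ep|^2$, so it is $\hat\mu^\partial_{u_\ep}$ rather than $\mu^\partial_{u_\ep}$. The Neumann condition gives $S^{u_\ep}(\nu)=\nu$ on $\partial\Omega$, hence
\[ \pi_*(\partial V^{u_{\ep_i}})=\nu\,\hat\mu^\partial_{u_{\ep_i}}\rightharpoonup \nu\,\sigma, \]
with exactly the $\sigma$ of your step (2). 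Since $V$ is integral, Lemma \ref{L:curvaure-varifolds-definition}(ii), which you already invoke, gives $\pi_*(\partial V)=\eta_V\,\|\delta V\|_{sing}$. Taking total variations of $\nu\,\sigma=\eta_V\,\|\delta V\|_{sing}$ yields $\sigma=\|\delta V\|_{sing}$.

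This gives (2) directly from the $\ep$-stability inequality together with $\mu^\partial_{u_\ep}\le 2\hat\mu^\partial_{u_\ep}$. For (3), let $i\to\infty$ in \eqref{E:diffused-V-first-variation}, using $\xi_{\ep_i}\to 0$ in $L^1$ and $\hat\mu^\partial_{u_{\ep_i}}\rightharpoonup\|\delta V\|_{sing}$. Equivalently, insert $\sigma=\|\delta V\|_{sing}$ into your own formula to get $\nu\cdot H_{\partial\Omega}=0$ at $\|V\|$-a.e. point of $\partial\Omega$. The detour through locality of $\mathbf B_V$ can be dropped.

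The distinction between $\mu^\partial$ and $\hat\mu^\partial$ is not cosmetic. The boundary discrepancy is not known to vanish, so a weak limit of $\nu\,\mu^\partial_{u_{\ep_i}}$ could differ from $\nu\,\sigma$. Its singularity with respect to $\|V\|$ would then tell you nothing about $\sigma$. Moreover, a level-set integration by parts that avoids the equation and produces $\mu^\partial$ leaves the tangential part $\nabla^T\log|\nabla u_\ep|$ of $\mathcal A_{u_\ep}$ as an interior term. That term does not vanish in the limit, so such an identity is not of the form you describe.
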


\begin{proof}
First of all, we show that that the first variation of the diffused varifold $V^{u_{\ep}} \in \bV_{n-1}(U)$ (recall \eqref{E:varifold-def}) associated to a solution $u_{\ep}$ to \eqref{E:Allen-Cahn} is given by: for any $\tilde{X} \in \mathfrak{X}_c(U)$,
\begin{equation}
\label{E:diffused-V-first-variation}
		\delta V^{u_\ep} (\tilde{X}) =\int_{\Omega \cap U}\xi_\ep(u_\ep)\Div_g \tilde{X} \; \dvol_g+\int_{\partial \Omega \cap U} e_\ep(u_\ep) (\tilde{X} \cdot \nu) \; d\mathcal{H}^{n-1}.
	\end{equation}
To prove \eqref{E:diffused-V-first-variation}, we first recall the following Pohozaev identity (see \cite[Lemma 4.1]{LiPariseSarnataro2023}, note that we are using the normalized energy density as in \eqref{E:energy-density-discrepancy})
	\begin{equation}\label{E:Pohozaev-1} 
		\int_{\Omega \cap U} \left( h_0 e_\ep(u_\ep)\Div_g \tilde{X} - \ep \nabla \tilde{X} (\nabla u_\ep, \nabla u_\ep) \right) \; \dvol_g 
		=\int_{\partial \Omega \cap U} h_0 e_\ep(u_\ep) (\tilde{X} \cdot \nu) \; d\mathcal{H}^{n-1}.
	\end{equation}
Observe that $\Div_{S^{u_\ep}} \tilde{X} = \Div_g \tilde{X} - \nabla \tilde{X} (N_{u_\ep},N_{u_\ep})$ and $\ep |\nabla u_\ep|^2 = h_0 (e_\ep(u_\ep)+\xi_\ep(u_\ep))$, the integrand in the left hand side of \eqref{E:Pohozaev-1} can be written as
\[ h_0 e_\ep(u_\ep)\Div_g \tilde{X} - \ep \nabla \tilde{X} (\nabla u_\ep, \nabla u_\ep)= - h_0 \xi_\ep(u_\ep) \Div_g \tilde X + \ep |\nabla u_\ep|^2 \Div_{S^{u_\ep}} \tilde{X} \]
from which \eqref{E:diffused-V-first-variation} follows (recall \eqref{E:varifold-def}).

Next, we test \eqref{E:diffused-V-first-variation} with a vector field\footnote{Strictly speaking, in order to deal with the singularities of $N_\ep$ across $\{|\nabla u_\ep|=0\}$, we should first consider $\varphi\left(x,Id-\frac{\nabla u_\ep\otimes\nabla u_\ep}{s+|\nabla u_\ep|^2}\right)$ for $s>0$ and then let $s \to 0$. For more details on this technical aspect, see \cite{Tonegawa2005a}. For simplicity of presentation, we will pretend $|\nabla u|\neq 0$ everywhere.}
\[ \tilde{X}(x)=\varphi(x,S^{u_\ep}(x)) S^{u_\ep}(x)(X(x))\]
for any $C^1$ vector field $X:U \to T\tM$ and any $\varphi \in C^1_c(G(B_r(x_0)))$ where $x_0 \in U$ and $r<\frac{1}{4}\min\{\dist_g(x_0,\partial U), i_0\}$. A direct computation shows that
\[ \Div_{S^{u_\ep}} \tilde{X}=S^{u_\ep}(\nabla^{hor} \varphi) \cdot X + \varphi \Div_{S^{u_\ep}} X + B_{u_\ep}(\nabla^{ver} \varphi, S^{u_\ep}(X)) \cdot N_{u_\ep} - \varphi H_{u_\ep} \cdot X \]
where everything is evaluated at either $x$ or $(x,S^{u_\ep}(x))$; here $B_{u_\ep}(x)$ and $H_{u_\ep}(x)$ are respectively the second fundamental form (as a $(1,2)$-tensor) and the mean curvature vector (see Example \ref{Ex:generalised-sff-smooth}) of the level set $\{u_\ep=u_\ep(x)\}$ of $u_\ep$ through the point $x \in \overline{\Omega} \cap U$. Note that the free boundary condition $\frac{\partial u_\ep}{\partial \nu}=0$ means that
\[ S^{u_\ep}(x)(\nu(x))=\nu(x) \quad \text{for all $x \in \partial \Omega$.} \]
Plugging these back into \eqref{E:diffused-V-first-variation}, we obtain
\[ \int_{\partial \Omega \cap U} \varphi(x,S^{u_\ep}(x)) X(x) \cdot \nu(x) e_\ep(u_\ep)(x) \, d\Hcal^{n-1} + \int_{\Omega \cap U} \xi_\ep(u_\ep)(x) \Div_g \tilde{X}(x) \, \dvol_g  \]    
\begin{equation}
\label{E:ep-curvature-varifold}
    =\int_{\Omega \cap U} S^{u_\ep}(x)\left(\nabla^{hor} \varphi (x,S^{u_\ep}(x)) \right) \cdot X(x) + \varphi(x,S^{u_\ep}(x)) \Div_{S^{u_\ep}}X(x) \, dV^{u_\ep}
\end{equation}
\[ + \int_{\Omega \cap U} B_{u_\ep}\left( \nabla^{ver} \varphi(x,S^{u_\ep}(x)), S^{u_\ep}(x)X(x) \right) \cdot N_{u_\ep}(x) - \varphi(x,S^{u_\ep}(x)) H_{u_\ep}(x) \cdot X(x) \, dV^{u_\ep}. \]
We will pass $\ep=\ep_i \to 0$ in \eqref{E:ep-curvature-varifold} to deduce that $V=\lim_{i \to \infty} V^{\ep_i}$ is in fact a curvature varifold with boundary. Moreover, we only present below the proof in the case $g_i=g$ and $\Omega_i=\Omega$ for all $i$ and leave the trivial modifications to the readers. In order to do this, we shall prove that all terms in \eqref{E:ep-curvature-varifold} can be suitably bounded as \textit{measure-function pairs} (see Appendix \ref{ap:measure-function}, or \cite{HutchinsonIUMJ, Tonegawa2005a}) and appeal to the compactness result of measure-function pairs as in Proposition \ref{prop:measure-function compactness}. With this goal in mind, we define a tensor-valued $\mathbf{B}_{V^{u_\ep}} \in L^1_{loc}(G(U);V^{u_\ep})$ with values in $T^*\tM \otimes T^* \tM \otimes T \tM$ by
\[ \mathbf{B}_{V^{u_\ep}}(X,Y)= B_{u_\ep}(S(X),S(Y)) \]
and a vector-valued Radon measure $\partial V^{u_\ep}$ on $G(U)$ with values in $T\tM$ by (recall \eqref{E:mu-hat-partial})
\[ \partial V^{u_\ep}= S(\nu(x)) \, d\hat{\mu}^\partial_{u_\ep} \otimes \delta_{S^{u_\ep}} \]
which clearly fulfil \eqref{E:formal-properties-B} and \eqref{E:formal-properties-partial-V}. By the well-known formula for the second fundamental form of a level set (see e.g. \cite[Lemma 3.3]{Hiesmayr2017}), 
		\begin{equation}
        \label{E:sff vs enhanced sff}
			|\mathbf{B}_{V^{u_{\ep_i}}}|^2(x, S) \leq |B_{u_\ep}|^2(x) \leq \frac{|\Hess u_\ep|^2-\big|\nabla|\nabla u_\ep|\big|^2}{|\nabla u_\ep|^2}=|\Acal_{u_\ep}|^2(x),
		\end{equation}
		where $\Acal_{u_\ep}$ denotes the enhanced second fundamental form as defined in \eqref{E:enhanced-sff}.

In order to apply the compactness theorem (Proposition \ref{prop:measure-function compactness}), we need to show that $\mathbf{B}_{V^{u_\ep}}$ is locally uniformly bounded in $L^2(B_r(x_0);V^{u_\ep})$ independent of $\ep$. Set (recall \eqref{E:cutoff}) $\psi(x)=\chi(\dist_g(x,x_0))$ in the $\ep$-stability inequality \eqref{E:ep-stability}, we see that (recall that $\mu^\partial_{u_\ep} \leq 2 \hat{\mu}^\partial_{u_\ep}$ by definitions \eqref{E:mu-partial} and \eqref{E:mu-hat-partial})
\begin{equation}
\label{E:B^ep is L^2 bounded}
			\begin{aligned}
				\int_{G(B_{r/2}(x_0))} |\mathbf{B}_{V^{u_\ep}}|^2(x, S) \, dV^{u_\ep} & \leq \int_{B_{r/2}(x_0)} |\Acal_{u_\ep}|^2\; d\|V^{u_\ep}\|	\\
                & \leq \left(\frac{4}{r^2} + (n-1)k_0 \right) \|V^{u_\ep}\|(B_r(x_0)) + 2 \kappa_0  \hat{\mu}^\partial_{u_\ep}(B_r(x_0))\\
			\end{aligned} 
		\end{equation}
By Assumptions (A1) in \ref{As:stationary-AC}, we have a uniform bound on the first term of \eqref{E:B^ep is L^2 bounded} since 
\[ \|V^{u_\ep}\|(B_r(x_0)) \leq \frac{2}{h_0}E_\ep (u_\ep) \leq \frac{2 E_0}{h_0}.\]
On the other hand, by \cite[Lemma 3.2]{LiPariseSarnataro2023} (with $\Lambda_0=0$ in our case), we also have a uniform bound on the second term of \eqref{E:B^ep is L^2 bounded}:
\begin{equation}
\label{E:mu-partial-bound}
\hat{\mu}^\partial_{u_\ep}(B_r(x_0)) \leq C(n,k_0,\kappa_0,r) \frac{E_\ep(u_\ep)}{h_0} \leq C(n,k_0,\kappa_0,r) \frac{E_0}{h_0}.
\end{equation} 
Putting these back into \eqref{E:B^ep is L^2 bounded}, we have the following local uniform $L^2$-bounds:
\begin{equation}
    \label{E:B^ep is L^2 bounded-2}
\int_{G(B_{r/2}(x_0))} |\mathbf{B}_{V^{u_\ep}}|^2(x, S) \, dV^{u_\ep} \leq C(n,k_0,\kappa_0,r) \frac{E_0}{h_0}.
\end{equation} 

We consider now a sequence of positive numbers $\ep_i\to 0$ and a sequence of solutions $\{u_{\ep_i}\}_{i=1}^\infty$ to \eqref{E:Allen-Cahn} as in Theorem \ref{T:old-main}. By \eqref{E:B^ep is L^2 bounded-2}, we can apply Proposition \ref{prop:measure-function compactness} in Appendix \ref{ap:measure-function}\footnote{The result there is applicable since no metric structure is required (except the fact that $X=G(B_r(x_0))$ is a locally compact Hausdorff space) and hence is independent of the Riemannian metric.} to extract a subsequence such that 
\[ (\mathbf{B}_{V^{u_{\ep_i}}},V^{u_{\ep_i}}) \weakto (\mathbf{B}_V,V) \]
where $\mathbf{B}_V \in L^1_{loc}(G(B_r(x_0));V) $ is a tensor-valued map with values in $T^*\tM \otimes T^* \tM \otimes T \tM$, and 
\begin{equation} 
\int  \mathbf{B}_{V^{u_{\ep_i}}}(X,Y)\cdot Z \; dV^{u_{\ep_i}} \to \int \mathbf{B}_V (X,Y)\cdot Z \; dV
\end{equation}
for any continuous vector fields $X,Y,Z$ on $G(B_r(x_0))$ with values in $T\tM$. This clearly implies $\mathbf{B}_V$ satisfies \eqref{E:formal-properties-B}. On the other hand, the uniform bound \eqref{E:mu-partial-bound} implies that, after passing to a subsequence, 
\[ \partial V^{u_{\ep_i}} \weakto \partial V=\textbf{n}_V \|\partial V\|\]
for some vector-valued Radon measure $\partial V$ on $G(B_r(x_0))$ with values in $T\tM$. From the definition of $\partial V^{u_{\ep_i}}$, it is clear that $\partial V$ satisfies \eqref{E:formal-properties-partial-V}. Passing $\ep_i \to 0$ in \eqref{E:ep-curvature-varifold}, using that $\xi_{\ep_i}(u_{\ep_i}) \to 0$ in $L^1(\Omega \cap U;\dvol_g)$ by Theorem \ref{T:old-main} (1), we obtain 
\[ \int \varphi(x,S) X(x) \cdot \textbf{n}_V(x,S) \; d\|\partial V\|=\int S \left(\nabla^{hor} \varphi (x,S) \right) \cdot X(x) + \varphi(x,S) \Div_{S} X(x) \; dV \] 
\begin{equation}
\label{E:V-curvature-varifold}
    + \int \mathbf{B}_V \left( \nabla^{ver} \varphi(x,S), S(X(x)) \right) \cdot N_S - \varphi(x,S) \mathbf{H}_V(x,S) \cdot X(x) \; dV
\end{equation}
where $\mathbf{H}_V=\Tr \mathbf{B}_V \in L^1_{loc}(G(B_r(x_0));V)$ with values in $T\tM$. This shows that $V$ is a curvature varifold with boundary in any geodesic ball $B_r(x_0)$ according to Definition \ref{D:curvature-varifolds}. Using a locally finite open covering of $U$ by such balls, (1) follows by Remark \ref{R:curvature-varifold-local}.

We now proceed to prove the stability type inequality \eqref{E:stability-inequality-boundary} in (2). First of all, since $\partial V^{u_{\ep_i}} \weakto \partial V$, combined with the fact that $V$ is a free boundary varifold with $\spt\|V\| \subset \overline{\Omega} \cap U$ by Theorem \ref{T:old-main} (4) and using Proposition \ref{P:free-boundary-characterization} together with \eqref{E:relation-to-first-variation}, we have
\begin{equation}
\label{E:partial-V-convergence}
\nu \,  \hat{\mu}^\partial_{u_{\ep_i}}=\pi_*(\partial V^{u_{\ep_i}}) \weakto \pi_*(\partial V)=\nu  \|\delta V\|_{sing}.
\end{equation}
Therefore, $\hat{\mu}^\partial_{u_{\ep_i}} \weakto \|\delta V\|_{sing}$. By the lower semi-continuity property \eqref{E:lower-semicontinuity-measure-function} in Proposition \ref{prop:measure-function compactness}, \eqref{E:sff vs enhanced sff} and the $\ep$-stability inequality \eqref{E:ep-stability}, since $V \in \bIV_{n-1}(U)$ by Theorem \ref{T:old-main} (3), for any $\psi\in C^1(U)$ we have (recall again that $\mu^\partial_{u_\ep} \leq 2\hat{\mu}^\partial_{u_\ep}$)
\[			\int |\mathbf{B}_V(x,T_xV)|^2\psi^2(x)\;d\|V\|(x)  \leq \liminf_{i \to\infty} \int |B_{u_{\ep_i}}(x)|^2 \psi^2(x) \; d \mu_{u_{\ep_i}}(x) \]
\[ \leq \int |\nabla \psi|^2 \; d\|V\| + (n-1)k_0 \int \psi^2 \; d\|V\| + 2 \kappa_0 \int \psi^2 \; d\|\delta V\|_{sing} \]
which is the desired inequality \eqref{E:stability-inequality-boundary}.

Finally, we prove assertion (3). Let $\ep=\ep_i \to 0$ in \eqref{E:diffused-V-first-variation}, by the continuity of first variation with respect to varifold convergence, together with Theorem \ref{T:old-main} (1) and \eqref{E:partial-V-convergence}, we obtain for any $X \in \mathfrak{X}_c(U)$, 
\[ \delta V(X)= \lim_{i \to \infty} \delta V^{u_{\ep_i}}(X)= \int X \cdot \nu \; d \|\delta V\|_{sing}.\]
\end{proof}

\begin{remark}[Test functions supported inside the regular set]
	Let $W \subset\subset U \setminus \mathrm{sing}(V)$ be an open set such that $W \cap \mathrm{reg}(V) \neq \emptyset$, and let $\phi\in C^1_c(W)$. In this case, we can get a more precise stability inequality as follows. First, note that, by \eqref{E:varifold-def}, 
	\begin{equation}
		\int_U \Ric(x) (N_{u_{\ep_i}(x)}, N_{u_{\ep_i}}(x)) \phi^2(x) \, d\|V^{u_{\ep_i}}\|(x)=\int_{G(U)} \Ric(x)(N_S, N_S) \phi^2 (x)\, d V^{u_{\ep_i}}(x,S).
	\end{equation}
	Letting $\ep_i \to 0$ and denoting $\Sigma=\reg V$, we have
    \[ \lim_{i \to \infty} \int_U \Ric(x) (N_{u_{\ep_i}(x)}, N_{u_{\ep_i}}(x)) \phi^2(x) \, d\|V^{u_{\ep_i}}\|(x)= \int_U \Ric (N_\Sigma,N_\Sigma) \phi^2\, d\|V\|. \]
	Therefore, when $\phi\in C^1_c(W)$ with $W \cap \mathrm{reg}(V) \neq \emptyset$, the stability inequality \eqref{E:stability-inequality-boundary} becomes
	\begin{equation}
		\int_U \left(|B_\Sigma|^2+\Ric (N_\Sigma, N_\Sigma) \right)\phi^2 \, d\|V\| \leq \int_U |\nabla \phi|^2 \, d\|V\| + 2 \kappa_0 \int_U \phi^2 \, d\|\delta V\|_{sing}.
	\end{equation}
    However, it is unclear whether one can recover the boundary term as in the classical case even when $\phi$ is supported inside $\reg V$ because we do not know the convergence of the Radon measures $\mu^\partial_{u_{\ep_i}}$ instead of $\hat{\mu}^\partial_{u_{\ep_i}}$.
\end{remark}

\begin{remark}
    By the trace inequality in Lemma \ref{L:trace-inequality}, \eqref{E:stability-inequality-boundary} implies for any $\delta>0$
    \begin{equation}
    \label{E:stability-inequality-boundary-removed}
    \int |\mathbf{B}_V|^2\psi^2 \, d\|V\| \leq (1+\delta)\int |\nabla\psi|^2 \, d\|V\| + C(n,i_0,k_0,\kappa_0,\delta) \int \psi^2 \, d\|V\|
    \end{equation}
    which is expressed entirely in interior terms resembling the concept of $\delta$-stability as studied in \cite{Hong-Li-Wang2024}, for example.
\end{remark}

\subsection{Excluding Classical Tangent Cones} 
\label{SS:exclude-classical-cones}

As a first step towards the optimal regularity of limit-interface $V$ arising from stable solutions to the Allen-Cahn equation as in Theorem \ref{T:main-AC}, we will show in this section that one can rule out classical tangent cones both in the interior and at the boundary for a generic set of singular points in $\sing V$. Our method mostly follows the work of \cite{Tonegawa2012} but extending to the Riemannian setting with boundary. Besides our previous work \cite{LiPariseSarnataro2023}, the conclusions in Theorem \ref{T:main-AC} will also play a major role in the arguments below.

We first recall the notion of classical tangent cones based at an interior point.

\begin{definition}[Interior classical tangent cones]
    \label{D:interior-classical-cone}
    Let $\bC \in \bIV_{n-1}(\mathbb{R}^n)$ be a stationary integral varifold cone (i.e. $(\eta_{0,r})_\sharp \bC=\bC$ for all $r>0$). We say that $\bC$ is an \emph{interior classical tangent cone} if 
    \[ \spt \|\bC\| = \bigcup_{j=1}^N P_j \]
    for some $N \geq 3$, and each $P_j \subset \mathbb{R}^n$ is a distinct half-hyperplane meeting along a common $(n-2)$-dimensional subspace.
\end{definition}

\begin{remark}
    By the stationarity and integrality of $\bC$, in fact we have as varifolds
    \[ \bC=\sum_{j=1}^N \Theta_j |P_j| \quad \text{ for some constants $\Theta_j \in \mathbb{N}$.} \]
    Moreover, if we denote $\eta_j$ to be the outward unit co-normal of $\partial P_j$ with respect to the half-hyperplane $P_j$, then stationarity implies the balancing condition
    \[ \sum_{j=1}^N \Theta_j \eta_j=0\]
    is satisfied along the spine $S(\bC)=\cap_{j=1}^N P_j$ (recall Definition \ref{D:spine}).
\end{remark}

Next, we introduce an analogous notion of classical tangent cones based at a boundary point.

\begin{definition}[Boundary classical tangent cones]
    \label{D:boundary-classical-cone}
    Let $\bC \in \bIV_{n-1}(\mathbb{R}^n)$ be a free boundary stationary integral varifold cone (i.e. $(\eta_{0,r})_\sharp \bC=\bC$ for all $r>0$) with free boundary on $\partial \mathbb{R}^n_+$ and $\spt \|\bC\| \subset \mathbb{R}^n_+$. We say that $\bC$ is a \emph{boundary classical tangent cone} if 
    \[ \spt \|\bC\| = \bigcup_{j=1}^N P_j \cap \mathbb{R}^n_+ \]
    for some $N \geq 2$, and each $P_j \subset \mathbb{R}^n$ is a distinct half-hyperplane meeting along a common $(n-2)$-dimensional subspace. We further divide the case according to the dichotomy \eqref{E:dichotomy-cones} on the spine of the doubled cone $\overline{\bC}$ (as defined in Lemma \ref{L:reflection stationary}). We say that $\bC$ is of
    \begin{itemize}
        \item \emph{Type I}: if $S(\overline{\bC}) \subset \partial \mathbb{R}^n_+$, $N \geq 2$, and each $P_j \subset \mathbb{R}^n_+$ but $\spt \|\bC\| \neq \partial \mathbb{R}^n_+$;
        \item \emph{Type II}: if $S(\overline{\bC}) \perp \partial \mathbb{R}^n_+$ and $N \geq 3$.
    \end{itemize}
\end{definition}

\begin{remark}
    By the free boundary stationarity and integrality of $\bC$, in fact we have as varifolds
    \[ \bC=\sum_{j=1}^N \Theta_j |P_j| \quad \text{ for some constants $\Theta_j \in \mathbb{N}$} \]
    if $\bC$ is of type I (c.f. Example \ref{Ex:boundary-classical-cones}). Moreover, if we denote $\eta_j$ to be the outward unit co-normal of $\partial P_j$ with respect to the half-hyperplane $P_j$, then free boundary stationarity implies the free boundary balancing condition
    \[ \sum_{j=1}^N \Theta_j \eta_j \perp \partial \mathbb{R}^n_+\]
    is satisfied along the spine $S(\overline{\bC})=\cap_{j=1}^N P_j \subset \partial \mathbb{R}^n_+$. If $\bC$ is of type II, then the doubled cone $\overline{\bC}$ is in fact an interior classical tangent cone with $S(\overline{\bC}) \perp \partial \mathbb{R}^n_+$.
\end{remark}

The main result of this section is the following:

\begin{theorem}[Exclusion of classical tangent cones]
\label{T:exclude-classical-cones}
Let $\{\ep_i\}_{i=1}^\infty$ be a sequence of positive real numbers such that $\ep_i\to 0$, and let $\{u_{\ep_i}\}_{i=1}^\infty$ be a sequence of solutions to \eqref{E:Allen-Cahn} satisfying (A1)-(A2) in Assumptions \ref{As:stationary-AC}. Assume, furthermore, that each $u_{\ep_i}$ is stable, i.e. (A3) in Assumption \ref{As:stable-AC} is satisfied. Then, for any limit-interface $V \in \bIV_{n-1}(U)$ obtained from Theorem \ref{T:old-main},
\[ \text{there exists a Borel set $\mathcal{Z} \subset \spt \|V\|$ with $\Hcal^{n-2}(\mathcal{Z})=0$} \]
such that for any $x \in \spt\|V\| \setminus \mathcal{Z}$, there does not exist $\bC \in \VarTan(V,x)$ which is an interior classical tangent cone in $T_x \tM$ when $x \in \Omega$; or a boundary classical tangent cone in $T^+_x \tM$ when $x \in \partial \Omega$.
\end{theorem}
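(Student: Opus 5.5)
We follow the strategy of Tonegawa--Wickramasekera \cite[Proposition 3.2]{Tonegawa2012}: a measure-theoretic reduction to "good" points, followed by a blow-up and contradiction argument at the $\ep$-level. The argument splits into three cases: interior points, Type II boundary points, and Type I boundary points.

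\textbf{Step 1: choosing the exceptional set.} By Theorem \ref{T:main-AC}(2) and the trace inequality (Corollary \ref{C:free-bdy-trace-ineq}), $|\mathbf{B}_V|^2$ is locally $\|V\|$-integrable. By Theorem \ref{T:boundary-rectifiability}, the measure $\pi_*\|\partial V\|$ is $(n-2)$-rectifiable, and \eqref{E:local-volume-bound-fbv-stationary} bounds $\|\delta V\|_{sing}$. Let $\mathcal{Z}$ be the set of points $x \in \spt\|V\|$ at which one of the following upper $(n-2)$-densities is positive:
\[ \limsup_{r\to 0} r^{2-n}\int_{B_r(x)}|\mathbf{B}_V|^2\,d\|V\|, \qquad \limsup_{r\to0} r^{2-n}\,\pi_*\|\partial V\|(B_r(x)) . \]
For the second density we restrict to the part of $\pi_*\|\partial V\|$ charging sets away from the points being considered; equivalently, we use the Lebesgue density decomposition of an $(n-2)$-rectifiable measure. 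Standard density theorems \cite[Theorem 3.2]{Simon1983} give $\Hcal^{n-2}(\mathcal{Z})=0$ for the first quantity, since $|\mathbf{B}_V|^2\|V\|$ is an $L^1$ measure with $\|V\|\ll\Hcal^{n-1}$. At $x\notin\mathcal{Z}$ with a classical cone $\bC\in\VarTan(V,x)$ along $r_i\to0$, the rescalings therefore have vanishing curvature energy:
\[ r_i^{2-n}\int_{B_{r_i}(x)}|\mathbf{B}_V|^2\,d\|V\| \to 0 . \]
We then choose $\ep_{j(i)}$ with $\ep_{j(i)}/r_i\to 0$ so that the rescaled Allen--Cahn solutions $\tilde u_i$ on $r_i^{-2}g$, with domain converging to $T^+_x\tM$, satisfy three conditions. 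Their varifolds converge to $\bC$. The quantity $\int|\Acal_{\tilde u_i}|^2\,d\mu$ over the ball $B_2$, restricted to a neighbourhood of the cone minus a neighbourhood of the spine, tends to $0$; this uses the $\ep$-stability inequality \eqref{E:ep-stability} with cutoffs together with lower semicontinuity as in the proof of Theorem \ref{T:main-AC}. Finally, the rescaled boundary measures $\hat\mu^\partial$ converge to $\|\delta\bC\|_{sing}$, by \eqref{E:partial-V-convergence}.

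\textbf{Step 2: slicing and planar curves.} Away from the spine, $\bC$ consists of multiplicity-$\Theta_j$ half-planes. We slice by generic $2$-planes orthogonal to the spine; in the Type I case the slicing planes are spanned by $\nu(x)$ and $e$, as in \eqref{E:boundary-classical-cone}. By Fubini and coarea, for most slices the level sets $\{\tilde u_i = t\}$ with $|t|\le 1-b$ restrict to planar curves whose total curvature inside an annulus $A=B_2\setminus B_{1/2}$ tends to $0$. By the up-to-boundary Hausdorff convergence (Theorem \ref{T:old-main}(2), Appendix \ref{A:reflected-density}), these curves lie in thin sectors around the rays of $\bC_0$. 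Equipartition \eqref{E:vanishing-discrepancy-limit} and the $1$-dimensional energy analysis of \cite{Hutchinson2000} show that each ray is crossed by the correct number of almost-straight arcs. Inside $B_{1/2}$, each such arc must continue: it either closes up, leaves through $\partial B_2$ along another ray, or ends on the slice of $\partial\Omega$.

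\textbf{Step 3: the contradiction (main obstacle).} In the interior and Type II cases, the reflected picture is an interior classical cone with at least three non-collinear rays, and the arcs must turn by a definite angle $\theta_0(\bC)>0$. This contradicts the vanishing total curvature, exactly as in \cite{Tonegawa2012}. In the Type II case we apply this after noting that the slice is taken inside $T_x\partial\Omega$-directions, away from the boundary.

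The genuinely new case is Type I, where arcs may terminate on the boundary line. By the Neumann condition $S^{u}(\nu)=\nu$, such an arc meets the boundary orthogonally. An arc arriving along a ray $P_j$ that is not orthogonal to $\partial\mathbb{R}^2_+$ must therefore turn by at least the angle between $P_j$ and the normal direction. The hypothesis $\spt\|\bC\|\ne\partial\mathbb{R}^n_+$, together with $N\ge2$, excludes configurations where every ray is normal or where arcs run tangentially along the boundary line without turning. An arc may also follow the boundary ray; we handle this using the near-vanishing of $\hat\mu^\partial$ mass in the annulus away from the spine. Enumerating the ends (closed, to another ray, to the boundary) then yields a uniform turning lower bound $\theta_0(\bC)>0$ for a positive-measure family of slices and levels.

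Integrating this lower bound against the energy measure contradicts the smallness of $\int|\Acal|^2\,d\mu$ from Step 1, via Cauchy--Schwarz on the curve lengths, which are uniformly bounded by the mass bounds. I expect this case analysis to be the hardest step, in particular controlling arcs that hug the boundary so that the orthogonality condition gives no turning.
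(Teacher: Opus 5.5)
Your overall architecture is the paper's: a blow-up at a point carrying a classical cone, slicing orthogonally to the spine, and a case analysis of arc ends using that level sets meet $\partial\Omega$ orthogonally. However, Step 1 chooses the wrong exceptional set, and as a result Step 3 does not close.

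\textbf{The gap in Step 1.} You define $\mathcal{Z}$ through the limit curvature $|\mathbf{B}_V|^2\|V\|$. By lower semicontinuity this measure is only a \emph{lower} bound for any weak limit $\mu$ of $|\Acal_{u_{\ep_i}}|^2\,d\mu_{u_{\ep_i}}$. So small density of $|\mathbf{B}_V|^2\|V\|$ at $x$ says nothing about the level sets of $u_{\ep_i}$ bending near the spine. Indeed, near a smooth classical junction $|\mathbf{B}_V|$ is bounded, so your density condition holds automatically there.

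Consistently with this, you only claim $\int|\Acal_{\tilde u_i}|^2\,d\mu\to0$ on the cone minus a neighbourhood of the spine. Even that is unjustified: lower semicontinuity points the wrong way, and the $\ep$-stability inequality with a fixed cutoff gives an $O(1)$ bound, not $o(1)$. More seriously, the turning you exploit in Step 3 happens inside $B_{1/2}$, near the spine, where you have no control at all. So ``vanishing total curvature'' of the slice curves is never established where it is needed.

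\textbf{The missing idea.} Following \cite{Tonegawa2012}, the paper selects $\mathcal{Z}$ using the $\ep$-level defect measure itself. The $\ep$-stability inequality \eqref{E:ep-stability}, together with the bound on $\hat\mu^\partial_{u_\ep}$, gives $|\Acal_{u_{\ep_i}}|^2\,d\mu_{u_{\ep_i}}\rightharpoonup\mu$. One then sets $\mathcal{Z}=\{x:\limsup_{r\to0}r^{3-n}\mu(B_r(x))>0\}$. This set is $\sigma$-finite for $\Hcal^{n-3}$, hence $\Hcal^{n-2}$-null; the exponent $n-3$ is exactly the scaling of $\int|\Acal|^2\,d\mu_\ep$. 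At $x\notin\mathcal{Z}$, a diagonal choice with $\ep_i/r_i\to0$ yields $\int_{B_2}|\Acal_{\tilde u_i}|^2\,d\tilde\mu_{i}\to0$ on the \emph{whole} ball, spine included.

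\textbf{Further problems.}
\begin{itemize}
\item Your second condition, on $\pi_*\|\partial V\|$, must be dropped. That measure is $(n-2)$-rectifiable, so its $(n-2)$-density is positive $\Hcal^{n-2}$-a.e.\ on its carrier. The carrier contains the spine of every classical cone (compare the triple junction example). Excluding those points is circular and breaks $\Hcal^{n-2}(\mathcal{Z})=0$.
\item Passing from smallness of $\int|\Acal|^2\,d\mu_{\ep}$ to $L^2$-curvature and length bounds on level-set slices by coarea requires $|\nabla u_{\ep_i}|\geq c/\ep_i$ on $\{|u_{\ep_i}|\le\frac12\}$ over most slices. This is the paper's irregular cross-section estimate, and your Steps 2--3 do not provide it.
\item You also need that every level $t\in[-\frac12,\frac12]$ is attained near each ray on most slices. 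This is the non-concentration estimate, proved via the excess and sheet-counting results of \cite{LiPariseSarnataro2023}, with a separate treatment of rays lying in $\partial\mathbb{R}^2_+$. Your appeal to equipartition and one-dimensional energy analysis to get ``the correct number of arcs'' supplies neither this nor the previous ingredient.
\end{itemize}
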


\begin{proof}
Our proof follows the main strategy of \cite[Proposition 3.2]{Tonegawa2012} which treats the interior case in the Euclidean setting. First of all, the $\ep$-stability inequality \eqref{E:ep-stability} implies that after passing to a subsequence, we have
\[ |\Acal_{u_{\ep_i}}|^2 \; d\mu_{u_{\ep_i}} \weakto \mu\]
as Radon measures on $U$. Now, we define 
\begin{equation}
    \mathcal{Z}:=\left\{ x \in \spt\|V\| \; : \; \limsup_{r \to 0} \frac{\mu(B_r(x))}{r^{n-3}} >0  \right\},
\end{equation}
which is a Borel set since $\mu$ is a Radon measure on $U$. Moreover, we have $\Hcal^{n-3+\gamma}(\mathcal{Z})=0$ for every $\gamma>0$ (c.f. \cite[Remark 3.1 and Theorem 3.2]{Simon1983}).

We now show that such $\mathcal{Z}$ satisfies the assertion in the theorem by a contradiction argument as in \cite{Tonegawa2012}. Suppose on the contrary that there exists $x \in \spt\|V\| \setminus \mathcal{Z}$ such that some tangent cone of $V$ at $x$ is a classical tangent cone. We will focus on the case $x \in \partial \Omega$ is a boundary point (as the interior case is similar and has been treated in the Euclidean setting in \cite{Tonegawa2012}). Therefore, we would obtain a tangent cone $\bC \in \VarTan(V,x)$ at $x \in \partial \Omega$ which is a boundary classical tangent cone in $T^+_x \tM$. There are two cases to consider according to whether $\bC$ is of type I or type II as described in Definition \ref{D:boundary-classical-cone}.

We first look at the case where $\bC$ is of type I, i.e. 
\[ T_x \partial \Omega \neq \spt \|\bC\| = \bigcup_{j=1}^N P_j  \]
for some $N \geq 2$, and each $P_j \subset T^+_x \tM$ is a distinct half-hyperplane meeting along the $(n-2)$-dimensional spine $S(\overline{\bC}) \subset T_x \partial \Omega$. After a rotation, we may assume that $S(\overline{\bC})=\{0\} \times \mathbb{R}^{n-2} \subset \mathbb{R}^2_+ \times \mathbb{R}^{n-2}$ and the half-hyperplanes $P_j$ can be expressed as
\[ P_j=\{t \textbf{p}_j \; : \; t \geq 0\} \times \mathbb{R}^{n-2}\]
for some distinct vectors $\textbf{p}_j \in \mathbb{R}^2_+$ with $|\textbf{p}_j|=1/2$.

\vspace{1ex}

\textit{Step 1: Passing the analysis back to the $\ep$-level.}

\vspace{1ex}

Since $\bC \in \VarTan(V,x)$ is a tangent cone at $x$, we have $\lim_{i \to \infty} (\eta_{0,r_i})_\sharp (\exp_x^{-1})_\sharp V=\bC$ for some $r_i \to 0$. On the other hand, $V=\lim_{i \to \infty} V^{u_{\ep_i}}$ for some $\ep_i \to 0$, where each $u_{\ep_i}$ is a stable solution to \eqref{E:Allen-Cahn} with $\ep=\ep_i$ satisfying (A1)-(A2) in Assumptions \ref{As:stationary-AC}. After passing to a subsequence, we can assume without loss of generality that
\begin{equation}
\label{E:proof-exclude-classical-cone-1}
\lim_{i \to \infty} (\eta_{0,r_i})_\sharp (\exp_x^{-1})_\sharp V^{u_{\ep_i}}=\bC \quad \text{ and } \quad \lim_{i \to \infty} \frac{\ep_i}{r_i}=0. 
\end{equation}
Setting 
\[ \tilde{\ep}_i:=\frac{\ep_i}{r_i} \to 0 \quad \text{ and } \quad \tilde{u}_{\tilde{\ep}_i}:= u_{\ep_i} \circ \exp_x \circ  \eta_{0,r_i^{-1}}.\]
If we let $\Phi_{x,r_i}:=\exp_x \circ  \eta_{0,r_i^{-1}} :B^\delta_{i_0/2}(0) \to B^g_{r_ii_0/2}(x)$ which is a $C^2$ diffeomorphism taking $\Omega_i \cap B^\delta_R(0)$ to $\Omega \cap B^g_{Rr_i}(x)$ for any fixed $R < \min\{\frac{i_0}{2}, \frac{1}{2\kappa_0} \}$ where $\Omega_i \subset T_x\tM$ are $C^2$ domains satisfying
\begin{equation}
    \label{E:domain-convergence}
    \Omega_i \cap B^\delta_R(0) \text{ converges locally in $C^2$ to } T^+_x \tM \cap B^\delta_R(0),
\end{equation}
Therefore, $\tilde{u}_{\tilde{\ep}_i} \in C^2(\overline{\Omega}_i \cap B^\delta_R(0))$ are stable solutions to the homogeneous Allen-Cahn equation
\begin{equation}
\label{E:u-tilde-AC}
-\tilde{\ep}_i^2 \Delta_{\tilde{g}_i} \tilde{u}_{\tilde{\ep}_i} +W'(\tilde{u}_{\ep_i})=0 \qquad \text{in $\Omega_i \cap B^\delta_R(0)$} 
\end{equation}
where $\tilde{g}_i:= r_i^{-2}\Phi^*_{x,r_i} g$ is a Riemannian metric on $B^\delta_R(0)$, together with the homogeneous Neumann boundary condition
\[ \frac{\partial \tilde{u}_{\tilde{\ep}_i}}{\partial \tilde{\nu}_i}=0 \qquad \text{ along $\partial \Omega_i \cap B^\delta_R(0)$}\]
where $\tilde{\nu}_i$ is the outward unit normal of $\partial \Omega_i$ with respect to the metric $\tilde{g}_i$. Note that the Riemannian metrics $\tilde{g}_i$ converge uniformly on $B^\delta_R(0)$ in $C^2$ to the flat metric $\delta$ on $T_x \tM$ and $\Omega_i$ converges in $C^2$-sense to $T_x^+\tM$ by \eqref{E:domain-convergence}. 

We now check that $\tilde{u}_{\tilde{\ep}_i}$ satisfies uniform bounds (A1)-(A2) with $U=B^\delta_R(0) \subset T_x \tM$ with respect to the metric $\tilde{g}_i$ in Assumptions \ref{As:stationary-AC}. It is clear from the definition that (A2) is satisfied by $\tilde{u}_{\tilde{\ep}_i}$ with the same $C_0$. On the other hand, the uniform energy bound (A1), with a different $\tilde{E}_0$, can be seen from the almost-monotonicity formula applied to $u_{\ep_i}$ (c.f. \cite[Proposition 4.15]{LiPariseSarnataro2023}). Therefore, Theorem \ref{T:old-main} can be applied to the varifolds $V^{\tilde{u}_{\tilde{\ep}_i}} \in \bV_{n-1}(B^\delta_R(0))$ defined by
\begin{equation}
\label{E:varifold-def-tilde}
	V^{\tilde{u}_{\tilde{\ep}_i}}(\phi) := \int_{\{y \in \Omega_i \cap B^\delta_R(0): |\nabla^{\tilde{g}_i} \tilde{u}_{\tilde{\ep}_i}(y)|\neq 0\}}\phi\left(y, S^{\tilde{u}_{\tilde{\ep}_i}}(y)\right)\ d\mu_{\tilde{u}_{\tilde{\ep}_i}}(y) \qquad \forall \phi\in C_c(G(B^\delta_R(0))).
\end{equation}
It is straightforward from \eqref{E:proof-exclude-classical-cone-1} and the definitions that
\begin{equation}
    \label{E:V-tilde-convergence}
 V^{\tilde{u}_{\tilde{\ep}_i}} \to \bC \quad \text{as varifolds in $\bV_{n-1}(B^\delta_R(0))$}.
\end{equation}
Recall that $x \in \spt \|V\| \setminus \mathcal{Z}$, after passing to a subsequence of $\{\ep_i\}$, we can further assume that
\[ \lim_{i\to \infty} \int_{B^\delta_2(0)} |\tilde{\Acal}_{\tilde{u}_{\tilde{\ep}_i}}|^2 \; d\tilde{\mu}_{\tilde{u}_{\tilde{\ep}_i}}=  \lim_{i \to \infty} \frac{1}{(2r_i)^{n-3}} \int_{B^g_{2r_i}(x)} |\Acal_{u_{\ep_i}}|^2 d\mu_{u_{\ep_i}}=0 \]
where the first integral is computed with respect to the metric $\tilde{g}_i$. 

Therefore, after suitably passing to subsequences, we can reduce our problem to the following situation (we are omitting the tildes for simplicity): let $U=B_2 \subset \mathbb{R}^n$ denote the Euclidean ball of radius $2$ centred at the origin, there exists a sequence of positive real numbers $\{\ep_i\}_{i=1}^\infty$ such that
\[ \lim_{i \to \infty} \ep_i=0\]
and a sequence $\{u_{\ep_i}\}_{i=1}^\infty$ of $C^2$ solutions to the homogeneous Allen-Cahn equation
\[ -\ep_i^2 \Delta_{g_i}u_{\ep_i} +W'(u_{\ep_i})=0 \qquad \text{ in $\Omega_i \cap B_2$}\]
with homogeneous Neumann boundary condition
\[ \frac{\partial u_{\ep_i}}{\partial \nu_i} = 0 \qquad \text{ along $\partial \Omega_i \cap B_2$}\]
where $\{g_i\}_{i=1}^\infty$ is a sequence of $C^2$ Riemannian metrics on $B_2$ and $\{\Omega_i\}_{i=1}^\infty$ is a sequence of Euclidean $C^2$ domains in $\mathbb{R}^n$ with outward unit normal $\nu_i$ (with respect to $g_i$) such that
\begin{equation}
    \label{E:ambient-convergence}
    \lim_{i \to \infty} g_i = \delta \qquad \text{ and } \qquad \lim_{i \to \infty} \Omega_i = \mathbb{R}^n_+
\end{equation}
in the $C^2$-sense. Moreover, $\{u_{\ep_i}\}_{i=1}^\infty$ satisfies the uniform bounds (A1)-(A2) in Assumptions \ref{As:stationary-AC}; $\{g_i\}_{i=1}^\infty$ and are stable in the sense of \eqref{E:def-stability} in (A3), Assumptions \ref{As:stable-AC}. Moreover, we have in addition
\begin{equation}
    \label{E:blow-up-argument}
    \lim_{i \to \infty} V^{u_{\ep_i}}=\bC \qquad \text{ and } \qquad \lim_{i\to \infty} \int_{B_2} |\Acal_{u_{\ep_i}}|^2 \; d\mu_{u_{\ep_i}}=0.
\end{equation}
Moreover, we set up our coordinates $(y,z) \in  \mathbb{R}^2_+ \times \mathbb{R}^{n-2}$ so that
\[ S(\overline{\bC})=\{(0,z) \; : \; z \in \mathbb{R}^{n-2}\}=\{0\} \times \mathbb{R}^{n-2}.\]



\vspace{1ex}

\textit{Step 2: Show that, in the limit sense as $\ep_i \to 0$, for $\mathcal{L}^{n-2}$-a.e. point on the spine $S(\overline{\bC})$, the $y$-cross sections of all the intermediate level sets of $u_{\ep_i}$ are uniformly regular.}

\vspace{1ex}

More precisely, we claim that there exists some $c=c(W)>0$ such that if we denote the set of \emph{irregular cross sections} by
\[ \mathcal{I}_{u_{\ep_i}}:=\left\{ z \in \mathbb{R}^{n-2} \; : \; 
\begin{array}{l}
            \textrm{$|z| < 1$ and for any $r>0$, there exists some $z' \in B_r(z)$ such that }
            \\
            \textrm{$| \nabla^{g_i} u_{\ep_i}(y',z')|_{g_i} < \frac{c}{\ep_i}$ for some $|y'|<1$ and $|u_{\ep_i}(y',z')| \leq \frac{1}{2}$}
        \end{array} \right\}, \]
then 
\begin{equation}
\label{E:step-2-proof}
\lim_{i \to \infty} \mathcal{L}^{n-2}(\mathcal{I}_{u_{\ep_i}})=0.
\end{equation}

\begin{remark}
    Note that by standard elliptic estimates (c.f. \cite[Lemma 4.7]{LiPariseSarnataro2023}), we have the uniform gradient estimate $\ep_i | \nabla^{g_i} u_{\ep_i}(y',z')|_{g_i} \leq C$ for some large enough $C>0$. Therefore, \eqref{E:step-2-proof} amounts to showing that in a limiting measure-theoretic sense, inside the transition layer $\{|u_{\ep_i}|\leq \frac{1}{2}\}$, the gradient of $u_{\ep_i}$ is indeed of order $\frac{1}{\ep_i}$.
\end{remark}

The proof first involves an estimate of the left hand side of \eqref{E:step-2-proof} by a combination of covering arguments, standard elliptic estimates and the two-dimensional Gagliardo–Nirenberg–Sobolev inequality. Then, by the vanishing of the discrepancy function in the $L^1$-limit (Theorem \ref{T:old-main}(1)) and the simplifying assumption \eqref{E:blow-up-argument} from Step 1, we can obtain \eqref{E:step-2-proof} verbatim from the proof of \cite[Lemma 4.1]{Tonegawa2012}.

\vspace{1ex}

\textit{Step 3: Show that, in the limit sense as $\ep_i \to 0$, for $\mathcal{L}^{n-2}$-a.e. point on the spine $S(\overline{\bC})$, we have at a fixed distance from the spine the cross sections of all the intermediate level sets of $u_{\ep}$ concentrate near every $P_j$.}

\vspace{1ex}

Recall that 
\[\spt \|\bC\| = \bigcup_{j=1}^N \{t \textbf{p}_j \; : \; t \geq 0\} \times \mathbb{R}^{n-2}\]
for some $N \geq 2$ and distinct vectors $\textbf{p}_j \in \mathbb{R}^2_+$ with $|\textbf{p}_j|=1/2$.  Fix $\sigma \in (0,\frac{1}{4})$ such that $\sigma < \frac{1}{2}\min_{j \neq k}|\textbf{p}_j - \textbf{p}_k|$ so that the balls $B_{2\sigma}(\mathbf{p}_j)$ are mutually disjoint. Define the \emph{non-concentration set} along the spine by
\[ \mathcal{N}_{u_{\ep_i}}:=\left\{ z \in \mathbb{R}^{n-2} \; : \; 
\begin{array}{l}
            \textrm{$|z| < 1$, and there exists some $t' \in [-\frac{1}{2},\frac{1}{2}]$ and $j'=1,\cdots,N$ }
            \\
            \textrm{such that $u_{\ep_i}(y,z) \neq t'$ for all $y$ with $|y-\textbf{p}_{j'}|<\sigma$}
        \end{array} \right\}, \]
then we claim that
\begin{equation}
\label{E:step-3-proof}
\lim_{i \to \infty} \mathcal{L}^{n-2}(\mathcal{N}_{u_{\ep_i}})=0.
\end{equation}

To prove \eqref{E:step-3-proof}, it suffices to establish $ \lim_{i \to \infty} \mathcal{L}^{n-2}(\mathcal{N}^j_{u_{\ep_i}})=0$ for each $j=1,\cdots,N$, where 
\[ \mathcal{N}^j_{u_{\ep_i}}:=\left\{ z \in \mathbb{R}^{n-2} \; : \; 
\begin{array}{l}
            \textrm{$|z| < 1$, and there exists some $t' \in [-\frac{1}{2},\frac{1}{2}]$ }
            \\
            \textrm{such that $u_{\ep_i}(y,z) \neq t'$ for all $y$ with $|y-\textbf{p}_j|<\sigma$}
        \end{array} \right\}. \]

For notational simplicity, we will fix $j$ and write the vector $\textbf{p}=\textbf{p}_j \in \mathbb{R}^2_+$ and the $(n-1)$-dimensional half-hyperplane $P=P_j$. There are two cases to consider: either $\textbf{p} \in \mathbb{R}^2_+ \setminus \partial \mathbb{R}^2_+$ or $\textbf{p} \in \partial \mathbb{R}^2_+$. We will focus only on the latter case as the former case is similar and resembles the interior theory treated in \cite[Lemma 4.2]{Tonegawa2012}. From now on, we take
\[\textbf{p}=\left( \frac{1}{2},0 \right) \in \mathbb{R}^2_+ \qquad \text{ and } \qquad P=\{(y_1,0,z)\; : \; y_1 \geq 0, z \in \mathbb{R}^{n-2}\}  \subset \mathbb{R}^n_+,\]
and we will denote, for any $z' \in \mathbb{R}^{n-2}$, the half-cylinder of size $\frac{\sigma}{2}$ centred at $(\textbf{p},z')$ by
\[ C_{\frac{\sigma}{2}} (\textbf{p},z'):=\left\{(y_1,y_2,z) \; : \; |y_1-\frac{1}{2}|^2+|z-z'|^2 < \left( \frac{\sigma}{2} \right)^2, \, 0 \leq y_2 < \frac{\sigma}{2} \right\} \subset \mathbb{R}^n_+. \]
Moreover, we define the set of exceptional points on $P$ centred at $(\textbf{p},z')$ as follows:
\begin{equation}
    \label{E:exceptional-set}
    S_i(\textbf{p},z'):=\left\{(y_1,0,z) \in P \cap C_{\frac{\sigma}{2}} (\textbf{p},z') \; : \; \exists t' \in \left[-\frac{1}{2},\frac{1}{2}\right] \text{ s.t. } u_{\ep_i}(y_1,y_2,z) \neq t' \; \forall y_2 \in \left[0,\frac{\sigma}{2}\right) \right\}.
\end{equation}

The proof of \eqref{E:step-3-proof} is a contradiction argument following \cite[Lemma 4.2]{Tonegawa2012}, utilizing the results in our previous work \cite[Section 6]{LiPariseSarnataro2023}. Suppose, on the contrary that \eqref{E:step-3-proof} does not hold, then there exists some $z' \in \mathbb{R}^{n-2}$ with $|z'|<1$ such that
\[ \limsup_{i \to \infty} \mathcal{L}^{n-2}(B_{\frac{\sigma}{4}}(z') \cap \mathcal{N}_{u_{\ep_i}})>0.\]
For any $z'' \in B_{\frac{\sigma}{4}}(z') \cap \mathcal{N}_{u_{\ep_i}}$, by the definition of $\mathcal{N}_{u_{\ep_i}}$ and $S_i(\textbf{p},z')$, we must have 
\[ (y_1,0,z'') \in S_i(\textbf{p},z') \quad \text{ provided that } \left|y_1-\frac{1}{2}\right|<\frac{\sigma}{4}.\]
Therefore, we have
\[\mathcal{L}^{n-1}(S_i(\textbf{p},z')) \geq \frac{\sigma}{2} \mathcal{L}^{n-2}(B_{\frac{\sigma}{4}}(z') \cap \mathcal{N}_{u_{\ep_i}})\]
and thus $\limsup_{i \to \infty} \mathcal{L}^{n-1}(S_i(\textbf{p},z')) >0$ by the choice of $z'$. This will immediately lead to a contradiction once we show that
\begin{equation}
\label{E:step-3-proof-main-estimate}
\lim_{i \to \infty} \mathcal{L}^{n-1}(S_i(\textbf{p},z')) =0 \quad \text{ for any $|z'|<1$.}
\end{equation}

We now prove \eqref{E:step-3-proof-main-estimate}. Fix any $|z'|<1$. By Theorem \ref{T:old-main}(2), we can assume that 
\begin{equation}
    \label{E:uniform-converge-to-one}
\text{$u_{\ep_i} \to 1$ locally uniformly on $B_\sigma(\textbf{p},z') \setminus P$.} 
\end{equation}
Take any $(y_1,0,z) \in S_i(\textbf{p},z')$, by definition and \eqref{E:uniform-converge-to-one}, we have for any given $b \in (0,\frac{1}{2})$ one of the following scenarios:
\begin{itemize}
    \item Scenario I: $u_{\ep_i}(y_1,y_2,z) > 1-b$ for all $0 \leq y_2 < \frac{\sigma}{2}$;
    \item Scenario II: $u_{\ep_i}(y_1,y_2,z) \leq 1-b$ for some $0 \leq y_2 < \frac{\sigma}{2}$, in which case we must have
    \begin{equation}
        \label{E:scenario-II-consequence}
    -\frac{1}{2} \leq \inf_{y_2 \in [0,\frac{\sigma}{2})} u_{\ep_i}(y_1,y_2,z) \leq 1-b. 
    \end{equation} 
\end{itemize}
We will estimate the $\mathcal{L}^{n-1}$-measure of those $(y_1,0,z) \in S_i(\textbf{p},z')$ belonging to each of the two scenarios. For our purpose, we denote
\[ S^I_i(\textbf{p},z';b):=\left\{ (y_1,0,z) \in S_i(\textbf{p},z') \; : \; u_{\ep_i}(y_1,y_2,z) > 1-b \text{ for all } 0 \leq y_2 < \frac{\sigma}{2} \right\}, \]
\[ S^{II}_i(\textbf{p},z';b):=\left\{ (y_1,0,z) \in S_i(\textbf{p},z') \; : \; u_{\ep_i}(y_1,y_2,z) \leq 1-b \text{ for some } 0 \leq y_2 < \frac{\sigma}{2} \right\}, \]
where $b \in (0,\frac{1}{2})$ is a parameter to be chosen small enough later. Clearly 
\begin{equation}
    \label{E:decompose-S_i}
S_i(\textbf{p},z')=S^I_i(\textbf{p},z';b) \cup S^{II}_i(\textbf{p},z';b) \qquad \text{for any $b \in (0,\frac{1}{2})$}.
\end{equation}
To estimate $S^I_i(\textbf{p},z';b)$ and $S^{II}_i(\textbf{p},z';b)$, one needs to analyse points around which the level sets of $u_{\ep_i}$ have relatively small (or large) scale-invariant ``tilt-excess''. For our convenience, we introduce the following notation (recall \eqref{E:energy-density-discrepancy}, \eqref{E:mu} and \eqref{E:u-normal} all taken with respect to the metric $g_i$ and domain $\Omega_i$): for any $x \in C_{\frac{\sigma}{2}} (\textbf{p},z')$ and $r \in (0,1)$, we denote the scale-invariant \emph{tilt-excess} of $u_{\ep_i}$ inside the ball $B_r(x)$ with respect to the half-hyperplane $P$ by the non-negative quantity
\begin{equation}
    \label{E:excess}
    \widehat{E}(u_{\ep_i};B_r(x)):= \frac{1}{r^{n-1}} \int_{B_r(x) \cap \Omega_i} |\xi_{\ep_i}(u_{\ep_i})| \; dv_{g_i} +  \big(1-g_i(N_{u_{\ep_i}}, \nabla^{g_i} d_{g_i}^P)^2 \big) \; d\mu_{u_{\ep_i}},
\end{equation}
where $d_{g_i}^P$ is the signed distance function from $P$ with respect to the metric $g_i$. 

We now fix some parameters using the key propositions in \cite[Section 6]{LiPariseSarnataro2023}. For any given $s>0$, by \cite[Proposition 6.3]{LiPariseSarnataro2023}, we can choose $b \in (0,\frac{1}{2})$ sufficiently small such that
\[ (*) \quad \int_{B_2 \cap \{|u_{\ep_i}| \geq 1-b\}} \frac{W(u_{\ep_i})}{\ep_i} \; dv_{g_i} \leq s \qquad \text{for all sufficiently large $i$}. \]
For any such $s$ and $b$, by \cite[Proposition 6.1]{LiPariseSarnataro2023} we can fix $\eta_1 \in (0,1)$ and $L>1$ such that\footnote{Recall that $g_i \to \delta$ and $\xi_{\ep_i}(u_{\ep_i}) \leq \ep_i^{-4/5}$ by Theorem \ref{T:old-main}. Therefore, (ii) of \cite[Proposition 6.1]{LiPariseSarnataro2023} are satisfied for all sufficiently large $i$. Note that there is a typo in (ii) of \cite[Proposition 6.1]{LiPariseSarnataro2023}, $|\xi_{\tilde{\ep}}|$ should be replaced by $\xi_{\tilde{\ep}}$.}
\[ \begin{array}{l}
\text{(**) For $x=(y_1,y_2,z) \in C_{\frac{\sigma}{2}} (\textbf{p},z')$ with $|u_{\ep_i}(x)| \leq 1-b$ and $\widehat{E}(u_{\ep_i};B_{4 \ep_i L}(x))\leq \eta_1$, and for all}\\
\text{sufficiently large $i$, we have $y_2>3\ep_iL$ and $u_{\ep_i}(y_1,y,z) \neq u_{\ep_i}(x)$ for all $0<|y-y_2|<3 \ep_i L$.}\\
\text{Moreover, inside $B_{\ep_i L}(x)$, $u_{\ep_i}$ is $C^1$-close to the unique one-dimensional finite-energy heteroclinic}\\
\text{solution $q_{\ep_i}(y_2)$ satisfying $q_{\ep_i}(y_2)=u_{\ep_i}(x)$ and}
\end{array}
\]
\begin{equation} 
\label{E:**}
1-s \leq \frac{1}{\omega_{n-1}(\ep_i L)^{n-1}} \int_{B_{\ep_i L}(x)} e_{\ep_i}(u_{\ep_i})\, dv_{g_i}  \leq 1+s. 
\end{equation}
Finally, for such $s$ and $b$, by \cite[Proposition 6.2]{LiPariseSarnataro2023} (with $R=\sigma$ and $N=\Theta(P):=\Theta_\bC(\textbf{p},z') \in \mathbb{N}$) we can fix $\eta_2 \in (0,1)$ such that\footnote{Note that the second assumption in (iii) of \cite[Proposition 6.2]{LiPariseSarnataro2023} is satisfied for all sufficiently large $i$ as $r^{-(n-1)}\mu_{\tilde{u}_{\tilde{\ep}_i}}(B_r(x_k))$ is of order $r_i$ by the blow-up process. Similarly, the constant $c_6$ in \cite[Lemma 4.12]{LiPariseSarnataro2023} is of order $r_i$ and the constant $c_7$ in \cite[Proposition 4.13]{LiPariseSarnataro2023} is of order $r_i^{1/8}$ in the blow-up process, thus all the assumptions in (ii) of \cite[Proposition 6.2]{LiPariseSarnataro2023} are satisfied for all sufficiently large $i$ as well.}
\[ \begin{array}{l}
\text{(***) For any collection $\{\hat{y}_k\}_{k=1}^{\Theta(P)+1}$ of distinct positive numbers with $\min_{k}|\hat{y}_{k+1}-\hat{y}_k|>3\ep_iL$}\\
\text{and $3\ep_iL<\hat{y}_1<\hat{y}_2<\cdots<\hat{y}_{\Theta(P)+1} < \eta_2 \sigma$ such that whenever $x_k:=(y_1,\hat{y}_k,z) \in C_{\frac{\sigma}{2}} (\textbf{p},z')$ with}\\
\text{$\widehat{E}(u_{\ep_i};B_r(x_k)) \leq \eta_2$ for all $\ep_iL \leq r \leq \sigma$ and all $k$, we have}\\
\end{array}
\]
\begin{equation} 
\label{E:***} 
\sum_{k=1}^{\Theta(P)+1} \frac{1}{\omega_{n-1} (\ep_i L)^{n-1}} \int_{B_{\ep_i L}(x_k)} e_{\ep_i}(u_{\ep_i})\; dv_{g_i} \leq s + \frac{1+s}{\omega_{n-1} \sigma^{n-1}} \int_{\cup_k B_\sigma(x_k)} e_{\ep_i}(u_{\ep_i})\; dv_{g_i}.  
\end{equation}
To summarize, for any given $s \in (0,1)$, we first fix $b=b(s) \in (0,\frac{1}{2})$ such that (*) is satisfied. Then we fix $\eta=\min\{\eta_1,\eta_2\} \in (0,1)$ and $L>1$ such that both (**) and (***) hold. We can now sub-divide the points in the intermediate level sets inside $C_{\frac{\sigma}{2}} (\textbf{p},z')$:
\[ C_{\frac{\sigma}{2}} (\textbf{p},z') \cap \{|u_{\ep_i}| \leq 1-b\}=\mathcal{G}^b_i(\textbf{p},z') \cup \mathcal{B}^b_i(\textbf{p},z'), \]
where $x \in \mathcal{G}^b_i(\textbf{p},z')$ belongs to the \emph{good set} if the scale-invariant excess satisfies
\[ \widehat{E}(u_{\ep_i};B_r(x)) \leq \eta \qquad \forall r \in [\ep_i L,\sigma],\]
otherwise $x \in \mathcal{B}^b_i(\textbf{p},z')$ belongs to the \emph{bad set}.

We first prove the following lemma which says that the bad set $\mathcal{B}^b_i(\textbf{p},z')$ has small size with respect to both $\mu_{u_{\ep_i}}$ and the $\mathcal{L}^{n-1}$-measure of its orthogonal projection onto $P$. This lemma lies at the heart of the remaining arguments in Step 3.

\begin{lemma}
    \label{L:bad-set-estimate}
    Let $\pi_P$ be the orthogonal projection map (with respect to the flat metric) onto $P$. Then,
    \[ \lim_{i \to \infty} \mu_{u_{\ep_i}}(\mathcal{B}^b_i(\textbf{p},z'))= \lim_{i \to \infty} \mathcal{L}^{n-1}(\pi_P(\mathcal{B}^b_i(\textbf{p},z')))=0. \]
\end{lemma}

\begin{proof}
    By definition the bad set $\mathcal{B}^b_i(\textbf{p},z')$ is covered by a collection of balls $\mathfrak{B}=\{B_{r_x}(x)\}_{x \in \mathcal{B}^b_i(\textbf{p},z')}$ where $r_x \in [ \ep_i L,\sigma]$ with $\widehat{E}(u_{\ep_i};B_{r_x}(x)) > \eta$. By Besicovitch Covering Theorem, we can choose a sub-collection $\cup_{\ell=1}^{N(n)} \mathfrak{B}_\ell$ covering $\mathcal{B}^b_i(\textbf{p},z')$ such that each $\mathfrak{B}_\ell$ contains mutually disjoint balls. Since $\mu_{u_{\ep_i}} \to \Theta(P)\mathcal{H}^{n-1} \mres P$ as Radon measures in $C_{2\sigma} (\textbf{p},z')$ by Theorem \ref{T:old-main} (1) and \eqref{E:blow-up-argument}, for each $B_{r_x}(x) \in \mathfrak{B}_\ell$, by the choice of $r_x$ such that $\widehat{E}(u_{\ep_i};B_{r_x}(x)) > \eta$, we have for all sufficiently large $i$ that
    \[ \mu_{u_{\ep_i}}(B_{r_x}(x)) \leq 2 \Theta(P) r_x^{n-1} \leq 2 \Theta(P) \eta^{-1} \int_{B_{r_x}(x) \cap \Omega_i} |\xi_{\ep_i}(u_{\ep_i})| \; dv_{g_i} +  \big(1-g_i(N_{u_{\ep_i}}, \nabla^{g_i} d_{g_i}^P)^2 \big) \; d\mu_{u_{\ep_i}}.  \]
    Summing over all the (disjoint!) balls in $\mathfrak{B}_\ell$, we have
    \[ \mu_{u_{\ep_i}}(\mathfrak{B}_\ell) \leq 2 \Theta(P) \eta^{-1} \int_{\mathfrak{B}_\ell \cap \Omega_i} |\xi_{\ep_i}(u_{\ep_i})| \; dv_{g_i} +  \big(1-g_i(N_{u_{\ep_i}}, \nabla^{g_i} d_{g_i}^P)^2 \big) \; d\mu_{u_{\ep_i}} \]
    which goes to zero as $i \to \infty$ by Theorem \ref{T:old-main} (1) and \eqref{E:blow-up-argument} again. This shows that 
    \[  \lim_{i \to \infty} \mu_{u_{\ep_i}}(\mathcal{B}^b_i(\textbf{p},z')) \leq N(n) \lim_{i \to \infty} \mu_{u_{\ep_i}}(\mathfrak{B}_\ell)=0. \]
    Similarly, by considering the collection of projected balls\footnote{Note that the projected balls of $\mathfrak{B}_\ell$ may not be disjoint in $P$, but the same argument still goes through since the balls in $\mathfrak{B}_\ell$ are disjoint before projection.} of $\cup_{\ell=1}^{N(n)} \mathfrak{B}_\ell$ by $\pi_P$ which covers the projected bad set $\pi_P(\mathcal{B}^b_i(\textbf{p},z'))$, we obtain $\lim_{i \to \infty} \mathcal{L}^{n-1}(\pi_P(\mathcal{B}^b_i(\textbf{p},z')))=0$.
\end{proof}

Next, we proceed to estimate in the next two lemmas the size of the exceptional set $S_i(\textbf{p},z')$ as defined in \eqref{E:exceptional-set} based on the decomposition \eqref{E:decompose-S_i}.

\begin{lemma}
\label{L:second-scenario}
$\lim_{i \to \infty} \mathcal{L}^{n-1}(S^{II}_i(\textbf{p},z';b))=0$.
\end{lemma}

\begin{proof}
    The assertion follows from Lemma \ref{L:bad-set-estimate} once we show that
    \begin{equation}
        \label{E:lemma-II}
        S^{II}_i(\textbf{p},z';b) \subset \pi_P(\mathcal{B}^b_i(\textbf{p},z')).
    \end{equation}
    To prove \eqref{E:lemma-II}, take any $(y_1,0,z) \in S^{II}_i(\textbf{p},z';b)$, by definition there exists some $y_2' \in [0,\frac{\sigma}{2})$ such that 
    \[ -\frac{1}{2} \leq u_{\ep_i}(y_1,y_2',z) \leq 1-b.\]
    Therefore, $(y_1,y_2',z) \in C_{\frac{\sigma}{2}} (\textbf{p},z') \cap \{|u_{\ep_i}| \leq 1-b\}$. We have two different cases to consider:
    \begin{itemize}
        \item $(y_1,y_2',z) \in \mathcal{G}^b_i(\textbf{p},z')$, in which case $x=(y_1,y_2',z)$ satisfies $\widehat{E}(u_{\ep_i};B_r(x)) \leq \eta$ for all $r \in [\ep_i L,\sigma]$ by definition. By (**), $y_2' \geq 3 \ep_iL$ and inside $B_{\ep_i L}(x)$, $u_{\ep_i}$ is $C^1$-close to the one-dimensional heteroclinic solution $q_{\ep_i}(y_2)$ with $q_{\ep_i}(y_2')=u_{\ep_i}(x)$. By enlarging $L$ depending on $W$ if necessary, we can arrange $\inf_{B_{\ep_i L}(x)} u_{\ep_i} < -\frac{1}{2}$, contradicting \eqref{E:scenario-II-consequence}. Therefore, we have excluded this case.
        \item $(y_1,y_2',z) \in \mathcal{B}^b_i(\textbf{p},z')$, hence $(y_1,0,z) \in \pi_P(\mathcal{B}^b_i(\textbf{p},z'))$ which is our desired conclusion.
    \end{itemize}
\end{proof}

\begin{lemma}
    \label{L:first-scenario}
    There exists $s_0=s_0(\Theta(P)) \in (0,1)$ small enough such that for any $s \in (0,s_0)$, we have
    \[ \limsup_{i \to \infty} \mathcal{L}^{n-1}(S^I_i(\textbf{p},z';b)) \leq C(n) s\]
\end{lemma}

\begin{proof}
    First of all, by definitions $S^I_i(\textbf{p},z';b)$ and $\pi_P(\mathcal{G}^b_i(\textbf{p},z')$ are disjoint subsets of $P \cap C_{\frac{\sigma}{2}} (\textbf{p},z')$. Therefore, we have
    \begin{equation} 
    \label{E:step-three-proof-a}
    \mathcal{L}^{n-1}(S^I_i(\textbf{p},z';b)) \leq  \omega_{n-1} \left(\frac{\sigma}{2}\right)^{n-1} - \mathcal{L}^{n-1}(\pi_P(\mathcal{G}^b_i(\textbf{p},z'))).
    \end{equation}
    Hence, we need to estimate the second term on the right hand side above. Fix any $|t| \leq 1-b$. Take any $(y_1,0,z) \in \pi_P(\mathcal{G}^b_i(\textbf{p},z'))$, and denote $\{\hat{y}_k\}_{k=1}^K$ to be distinct non-negative numbers such that $\hat{y}_k \leq \eta \sigma$, $x_k=(y_1,\hat{y}_k,z) \in \mathcal{G}^b_i(\textbf{p},z')$ and $u_{\ep_i}(x_k)=t$ for each $k=1,\cdots,K$. We claim that a-priori one must have $K \leq \Theta(P)$. Otherwise, we can take $K=\Theta(P)+1$ and then derive a contradiction below. Roughly speaking, the contradiction comes from the fact that around every $x_k$ in the good set $\mathcal{G}^b_i(\textbf{p},z')$, the smallness of excess at all scales above $O(\ep_i)$ implies good sheet separation and locally at $O(\ep_i)$-scale $u_{\ep_i}$ resembles the one-dimensional heteroclinic solution $q_{\ep_i}(y_2)$. Therefore, having $K \geq \Theta(P)+1$ would contradict the varifold convergence in \eqref{E:blow-up-argument}. 

    Suppose to the contrary that we can take $K=\Theta(P)+1$. Since each $x_k \in \mathcal{G}^b_i(\textbf{p},z')$, (**) implies immediately that all the assumptions in (***) are satisfied for $\{\hat{y}_k\}_{k=1}^{\Theta(P)+1}$. Combining \eqref{E:**} and \eqref{E:***}, taking $i \to \infty$, we obtain
    \[ (\Theta(P)+1)(1-s) \leq s + \frac{1+s}{\omega_{n-1}\sigma^{n-1}} \|\bC\|\left(\bigcup_{k=1}^{\Theta(P)+1} \overline{B_\sigma(x_k)} \right) \leq s+ (1+s) \Theta(P) \]
    which will result in a contradiction provided that $s < \frac{1}{2(\Theta(P)+1)}$.

    To finish the proof of the lemma, we perform a standard change of variable
    \[ w_{u_{\ep_i}} := \Phi \circ u_{\ep_i} \qquad \text{ where } \Phi(t):=\int_0^t \sqrt{2W(\tau)} \; d\tau.\]
    Note that $\nabla w_{\ep_i}=\sqrt{2W(u_{\ep_i})} \, \nabla u_{\ep_i}$ and (recall \eqref{E:energy-density-discrepancy})
    \[ \left| e_{\ep_i}(u_{\ep_i}) - \frac{|\nabla w_{u_{\ep_i}}|}{h_0} \right| \leq |\xi_{\ep_i}(u_{\ep_i})|.\]
    By Theorem \ref{T:old-main} (1), we have 
    \begin{equation} 
    \label{E:step-three-last-equation}
    \lim_{i \to \infty} \int_{C_{\frac{\sigma}{2}} (\textbf{p},z')} \frac{|\nabla w_{u_{\ep_i}}|}{h_0} \; dv_{g_i}= \|\bC\|(C_{\frac{\sigma}{2}} (\textbf{p},z'))= \Theta(P) \omega_{n-1} \left( \frac{\sigma}{2} \right)^{n-1}. 
    \end{equation}
    On the other hand, using (*), Lemma \ref{L:bad-set-estimate}, the co-area formula and \eqref{E:energy-constant}, 
    \begin{eqnarray*}
    \lim_{i \to \infty} \int_{C_{\frac{\sigma}{2}} (\textbf{p},z')} \frac{|\nabla w_{u_{\ep_i}}|}{h_0} \; dv_{g_i} & \leq & s + \liminf_{i \to \infty} \int_{\mathcal{G}^b_i(\textbf{p},z')} \frac{|\nabla w_{u_{\ep_i}}|}{h_0} \; dv_{g_i}  \\
    & \leq & s + \liminf_{i \to \infty} \int_{-1+b}^{1-b} \Hcal^{n-1}(\{u_{\ep_i}=t\} \cap \mathcal{G}^b_i(\textbf{p},z')) \frac{\sqrt{2W(t)}}{h_0} \; dt \\
    & \leq & s+ (1+s) \Theta(P) \liminf_{i \to \infty} \mathcal{L}^{n-1}(\pi_P(\mathcal{G}^b_i(\textbf{p},z'))).
    \end{eqnarray*}
    The last inequality is due to the observation (c.f. \cite[(4.11)]{Tonegawa2012}) that around any $x \in \mathcal{G}^b_i(\textbf{p},z')$ which lies on a regular level set of $u_{\ep_i}$, since $u_{\ep_i}$ is $C^1$-close to the one-dimensional heteroclinic solution $q_{\ep_i}(y_2)$ inside $B_{\ep_i L}(x)$, the level set $\{u_{\ep_i}=u_{\ep_i}(x)\}$ can be written as the graph of a function over $\pi_P(B_{\ep_i L}(x))$ with gradient approaching zero less than  as $i \to \infty$. Combining the above with \eqref{E:step-three-last-equation}, and taking limsup on both sides of \eqref{E:step-three-proof-a}, we obtain
    \[  \limsup_{i \to \infty} \mathcal{L}^{n-1}(S^I_i(\textbf{p},z';b)) \leq \left( \omega_{n-1}\left( \frac{\sigma}{2} \right)^{n-1} + \frac{1}{\Theta(P)} \right) s, \]
    which proves the assertion in the lemma.
\end{proof}

By Lemma \ref{L:second-scenario} and \ref{L:first-scenario}, together with \eqref{E:decompose-S_i}, we have
\[\lim_{i \to \infty} \mathcal{L}^{n-1}(S_i(\textbf{p},z')) \leq C(n)s \qquad \text{for all $s \in (0,s_0)$,} \]
hence we obtain the desired conclusion \eqref{E:step-3-proof-main-estimate}.        

\vspace{1ex}

\textit{Step 4: Controlling in an integral sense the total turning angle of the cross sections of the intermediate level sets of $u_{\ep}$ and the final contradiction argument.}

\vspace{1ex}

In what follows, we will use $B^k_r$ to denote the Euclidean ball of radius $r>0$ centred at the origin. Let $\pi_{S(\bC)}$ be the orthogonal projection map (with respect to the flat metric) onto the spine $S(\bC) \cong \mathbb{R}^{n-2}$. We first establish a lemma saying that generic level sets of $u_{\ep_i}$ satisfy some nice properties.

\begin{lemma}
    \label{L:nice-level-sets-are-generic}
    For $\mathcal{L}^1$-a.e. $t \in (-\frac{1}{2},\frac{1}{2})$, the following statements hold:
    \begin{itemize}
        \item[(1)] For every $i \in \mathbb{N}$, $t$ is a regular value for $u_{\ep_i}$ restricted on the open\footnote{Note that each $\mathcal{I}_{u_{\ep_i}}$ is a relatively closed subset of $B^{n-2}_1$ by definition.} set $B^2_1 \times (B^{n-2}_1\setminus \mathcal{I}_{u_{\ep_i}})$, thus the level set $\{u_{\ep_i}=t\}$ is a $C^3$ hypersurface in $\Omega_i$ with free boundary on $\partial \Omega_i$. 
        \item[(2)] Over the ``nice part'' $\mathfrak{N}_i \subset B^2_1 \times B^{n-2}_1 \subset B^n_2$ defined by
        \[ \mathfrak{N}_i:=\pi_{S(\bC)}^{-1}(B^{n-2}_1\setminus (\mathcal{I}_{u_{\ep_i}} \cup \mathcal{N}_{u_{\ep_i}})), \]
        the level hypersurfaces $\{u_{\ep_i}=t\}$ have finite area in the liminf sense, i.e.
        \begin{equation}
            \label{E:finite-area-level-set}
        \liminf_{i \to \infty} \Hcal^{n-1}_{g_i}(\{u_{\ep_i} = t \} \cap \mathfrak{N}_i) < \infty;
        \end{equation} 
        and their enhanced second fundamental form are converging to zero in the $L^2$-sense, i.e.
        \begin{equation}
            \label{E:2ff-converges-to-zero}
            \liminf_{i \to \infty} \int_{\{u_{\ep_i}=t\} \cap \mathfrak{N}_i} |\Acal_{u_{\ep_i}}|^2 \; d\Hcal^{n-1}_{g_i} = 0.
        \end{equation}
    \end{itemize}
\end{lemma}

\begin{proof}
(1) is a direct consequence of Sard's theorem. To prove (2), first note that by the definition of $\mathcal{I}_{u_{\ep_i}}$ (recall \eqref{E:step-2-proof}), for any $z \in B^{n-2}_1\setminus \mathcal{I}_{u_{\ep_i}}$, there exists $r_z>0$ such that
\begin{equation} 
\label{E:non-degeneracy}
|\nabla u_{\ep_i}| \geq \frac{c}{\ep_i} \qquad \text{ on $B^2_1 \times B^{n-2}_{r_z}(z) \cap \left\{|u_{\ep_i}|\leq \frac{1}{2}\right\} $.}
\end{equation}
Using co-area formula and \eqref{E:non-degeneracy}, 
\[ \int_{-\frac{1}{2}}^{\frac{1}{2}} \Hcal^{n-1}_{g_i}(\{u_{\ep_i} = t \} \cap \mathfrak{N}_i) \; dt = \int_{\left\{|u_{\ep_i}|\leq \frac{1}{2}\right\} \cap \mathfrak{N}_i} |\nabla u_{\ep_i}|\; dv_{g_i} \leq \frac{h_0}{c} \mu_{u_{\ep_i}}(B^n_2), \]
which implies \eqref{E:finite-area-level-set} by letting $i \to \infty$ using Fatou's lemma. For \eqref{E:2ff-converges-to-zero}, using again co-area formula and \eqref{E:non-degeneracy}, 
\[ \int_{-\frac{1}{2}}^{\frac{1}{2}} \int_{\{u_{\ep_i}=t\} \cap \mathfrak{N}_i} |\Acal_{u_{\ep_i}}|^2 \; d\Hcal^{n-1}_{g_i} \; dt= \int_{\left\{|u_{\ep_i}|\leq \frac{1}{2}\right\} \cap \mathfrak{N}_i} |\Acal_{u_{\ep_i}}|^2 |\nabla u_{\ep_i}| \; dv_{g_i} \leq \frac{h_0}{c} \int_{B^n_2}|\Acal_{u_{\ep_i}}|^2 \; d\mu_{u_{\ep_i}}.\]
The desired assertion follows by taking $i \to \infty$ and using Fatou's lemma together with \eqref{E:blow-up-argument}.
\end{proof}

Applying an elementary geometric lemma \cite[Lemma 4.3]{Tonegawa2012}\footnote{The proof there works verbatim in the Riemannian setting with boundary.}, Lemma \ref{L:nice-level-sets-are-generic} with another use of Fatou's lemma implies that for $\mathcal{L}^1$-a.e. $t \in (-\frac{1}{2},\frac{1}{2})$ such that (1) and (2) in Lemma \ref{L:nice-level-sets-are-generic} holds, 
\[ \liminf_{i \to \infty} \int_{\ell_z} |\kappa^i_{z}|\; ds_{g_i} = 0  \]
for $\mathcal{L}^{n-2}$-a.e. $z \in B^{n-2}_1\setminus (\mathcal{I}_{u_{\ep_i}} \cup \mathcal{N}_{u_{\ep_i}})$ which is also a regular value of $\pi_{S(\bC)}$ restricted on the level hypersurface $\{u_{\ep_i}=t\}$ in $B^2_1 \times (B^{n-2}_1\setminus \mathcal{I}_{u_{\ep_i}})$. Here, $\ell^i_z:=\pi_{S(\bC)}^{-1}(z) \cap \{u_{\ep_i}=t\}$ is a $C^3$ planar curve contained in $B^2_1 \times \{z\}$, and $\kappa^i_z$ is the geodesic curvature of the planar curve $\ell^i_z$ with respect to $g_i$. Since $\lim_{i \to \infty} \mathcal{L}^{n-2}(\mathcal{I}_{u_{\ep_i}} \cup \mathcal{N}_{u_{\ep_i}})= 0$ by \eqref{E:step-2-proof} and \eqref{E:step-3-proof}, we can choose a sequence of $z_i \in  B^{n-2}_1\setminus (\mathcal{I}_{u_{\ep_i}} \cup \mathcal{N}_{u_{\ep_i}})$ which is a regular value of $\pi_{S(\bC)}$ restricted on the level hypersurface $\{u_{\ep_i}=t\}$ in $B^2_1 \times (B^{n-2}_1\setminus \mathcal{I}_{u_{\ep_i}})$ and, possibly after passing to a subsequence, 
\begin{equation}
    \label{E:final-contradiction}
\lim_{i \to \infty} \int_{\ell^i_{z_i}} |\kappa^i_{z_i}|\; ds_{g_i} = 0. 
\end{equation} 

Finally, we describe the contradiction arguments. Note that each $\ell^i_{z_i}$ is a non-empty union of disjoint, connected, properly embedded planar curves in $\Omega_i \cap (B^2_1 \times \{z_i\})$ with (possibly empty) free boundary in $\partial \Omega_i \cap (B^2_1 \times \{z_i\})$. Moreover, by the definition of $\mathcal{N}_{u_{\ep_i}}$, $\ell^i_{z_i}$ must intersect every one of the disjoint disks $B^2_{\sigma}(\textbf{p}_j) \times \{z_i\}$ for all $j=1,\cdots,N$. Since $N \geq 2$ and $\spt\|\bC\| \neq \partial \mathbb{R}^n_+$, for all sufficiently large $i$ (recall \eqref{E:ambient-convergence}), we have one of the following cases to consider: (in what follows, we call a component ``unbounded'' if it is not contained in a compact subset of $B^2_1 \times \{z_i\}$)
\begin{itemize}
    \item there exists a closed (i.e. without boundary) component of $\ell^i_{z_i}$, which would result in $\int_{\ell^i_{z_i}} |\kappa^i_{z_i}|\; ds_{g_i} \geq 1.99\pi$.
    \item there exists a component of $\ell^i_{z_i}$ with two free boundary points at $\partial \Omega_i \cap (B^2_1 \times \{z_i\})$, which would result in $\int_{\ell^i_{z_i}} |\kappa^i_{z_i}|\; ds_{g_i} \geq 0.99\pi$.
    \item there exists a component of $\ell^i_{z_i}$ that intersects both $B^2_{\sigma}(\textbf{p}_{j_1}) \times \{z_i\}$ and $B^2_{\sigma}(\textbf{p}_{j_2}) \times \{z_i\}$ for some $j_1\neq j_2$ such that one of the $\textbf{p}_{j_1}$ or $\textbf{p}_{j_2}$ does not lie on $\partial \mathbb{R}^2_+$, by the Hausdorff convergence of the level sets in Theorem \ref{T:old-main} (2), we must have 
    \[ \int_{\ell^i_{z_i}} |\kappa^i_{z_i}|\; ds_{g_i} \geq c(\textbf{p}_{j_1},\textbf{p}_{j_2},\sigma)>0.\]
    \item there exists an unbounded component of $\ell^i_{z_i}$ that intersects only one $B^2_{\sigma}(\textbf{p}_{j}) \times \{z_i\}$ for some $\textbf{p}_j \neq (0,\frac{1}{2})$ which has exactly one free boundary point at $\partial \Omega_i \cap (B^2_1 \times \{z_i\})$, in this case we must have
    \[ \int_{\ell^i_{z_i}} |\kappa^i_{z_i}|\; ds_{g_i} \geq c(\textbf{p}_j,\sigma)>0.\]
    \item there exists an unbounded component of $\ell^i_{z_i}$ that intersects only one $B^2_{\sigma}(\textbf{p}_{j}) \times \{z_i\}$ without boundary points, which would again result in $\int_{\ell^i_{z_i}} |\kappa^i_{z_i}|\; ds_{g_i} \geq 0.99\pi$.
\end{itemize}
Therefore, in all of the above cases, we always have (recall that $\sigma$ only depends on $\bC$ as well)
\[ \int_{\ell^i_{z_i}} |\kappa^i_{z_i}|\; ds_{g_i} \geq c(\bC)>0\]
which clearly contradicts \eqref{E:final-contradiction}. This finishes the proof of excluding boundary classical tangent cones of Type I. The proof for Type II is very similar and the details are left to the reader.
\end{proof}

\appendix

\section{Monotonicity of reflected density}
\label{A:reflected-density}

In this appendix, we use the reflected ball arguments as in \cite{Tonegawa2003} to establish a monotonicity-type formula for the ``reflected'' rescaled energy up to the boundary. Besides giving an alternative approach to the monotonicity formula in \cite{LiPariseSarnataro2023}, the reflected ball arguments have the advantage that the balls can intersect the boundary without being centred at a boundary point. This yields up-to-the-boundary Hausdorff convergence of the intermediate level sets $\{|u_\varepsilon|\leq 1-b\}$ to the support of the limit-interface (see Theorem \ref{T:old-main} (2) and Remark \ref{R:up-to-boundary-Hausdorff-convergence}). 


Throughout this appendix, we assume that $u_\varepsilon\in C^3(\overline\Omega\cap U)$ solves the following inhomogeneous Allen-Cahn equation as in \cite{LiPariseSarnataro2023}:
\begin{equation}
\label{equation: EL}
\begin{cases}
-\varepsilon\Delta_g u_\varepsilon+\dfrac{W'(u_\varepsilon)}{\varepsilon}=\lambda_\varepsilon&\text{in }\Omega\cap U,\\[4pt]
\partial u_\varepsilon/\partial \nu=0&\text{on }\partial\Omega\cap U,
\end{cases}
\end{equation}
where $\varepsilon\in(0,1)$, with (A1)-(A2) in Assumptions \ref{As:stationary-AC} and
\[
|\lambda_\varepsilon|\leq\Lambda_0.
\]

\begin{remark}
We will mainly focus on the homogeneous case $\Lambda_0=0$. When $\Lambda_0\neq0$, we can consider as in \cite{LiPariseSarnataro2023} the following modified (non-negative) potential
\[
\widetilde W_\varepsilon(s)=W(s)-\varepsilon\lambda_\varepsilon s+\varepsilon C_0\Lambda_0.
\]
and the following modified normalized energy density and discrepancy function (recall \eqref{E:energy-density-discrepancy}):
\begin{equation}
\label{E:modified-densities}
\widetilde e_\varepsilon (u_\varepsilon):=e_\varepsilon (u_\varepsilon)+f_\varepsilon(u_\varepsilon),
\qquad
\widetilde\xi_\varepsilon (u_\varepsilon):=\xi_\varepsilon (u_\varepsilon)-f_\varepsilon (u_\varepsilon).
\end{equation}
where $f_\varepsilon(u_\varepsilon):=h_0^{-1}(-\lambda_\varepsilon u_\varepsilon+C_0\Lambda_0)$ satisfies $0\leq f_\varepsilon\leq2h_0^{-1}C_0\Lambda_0$. Thus the same computations below apply to $\widetilde e_\varepsilon$ and $\widetilde\xi_\varepsilon$ in place of $e_\varepsilon$ and $\xi_\varepsilon$.  Note that the associated varifold $V^{u_\ep}$ defined in \eqref{E:varifold-def} is independent of the potential term and hence the limit-interface $V$ would be the same even if we consider the modified quantities as above.
\end{remark}

In what follows, we let $V=\lim_{i \to \infty} V^{u_{\ep_i}}$ be a limit-interface as obtained in Theorem \ref{T:old-main} for some $\ep_i \to 0$. We will write $u_i=u_{\varepsilon_i}$ for simplicity. The goal of this appendix is to establish the following (c.f. Theorem \ref{T:old-main} (2) and Remark \ref{R:up-to-boundary-Hausdorff-convergence}):
\begin{proposition}
\label{P:reflected-Hausdorff}
For any $b\in(0,1)$, the subsets $\{|u_i|\leq1-b\}$ locally converge in the Hausdorff sense to $\spt\|V\|$ in $\overline\Omega\cap U$.
\end{proposition}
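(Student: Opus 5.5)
Local Hausdorff convergence of $\{|u_i|\leq 1-b\}$ to $\spt\|V\|$ in $\overline\Omega\cap U$ splits into two inclusions. First, every limit point $x$ of points $x_i\in\{|u_i|\leq 1-b\}$ lies in $\spt\|V\|$. Second, every $x\in\spt\|V\|$ is a limit of such points. Theorem \ref{T:old-main}(2) already gives both statements locally away from $\partial\Omega$. It also gives the second inclusion at interior points, hence at boundary points of $\spt\|V\|$ lying in the closure of $\spt\|V\|\cap\Omega$. So it remains to prove two things. The first is that accumulation points $x\in\partial\Omega$ of transition points lie in $\spt\|V\|$, including when $x_i\in\Omega$ with $\dist(x_i,\partial\Omega)\to 0$ at an arbitrary rate. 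The second is that boundary points $x\in\spt\|V\|$ with $V=\Theta|\partial\Omega|$ near $x$ (Lemma \ref{L:constancy lemma De Masi}) are approximated by transition points; near such $x$ the measures $\mu_{u_i}$ concentrate on $\partial\Omega$, so $u_i$ cannot be $\pm1$ uniformly on any neighbourhood of $x$, since otherwise $\mu_{u_i}\to 0$ there by the standard exponential decay estimates. The real content is therefore the first statement. It requires a uniform lower bound on the rescaled energy in balls $B_r(x_i)$ which may intersect $\partial\Omega$ without being centred on it.

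To obtain this lower bound I follow the reflected ball construction of \cite{Grueter1986,Tonegawa2003}. In Fermi coordinates near $\partial\Omega$, let $\tilde y$ denote the reflection of a point $y$ across $\partial\Omega$ along normal geodesics. For a centre $x_i$ at small distance from $\partial\Omega$, define the reflected radial function $\tilde r(y)=\dist(\tilde y,x_i)$, or equivalently $\dist(y,\tilde x_i)$ up to a controlled error. I then test the Pohozaev identity \eqref{E:Pohozaev-1} with
\[
\tilde X=\chi(r/\rho)\,r\nabla r+\chi(\tilde r/\rho)\,\tilde r\nabla\tilde r .
\]
The point of this choice is that the boundary term $\int_{\partial\Omega}e_\ep(u_i)\,\tilde X\cdot\nu$ nearly cancels. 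On $\partial\Omega$ we have $r=\tilde r$ and the normal components of $\nabla r$ and $\nabla\tilde r$ are opposite up to an error $O(\kappa_0 r^2)$. This error is controlled by $\hat\mu^\partial_{u_i}$, which I bound using the trace estimate \cite[Lemma 3.2]{LiPariseSarnataro2023}, in the same way as in the proof of Lemma \ref{L:monotonicity limit varifold}.

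Running the computation in the proof of Lemma \ref{L:monotonicity limit varifold}, I expect the differential inequality
\[
\frac{d}{d\rho}\left(e^{C\rho}\frac{I(\rho)+\tilde I(\rho)}{\rho^{n-1}}\right)\geq -\frac{C}{\rho^n}\int_{B_\rho(x_i)\cup\widetilde B_\rho(x_i)}\xi_{\ep_i}(u_i)^+\,dv_g-\text{(curvature errors)},
\]
where $I(\rho)$ and $\tilde I(\rho)$ are the $\chi$-weighted energies in the ball and in the reflected ball. The discrepancy term is then handled by the integral estimates of \cite[Section 4]{LiPariseSarnataro2023}, which are based on the pointwise bound \eqref{E:discrepancy-pointwise-bound}. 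These show that the total error is $o(1)$ when integrated from $\rho=\ep_i$ up to a fixed scale. I expect this to be the main obstacle: since the reflected ball is not centred on $\partial\Omega$, the covering arguments of \cite{LiPariseSarnataro2023} must be redone for this geometry.

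At the scale $\rho\sim\ep_i$, the standard gradient estimate gives the lower bound $\ep_i^{1-n}\int_{B_{\ep_i}(x_i)}e_{\ep_i}(u_i)\geq c(b)>0$ whenever $|u_i(x_i)|\leq 1-b$. Monotonicity then yields $\|V\|(\overline{B_{2r}(x)})\geq c\,r^{n-1}$ for every fixed $r$. This shows $x\in\spt\|V\|$ and completes the first inclusion.
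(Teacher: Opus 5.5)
\textbf{The first inclusion.} This part follows the paper's route: test the Pohozaev identity with a radial field plus its reflection, derive a reflected monotonicity inequality with error $-\rho^{-n}\int\xi^+$, and get a lower bound at scale $\ep_i$ from the gradient estimate. Three points make it simpler than you expect.
\begin{enumerate}
\item Take $\tilde r=r\circ\mathcal R$. At $y\in\partial\Omega$ the differential $D\mathcal R_y$ is the orthogonal reflection across $T_y\partial\Omega$, so $\nabla\tilde r(y)$ is the mirror image of $\nabla r(y)$ and $\tilde X\cdot\nu=0$ exactly. No trace estimate is needed; the paper's field $(D\mathcal R)^{-1}\zeta\circ\mathcal R$ has the same property.
\item No covering argument has to be redone. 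Since $t\leq\tilde r$ and $\dist(y,\mathcal R y)=2t$, we have $\widetilde B_r(a)\subset B_{3r}(a)\cap\Omega$. Also $B_r(a)\subset B_{2r}(\pi(a))$, with $\pi(a)$ the nearest boundary point, whenever $B_r(a)$ meets $\partial\Omega$. So the boundary-centred energy and discrepancy bounds of \cite{LiPariseSarnataro2023} transfer directly.
\item The discrepancy error is not $o(1)$. Between $\ep_i$ and $c\ep_i^{4/5}$ the pointwise bound contributes at most $Cc$. Between $c\ep_i^{4/5}$ and $r_0$ the bound $r^{-n}\int_{B_r}\xi^+\leq Cr^{-7/8}$ contributes at most $Cr_0^{1/8}$. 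You must therefore fix $c$ and $r_0$ small relative to the lower bound $\theta_b$ at scale $\ep_i$.
\end{enumerate}

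\textbf{The gap is in the converse direction.} Take a boundary point $x\in\spt\|V\|$ with $V=\Theta|\partial\Omega|$ nearby. The negation of what you must prove is only this: along a subsequence, $|u_i|>1-b$ on a connected neighbourhood $B\cap\overline\Omega$, hence say $u_i>1-b$ there. That is not ``$u_i\to\pm1$ uniformly'', which is what your exponential-decay argument uses.
\begin{itemize}
\item For $1-b\geq\alpha$ you could bridge this with a barrier argument for $(u_i-1)^2$. It satisfies $\ep_i^2\Delta(u_i-1)^2\geq 2\kappa(u_i-1)^2$ and has zero normal derivative on $\partial\Omega$.
\item The proposition is claimed for every $b\in(0,1)$. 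When $1-b<\alpha$ that inequality is unavailable. If $1-b\leq\gamma$, then $(u_i-1)^2$ need not even be subharmonic.
\item You cannot reduce to small $b$. The sets $\{|u_i|\leq 1-b\}$ shrink as $b$ grows, so approximation by points with $|u_i|\leq1-b'$ for small $b'$ says nothing about larger $b$.
\end{itemize}

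\textbf{How the paper closes it.} It uses an integral identity that needs only $u_i>1-b$. Write $W(s)=(s-1)^2w(s)$ and take $h(s)=\int_1^s(q-1)\sqrt{w(q)}\,dq\,/\,\bigl((s-1)\sqrt{w(s)}\bigr)$. This $h$ is $C^1$ on $[1-b,C_0]$, with $h(1)=0$ and $2Wh'+W'h=2W$. Test the equation with $\psi h(u_i)$; the Neumann condition removes the boundary term, and one gets
\[
\int\psi\Bigl(\frac{2W(u_i)}{\ep_i}+2h_0h'(u_i)\xi_{\ep_i}(u_i)\Bigr)=\lambda_i\int\psi h(u_i)-\ep_i\int h(u_i)\nabla u_i\cdot\nabla\psi .
\]
The right-hand side tends to $0$:
\begin{itemize}
\item $u_i\to1$ in $L^1(B)$, because $u_i>1-b>0$ and $\int W(u_i)\to0$, so $\int\psi h(u_i)\to 0$;
\item the last term is $O(\sqrt{\ep_i})$ by the energy bound.
\end{itemize}
Since $\xi_{\ep_i}\to0$ in $L^1$ up to $\partial\Omega$, this gives $\int\psi W(u_i)/\ep_i\to0$. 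Hence $\int\psi\, e_{\ep_i}(u_i)\to0$, contradicting $x\in\spt\|V\|$. The argument works at every point of $\spt\|V\|$, so your case split is unnecessary.
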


To proceed with the reflected balls argument, we recall the Fermi coordinates with respect to $\partial\Omega$. Fix $p\in\partial\Omega$, and let $(y_1,\ldots,y_{n-1})$ be geodesic normal coordinates on a neighbourhood $\mathcal O$ of $p$ in $\partial\Omega$. We parametrise an open subset of the two-sided tubular neighbourhood $U_{\frac{1}{2\kappa_0}}(\partial \Omega)$ by
\[
\Phi:\mathcal O\times \left(-\frac{1}{2\kappa_0},\frac{1}{2\kappa_0} \right)\longrightarrow\widetilde M,
\qquad \Phi(y,t):=\exp_y(-t\nu(y)).
\]
Here $t$ is geometrically the signed boundary distance function (recall \eqref{E:dist-estimates}), and $t>0$ corresponds to the interior of $\Omega$. In these coordinates, the metric takes the form $g=dt^2+g_t$, with $g_t=g_0+2t\sff_0+O(t^2),$ where $\sff_0$ is the second fundamental form with respect to the inward unit normal $\partial_t=\nabla d$. We define reflection across $\partial\Omega$ by
\[
\mathcal R(y,t):=(y,-t).
\]
Then $\mathcal R|_{\partial\Omega}=\mathrm{Id}$ and $\mathcal R^2=\mathrm{Id}$ (i.e. $\mathcal{R}$ is an involution). Its differential is exactly $\operatorname{diag}(I_{n-1},-1)$, but reflection is not generally an isometry. More precisely,
\begin{equation}
\label{E:reflection-distortion}
(D\mathcal R_x)^*D\mathcal R_x=\mathrm{Id}+O(|t|),
\end{equation}
where the adjoint is taken with respect to the metrics at $x$ and $\mathcal R(x)$.

Fix a point $a\in\overline\Omega \cap U_{\frac{1}{4\kappa_0}}(\partial \Omega)$ which will be our centre, and set
\[
r(x):=\dist_g(x,a),\qquad
\widetilde r(x):=r(\mathcal R(x)),\qquad
\widetilde B_\rho(a):=\Omega \cap U_{\frac{1}{4\kappa_0}}(\partial \Omega) \cap \{\widetilde r(x)<\rho\}
\]
where $\rho \in (0, \frac{1}{4\kappa_0})$ will be taken small enough so that all the functions and vector fields are well-defined and their cutoffs are compactly supported in $U$. Introduce
\[
\zeta(x):=\frac12\nabla r^2=r\nabla r =-\exp_x^{-1}(a),\qquad
A(x):=(D\mathcal R_x)^{-1},\qquad
Y(x):=A(x)\zeta(\mathcal R(x)).
\]
Fix $\varphi\in C_c^\infty([0,\infty))$ with $0\leq\varphi\leq1$, $\varphi'\leq0$, $\varphi\equiv1$ on $[0,1]$, and $\varphi\equiv0$ on $[2,\infty)$. For a small scale $\rho>0$, define
\[
X(x):=\varphi(r(x)/\rho)\zeta(x)+\varphi(\widetilde r(x)/\rho)Y(x)=X_1(x)+X_2(x).
\]
This vector field is compactly supported in $U$, and $|X|\leq C\rho$. Along $\partial\Omega$, the map $D\mathcal R$ is orthogonal reflection across $T\partial\Omega$. Since $r=\widetilde r$ there, the normal components of $X_1$ and $X_2$ cancel, so that $X\cdot\nu=0$.

As in \cite[Section 4]{LiPariseSarnataro2023}, standard computations give $\nabla\zeta=\mathrm{Id}+O(r)$ and $\operatorname{div}\zeta=n+O(r)$. Hence
\[
\operatorname{div}X_1=n\varphi(r/\rho)+\frac r\rho\varphi'(r/\rho)+O(r)\varphi(r/\rho),
\]
and, writing $\nabla X(v,v)=\langle\nabla_vX,v\rangle$, we obtain
\[
\nabla X_1(\nabla u,\nabla u)
=\varphi(r/\rho)|\nabla u|^2
+\frac r\rho\varphi'(r/\rho)(\partial_ru)^2
+O(r)\varphi(r/\rho)|\nabla u|^2.
\]
For the reflected term, differentiating $Y=A(\zeta\circ\mathcal R)$ gives
\[
\nabla Y=(\nabla A)(\zeta\circ\mathcal R)
+A\circ(\nabla\zeta)_{\mathcal R(x)}\circ D\mathcal R_x
=\mathrm{Id}+O(\widetilde r+t).
\]
Here $\nabla A=O(1)$ on the fixed tubular neighbourhood, while the second term is $\mathrm{Id}+O(\widetilde r)$. Moreover, from $d\widetilde r(Y)=\widetilde r$ we deduce
\[
\operatorname{div}X_2=n\varphi(\widetilde r/\rho)
+\frac{\widetilde r}{\rho}\varphi'(\widetilde r/\rho)
+O(\widetilde r+t)\varphi(\widetilde r/\rho).
\]
Set $\eta=Y/\widetilde r$ away from $\widetilde r=0$. The cutoff term in $\nabla X_2(\nabla u,\nabla u)$ is
\[
\frac{\widetilde r}{\rho}\varphi'(\widetilde r/\rho)
\langle\nabla\widetilde r,\nabla u\rangle\langle\eta,\nabla u\rangle.
\]
By \eqref{E:reflection-distortion},
\[
\nabla\widetilde r=(D\mathcal R_x)^*D\mathcal R_x\eta=\eta+O(t).
\]
Consequently, replacing this product by $\langle\eta,\nabla u\rangle^2$ introduces an error bounded by
\[
Ct\frac{\widetilde r}{\rho}|\varphi'(\widetilde r/\rho)|\,|\nabla u|^2.
\]

Write
\[
S_\rho:=\varphi(r/\rho)+\varphi(\widetilde r/\rho),
\qquad \text{and} \qquad T_\rho:=-\frac{\widetilde r}{\rho}\varphi'(\widetilde r/\rho)\geq0.
\]
On the support of the first cutoff, $r\leq2\rho$; on the support of the reflected cutoff, $0\leq t\leq\widetilde r\leq2\rho$. Thus there exist a function $q_1$ and a tensor $Q_2$, depending on $\rho$ and $\varphi$, such that
\begin{align*}
 \operatorname{div}X&=nS_\rho+\frac r\rho\varphi'(r/\rho)
+\frac{\widetilde r}{\rho}\varphi'(\widetilde r/\rho)+q_1,\\
\nabla X(\nabla u,\nabla u)& =S_\rho|\nabla u|^2+\frac r\rho\varphi'(r/\rho)(\partial_ru)^2+\frac{\widetilde r}{\rho}\varphi'(\widetilde r/\rho)
\langle\eta,\nabla u\rangle^2+Q_2(\nabla u,\nabla u),
\end{align*}
with the cutoff-independent bounds
\begin{equation}
\label{E:reflection-errors}
|q_1|\leq C\rho S_\rho,
\qquad |Q_2|\leq C\rho(S_\rho+T_\rho).
\end{equation}

Recall now the Pohozaev identity from \cite[Section 4]{LiPariseSarnataro2023} (note that we have a different normalization constant in \eqref{E:energy-density-discrepancy}). Since $X\cdot\nu=0$ and $\frac{\partial u}{\partial \nu}=0$, for $\lambda_\varepsilon=0$ it reads
\[
\int_{\Omega\cap U}
\bigl(e_\varepsilon\operatorname{div}X-\varepsilon h_0^{-1} \nabla X(\nabla u,\nabla u)\bigr)\, \dvol_g=0.
\]
Using $ne_\varepsilon-\varepsilon h_0^{-1}|\nabla u|^2=(n-1)e_\varepsilon-\xi_\varepsilon$, we obtain
\begin{align*}
0&=\int_\Omega\bigl((n-1)e_\varepsilon-\xi_\varepsilon\bigr)S_\rho\,\dvol_g+\int_\Omega e_\varepsilon\left(\frac r\rho\varphi'(r/\rho)
+\frac{\widetilde r}{\rho}\varphi'(\widetilde r/\rho)\right)\,\dvol_g\\
&\quad-h_0^{-1}\int_\Omega\varepsilon \left(\frac r\rho\varphi'(r/\rho)(\partial_ru)^2 +\frac{\widetilde r}{\rho}\varphi'(\widetilde r/\rho)\langle\eta,\nabla u\rangle^2\right)\,\dvol_g+ h_0^{-1}\int_\Omega\bigl(q_1e_\varepsilon-\varepsilon Q_2(\nabla u,\nabla u)\bigr)\,\dvol_g.
\end{align*}
All integrands here and below are supported in $U$. Define
\begin{align*}
I(\rho)&:=\int_\Omega e_\varepsilon S_\rho\,\dvol_g, \qquad J_1(\rho):=\int_\Omega(-\xi_\varepsilon)S_\rho\,\dvol_g,\\
J_2(\rho)&:=h_0^{-1}\int_\Omega\varepsilon\left[\varphi(r/\rho)(\partial_ru)^2
+\varphi(\widetilde r/\rho)\langle\eta,\nabla u\rangle^2\right]\,\dvol_g,\\
J_3(\rho)&:=h_0^{-1}\int_\Omega\bigl(q_1e_\varepsilon-\varepsilon Q_2(\nabla u,\nabla u)\bigr)\,\dvol_g.
\end{align*}
Using $\rho\,\partial_\rho\varphi(s/\rho)=-(s/\rho)\varphi'(s/\rho)$, the Pohozaev identity becomes
\begin{equation}
\label{equation: itermediate PI}
(n-1)I(\rho)-\rho I'(\rho)=-J_1(\rho)-\rho J_2'(\rho)-J_3(\rho).
\end{equation}
This is the argument of \cite{Tonegawa2003}; the error term $J_3$ accounts for curvature and the fact that $\mathcal R$ is not an isometry. Since $\varphi'\leq0$, we have $I'\geq0$ and $J_2'\geq0$. Moreover, $|J_1|\leq I$, and \eqref{E:reflection-errors} gives $|J_3(\rho)|\leq C\rho I(\rho)+C\rho^2I'(\rho).$ Indeed, $\varepsilon h_0^{-1}|\nabla u|^2\leq2e_\varepsilon$ and $\int_\Omega e_\varepsilon T_\rho\,\dvol_g\leq\rho I'(\rho)$. It follows from \eqref{equation: itermediate PI} that $(1+C\rho)\rho I'(\rho)\geq(n-1)I(\rho)+J_1(\rho)-C\rho I(\rho).$
Dividing by $1+C\rho$ and using $|J_1|\leq I$, we may enlarge $C$ to obtain $\rho I'(\rho)\geq(n-1)I(\rho)+J_1(\rho)-C\rho I(\rho).$ Consequently,
\begin{equation}
\label{E:smooth-reflected-mono}
\frac{d}{d\rho}\bigl(e^{C\rho}\rho^{1-n}I(\rho)\bigr)
\geq e^{C\rho}\rho^{-n}J_1(\rho). 
\end{equation}

Choose such cutoffs $\varphi_j$ decreasing pointwise to the characteristic function $\mathbf1_{[0,1]}$. By dominated convergence,
\begin{align*}
I_j(\rho)&\rightarrow\mathcal E_\varepsilon(\rho,a)
:=\int_{B_\rho(a)\cap\Omega}e_\varepsilon\,\dvol_g
+\int_{\widetilde B_\rho(a)}e_\varepsilon\,\dvol_g,\\
J_{1,j}(\rho)&\rightarrow D_\varepsilon(\rho,a)
:=\int_{B_\rho(a)\cap\Omega}(-\xi_\varepsilon)\,\dvol_g
+\int_{\widetilde B_\rho(a)}(-\xi_\varepsilon)\,\dvol_g.
\end{align*}
We first integrate \eqref{E:smooth-reflected-mono} over a compact interval of admissible positive radii and then pass to the limit. This yields the integrated inequality for $\mathcal E_\varepsilon$ and $D_\varepsilon$. Since the ball integrals are locally absolutely continuous in the radius, the corresponding differential inequality holds for almost every radius.

We have thus proved the following monotonicity-type formula (the interior case follows from \cite[Section 4.1]{LiPariseSarnataro2023}). 
\begin{proposition}[Reflected monotonicity formula]
\label{P:reflected-monotonicity}
There exists $C=C(n,k_0,\kappa_0,\Lambda_0)\geq0$ such that the following holds: for every $a\in \overline\Omega \cap U_{\frac{1}{4\kappa_0}}(\partial \Omega)$ and almost every $\rho \in (0,\frac{1}{4\kappa_0})$,
\begin{equation}
\label{E:reflected-mono}
\frac{d}{d\rho}\left(e^{C\rho}\rho^{1-n}\mathcal E_\varepsilon(\rho,a)\right)
\geq e^{C\rho}\rho^{-n}D_\varepsilon(\rho,a).
\end{equation}
On the other hand, for any $a \in \Omega \cap U_{\frac{1}{4\kappa_0}}(\partial \Omega)$ and almost every $0<\rho<\min\{\frac{1}{4\kappa_0},\dist_g(a,\partial \Omega)\}$, we have
\begin{equation}
\label{E:interior-mono-1}
\frac{d}{d\rho}\left(e^{C\rho}\rho^{1-n}
\int_{B_\rho(a)}e_\varepsilon\,\dvol_g\right)
\geq e^{C\rho}\rho^{-n}\int_{B_\rho(a)}(-\xi_\varepsilon)\,\dvol_g.
\end{equation}

\end{proposition}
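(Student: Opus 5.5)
The proof will assemble the Pohozaev computation with the reflected test field $X=X_1+X_2$ carried out just above; the main work is to check that every step is legitimate and that the constant depends only on $(n,k_0,\kappa_0,\Lambda_0)$. First I will verify that $X$ is an admissible test field in the Pohozaev identity of \cite[Section 4]{LiPariseSarnataro2023}. It is $C^1$ because $\mathcal R$ is $C^2$ on the tubular neighbourhood $U_{\frac{1}{2\kappa_0}}(\partial\Omega)$ and $\varphi(\widetilde r/\rho)$ is compactly supported in it when $\rho<\frac{1}{4\kappa_0}$. It is tangential along $\partial\Omega$, since there $r=\widetilde r$ and $D\mathcal R$ is the orthogonal reflection, so the normal components of $X_1$ and $X_2$ cancel. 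Hence the boundary term $\int_{\partial\Omega}e_\varepsilon(X\cdot\nu)$ vanishes, and the term $\partial u/\partial\nu$ vanishes by the Neumann condition. For $\Lambda_0\neq0$ I will run the same argument with $\widetilde e_\varepsilon,\widetilde\xi_\varepsilon$ from \eqref{E:modified-densities}. In that case the extra terms $f_\varepsilon\operatorname{div}X$ are bounded by $C\Lambda_0 S_\rho\leq C\Lambda_0\rho\,\rho^{-1}I$ and are absorbed into the $C\rho I(\rho)$ error.

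Second, I will establish the pointwise expansions of $\operatorname{div}X$ and $\nabla X(\nabla u,\nabla u)$, together with the error bounds \eqref{E:reflection-errors}. The estimates needed are the following.
\begin{itemize}
\item $\nabla\zeta=\mathrm{Id}+O(r)$, by Hessian comparison from \eqref{E:sec-bound}.
\item $\nabla A=O(1)$ and \eqref{E:reflection-distortion}, from the Fermi form $g=dt^2+g_t$ with $g_t=g_0+2t\sff_0+O(t^2)$ and the bounds \eqref{E:dist-estimates}, \eqref{E:ball-curvatures}.
\item On the support of the reflected cutoff, $t\leq\widetilde r\leq 2\rho$. Every $O(t)$ or $O(\widetilde r)$ error is therefore $O(\rho)$, multiplied either by $S_\rho$ or, for the cutoff-derivative term, by $T_\rho$.
\end{itemize}

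Inserting these expansions into the Pohozaev identity and using $ne_\varepsilon-\varepsilon h_0^{-1}|\nabla u|^2=(n-1)e_\varepsilon-\xi_\varepsilon$ gives \eqref{equation: itermediate PI}. From there the inequality follows by the elementary steps already indicated:
\begin{itemize}
\item $I'\geq0$ and $J_2'\geq0$, since $\varphi'\leq0$;
\item $|J_1|\leq I$;
\item $|J_3|\leq C\rho I+C\rho^2I'$, using $\varepsilon h_0^{-1}|\nabla u|^2\leq 2e_\varepsilon$ and $\int e_\varepsilon T_\rho\leq\rho I'(\rho)$.
\end{itemize}
Dropping $-\rho J_2'\leq 0$ yields $\rho(1+C\rho)I'\geq (n-1)I+J_1-C\rho I$. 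Dividing by $(1+C\rho)$ and using $|J_1|\leq I$ gives $\rho I'\geq (n-1)I+J_1-C'\rho I$, which is \eqref{E:smooth-reflected-mono} after multiplying by $e^{C'\rho}\rho^{-n}$.

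Finally, I will remove the smooth cutoff. I will take $\varphi_j\downarrow\mathbf 1_{[0,1]}$, integrate \eqref{E:smooth-reflected-mono} on $[\rho_1,\rho_2]$, and pass to the limit by dominated convergence; the integrands are bounded by the $L^1$ function $2e_\varepsilon$ on a fixed ball, uniformly in $j$. This yields the integrated form of \eqref{E:reflected-mono} for $\mathcal E_\varepsilon$ and $D_\varepsilon$ at all radii where the ball integrals are continuous, hence at all radii. Because $\rho\mapsto\mathcal E_\varepsilon(\rho,a)$ is locally absolutely continuous ($e_\varepsilon\in L^\infty$ and spheres are null sets), the differential inequality holds for a.e. $\rho$. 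The interior statement \eqref{E:interior-mono-1} is the same argument with $X_1$ alone, which is compactly supported in $\Omega$ when $\rho<\dist_g(a,\partial\Omega)$, as in \cite[Section 4.1]{LiPariseSarnataro2023}.

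I expect the main obstacle to be the cutoff-derivative term of $X_2$. The identity $\nabla\widetilde r=\eta$ fails, since $\mathcal R$ is not an isometry, so the term is not a perfect square. Handling it requires the $O(t)$ distortion estimate \eqref{E:reflection-distortion} to be paired with $T_\rho$, so that the error becomes $C\rho^2I'$ rather than something of order $I'$, which could not be absorbed.
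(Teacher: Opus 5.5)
Your proposal follows the paper's own argument essentially step for step: the reflected test field $X=X_1+X_2$, the expansions and error bounds \eqref{E:reflection-errors}, the identity \eqref{equation: itermediate PI} with $|J_1|\le I$ and $|J_3|\le C\rho I+C\rho^2 I'$, and then removal of the cutoffs. You also correctly identify the cutoff-derivative term of $X_2$, paired with $T_\rho$, as the delicate point. One small correction to your side remark on $\Lambda_0\neq0$: once the forcing is absorbed into $\widetilde W_\varepsilon$, the Pohozaev identity holds exactly with $\widetilde e_\varepsilon$, so there are no extra $f_\varepsilon\operatorname{div}X$ terms to absorb. Such an absorption would not work anyway, since $\Lambda_0\int S_\rho\,\dvol_g\sim\Lambda_0\rho^n$ cannot be bounded by $C\rho I(\rho)$ when the energy in the ball is small.
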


\begin{remark}
The monotonicity formulas above are stated for the homogeneous case, i.e. $\Lambda_0=0$. When $\Lambda_0 \neq 0$, they hold with $e_\varepsilon$ and $\xi_\varepsilon$ replaced by $\widetilde e_\varepsilon$ and $\widetilde\xi_\varepsilon$ from \eqref{E:modified-densities}.
\end{remark}

With Proposition \ref{P:reflected-monotonicity}, we can now proceed to the proof of Proposition \ref{P:reflected-Hausdorff}. 

\begin{proof}[Proof of Proposition \ref{P:reflected-Hausdorff}]
Let $\mu_i=\mu_{u_{\ep_i}}=\|V^{u_{\ep_i}}\|$ be the Radon measures as defined in \eqref{E:mu}. Note that $\mu_i \to \|V\|$ as Radon measures on $U$. By \cite[Theorem 3.3(2)]{LiPariseSarnataro2023}, we already have the required Hausdorff convergence away from the boundary $\partial \Omega$. Therefore, we now focus on the additional arguments needed to conclude the Hausdorff convergence up to the boundary using Proposition \ref{P:reflected-monotonicity}. 

Fix $b\in(0,1)$ and work on a relatively compact subset of $U$, shrinking the admissible radii if needed. The local gradient estimate and the discrepancy and energy estimates in \cite[Section 4.2 and Propositions 4.10, 4.13, 4.15]{LiPariseSarnataro2023} give
\begin{equation}
\label{E:estimates-for-Hausdorff}
\varepsilon|\nabla u_\varepsilon|\leq C,\qquad
\xi_\varepsilon^+\leq C\varepsilon^{-4/5},\qquad
\int_{B_r(a)\cap\Omega}e_\varepsilon\,\dvol_g\leq Cr^{n-1}
\end{equation}
for $\varepsilon\leq r\leq r_0$ and all centres under consideration. To obtain the last estimate for a ball meeting the boundary whose centre is interior, include it in $B_{2r}(\pi(a))\cap\Omega$; balls disjoint from the boundary are covered by the interior estimate. The integral discrepancy estimate then yields
\begin{equation}
\label{E:integrated-discrepancy-Hausdorff}
r^{-n}\int_{B_r(a)\cap\Omega}\xi_\varepsilon^+\, \dvol_g\leq Cr^{-7/8}
\qquad \text{for any $c\varepsilon^{4/5}\leq r\leq r_0$}.
\end{equation}
Here $c>0$ can be any fixed constant, cf. \cite[Proposition 4.13]{LiPariseSarnataro2023}.

For the reflected sets, $t\leq\widetilde r$ and $\dist_g(x,\mathcal R(x))=2t$ imply
\begin{equation}
\label{E:reflected-ball-comparison}
\widetilde B_r(a)\subset B_{3r}(a)\cap\Omega.
\end{equation}
Thus the same upper bounds apply to the reflected energy and positive discrepancy, after changing the constants.

Suppose now that $|u_\varepsilon(a)|\leq1-b$. The gradient bound in \eqref{E:estimates-for-Hausdorff}, positivity of $W$ away from its wells, and the local volume lower bound give $\varepsilon^{1-n}\mathcal E_\varepsilon(\varepsilon,a)\geq\theta_b>0.$ Apply Proposition \ref{P:reflected-monotonicity} from $\varepsilon$ to $s_\varepsilon=c\varepsilon^{4/5}$. Choosing $c$ sufficiently small preserves a positive lower bound at $s_\varepsilon$. From $s_\varepsilon$ to a fixed small radius, use \eqref{E:integrated-discrepancy-Hausdorff}. Its integral is bounded by $Cr^{1/8}$, so, after decreasing $r_0$, we obtain $\mathcal E_\varepsilon(r,a)\geq\theta r^{n-1}$ for $\varepsilon\leq r\leq r_0.$ Finally, \eqref{E:reflected-ball-comparison} gives the lower bound for the original energy measure:
\begin{equation}
\label{E:transition-density-lower}
\mu_i(B_r(a))\geq\theta_0r^{n-1}
\quad\text{if }|u_i(a)|\leq1-b,\qquad
3\varepsilon_i\leq r\leq r_0.
\end{equation}
The constants are locally uniform in $a$ and independent of $i$ for all sufficiently large $i$.

Consequently, if $a_i\in\{|u_i|\leq1-b\}$ and $a_i\to x\in\partial\Omega\cap U$, then $x\in\spt\mu$. Indeed, for each fixed sufficiently small $r>0$ and all large $i$, $B_{r/4}(a_i)\subset\overline B_{r/2}(x)$. By \eqref{E:transition-density-lower} and weak convergence,
\[
\mu(\overline B_{r/2}(x))\geq\limsup_{i\to\infty}\mu_i(\overline B_{r/2}(x))
\geq\theta_0(r/4)^{n-1}>0.
\]

For completeness, we also justify the converse approximation for every $b\in(0,1)$. Suppose that $x\in\spt\mu$ and, along a subsequence, a connected neighbourhood $B\cap\overline\Omega$ of $x$ contains no point with $|u_i|\leq1-b$. Passing to a further subsequence, assume $u_i>1-b$ there. The energy bound implies $u_i\to1$ in measure and in $L^1$ on this neighbourhood. Choose a $C^1$ function $h$ on $[1-b,C_0]$, with bounded derivative, satisfying
\[
h(1)=0,\qquad 2W h'+W'h=2W.
\]
Such a function extends regularly across $1$: writing $W(s)=(s-1)^2w(s)$, with $w>0$, it is given by
\[
h(s)=\frac{\displaystyle\int_1^s(q-1)\sqrt{w(q)}\,dq}
{(s-1)\sqrt{w(s)}},\qquad h(1)=0.
\]
For a nonnegative smooth cutoff $\psi$ supported in $B$, test \eqref{equation: EL} with $\psi h(u_i)$ to obtain
\[
\int_\Omega\psi\left(\frac{2W(u_i)}{\varepsilon_i}
+2h_0h'(u_i)\xi_{\varepsilon_i}(u_i)\right)\,\dvol_g
=\lambda_i\int_\Omega\psi h(u_i)\,\dvol_g
-\varepsilon_i\int_\Omega h(u_i)\nabla u_i\cdot\nabla\psi\,\dvol_g.
\]
The right-hand side tends to zero: its first term does so by $h(u_i)\to0$ in $L^1$, and its second is $O(\sqrt{\varepsilon_i})$ by the energy bound. By \cite[Theorem 5.4]{LiPariseSarnataro2023}, $\xi_{\varepsilon_i}(u_i)\to0$ in $L^1_{\mathrm{loc}}(\overline\Omega\cap U)$. It follows that $\int\psi W(u_i)/\varepsilon_i\to0$, and hence $\int\psi e_{\varepsilon_i}(u_i)\to0$. Taking $\psi=1$ near $x$ contradicts $x\in\spt\mu$. The negative phase is treated in the same way, using a function $h$ vanishing at $-1$. This proves the converse approximation. Together with the preceding argument and local compactness, it proves Proposition \ref{P:reflected-Hausdorff}.
\end{proof}

\section{Revisiting rectifiability and integrality}
\label{app:rectifiability-integrality}

In this appendix, we give an alternative proof of the rectifiability and integrality for \emph{stable} limit-interfaces of the Allen--Cahn equation with Neumann boundary conditions, following the approach of De Rosa--Pigati \cite[Sections~6--7]{DRP25}. We retain the vanishing-discrepancy theorem of \cite{LiPariseSarnataro2023}, but do not use its rectifiability or integrality conclusions. The argument therefore revisits these two steps in light of stability, rather than providing an independent proof of the entire convergence result in Theorem \ref{T:old-main} in the setting of general critical points. Set-up and notation are as in the previous sections (c.f. Section \ref{S:Allen-Cahn}). We start with the following lemma. 

\begin{lemma}\label{lem:app-first-variation}
There is a constant $C$, independent of $i$, such that
\[
 \int_{\partial \Omega \cap U}e_{\ep_i}(u_{\ep_i})\,d \mathcal{H}^{n - 1} \le C E_0, \qquad \int |B_{u_{\ep_i}}|^2 \, d\mu_{u_{\ep_i}} \le C E_0, \qquad \|\delta\widetilde V_i\|(\widetilde M)\le C E_0.
\]
where $\widetilde{V}^{u_{\ep_i}} \in \bV_{n-1}(U)$ is defined by (c.f. Remark \ref{R:varifold-definition})
\[ \widetilde{V}^{u_{\ep_i}}(\phi)=\int_{\{\nabla u_{\ep_i} \neq 0\}} \sqrt{2W(u_{\ep_i})} |\nabla u_{\ep_i}| \phi(x,S^{u_{\ep_i}}) \, dv_g. \]
\end{lemma}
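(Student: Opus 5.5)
The three bounds are local, so throughout I will work on a fixed compact subset of $U$ covered by finitely many geodesic balls $B_r(x_0)$ with $r<\frac14\min\{\dist_g(x_0,\partial U),i_0\}$. The constant $C$ will be allowed to depend on the ambient data $n,i_0,k_0,\kappa_0$, on $r$, and on the covering. The three estimates are proved in order, each feeding into the next.

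\emph{Boundary energy.} For the first bound I will quote \cite[Lemma 3.2]{LiPariseSarnataro2023} with $\Lambda_0=0$, which is exactly \eqref{E:mu-partial-bound}. Summing over the finite cover gives $\int_{\partial\Omega\cap U}e_{\ep_i}(u_{\ep_i})\,d\Hcal^{n-1}\le C E_\ep(u_{\ep_i})/h_0\le CE_0$. If a self-contained argument is preferred, I would instead test the Pohozaev identity \eqref{E:Pohozaev-1} with $\tilde X=-\chi(r/R)\nabla d$, in the same way as in Lemma \ref{L:trace-inequality}. The boundary term then equals $h_0\int_{\partial\Omega}\chi\, e_\ep$, and the interior terms are bounded by $C\int e_\ep\le CE_0$ using \eqref{E:dist-estimates}.

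\emph{Second fundamental form.} For the second bound I will insert the cutoff $\psi=\chi(\dist_g(\cdot,x_0)/r)$ into the $\ep$-stability inequality \eqref{E:ep-stability}. I then use $|B_{u_\ep}|^2\le|\Acal_{u_\ep}|^2$ from \eqref{E:sff vs enhanced sff}, $|\Ric|\le(n-1)k_0$ and $|\sff_{\partial\Omega}|\le\kappa_0$. Together with $\mu^\partial_{u_\ep}\le 2\hat\mu^\partial_{u_\ep}$, this reproduces \eqref{E:B^ep is L^2 bounded}. The boundary term is controlled by the first bound, and $\mu_{u_\ep}(B_r)\le 2E_0/h_0$. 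Summing over the cover gives the estimate.

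\emph{First variation of $\widetilde V$.} The third bound is the main step. I would first compute the first variation of $\widetilde V_i:=\widetilde V^{u_{\ep_i}}$ directly. Write $\nabla w=\sqrt{2W(u)}\nabla u$, where $w=\Phi(u)$. Then $\widetilde V_i$ is the varifold with weight $|\nabla w|\,\dvol_g$ on the level sets of $w$. By the co-area formula,
\[
\delta\widetilde V_i(X)=\int_{\mathbb R}\delta|\{w=s\}|(X)\,ds,
\]
which is legitimate for a.e. $s$ by Sard's theorem. Each level set $\{u=t\}$ is a $C^2$ hypersurface meeting $\partial\Omega$ orthogonally, by the Neumann condition. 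Its first variation is $-\int X\cdot H_{u}\,d\Hcal^{n-1}$ plus the boundary term $\int_{\partial\{u=t\}}X\cdot\nu\,d\Hcal^{n-2}$. Re-applying the co-area formula gives
\[
|\delta\widetilde V_i(X)|\le \|X\|_\infty\Big(\int\sqrt{2W(u)}\,|\nabla u|\,|H_u|\,\dvol_g+\int_{\partial\Omega}\sqrt{2W(u)}\,|\nabla^{\partial\Omega}u|\,d\Hcal^{n-1}\Big).
\]
By Cauchy--Schwarz and $\sqrt{2W}|\nabla u|\le h_0e_\ep$, the interior term is at most
\[
C\Big(\int e_\ep\Big)^{1/2}\Big(\int|H_u|^2\,d\mu_{u_\ep}\Big)^{1/2}\le CE_0,
\]
using the second bound and $|H_u|^2\le(n-1)|B_u|^2$. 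The boundary term is at most $h_0\int_{\partial\Omega}e_\ep\le CE_0$ by the first bound. Globalizing via a partition of unity gives $\|\delta\widetilde V_i\|\le CE_0$ on the compact set.

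The main obstacle I expect is the critical set $\{\nabla u=0\}$, where the level-set computation breaks down. I would handle it with Sard's theorem together with the regularization $\nabla u\otimes\nabla u/(s+|\nabla u|^2)$, $s\downarrow0$, as in \cite{Tonegawa2005a}. The alternative is the distributional identity $\delta\widetilde V(X)=\int|\nabla w|\Div X-\langle\nabla_{N}X,\nabla w\rangle$. Integrating this by parts leaves only $\Hess w$ terms over $|\nabla w|$, and these are exactly the quantities bounded by $|B_u|$.
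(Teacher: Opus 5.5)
Your proposal is correct and follows essentially the same route as the paper. The boundary energy bound comes from the Pohozaev (stress-energy) identity tested against a field equal to $\nu$ on $\partial\Omega$, equivalently \cite[Lemma 3.2]{LiPariseSarnataro2023}. The curvature bound comes from the cut-off $\ep$-stability inequality, with its boundary term absorbed by the first estimate. The first-variation bound comes from the co-area formula over level sets meeting $\partial\Omega$ orthogonally, followed by Cauchy--Schwarz and the boundary energy bound. Your localization to compact subsets of $U$ is exactly what the paper's argument implicitly delivers. The regularization near $\{\nabla u=0\}$ is unnecessary here, since the co-area weight vanishes there and Sard's theorem handles the critical values.
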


\begin{proof}
The stress-energy tensor $T_i:=e_{\ep_i}(u_{\ep_i}) g-\varepsilon_i\,du_i\otimes du_i$ satisfies $\operatorname{div}_gT_i=0,$ and $T_i(\nu,\cdot)=e_{\ep_i}(u_{\ep_i})\,g(\nu,\cdot)$ on $\partial \Omega \cap U$. Consequently, for every $X \in \mathfrak{X}_c(U)$,
\begin{equation}\label{eq:app-stress}
 \int_{\Omega \cap U} T_i:\nabla X\, dv_g = \int_{\partial \Omega \cap U} e_{\ep_i}(u_{\ep_i}) \langle X,\nu \rangle\,d\mathcal{H}^{n - 1}.
\end{equation}
Choosing a fixed $X$ with $X=\nu$ on $\partial \Omega \cap U$, since $|T_i|\le C e_{\ep_i}(u_{\ep_i})$, there holds (cf. \cite[Lemma~3.2]{LiPariseSarnataro2023})
\begin{equation}\label{eq:app-boundary-energy}
 \int_{\partial \Omega \cap U} e_{\ep_i}(u_{\ep_i}) \, d \mathcal{H}^{n - 1} = \int_{\Omega \cap U} T_i:\nabla X \, dv_g \le C E_0.
\end{equation}

We next give the isotropic boundary counterpart of \cite[Theorem~6.4]{DRP25}. Set $w_i=|\nabla u_{\ep_i}|$. Arguing as in the proof of Proposition \ref{P:ep-stability}, yields
\begin{align}\label{eq:app-bochner}
 \int_{\Omega \cap U} \varepsilon_i \bigl(|\nabla^2u_{\ep_i}|^2-|\nabla w_i|^2\bigr)\, dv_g \le C E_0, 
\end{align}
where the boundary term is controlled by \eqref{eq:app-boundary-energy}; note that no sign assumption on the second fundamental form of $\partial \Omega$ is required. On $\{w_i>0\}$,
\[
 |\nabla^2u_{\ep_i}|^2-|\nabla w_i|^2 = w_i^2|B_{u_{\ep_i}}|^2+|\nabla^T w_i|^2,
\]
where $\nabla^T$ denotes the tangential gradient along the level sets of $u_{\ep_i}$, which proves the curvature estimate.

For almost every $s$, the regular level set $\{u_{\ep_i}=s\}$ meets $\partial \Omega$ orthogonally, with outward conormal $\nu$, or it does not meet it at all, in which case part of the argument below simplifies. Its first variation is bounded as follows
\begin{align*}
 h_0&\|\delta \widetilde{V}^{u_{\ep_i}}\|(U) \\
 &\le \sqrt{n - 1} \left(\int_{\Omega \cap U} \varepsilon_i|B_{u_{\ep_i}}|^2 w_i^2\, dv_g \right)^{1/2} \left(\int_{\Omega \cap U} \frac{2W(u_{\ep_i})}{\varepsilon_i}\, dv_g \right)^{1/2} + \int_{\partial \Omega \cap U} \sqrt{2W(u_{\ep_i})}\,w_i\,d\mathcal{H}^{n - 1} \\
 & \le C E_0.
\end{align*}
This is the analogue of \cite[Corollary~6.6]{DRP25}, with the additional boundary contribution. In particular, the estimate concerns the full ambient first variation, not only vector fields tangent to the boundary. 
\end{proof}

\begin{proposition}\label{prop:app-rectifiability}
The varifold $V$ is rectifiable and stationary with free boundary:
\[
 \delta V(X)=0 \quad\text{for every }X\in \mathfrak{X}^{tan}_c(U).
\]
\end{proposition}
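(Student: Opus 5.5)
We will work with the modified varifolds $\widetilde V_i:=\widetilde{V}^{u_{\ep_i}}$ from Lemma \ref{lem:app-first-variation} rather than $V^{u_{\ep_i}}$. By Theorem \ref{T:old-main}(1), $\xi_{\ep_i}(u_{\ep_i})\to0$ in $L^1(\Omega\cap U)$. Hence
\[
\bigl|\sqrt{2W(u)}\,|\nabla u| - \ep|\nabla u|^2\bigr| \le C|\xi_\ep|+ \text{(terms vanishing in $L^1$)},
\]
and since both $\frac{\ep}{2}|\nabla u|^2$ and $W(u)/\ep$ converge to half the energy, $h_0^{-1}\widetilde V_i$ and $V^{u_{\ep_i}}$ have the same varifold limit $V$ (cf. Remark \ref{R:varifold-definition}).

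\textbf{Step 1 (stationarity).} Stationarity is immediate from \eqref{E:diffused-V-first-variation}. For $X\in\mathfrak{X}^{tan}_c(U)$ the boundary term vanishes, so
\[
|\delta V^{u_{\ep_i}}(X)|\le \|\Div X\|_\infty\int|\xi_{\ep_i}|\,dv_g\to 0 .
\]
Continuity of the first variation under varifold convergence then gives $\delta V(X)=0$. The uniform bound $\|\delta\widetilde V_i\|(U)\le CE_0$ from Lemma \ref{lem:app-first-variation}, applied now to arbitrary ambient $X$, passes to the limit and shows that $V$ has locally bounded first variation in all of $U$, not just tangentially.

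\textbf{Step 2 (density lower bound).} To apply Allard's rectifiability theorem \cite[5.5(1)]{Allard1972} (intrinsically, or in normal coordinates after absorbing the metric error, which is harmless since $\|\delta V\|$ is already locally finite in $U$), we need $\Theta^{n-1}_*(\|V\|,x)>0$ for $\|V\|$-a.e. $x$. We will show a uniform lower bound at every $x\in\spt\|V\|$. For interior points this is \eqref{E:transition-density-lower} combined with the Hausdorff convergence in Proposition \ref{P:reflected-Hausdorff}. Given $x\in\spt\|V\|$, choose $a_i\to x$ with $|u_i(a_i)|\le 1-b$; the reflected monotonicity formula of Proposition \ref{P:reflected-monotonicity} then gives $\mu_i(B_r(a_i))\ge\theta_0 r^{n-1}$ for $3\ep_i\le r\le r_0$, and passing to the limit yields $\|V\|(\overline B_{r}(x))\ge\theta_0 (r/2)^{n-1}$. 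This covers boundary points as well. Note that this step uses only the discrepancy estimates, not the rectifiability conclusion of \cite{LiPariseSarnataro2023}. Allard's theorem then gives $V\in\bRV_{n-1}(U)$.

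\textbf{Alternative route to rectifiability.} If Allard's theorem is awkward near $\partial\Omega$, a backup is the De Rosa--Pigati approach \cite[Section 6]{DRP25}. The curvature bound $\int|B_{u_{\ep_i}}|^2d\mu_{u_{\ep_i}}\le CE_0$ from Lemma \ref{lem:app-first-variation} makes the pairs $(\widetilde V_i,B_{u_{\ep_i}})$ precompact as measure-function pairs (Proposition \ref{prop:measure-function compactness}). The level sets then form a family of hypersurfaces with $L^2$-bounded second fundamental form and bounded boundary mass. One could invoke Hutchinson's result that limits of such families with a density lower bound are rectifiable, together with Mantegazza's boundary version to handle the orthogonal contact with $\partial\Omega$.

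\textbf{Main obstacle.} I expect the hardest part to be the passage in Step 1 that justifies $\|\delta\widetilde V_i\|$ controlling $\|\delta V\|$ near $\partial\Omega$. Specifically, one must check that the boundary term $\int_{\partial\Omega}\sqrt{2W}\,w_i$ is uniformly bounded by the trace estimate \eqref{eq:app-boundary-energy}, and that no mass concentrates on $\partial\Omega$ in a way that destroys the density bound. Concretely, I will first handle the latter by applying Lemma \ref{L:constancy lemma De Masi} to the portion of $\|V\|$ lying on $\partial\Omega$: that portion is automatically rectifiable, with tangent plane $T_x\partial\Omega$ and density $\Theta^{n-1}(\|V\|,\cdot)$. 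Allard's argument then only needs to be applied on $\Omega\cap U$, where the situation is purely interior.
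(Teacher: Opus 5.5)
Your proof is correct, and your stationarity step matches the paper's. The paper tests the stress--energy identity \eqref{eq:app-stress}, which is equivalent to \eqref{E:diffused-V-first-variation}, and uses $\xi_{\ep_i}(u_{\ep_i})\to0$ in $L^1$.

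For rectifiability, however, you take a genuinely different route. You get the density lower bound from the $\ep$-level reflected monotonicity of Appendix~\ref{A:reflected-density}, i.e.\ from \eqref{E:transition-density-lower} together with the converse Hausdorff approximation. That in turn rests on the discrepancy machinery of \cite{LiPariseSarnataro2023}: the pointwise bound $\xi^+_{\ep}\le C\ep^{-4/5}$ and the integrated decay \eqref{E:integrated-discrepancy-Hausdorff}. This is the classical Hutchinson--Tonegawa mechanism, and it never uses stability. Local finiteness of $\|\delta V\|$ already follows from \eqref{E:diffused-V-first-variation} and the trace bound \eqref{E:mu-partial-bound}. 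So your argument works for general critical points and gives a uniform lower density bound at \emph{every} point of $\spt\|V\|$.

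The paper instead follows De Rosa--Pigati. Stability gives the microscopic bound \eqref{eq:app-microscopic-density}, since a constant plateau at $\gamma$ is unstable. Via the $L^2$ curvature bound, stability also gives $\|\delta\widetilde V^{u_{\ep_i}}\|(U)\le CE_0$ for the diffuse varifolds themselves. A Besicovitch good-set argument plus ordinary varifold monotonicity for $\widetilde V^{u_{\ep_i}}$ then carries the microscopic bound up to macroscopic scales, giving positive density only $\|V\|$-a.e. The gain is that, apart from the $L^1$-vanishing of the discrepancy, the paper needs no $\ep$-level monotonicity with discrepancy control near $\partial\Omega$, which is the delicate part of \cite{LiPariseSarnataro2023}. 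The price is an essential use of stability.

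Your observation via Lemma~\ref{L:constancy lemma De Masi} simplifies your own route. Once Step 1 gives free boundary stationarity, $V\mres G(\partial\Omega\cap U)$ is automatically rectifiable. So only the interior monotonicity \eqref{E:interior-mono-1} is needed, and the reflected balls can be dropped.

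One caveat: do not justify Allard's theorem by passing to normal coordinates. As stressed in Section~\ref{SSS:intrinsic-approach}, the pushforward picks up an error of size $\int O(|q|)\,|DX|\,d\|V\|$, so it need not have locally bounded first variation in the flat metric. Use instead the intrinsic version of Allard's theorem (the blow-up argument still works because this error vanishes after rescaling), or an isometric embedding.
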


\begin{proof}
We adapt the positive-density argument of \cite[Theorem~6.7]{DRP25}. First, the boundary version of \cite[Lemma~6.2]{DRP25} gives, for each sufficiently small $b>0$, constants $R,c_b>0$ such that
\begin{equation}\label{eq:app-microscopic-density}
 |u_{\ep_i}(x)|\le 1- b \quad\Longrightarrow\quad \|\widetilde{V}^{u_{\ep_i}}\|(B_{R\varepsilon_i}(x))
 \ge c_b\varepsilon_i^{n - 1}
\end{equation}
for all large $i$. To see this, suppose that the normalised masses on the left tend to zero and rescale by $\varepsilon_i$ about the corresponding points. Elliptic estimates give a limit on $B_R(0)$ intersected with a full space or a half-space containing the origin. This limit is then a constant. Its value lies in $[-1+b,1-b]$, and the equation forces it to equal $\gamma$ (the unique local non-degenerate maxima of $W$). Stability would then imply
\[
 -W''(\gamma)\int\zeta^2\le\int|\nabla\zeta|^2
\]
for every test function supported away from the curved boundary, with no restriction on its trace on the flat boundary. A radial cutoff contradicts this inequality when $R$ is sufficiently large.

Multiplying \eqref{E:Allen-Cahn} with $\gamma - u_{\ep_i}$ gives
\[
 \int_{\Omega \cap U} \frac{(\gamma-u_{\ep_i})W'(u_{\ep_i})}{\varepsilon_i}\,dv_g =\int_{\Omega \cap U} \varepsilon_i |\nabla u_{\ep_i}|^2 \, dv_g \le 2E_0.
\]
The integrand on the left is nonnegative, and $W(s)\le Cb(\gamma-s)W'(s)$ for $|s|>1-b$. Using $h_0 e_{\ep_i} (u_{\ep_i})=2W(u_{\ep_i})/\varepsilon_i+ h_0 \xi_{\ep_i}(u_{\ep_i})$ therefore gives
\begin{equation}\label{eq:app-well-energy}
\int_{\{|u_{\ep_i}|>1-b\}} e_{\ep_i}(u_{\ep_i}) \, dv_g \le Cb+\int_{\Omega \cap U} |\xi_{\ep_i}(u_{\ep_i})|\, dv_g.
\end{equation}
Compare \cite[equation~(6.12)]{DRP25}.

Fix a sufficiently small $r_0>0$ and define
\[
 G_{i,b}:=\left\{x\in \Omega \cap U:
 \begin{array}{l}
 |u_{\ep_i}(x)|\le1-b,\\
 \|\delta\widetilde{V}^{u_{\ep_i}}\|(B_r(x)) \le b^{-1}\|\widetilde{V}^{u_{\ep_i}}\|(B_r(x)) \quad\text{for every }0<r<r_0
 \end{array}\right\}.
\]
Besicovitch's covering theorem, Lemma~\ref{lem:app-first-variation}, and \eqref{eq:app-well-energy} imply
\begin{equation}\label{eq:app-good-mass}
 \|\widetilde{V}^{u_{\ep_i}}\|(M\setminus G_{i,b}) \leq Cb + o_i(1).
\end{equation}
This is the good-set construction in \cite[equation~(6.11) and the proof of Theorem~6.7]{DRP25}. At a point of $G_{i,b}$, the first-variation bound in its definition and the varifold monotonicity inequality propagate \eqref{eq:app-microscopic-density} to
\begin{equation}\label{eq:app-propagated-density}
 \|\widetilde{V}^{u_{\ep_i}}\|(B_r(x))\ge c_b r^{n - 1} \qquad(R\varepsilon_i\le r<r_0).
\end{equation}
The exponential factor absorbs both $b^{-1}$ and the ambient geometric error. Since we have bounded the full first variation in $U$, this applies also when the ball intersects $\partial \Omega$.

For fixed $b$, take a weak limit $\sigma_b$ of $\|\widetilde{V}^{u_{\ep_i}}\| \mres G_{i,b}$. Then $\sigma_b\le\mu$ and $(\mu-\sigma_b)(U)\le Cb$. Every point of $\operatorname{spt}\sigma_b$ is a limit of points in $G_{i,b}$, so \eqref{eq:app-propagated-density}, applied first on slightly smaller balls, gives $\mu(B_r(x))\ge c_b r^{n - 1}$ there. Letting $b$ tend to zero along a countable sequence proves positive upper $(n - 1)$-density at $\mu$-almost every point. Lemma~\ref{lem:app-first-variation} and Allard's rectifiability theorem \cite[Section~5]{Allard1972} now show that $V$ is rectifiable.

Finally, $h_0 e_{\ep_i} (u_{\ep_i}) - \sqrt{2W(u_{\ep_i})} \vert \nabla u_{\ep_i} \vert \leq h_0 \vert \xi_{\ep_i}(u_{\ep_i}) \vert$ combined with $|\varepsilon_i|\nabla u_{\ep_i}|^2- \sqrt{2W(u_{\ep_i})} \vert \nabla u_{\ep_i} \vert|\le 2 h_0 |\xi_{\ep_i}(u_{\ep_i})|$ imply that $h_0^{-1}T_i\,dv_g$ converges. For $X \in \mathfrak{X}^{tan}_c(U)$, the right-hand side of \eqref{eq:app-stress} vanishes. Passing to the limit proves $\delta V(X)=0$.
\end{proof}

\begin{proposition}\label{prop:app-integrality}
The varifold $V$ is integral, including its restriction to $\partial \Omega$ (compare with \cite[Proposition~6.1]{LiPariseSarnataro2023}). 
\end{proposition}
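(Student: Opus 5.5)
We follow the De Rosa--Pigati strategy \cite[Section~7]{DRP25}, treating interior points as there and adding boundary points via reflection. By Proposition \ref{prop:app-rectifiability}, $V$ is rectifiable and free boundary stationary. By Lemma \ref{lem:app-first-variation}, the diffused varifolds $\widetilde V^{u_{\ep_i}}$ have uniformly bounded full first variation. The key preliminary observation is that $V$ and $\lim \widetilde V^{u_{\ep_i}}$ coincide, because $\xi_{\ep_i}\to 0$ in $L^1$. It therefore suffices to show $\Theta_V(x)\in\mathbb N$ for $\|V\|$-a.e.\ $x$. We split $\|V\|$-a.e.\ points into two classes: interior points $x\in\Omega$, and points $x\in\partial\Omega$ with $\|V\|\mres\partial\Omega$-positive density. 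By Lemma \ref{L:constancy lemma De Masi}, the approximate tangent plane at the second class of points is $T_x\partial\Omega$.

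\emph{Interior points.} Fix $x\in\Omega$ where the approximate tangent plane $P$ exists, where the tilt excess decays, and where $\mu=\lim |\Acal_{u_{\ep_i}}|^2\mu_{u_{\ep_i}}$ has finite $(n-1)$-density. The last condition holds a.e.\ since $\mu$ is a finite Radon measure. Blowing up at scales $r_i$ with $\ep_i/r_i\to0$ as in Step 1 of Theorem \ref{T:exclude-classical-cones}, we obtain rescaled solutions whose varifolds converge to $\Theta|P|$ and whose enhanced second fundamental form satisfies $r_i^{3-n}\int_{B_{2r_i}}|\Acal|^2\,d\mu\to 0$. In fact we only need the weaker scale-invariant control $r^{2}\cdot r^{1-n}\mu(B_r)\to0$. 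We then run the good/bad set decomposition of Step 3 (Lemmas \ref{L:bad-set-estimate}--\ref{L:first-scenario}). On the good set, the sheets are $C^1$-close to heteroclinics and are well separated, so over most of $P$ each fibre meets each intermediate level set in an integer number $K$ of points. Statement (***) gives $K\le\Theta$, and the coarea argument ending Lemma \ref{L:first-scenario} gives $\Theta\le K+Cs$. Letting $s\to0$ yields $\Theta\in\mathbb N$. This reproduces \cite[Theorem~7.1]{DRP25}, with Riemannian errors vanishing in the blow-up.

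\emph{Boundary points.} Fix $x\in\partial\Omega$ with $\Theta_V(x)>0$ and approximate tangent plane $T_x\partial\Omega$. We blow up as before; the domains converge to $\mathbb R^n_+$ by \eqref{E:domain-convergence}. Here the sheets are parallel to the boundary, and the first sheet may lie at distance $O(\ep_i)$ from $\partial\Omega_i$, which is exactly the Malchiodi--Ni--Wei regime. To handle it, we evenly reflect $u_{\ep_i}$ across $\partial\Omega_i$ using the Fermi reflection $\mathcal R$ of Appendix \ref{A:reflected-density}. The Neumann condition makes the reflection a $C^{1,1}$ approximate solution, with error $O(t)$ from \eqref{E:reflection-distortion}. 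The reflected varifold then converges to $2\Theta_V(x)|T_x\partial\Omega|$, its excess and $\Acal$-smallness are inherited, and the reflected monotonicity of Proposition \ref{P:reflected-monotonicity} replaces the interior one. Applying the interior counting argument to the reflected function, each fibre normal to the boundary meets an intermediate level set in $2K'$ points, or in $2K'-1$ points if a sheet sits exactly on the boundary. The latter case is excluded for a.e.\ level $t$, because on $\partial\Omega$ the function $u$ has $\partial_\nu u=0$ and a level set tangent to the boundary is non-generic. This gives $2\Theta_V(x)\in 2\mathbb N$.

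\emph{Main obstacle.} The hard step is the boundary case. Specifically, we must justify that sheets at distance $\lesssim\ep_i L$ from $\partial\Omega_i$ still behave like heteroclinic profiles after reflection, i.e.\ that statement (**) holds for the reflected function, and we must rule out a single sheet that is "folded" onto the boundary and would contribute odd multiplicity. We would first try to prove this by applying \cite[Propositions~6.1--6.2]{LiPariseSarnataro2023}, which already treat boundary concentration, to the reflected picture. We would control the reflected discrepancy by the weaker bound $\xi\le\ep^{-4/5}$ through \eqref{E:integrated-discrepancy-Hausdorff}.
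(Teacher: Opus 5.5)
Your proposal has a genuine gap at the boundary, which is the only new content of the statement. Your parity argument there aims at the wrong obstruction.

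\textbf{Interior step.} This is fine in substance, but it is the Hutchinson--Tonegawa counting argument of \cite{LiPariseSarnataro2023}, not the De Rosa--Pigati argument. It runs entirely on $(\ast\ast)$ and $(\ast\ast\ast)$, i.e.\ on Propositions~6.1--6.2 there. The $\Acal$-smallness you extract from stability is never used.

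\textbf{Boundary step: what the real obstruction is.} The danger is not a level set passing through $\partial\Omega$ exactly at $t=0$ on some fibre; that is indeed non-generic. The danger is a transition layer sitting at microscopic distance $O(\ep_i)$ from $\partial\Omega_i$. This is a half-heteroclinic folded onto the boundary, carrying energy about $h_0/2$ and producing half-integer density.

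Genericity cannot detect this. The even extension $q(|t|/\ep)$ of such a half-profile meets almost every level in $0$ or $2$ points, yet its reflected density is $1$, which is odd.

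What excludes it is the equation. Rescaling by $\ep_i$ at such a point gives a one-dimensional limit $q$ with two properties:
\begin{itemize}
\item $q'=0$ at the boundary point, by the Neumann condition;
\item $\tfrac12(q')^2=W(q)$, because the discrepancy vanishes at scale $\ep_i$; the paper secures this along slices chosen with $\mathcal M d_i(y_i)\to0$.
\end{itemize}
Hence $q=\pm1$ at the boundary point, and by ODE uniqueness $q\equiv\pm1$, contradicting $|q(0)|\le 1-b$. Combined with the transversality estimate, this shows that on good slices every transition is a complete monotone crossing strictly inside $I_1$. Each crossing contributes $h_b\approx h_0$, so $\theta\in\mathbb N$ follows directly, with no reflection.

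\textbf{Why the reflection route does not close the gap.} You flag this step as the main obstacle, but you leave it unproven and describe it incorrectly. In your reflected picture the fact you need is that \emph{no} good point lies within $O(\ep_i L)$ of $\partial\Omega_i$. You instead propose to show that such sheets still behave like heteroclinic profiles after reflection. They cannot: the reflected function is even in $t$, while a heteroclinic is strictly monotone.

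Once the correct fact is available, the reflection is superfluous. The boundary version of $(\ast\ast)$ in \cite[Proposition~6.1]{LiPariseSarnataro2023} already asserts $y_2>3\ep_iL$ for good points relative to a plane lying in $\partial\Omega$. The counting then runs directly on the half-space. At that point, however, you are re-running the original integrality proof of \cite{LiPariseSarnataro2023}. You are not giving the independent argument of this appendix, which consists of:
\begin{itemize}
\item Allard's strong constancy lemma for the slice energies $f_i$;
\item maximal-function selection of good slices;
\item one-dimensional profiles with the Neumann exclusion above;
\item counting of monotone components.
\end{itemize}
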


\begin{proof}
We follow  \cite[Lemmas~7.7--7.8 and Theorem~7.10]{DRP25}. The fact that $\xi_{\ep_i}(u_{\ep_i}) \rightarrow 0$ in $L^1$ simplifies the proof. 

Fix a point $x$ at which $V$ has density $\theta\in(0,\infty)$ and a unique tangent varifold $\theta|P|$. At $\mu$-almost every such point in $\partial \Omega \cap U$, arguing as in \cite[Proposition~6.1]{LiPariseSarnataro2023} it is enough to consider $P=T_x\partial \Omega$. Using normal coordinates in the interior and Fermi coordinates at the boundary, choose a diagonal blow-up sequence with $\varepsilon_i/r_i\to0$. The resulting metrics $g_i$ converge smoothly to the Euclidean metric, and the domains are either a full space or the half-space $\{t\ge0\}$. Write $z=(y,t)\in\mathbb R^{n - 1}\times\mathbb R$, with $P=\{t=0\}$, and set
\[
 I_r:=(-r,r)\quad\text{or}\quad I_r:=[0,r), \qquad Q_r:=B_r^{n - 1}\times I_r,
\]
respectively. In the boundary case $\partial_tu_{\ep_i}=0$ at $t=0$. From now on, write
\[
 \bar e_i:=\frac{\varepsilon_i}{2}|Du_{\ep_i}|^2 + \frac{W(u_{\ep_i})}{\varepsilon_i}, \qquad \bar\xi_i:=\frac{\varepsilon_i}{2}|Du_{\ep_i}|^2 - \frac{W(u_{\ep_i})}{\varepsilon_i}
\]
for the Euclidean energy and discrepancy in these coordinates. The metric errors vanish, and varifold convergence together with vanishing of the discrepancy in $L^1$ yields
\begin{equation}\label{eq:app-flat-limit}
 h_0^{-1}\bar e_i\,dz\rightharpoonup \theta\,\mathcal H^{n - 1} \mres P, \qquad \int_{Q_3}\bigl(|\bar\xi_i|+ \varepsilon_i|D_yu_{\ep_i}|^2\bigr)\,dz\longrightarrow0.
\end{equation}
The second assertion is the isotropic counterpart of \cite[equation~(7.12)]{DRP25}, with the discrepancy included.

Choose $0\le\chi\in C_c^\infty((-1,1))$ with $\chi=1$ on $[-1/2,1/2]$, and define
\[
 f_i(y):=\frac1{h_0}\int_{I_1}\chi(t)\bar e_i(y,t)\,dt.
\]
As in \cite[Lemma~7.7]{DRP25}, test the stress-energy identity with $X=\chi(t)\varphi(y)\partial_{y_a}$. The resulting distributional identity on $B_2^{n - 1}$ has the form
\begin{equation}\label{eq:app-projected-stress}
 Df_i=\operatorname{div}R_i+r_i,
 \qquad \|R_i\|_{L^1(B_2^{n - 1})}+\|r_i\|_{L^1(B_2^{n - 1})}\rightarrow0.
\end{equation}
In the Euclidean metric, the terms are explicit
\[
 R_i^{ab}=\frac1{h_0}\int_{I_1}\chi\varepsilon_i
                  \partial_{y_a}u_i\partial_{y_b}u_i\,dt,
 \qquad
 r_i^a=-\frac1{h_0}\int_{I_1}\chi'\varepsilon_i
                  \partial_{y_a}u_i\partial_tu_i\,dt.
\]
Their $L^1$ norms tend to zero by \eqref{eq:app-flat-limit} and Cauchy--Schwarz. The additional tensor and zeroth-order terms are bounded by $C\|g_i-\delta\|_{C^1}\int_{Q_3}\bar e_i$. Since $f_i\ge0$ and $f_i\,dy\rightharpoonup\theta\,dy$, Allard's strong constancy lemma \cite[Theorem~1.(4)]{Allard86} gives
\begin{equation}\label{eq:app-strong-constancy}
 f_i\rightarrow\theta\quad\text{in }L^1(B_1^{n - 1}).
\end{equation}
There holds
\begin{equation}\label{eq:app-local-wells}
 \int_{Q_2\cap\{|u_i|>1-b\}}\bar e_i\,dz
 \le Cb+o_i(1).
\end{equation}
Put
\[
 d_i(y):=\int_{I_2}\bigl(|\bar\xi_i|+
                    \varepsilon_i|D_yu_{\ep_i}|^2\bigr)(y,t)\,dt.
\]
The weak-$(1,1)$ estimate for the local Hardy--Littlewood maximal operator $\mathcal M$ and \eqref{eq:app-flat-limit} show that $\mathcal M d_i\to0$ in measure on $B_1^{n - 1}$. For fixed $b>0$, we can therefore choose Lebesgue points $y_i\in B_1^{n - 1}$ such that
\begin{align}\label{eq:app-good-slice}
 f_i(y_i)&\rightarrow\theta,
 &\mathcal M d_i(y_i)&\rightarrow0,\notag\\
 \int_{I_2\cap\{|t|\ge1/4\}}\bar e_i(y_i,t)\,dt
 &\rightarrow0,
 &\int_{I_2\cap\{|u_{\ep_i}(y_i,\cdot)|>1-b\}}
       \bar e_i(y_i,t)\,dt&\le Cb+o_i(1).
\end{align}
For the third condition use the concentration of the limit measure on $P$. For the last one use \eqref{eq:app-local-wells} and Markov's inequality, increasing $C$ so that its exceptional set occupies at most half of $B_1^{n - 1}$. The exceptional sets for the other conditions have vanishing measure. This makes explicit the selection used in \cite[proof of Theorem~7.10]{DRP25}. In particular,
\begin{equation}\label{eq:app-total-slice}
 \int_{I_1}\bar e_i(y_i,t)\,dt\rightarrow h_0\theta,
 \qquad \text{and} \qquad d_i(y_i)\rightarrow0.
\end{equation}

Consider any sequence $t_i\in I_1$ such that
$|u_{\ep_i}(y_i,t_i)|\le1-b$, and rescale about $(y_i,t_i)$ by $\varepsilon_i$. For each fixed $R$, the definition of the maximal operator gives
\begin{equation}\label{eq:app-microscopic-defect}
 \varepsilon_i^{-(n - 1)}\int_{B_{R\varepsilon_i}(y_i,t_i)\cap Q_2}
 \bigl(|\bar\xi_i|+\varepsilon_i|D_yu_{\ep_i}|^2\bigr)\,dz
 \le C R^{n - 1}\mathcal M d_i(y_i)\rightarrow0.
\end{equation} 
Compactness therefore gives a one-dimensional limit $q=q(t)$ satisfying (cf. \cite[Lemma~7.8 and Remark~7.9]{DRP25})
\begin{equation}\label{eq:app-profile}
 q''=W'(q),\qquad \frac12(q')^2=W(q),\qquad |q(0)|\le1-b.
\end{equation}

In the boundary case, if $t_i/\varepsilon_i$ remained bounded, the limit would be defined on a half-line $[-b,\infty)$ and satisfy $q'(-b)=0$. Equation~\eqref{eq:app-profile} would force $q(-b)=\pm1$, and uniqueness for the ordinary differential equation would make $q$ constant, a contradiction. Moreover, another contradiction argument using \eqref{eq:app-profile} gives
\begin{equation}\label{eq:app-transversality}
 \varepsilon_i|\partial_tu_{\ep_i}(y_i,t)|\ge c_b>0
 \quad\text{whenever }t\in I_1,
 \quad |u_{\ep_i}(y_i,t)|\le1-b,
\end{equation}
for all large $i$.

The uniform estimate $\varepsilon_i|Du_{\ep_i}|\le C$ on $Q_2$ and the third condition in \eqref{eq:app-good-slice} exclude transition points near the artificial endpoints of $I_1$. Together with the preceding boundary argument, this shows that every component of
\[
 \{t\in I_1:|u_{\ep_i}(y_i,t)|<1-b\}
\]
has both endpoints in the interior of $I_1$. By \eqref{eq:app-transversality}, the restriction of $u_i$ to each component is strictly monotone, and its endpoint values are $-1+b$ and $1-b$, in either order.

We can now prove integrality. Let $N_i$ be the number of these components and set
\[
 h_b:=\int_{-1+b}^{1-b}\sqrt{2W(s)}\,ds.
\]
The elementary inequality
\begin{equation}\label{eq:app-slice-comparison}
 0\le\bar e_i-\sqrt{2W(u_{\ep_i})}\,|\partial_t u_{\ep_i}|
 \le|\bar\xi_i|+\varepsilon_i|D_yu_{\ep_i}|^2
\end{equation}
shows, by change of variables on each monotone component and \eqref{eq:app-total-slice}, that
\[
 \int_{I_1\cap\{|u_{\ep_i}(y_i,\cdot)|<1-b\}}\bar e_i(y_i,t)\,dt =N_i h_b+o_i(1).
\]
In particular, $N_i$ is bounded independently of $i$ and of sufficiently small $b$. The remaining energy is bounded by the last condition in \eqref{eq:app-good-slice}. Since $h_b=h_0+O(b^2)$, we obtain $ \operatorname{dist}(\theta,\mathbb N)\le Cb.$ Letting $b \downarrow0$ proves $\theta\in\mathbb N$ and completes the argument.
\end{proof}

\section{Measure-function convergence}
\label{ap:measure-function}

In this appendix we shall summarise some basic notions and results from the theory of \textit{measure-function pairs}, which are used in the proof of Theorem \ref{T:main-AC}. See \cite{HutchinsonIUMJ, Tonegawa2005a} or \cite{Hiesmayr2017} for more details.

\begin{definition}
Let $X$ be a locally compact Hausdorff space and $N\in \mathbb{N}$. Let $\mu$ be a Radon measure on $X$ and $f:X\to\R^N$ a Borel-measurable vector-valued function which is in $L^1_{loc}(X;\mu)$. We shall call such a pair $(\mu,f)$ a \textit{measure-function pair} over $X$.
\end{definition}

\begin{definition}
Let $(\mu_i, f_i)_{i\in\N}$ be a sequence of measure-function pairs over $X$. We say $(\mu_i, f_i)$ \textit{converges} to a measure function pair $(\mu,f)$ over $X$ in the \textit{weak sense} (or, more simply, \textit{weakly}) if the following conditions hold:
\begin{enumerate}[(i)]
    \item $\mu_i \to \mu$ as Radon measures on $X$; and
    \item for any $\phi \in C_c(X,\R^N)$, we have
    \begin{equation}
        \int_X\langle f_i, \phi\rangle\,d\mu_i\to\int_X\langle f,\phi\rangle\, d\mu.
    \end{equation}
\end{enumerate}
We shall denote this convergence by $(\mu_i, f_i)\rightharpoonup(\mu, f)$.
\end{definition}

The main result we shall need in the proof of Theorem \ref{T:main-AC} is the following compactness result.
\begin{proposition}[\cite{HutchinsonIUMJ}]
\label{prop:measure-function compactness}
Let $(\mu_i, f_i)_{i\in\N}$ be a sequence of measure-function pairs over $X$. Suppose that
\begin{equation}
    \sup_i \mu_i(X)<+\infty,
\end{equation}
and
\begin{equation}
    \sup_i\int_X|f_i|^2\,d\mu_i<+\infty.
\end{equation}
Then, there is a subsequence $(\mu_{i'}, f_{i'})$ and a measure-function pair $(\mu, f)$ over $X$ such that
\begin{equation}
    (\mu_{i'}, f_{i'})\rightharpoonup(\mu,f).
\end{equation}
Moreover, the weak limit $(\mu, f)$ satisfies
\begin{equation}
\label{E:lower-semicontinuity-measure-function}
    \int_X|f|^2\, d\mu\leq\liminf_{i}\int_X|f_i|^2\, d\mu_i.
\end{equation}
\end{proposition}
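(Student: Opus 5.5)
The plan is to build the limit in two stages. First I obtain a limit measure $\mu$ and a limit vector measure $\nu$ by weak-$*$ compactness. Then I identify $\nu$ as $f\mu$ with $f\in L^2(\mu)$ through an $L^2$ duality argument; that argument also gives the lower semicontinuity \eqref{E:lower-semicontinuity-measure-function}.

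\textbf{Reduction and extraction.} First pass to a subsequence along which $\int_X|f_i|^2\,d\mu_i$ converges to its $\liminf$; call this limit $K$. Every further subsequence then still has $\liminf$ equal to $K$, which is what the lower semicontinuity requires. Next regard $\mu_i$ and the $\R^N$-valued measures $\nu_i:=f_i\,\mu_i$ as elements of the dual of $C_0(X)$ and of $C_0(X,\R^N)$ respectively. Their total masses are uniformly bounded. For $\mu_i$ this is the hypothesis. For $\nu_i$, Cauchy--Schwarz gives
\[ |\nu_i|(X)=\int_X|f_i|\,d\mu_i\le \mu_i(X)^{1/2}\Big(\int_X|f_i|^2\,d\mu_i\Big)^{1/2}\le C. \]
By Banach--Alaoglu and Riesz representation, after a further subsequence we get $\mu_i\weakto\mu$ and $\nu_i\weakto\nu$, where $\mu$ is a finite Radon measure and $\nu$ an $\R^N$-valued Radon measure. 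Here I use that $C_c(X)$ is separable, so that bounded sets in the dual are weak-$*$ sequentially compact. This holds for all spaces where the proposition is applied, for instance $X=G(B_r(x_0))$, which is second countable.

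\textbf{Identification of the density.} For $\phi\in C_c(X,\R^N)$, Cauchy--Schwarz again gives
\[ \Big|\int_X\langle\phi,d\nu\rangle\Big|=\lim_i\Big|\int_X\langle f_i,\phi\rangle\,d\mu_i\Big|\le \liminf_i\Big(\int|f_i|^2d\mu_i\Big)^{1/2}\Big(\int|\phi|^2d\mu_i\Big)^{1/2}=K^{1/2}\Big(\int_X|\phi|^2\,d\mu\Big)^{1/2}. \]
The last equality uses $\mu_i\weakto\mu$ applied to the function $|\phi|^2\in C_c(X)$. Hence the linear functional $\phi\mapsto\int\langle\phi,d\nu\rangle$ is bounded in the $L^2(X,\mu;\R^N)$ norm on $C_c(X,\R^N)$. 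Since $\mu$ is Radon on a locally compact Hausdorff space, $C_c$ is dense in $L^2(\mu)$, so the functional extends uniquely to the whole of $L^2(\mu)$. The Riesz representation theorem on this Hilbert space then produces $f\in L^2(X,\mu;\R^N)$ with
\[ \int\langle\phi,d\nu\rangle=\int\langle f,\phi\rangle\,d\mu\quad\text{for all }\phi, \qquad \|f\|_{L^2(\mu)}\le K^{1/2}. \]
The first identity says $\nu=f\mu$, which is exactly condition (ii) of weak measure-function convergence, so $(\mu_{i'},f_{i'})\weakto(\mu,f)$. The bound on $\|f\|_{L^2(\mu)}$ is \eqref{E:lower-semicontinuity-measure-function}. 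Finally, $f\in L^2(\mu)$ with $\mu$ finite implies $f\in L^1_{loc}(X;\mu)$, so $(\mu,f)$ is a measure-function pair.

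\textbf{Main obstacle.} The only delicate point is the sequential extraction, which needs separability (second countability) of $X$; it is the hypothesis I would make explicit, and it holds in all applications here. Everything else is a direct combination of Cauchy--Schwarz with Riesz representation, once in $C_0(X)^*$ and once in $L^2(\mu)$.
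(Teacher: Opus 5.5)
Your argument is correct and is essentially the standard proof of Hutchinson's compactness theorem, which the paper cites without proof. You use weak-$*$ compactness of $\mu_i$ and of the vector measures $f_i\mu_i$, then Cauchy--Schwarz to show the limit functional is bounded in $L^2(\mu)$. For $p=2$ the Riesz representation on $L^2(\mu)$ replaces the Radon--Nikodym/duality step of the general $L^p$ argument, and it gives both $\nu=f\mu$ and the lower semicontinuity at once.

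Your caveat about separability is well founded. For an arbitrary locally compact Hausdorff $X$ the sequential extraction can fail (e.g.\ $X=\beta\N$, $\mu_i=\delta_i$, $f_i=0$). However, $X=G(B_r(x_0))$ is second countable, so nothing is lost in the paper's application.
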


\bibliographystyle{amsalpha}
\bibliography{references}

@article {Simon2023,
    AUTHOR = {Simon, Leon},
     TITLE = {Stable minimal hypersurfaces in {$\Bbb R^{N+1+\ell}$} with
              singular set an arbitrary closed {$K\subset\{0\}\times\Bbb
              R^{\ell}$}},
   JOURNAL = {Ann. of Math. (2)},
  FJOURNAL = {Annals of Mathematics. Second Series},
    VOLUME = {197},
      YEAR = {2023},
    NUMBER = {3},
     PAGES = {1205--1234},
      ISSN = {0003-486X,1939-8980},
   MRCLASS = {49Q05 (28A75 35A30 53A10 53C23)},
  MRNUMBER = {4564263},
MRREVIEWER = {Shi-Zhong\ Du},
       DOI = {10.4007/annals.2023.197.3.4},
       URL = {https://doi-org.easyaccess2.lib.cuhk.edu.hk/10.4007/annals.2023.197.3.4},
}

@article {Almgren1976,
    AUTHOR = {Almgren, Jr., F. J.},
     TITLE = {Existence and regularity almost everywhere of solutions to
              elliptic variational problems with constraints},
   JOURNAL = {Mem. Amer. Math. Soc.},
  FJOURNAL = {Memoirs of the American Mathematical Society},
    VOLUME = {4},
      YEAR = {1976},
    NUMBER = {165},
     PAGES = {viii+199},
      ISSN = {0065-9266,1947-6221},
   MRCLASS = {49F22 (58E15)},
  MRNUMBER = {420406},
MRREVIEWER = {Jean\ E.\ Taylor},
       DOI = {10.1090/memo/0165},
       URL = {https://doi-org.easyaccess1.lib.cuhk.edu.hk/10.1090/memo/0165},
}

@article{DDPS2026,
      title={Second order estimates on transition layers for the Allen-Cahn equation with homogeneous Neumann boundary conditions: Part I - Boundary-orthogonal transition layers}, 
      author={Dey, Akashdeep and Du, Wenkui and Parise, Davide and Sarnataro, Lorenzo},
      year={2026},
      journal={arXiv},
      pages={2609.32829},
      eprint={2609.32829},
      archivePrefix={arXiv},
      primaryClass={math.AP}
}

@article{CFFFS2026,
      title={Stable solutions of the Allen-Cahn equation in dimension three are one-dimensional}, 
      author={Chan, Hardy and Fern`{a}ndez-Real, Xavier and Figalli, Alessio and Florit-Simon, Enric and Serra, Joaqium},
      year={2026},
      journal={arXiv},
      pages={2609.30194},
      eprint={2609.30194},
      archivePrefix={arXiv},
      primaryClass={math.AP}
}

@article{Hong-Li-Wang2026,
      title={The stable Bernstein theorem in $\mathbb{R}^7$}, 
      author={Hong, Han and Li, Haizhong and Wang, Gaoming},
      year={2026},
      journal={arXiv},
      pages={2609.15720},
      eprint={2609.15720},
      archivePrefix={arXiv},
      primaryClass={math.DG}
}

@article{LLWWWW2026,
      title={Stable De Giorgi conjecture of the Allen-Cahn equation in $\mathbb{R}^3$}, 
      author={Liu, Yong and Luo, Tianci and Wang, Kelei and Wei, Juncheng and Wei, Yong and Wu, Ke},
      year={2026},
      journal={arXiv},
      pages={2609.21680},
      eprint={2609.21680},
      archivePrefix={arXiv},
      primaryClass={math.AP}
}

@article {Jacobowitz72,
    AUTHOR = {Jacobowitz, Howard},
     TITLE = {Implicit function theorems and isometric embeddings},
   JOURNAL = {Ann. of Math. (2)},
  FJOURNAL = {Annals of Mathematics. Second Series},
    VOLUME = {95},
      YEAR = {1972},
     PAGES = {191--225},
      ISSN = {0003-486X},
   MRCLASS = {53C40 (58C15)},
  MRNUMBER = {307127},
MRREVIEWER = {C.\ S.\ Houh},
       DOI = {10.2307/1970796},
       URL = {https://doi-org.easyaccess2.lib.cuhk.edu.hk/10.2307/1970796},
}

@incollection {Allard86,
    AUTHOR = {Allard, William K.},
     TITLE = {An integrality theorem and a regularity theorem for surfaces
              whose first variation with respect to a parametric elliptic
              integrand is controlled},
 BOOKTITLE = {Geometric measure theory and the calculus of variations
              ({A}rcata, {C}alif., 1984)},
    SERIES = {Proc. Sympos. Pure Math.},
    VOLUME = {44},
     PAGES = {1--28},
 PUBLISHER = {Amer. Math. Soc., Providence, RI},
      YEAR = {1986},
      ISBN = {0-8218-1470-2},
   MRCLASS = {49F20 (28A75)},
  MRNUMBER = {840267},
MRREVIEWER = {Marcello\ Ragni},
       DOI = {10.1090/pspum/044/840267},
       URL = {https://doi-org.easyaccess2.lib.cuhk.edu.hk/10.1090/pspum/044/840267},
}

@article {Naber-Valtorta2020,
    AUTHOR = {Naber, Aaron and Valtorta, Daniele},
     TITLE = {The singular structure and regularity of stationary varifolds},
   JOURNAL = {J. Eur. Math. Soc. (JEMS)},
  FJOURNAL = {Journal of the European Mathematical Society (JEMS)},
    VOLUME = {22},
      YEAR = {2020},
    NUMBER = {10},
     PAGES = {3305--3382},
      ISSN = {1435-9855,1435-9863},
   MRCLASS = {58E12 (35J60 35J93 49Q20)},
  MRNUMBER = {4153109},
MRREVIEWER = {Pei\ Biao\ Zhao},
       DOI = {10.4171/jems/987},
       URL = {https://doi-org.easyaccess1.lib.cuhk.edu.hk/10.4171/jems/987},
}

@article{DRP25,
      title={Anisotropic min-max via phase transitions}, 
      author={De Rosa, Antonio and Pigati, Alessandro},
      year={2025},
      journal={arXiv},
      pages={2512.07039},
      eprint={2512.07039},
      archivePrefix={arXiv},
      primaryClass={math.DG}
}

@unpublished{Bellettini-Li-Parise-Sarnataro,
      title={Regularity and compactness theory of stable free-boundary minimal hypersurfaces}, 
      author={Bellettini, Costante and Li, Martin Man-chun and Parise, Davide and Sarnataro, Lorenzo},
      year={2026},
      note={preprint},
}

@article {Sharp2017,
    AUTHOR = {Sharp, Ben},
     TITLE = {Compactness of minimal hypersurfaces with bounded index},
   JOURNAL = {J. Differential Geom.},
  FJOURNAL = {Journal of Differential Geometry},
    VOLUME = {106},
      YEAR = {2017},
    NUMBER = {2},
     PAGES = {317--339},
      ISSN = {0022-040X,1945-743X},
   MRCLASS = {53C42 (53C21)},
  MRNUMBER = {3662994},
MRREVIEWER = {Luciano\ Mari},
       DOI = {10.4310/jdg/1497405628},
       URL = {https://doi-org.easyaccess2.lib.cuhk.edu.hk/10.4310/jdg/1497405628},
}

@article {Menne2016b,
    AUTHOR = {Menne, Ulrich},
     TITLE = {Sobolev functions on varifolds},
   JOURNAL = {Proc. Lond. Math. Soc. (3)},
  FJOURNAL = {Proceedings of the London Mathematical Society. Third Series},
    VOLUME = {113},
      YEAR = {2016},
    NUMBER = {6},
     PAGES = {725--774},
      ISSN = {0024-6115,1460-244X},
   MRCLASS = {46E35 (49Q15)},
  MRNUMBER = {3626845},
MRREVIEWER = {Juha\ K.\ Kinnunen},
       DOI = {10.1112/plms/pdw023},
       URL = {https://doi-org.easyaccess2.lib.cuhk.edu.hk/10.1112/plms/pdw023},
}

@article {Menne2016,
    AUTHOR = {Menne, Ulrich},
     TITLE = {Weakly differentiable functions on varifolds},
   JOURNAL = {Indiana Univ. Math. J.},
  FJOURNAL = {Indiana University Mathematics Journal},
    VOLUME = {65},
      YEAR = {2016},
    NUMBER = {3},
     PAGES = {977--1088},
      ISSN = {0022-2518,1943-5258},
   MRCLASS = {49Q15 (49Q20)},
  MRNUMBER = {3528825},
MRREVIEWER = {Andrew\ Bucki},
       DOI = {10.1512/iumj.2016.65.5829},
       URL = {https://doi-org.easyaccess1.lib.cuhk.edu.hk/10.1512/iumj.2016.65.5829},
}

@article {Li2015,
    AUTHOR = {Li, Martin Man-chun},
     TITLE = {A general existence theorem for embedded minimal surfaces with
              free boundary},
   JOURNAL = {Comm. Pure Appl. Math.},
  FJOURNAL = {Communications on Pure and Applied Mathematics},
    VOLUME = {68},
      YEAR = {2015},
    NUMBER = {2},
     PAGES = {286--331},
      ISSN = {0010-3640,1097-0312},
   MRCLASS = {53C42 (53A10)},
  MRNUMBER = {3298664},
MRREVIEWER = {Rafael\ L\'opez},
       DOI = {10.1002/cpa.21513},
       URL = {https://doi-org.easyaccess2.lib.cuhk.edu.hk/10.1002/cpa.21513},
}

@article {Wang-Wei2019,
    AUTHOR = {Wang, Kelei and Wei, Juncheng},
     TITLE = {Finite {M}orse index implies finite ends},
   JOURNAL = {Comm. Pure Appl. Math.},
  FJOURNAL = {Communications on Pure and Applied Mathematics},
    VOLUME = {72},
      YEAR = {2019},
    NUMBER = {5},
     PAGES = {1044--1119},
      ISSN = {0010-3640,1097-0312},
   MRCLASS = {35J91 (53A10 58E05)},
  MRNUMBER = {3935478},
MRREVIEWER = {Jos\'e\ Carmona Tapia},
       DOI = {10.1002/cpa.21812},
       URL = {https://doi-org.easyaccess2.lib.cuhk.edu.hk/10.1002/cpa.21812},
}

@article {Figalli-Serra2020,
    AUTHOR = {Figalli, Alessio and Serra, Joaquim},
     TITLE = {On stable solutions for boundary reactions: a {D}e
              {G}iorgi-type result in dimension {$4+1$}},
   JOURNAL = {Invent. Math.},
  FJOURNAL = {Inventiones Mathematicae},
    VOLUME = {219},
      YEAR = {2020},
    NUMBER = {1},
     PAGES = {153--177},
      ISSN = {0020-9910,1432-1297},
   MRCLASS = {35R11},
  MRNUMBER = {4050103},
MRREVIEWER = {Vincenzo\ Ambrosio},
       DOI = {10.1007/s00222-019-00904-2},
       URL = {https://doi-org.easyaccess1.lib.cuhk.edu.hk/10.1007/s00222-019-00904-2},
}

@article {Cabre-Figalli-RosOton-Serra2020,
    AUTHOR = {Cabr\'e, Xavier and Figalli, Alessio and Ros-Oton, Xavier and
              Serra, Joaquim},
     TITLE = {Stable solutions to semilinear elliptic equations are smooth
              up to dimension 9},
   JOURNAL = {Acta Math.},
  FJOURNAL = {Acta Mathematica},
    VOLUME = {224},
      YEAR = {2020},
    NUMBER = {2},
     PAGES = {187--252},
      ISSN = {0001-5962,1871-2509},
   MRCLASS = {35J61 (35B65 35J91)},
  MRNUMBER = {4117051},
MRREVIEWER = {Tom\'as\ Sanz-Perela},
       DOI = {10.4310/acta.2020.v224.n2.a1},
       URL = {https://doi-org.easyaccess1.lib.cuhk.edu.hk/10.4310/acta.2020.v224.n2.a1},
}

@article {Cabre2026,
    AUTHOR = {Cabr\'e, Xavier},
     TITLE = {H\"older regularity of stable solutions to elliptic equations
              up to $\mathbb{R}^9$: full quantitative proofs},
   JOURNAL = {Bull. Amer. Math. Soc. (N.S.)},
  FJOURNAL = {American Mathematical Society. Bulletin. New Series},
    VOLUME = {63},
      YEAR = {2026},
    NUMBER = {1},
     PAGES = {1--57},
      ISSN = {0273-0979,1088-9485},
   MRCLASS = {35J61 (35-02 35B45 35B65 35J15)},
  MRNUMBER = {5009056},
       DOI = {10.1090/bull/1873},
       URL = {https://doi-org.easyaccess1.lib.cuhk.edu.hk/10.1090/bull/1873},
}

@article {Chan2026,
    AUTHOR = {Chan, Hardy and Fern\'{a}ndez-Real, Xavier and Figalli, Alessio and Serra, Joaquim},
     TITLE = {Global stable solutions to the free boundary {A}llen-{C}ahn and {B}ernoulli problems in 3{D} are one-dimensional},
   JOURNAL = {J. Amer. Math. Soc.},
  FJOURNAL = {Journal of the American Mathematical Society},
    VOLUME = {39},
      YEAR = {2026},
    NUMBER = {4},
     PAGES = {1027--1151},
      ISSN = {0894-0347,1088-6834},
       DOI = {10.1090/jams/1074},
       URL = {https://pubs.ams.org/JAMS/2026-39-04/S0894-0347-2026-01074-0},
}

@article{Florit-Simon2026,
      title={Phase transitions with bounded index: {P}arallels to {D}e Giorgi's conjecture}, 
      author={Florit-Simon, Enric},
      year={2026},
      journal={arXiv},
      pages={2602.03136},
      eprint={2602.03136},
      archivePrefix={arXiv},
      primaryClass={math.AP}
}

@article{Florit-Simon-Serra2025,
      title={On stable solutions to the Allen-Cahn equation with bounded energy density in $\mathbb{R}^4$}, 
      author={Florit-Simon, Enric and Serra, Joaquim},
      year={2025},
      journal={arXiv},
      pages={2509.02739},
      eprint={2509.02739},
      archivePrefix={arXiv},
      primaryClass={math.AP}
}

@article{Mazet2024,
      title={Stable minimal hypersurfaces in $\mathbb{R}^6$}, 
      author={Mazet, Laurent},
      year={2024},
      journal={arXiv},
      pages={2405.14676},
      eprint={2405.14676},
      archivePrefix={arXiv},
      primaryClass={math.DG}
}

@article {Chodosh-Li2024,
    AUTHOR = {Chodosh, Otis and Li, Chao},
     TITLE = {Stable minimal hypersurfaces in $\mathbb{R}^4$},
   JOURNAL = {Acta Math.},
  FJOURNAL = {Acta Mathematica},
    VOLUME = {233},
      YEAR = {2024},
    NUMBER = {1},
     PAGES = {1--31},
      ISSN = {0001-5962,1871-2509},
   MRCLASS = {53A10 (53C42)},
  MRNUMBER = {4816633},
MRREVIEWER = {Jinyu\ Guo},
       DOI = {10.4310/acta.2024.v233.n1.a1},
       URL = {https://doi-org.easyaccess1.lib.cuhk.edu.hk/10.4310/acta.2024.v233.n1.a1},
}

@article {Chodosh-Li-Minter-Stryker2026,
    AUTHOR = {Chodosh, Otis and Li, Chao and Minter, Paul and Stryker,
              Douglas},
     TITLE = {Stable minimal hypersurfaces in $\mathbb{R}^5$},
   JOURNAL = {Ann. of Math. (2)},
  FJOURNAL = {Annals of Mathematics. Second Series},
    VOLUME = {204},
      YEAR = {2026},
    NUMBER = {1},
     PAGES = {423--453},
      ISSN = {0003-486X,1939-8980},
   MRCLASS = {53C42 (35J50 53A07 53A10)},
  MRNUMBER = {5094441},
       DOI = {10.4007/annals.2026.204.1.5},
       URL = {https://doi-org.easyaccess1.lib.cuhk.edu.hk/10.4007/annals.2026.204.1.5},
}

@article {Modica-Mortola-1977,
    AUTHOR = {Modica, Luciano and Mortola, Stefano},
     TITLE = {Un esempio di {$\Gamma \sp{-}$}-convergenza},
   JOURNAL = {Boll. Un. Mat. Ital. B (5)},
  FJOURNAL = {Unione Matematica Italiana. Bollettino. B. Serie V},
    VOLUME = {14},
      YEAR = {1977},
    NUMBER = {1},
     PAGES = {285--299},
   MRCLASS = {49A50},
  MRNUMBER = {445362},
MRREVIEWER = {Francesco\ Barbieri},
}

@article{Hong-Li-Wang2024,
      title={On $\delta$-{S}table {M}inimal {H}ypersurfaces in $\mathbb{R}^{n+1}$}, 
      author={Hong, Han and Li, Haizhong and Wang, Gaoming},
      year={2024},
      journal={arXiv},
      pages={2407.03222},
      eprint={2407.03222},
      archivePrefix={arXiv},
      primaryClass={math.DG}
}

@article{Koltai-Volkner,
      title={Structure theory of {B}{V} functions and finite perimeter sets on {R}iemannian manifolds}, 
      author={Koltai, P\'{e}ter and V\"{o}lkner, Kathrin},
      year={2026},
      journal={arXiv},
      pages={2607.21031v1},
      eprint={2607.21031v1},
      archivePrefix={arXiv},
      primaryClass={math.DG}
}

@incollection {Allard-Almgren1976,
 AUTHOR = {Allard, W. K. and Almgren, Jr., F. J.},
     TITLE = {The structure of stationary one dimensional varifolds with
              positive density},
   JOURNAL = {Invent. Math.},
  FJOURNAL = {Inventiones Mathematicae},
    VOLUME = {34},
      YEAR = {1976},
    NUMBER = {2},
     PAGES = {83--97},
      ISSN = {0020-9910,1432-1297},
   MRCLASS = {49F20 (58E15)},
  MRNUMBER = {425741},
MRREVIEWER = {J.\ E.\ Brothers},
       DOI = {10.1007/BF01425476},
       URL = {https://doi.org/10.1007/BF01425476},
}

@book {Brakke,
    AUTHOR = {Brakke, Kenneth A.},
     TITLE = {The motion of a surface by its mean curvature},
    SERIES = {Mathematical Notes},
    VOLUME = {20},
 PUBLISHER = {Princeton University Press, Princeton, NJ},
      YEAR = {1978},
     PAGES = {i+252},
      ISBN = {0-691-08204-9},
   MRCLASS = {49F22 (35K99 49F20 58D25)},
  MRNUMBER = {485012},
MRREVIEWER = {Jean\ E.\ Taylor},
}

@article {Koike-Yamamoto2018,
    AUTHOR = {Koike, Naoyuki and Yamamoto, Hikaru},
     TITLE = {Gauss maps of the {R}icci-mean curvature flow},
   JOURNAL = {Geom. Dedicata},
  FJOURNAL = {Geometriae Dedicata},
    VOLUME = {194},
      YEAR = {2018},
     PAGES = {169--185},
      ISSN = {0046-5755,1572-9168},
   MRCLASS = {53C42 (53C44)},
  MRNUMBER = {3798066},
MRREVIEWER = {Yi\ Li},
       DOI = {10.1007/s10711-017-0271-8},
       URL = {https://doi-org.easyaccess1.lib.cuhk.edu.hk/10.1007/s10711-017-0271-8},
}

@incollection {Allard-Almgren1973,
    AUTHOR = {Allard, W. K. and Almgren, Jr., F. J.},
     TITLE = {An introduction to regularity theory for parametric elliptic
              variational problems},
 BOOKTITLE = {Partial differential equations ({P}roc. {S}ympos. {P}ure
              {M}ath., {V}ol. {XXIII}, {U}niv. {C}alifornia, {B}erkeley,
              {C}alif., 1971)},
    SERIES = {Proc. Sympos. Pure Math.},
    VOLUME = {Vol. XXIII},
     PAGES = {231--260},
 PUBLISHER = {Amer. Math. Soc., Providence, RI},
      YEAR = {1973},
   MRCLASS = {49F22},
  MRNUMBER = {385688},
MRREVIEWER = {Klaus\ Steffen},
}

@article{Nardulli,
      title={Allard's interior $\epsilon$-{R}egularity {T}heorem in {A}lexandrov spaces}, 
      author={Agnoletto, Marcos and Hoyos Julio C. Correa and da Silva, M\'{a}rcio Fabiano and Nardulli, Stefano},
      year={2025},
      journal={arXiv},
      pages={2504.10758v3},
      eprint={2504.10758v3},
      archivePrefix={arXiv},
      primaryClass={math.DG}
}

@article {HutchinsonIUMJ,
    AUTHOR = {Hutchinson, John E.},
     TITLE = {Second fundamental form for varifolds and the existence of
              surfaces minimising curvature},
   JOURNAL = {Indiana Univ. Math. J.},
  FJOURNAL = {Indiana University Mathematics Journal},
    VOLUME = {35},
      YEAR = {1986},
    NUMBER = {1},
     PAGES = {45--71},
      ISSN = {0022-2518,1943-5258},
   MRCLASS = {49F22 (58E15)},
  MRNUMBER = {825628},
MRREVIEWER = {Harold\ Parks},
       DOI = {10.1512/iumj.1986.35.35003},
       URL = {https://doi.org/10.1512/iumj.1986.35.35003},
}

@article {LiPariseSarnataro2023,
    AUTHOR = {Li, Martin Man-chun and Parise, Davide and Sarnataro, Lorenzo},
     TITLE = {Boundary behavior of limit-interfaces for the {A}llen-{C}ahn
              equation on {R}iemannian manifolds with {N}eumann boundary
              condition},
   JOURNAL = {Arch. Ration. Mech. Anal.},
  FJOURNAL = {Archive for Rational Mechanics and Analysis},
    VOLUME = {248},
      YEAR = {2024},
    NUMBER = {6},
     PAGES = {Paper No. 124, 43},
      ISSN = {0003-9527,1432-0673},
   MRCLASS = {35R01 (35J25 35J67 49Q15)},
  MRNUMBER = {4831349},
MRREVIEWER = {Haitao\ Wan},
       DOI = {10.1007/s00205-024-02070-z},
       URL = {https://doi.org/10.1007/s00205-024-02070-z},
}

@article {Mantegazza1996,
    AUTHOR = {Mantegazza, Carlo},
     TITLE = {Curvature varifolds with boundary},
   JOURNAL = {J. Differential Geom.},
  FJOURNAL = {Journal of Differential Geometry},
    VOLUME = {43},
      YEAR = {1996},
    NUMBER = {4},
     PAGES = {807--843},
      ISSN = {0022-040X,1945-743X},
   MRCLASS = {49Q20 (58E99)},
  MRNUMBER = {1412686},
MRREVIEWER = {Klaus\ Steffen},
       URL = {http://projecteuclid.org/euclid.jdg/1214458533},
}

@article{Masi2020,
	author = {De Masi, Luigi},
	doi = {10.1512/iumj.2021.70.9401},
	fjournal = {Indiana University Mathematics Journal},
	issn = {0022-2518},
	journal = {Indiana Univ. Math. J.},
	mrclass = {49Q15 (53C42)},
	mrnumber = {4359920},
	mrreviewer = {Mario Santilli},
	number = {6},
	pages = {2603--2651},
	title = {Rectifiability of the free boundary for varifolds},
	url = {https://doi.org/10.1512/iumj.2021.70.9401},
	volume = {70},
	year = {2021}}

@article{Chodosh2023,
	author = {Chodosh, Otis and Mantoulidis, Christos},
	doi = {10.1007/s10240-023-00141-7},
	fjournal = {Publications Math\'{e}matiques. Institut de Hautes \'{E}tudes Scientifiques},
	issn = {0073-8301},
	journal = {Publ. Math. Inst. Hautes \'{E}tudes Sci.},
	mrclass = {53C42 (49Q15 49Q20 58J50)},
	mrnumber = {4588597},
	pages = {245--342},
	title = {The p-widths of a surface},
	url = {https://doi.org/10.1007/s10240-023-00141-7},
	volume = {137},
	year = {2023}}

@article{Chan2018,
	author = {Chan, Hardy and Wei, Juncheng},
	doi = {10.1007/s11425-017-9307-4},
	fjournal = {Science China. Mathematics},
	issn = {1674-7283},
	journal = {Sci. China Math.},
	mrclass = {35J91 (35B08)},
	mrnumber = {3864761},
	number = {11},
	pages = {1925--1946},
	title = {On {D}e {G}iorgi's conjecture: recent progress and open problems},
	url = {https://doi.org/10.1007/s11425-017-9307-4},
	volume = {61},
	year = {2018}}

@article{Scharrer2022,
	author = {Scharrer, Christian},
	doi = {10.1007/s10455-021-09822-0},
	fjournal = {Annals of Global Analysis and Geometry},
	issn = {0232-704X,1572-9060},
	journal = {Ann. Global Anal. Geom.},
	mrclass = {49Q15 (28A75 53C21)},
	mrnumber = {4423121},
	mrreviewer = {Daniele\ Semola},
	number = {4},
	pages = {691--719},
	title = {Some geometric inequalities for varifolds on {R}iemannian manifolds based on monotonicity identities},
	url = {https://doi.org/10.1007/s10455-021-09822-0},
	volume = {61},
	year = {2022}}

@article{Li2021,
	author = {Li, Martin Man-Chun and Zhou, Xin},
	doi = {10.4310/jdg/1625860624},
	issn = {0022-040X},
	journal = {Journal of Differential Geometry},
	mrnumber = {4285846},
	number = {3},
	pages = {487--553},
	title = {Min-max theory for free boundary minimal hypersurfaces {I}---{R}egularity theory},
	volume = {118},
	year = {2021}}

@article{Allard1972,
	author = {Allard, William K.},
	doi = {10.2307/1970868},
	issn = {0003-486X},
	journal = {Annals of Mathematics. Second Series},
	mrnumber = {307015},
	pages = {417--491},
	title = {On the first variation of a varifold},
	volume = {95},
	year = {1972}}

@article{Mantoulidis2021,
	author = {Mantoulidis, Christos},
	doi = {10.4310/jdg/1609902018},
	issn = {0022-040X},
	journal = {Journal of Differential Geometry},
	mrnumber = {4195753},
	number = {1},
	pages = {93--135},
	title = {Allen-{C}ahn min-max on surfaces},
	volume = {117},
	year = {2021}}

@article{Chodosh2020,
	author = {Chodosh, Otis and Mantoulidis, Christos},
	doi = {10.4007/annals.2020.191.1.4},
	issn = {0003-486X},
	journal = {Annals of Mathematics. Second Series},
	mrnumber = {4045964},
	number = {1},
	pages = {213--328},
	title = {Minimal surfaces and the {A}llen-{C}ahn equation on 3-manifolds: index, multiplicity, and curvature estimates},
	volume = {191},
	year = {2020}}

@article{Grueter1986,
	author = {Gr\"{u}ter, Michael and Jost, J\"{u}rgen},
	issn = {0391-173X},
	journal = {Annali della Scuola Normale Superiore di Pisa. Classe di Scienze. Serie IV},
	mrnumber = {863638},
	number = {1},
	pages = {129--169},
	title = {Allard type regularity results for varifolds with free boundaries},
	volume = {13},
	year = {1986}}

@article{Guaraco2018,
	author = {Guaraco, Marco A. M.},
	doi = {10.4310/jdg/1513998031},
	issn = {0022-040X},
	journal = {Journal of Differential Geometry},
	mrnumber = {3743704},
	number = {1},
	pages = {91--133},
	title = {Min-max for phase transitions and the existence of embedded minimal hypersurfaces},
	volume = {108},
	year = {2018}}

@article{Hutchinson2000,
	author = {Hutchinson, John E. and Tonegawa, Yoshihiro},
	doi = {10.1007/PL00013453},
	issn = {0944-2669},
	journal = {Calculus of Variations and Partial Differential Equations},
	mrnumber = {1803974},
	number = {1},
	pages = {49--84},
	title = {Convergence of phase interfaces in the van der {W}aals-{C}ahn-{H}illiard theory},
	volume = {10},
	year = {2000}}

@article{Tonegawa2012,
	author = {Tonegawa, Yoshihiro and Wickramasekera, Neshan},
	doi = {10.1515/crelle.2011.134},
	issn = {0075-4102},
	journal = {Journal f\"{u}r die Reine und Angewandte Mathematik. [Crelle's Journal]},
	mrnumber = {2948876},
	pages = {191--210},
	title = {Stable phase interfaces in the van der {W}aals--{C}ahn--{H}illiard theory},
	volume = {668},
	year = {2012}}

@article{Tonegawa2005a,
	author = {Tonegawa, Yoshihiro},
	issn = {1019-8385},
	journal = {Communications in Analysis and Geometry},
	mrnumber = {2154826},
	number = {2},
	pages = {439--459},
	title = {On stable critical points for a singular perturbation problem},
	volume = {13},
	year = {2005}}

@article{Tonegawa2003,
	author = {Tonegawa, Yoshihiro},
	doi = {10.1512/iumj.2003.52.2351},
	issn = {0022-2518},
	journal = {Indiana University Mathematics Journal},
	mrnumber = {1970021},
	number = {1},
	pages = {69--83},
	title = {Domain dependent monotonicity formula for a singular perturbation problem},
	volume = {52},
	year = {2003}}

@book{Simon1983,
	author = {Simon, Leon},
	isbn = {0-86784-429-9},
	mrnumber = {756417},
	pages = {vii+272},
	publisher = {Australian National University, Centre for Mathematical Analysis, Canberra},
	series = {Proceedings of the Centre for Mathematical Analysis, Australian National University},
	title = {Lectures on geometric measure theory},
	volume = {3},
	year = {1983}}

@article{Padilla1998,
	author = {Padilla, Pablo and Tonegawa, Yoshihiro},
	doi = {10.1002/(SICI)1097-0312(199806)51:6<551::AID-CPA1>3.0.CO;2-6},
	issn = {0010-3640},
	journal = {Communications on Pure and Applied Mathematics},
	mrnumber = {1611144},
	number = {6},
	pages = {551--579},
	title = {On the convergence of stable phase transitions},
	volume = {51},
	year = {1998}}

@article{Modica1987a,
	author = {Modica, Luciano},
	doi = {10.1007/BF00251230},
	issn = {0003-9527},
	journal = {Archive for Rational Mechanics and Analysis},
	mrnumber = {866718},
	number = {2},
	pages = {123--142},
	title = {The gradient theory of phase transitions and the minimal interface criterion},
	volume = {98},
	year = {1987}}

@article{Malchiodi2007,
	author = {Malchiodi, A. and Wei, Juncheng},
	doi = {10.1007/s11784-007-0016-7},
	issn = {1661-7738},
	journal = {Journal of Fixed Point Theory and Applications},
	mrnumber = {2336614},
	number = {2},
	pages = {305--336},
	title = {Boundary interface for the {A}llen-{C}ahn equation},
	volume = {1},
	year = {2007}}

@article{Malchiodi2007a,
	author = {Malchiodi, Andrea and Ni, Wei-Ming and Wei, Juncheng},
	doi = {10.2140/pjm.2007.229.447},
	issn = {0030-8730},
	journal = {Pacific Journal of Mathematics},
	mrnumber = {2276519},
	number = {2},
	pages = {447--468},
	title = {Boundary-clustered interfaces for the {A}llen-{C}ahn equation},
	volume = {229},
	year = {2007}}

@article{Kohn1989,
	author = {Kohn, Robert V. and Sternberg, Peter},
	doi = {10.1017/S0308210500025026},
	issn = {0308-2105},
	journal = {Proceedings of the Royal Society of Edinburgh. Section A. Mathematics},
	mrnumber = {985990},
	number = {1-2},
	pages = {69--84},
	title = {Local minimisers and singular perturbations},
	volume = {111},
	year = {1989}}

@article{Edelen2020,
	author = {Edelen, Nick},
	doi = {10.1515/crelle-2017-0053},
	issn = {0075-4102},
	journal = {Journal f\"{u}r die Reine und Angewandte Mathematik. [Crelle's Journal]},
	mrnumber = {4048443},
	pages = {95--137},
	title = {The free-boundary {B}rakke flow},
	volume = {758},
	year = {2020}}

@article{Hiesmayr2017,
  title={Spectrum and index of two-sided {A}llen–-{C}ahn minimal hypersurfaces},
  author={Fritz Hiesmayr},
  journal={Communications in Partial Differential Equations},
  year={2017},
  volume={43},
  pages={1541--1565},
}

@article {SchoenSimon81,
    AUTHOR = {Schoen, Richard and Simon, Leon},
     TITLE = {Regularity of stable minimal hypersurfaces},
   JOURNAL = {Comm. Pure Appl. Math.},
  FJOURNAL = {Communications on Pure and Applied Mathematics},
    VOLUME = {34},
      YEAR = {1981},
    NUMBER = {6},
     PAGES = {741--797},
      ISSN = {0010-3640,1097-0312},
   MRCLASS = {49F22 (53C42 58E15)},
  MRNUMBER = {634285},
MRREVIEWER = {F.\ J.\ Almgren, Jr.},
       DOI = {10.1002/cpa.3160340603},
       URL = {https://doi.org/10.1002/cpa.3160340603},
}

@article {Federer1970,
    AUTHOR = {Federer, Herbert},
     TITLE = {The singular sets of area minimizing rectifiable currents with
              codimension one and of area minimizing flat chains modulo two
              with arbitrary codimension},
   JOURNAL = {Bull. Amer. Math. Soc.},
  FJOURNAL = {Bulletin of the American Mathematical Society},
    VOLUME = {76},
      YEAR = {1970},
     PAGES = {767--771},
      ISSN = {0002-9904},
   MRCLASS = {28.80 (26.00)},
  MRNUMBER = {260981},
MRREVIEWER = {J.\ E.\ Brothers},
       DOI = {10.1090/S0002-9904-1970-12542-3},
       URL = {https://doi.org/10.1090/S0002-9904-1970-12542-3},
}

@book {Almgren2000,
    AUTHOR = {Almgren, Jr., Frederick J.},
     TITLE = {Almgren's big regularity paper},
    SERIES = {World Scientific Monograph Series in Mathematics},
    VOLUME = {1},
      NOTE = {$Q$-valued functions minimizing Dirichlet's integral and the
              regularity of area-minimizing rectifiable currents up to
              codimension 2,
              With a preface by Jean E.\ Taylor and Vladimir Scheffer},
 PUBLISHER = {World Scientific Publishing Co., Inc., River Edge, NJ},
      YEAR = {2000},
     PAGES = {xvi+955},
      ISBN = {981-02-4108-9},
   MRCLASS = {49-02 (35J20 49N60 49Q20 58E12)},
  MRNUMBER = {1777737},
MRREVIEWER = {Brian\ Cabell\ White},
}

@article {White1997,
    AUTHOR = {White, Brian},
     TITLE = {Stratification of minimal surfaces, mean curvature flows, and
              harmonic maps},
   JOURNAL = {J. Reine Angew. Math.},
  FJOURNAL = {Journal f\"ur die Reine und Angewandte Mathematik. [Crelle's
              Journal]},
    VOLUME = {488},
      YEAR = {1997},
     PAGES = {1--35},
      ISSN = {0075-4102,1435-5345},
   MRCLASS = {49Q05 (49Q20 53C42 58E20)},
  MRNUMBER = {1465365},
MRREVIEWER = {Harold\ Parks},
       DOI = {10.1515/crll.1997.488.1},
       URL = {https://doi.org/10.1515/crll.1997.488.1},
}

@book {Simon1996,
    AUTHOR = {Simon, Leon},
     TITLE = {Theorems on regularity and singularity of energy minimizing
              maps},
    SERIES = {Lectures in Mathematics ETH Z\"urich},
      NOTE = {Based on lecture notes by Norbert Hungerb\"uhler},
 PUBLISHER = {Birkh\"auser Verlag, Basel},
      YEAR = {1996},
     PAGES = {viii+152},
      ISBN = {3-7643-5397-X},
   MRCLASS = {58E20 (35J60 49N60 58G03)},
  MRNUMBER = {1399562},
MRREVIEWER = {Nathan\ Smale},
       DOI = {10.1007/978-3-0348-9193-6},
       URL = {https://doi.org/10.1007/978-3-0348-9193-6},
}

@article {Bellettini2025,
    AUTHOR = {Bellettini, Costante},
     TITLE = {Extensions of {S}choen-{S}imon-{Y}au and {S}choen-{S}imon
              theorems via iteration \`a{} la {D}e {G}iorgi},
   JOURNAL = {Invent. Math.},
  FJOURNAL = {Inventiones Mathematicae},
    VOLUME = {240},
      YEAR = {2025},
    NUMBER = {1},
     PAGES = {1--34},
      ISSN = {0020-9910,1432-1297},
   MRCLASS = {49Q20 (58E12)},
  MRNUMBER = {4871955},
       DOI = {10.1007/s00222-025-01317-0},
       URL = {https://doi.org/10.1007/s00222-025-01317-0},
}

@article {Wick14,
    AUTHOR = {Wickramasekera, Neshan},
     TITLE = {A general regularity theory for stable codimension 1 integral
              varifolds},
   JOURNAL = {Ann. of Math. (2)},
  FJOURNAL = {Annals of Mathematics. Second Series},
    VOLUME = {179},
      YEAR = {2014},
    NUMBER = {3},
     PAGES = {843--1007},
      ISSN = {0003-486X,1939-8980},
   MRCLASS = {49Q20 (49N60 58E05 58K99)},
  MRNUMBER = {3171756},
MRREVIEWER = {S\l awomir\ Kolasi\'nski},
       DOI = {10.4007/annals.2014.179.3.2},
       URL = {https://doi.org/10.4007/annals.2014.179.3.2},
}

\end{document}